\newcommand{\A}{\mathrm{A}} \newcommand{\AGL}{\mathrm{AGL}} \newcommand{\AGaL}{\mathrm{A\Gamma L}}  \newcommand{\Aut}{\mathrm{Aut}}
\newcommand{\B}{\mathrm{B}}  
\newcommand{\C}{\mathrm{C}}   \newcommand{\calC}{\mathcal{C}}     \newcommand{\Co}{\mathrm{Co}} 
\newcommand{\D}{\mathrm{D}} 
\newcommand{\E}{\mathrm{E}}
\newcommand{\F}{\mathrm{F}} \newcommand{\Fi}{\mathrm{Fi}}
\newcommand{\G}{\mathrm{G}}     \newcommand{\GL}{\mathrm{GL}} \newcommand{\GO}{\mathrm{O}} \newcommand{\GU}{\mathrm{GU}}
\newcommand{\He}{\mathrm{He}}  \newcommand{\HS}{\mathrm{HS}}
\newcommand{\J}{\mathrm{J}}
\newcommand{\lefthat}{\scalebox{1.3}[1]{\text{$\hat{~}$}}}
\newcommand{\M}{\mathrm{M}} 
\newcommand{\Nor}{\mathbf{N}}
\newcommand{\Out}{\mathrm{Out}}
  \newcommand{\PGL}{\mathrm{PGL}}   \newcommand{\PGU}{\mathrm{PGU}} \newcommand{\ppd}{\mathrm{ppd}} \newcommand{\POm}{\mathrm{P\Omega}} \newcommand{\PSL}{\mathrm{PSL}}   \newcommand{\PSO}{\mathrm{PSO}} \newcommand{\PSp}{\mathrm{PSp}} \newcommand{\PSU}{\mathrm{PSU}}
\newcommand{\Q}{\mathrm{Q}}
 \newcommand{\Rad}{\mathbf{R}}  \newcommand{\Ru}{\mathrm{Ru}}
 \newcommand{\SL}{\mathrm{SL}} \newcommand{\SO}{\mathrm{SO}} \newcommand{\Soc}{\mathrm{Soc}} \newcommand{\Sp}{\mathrm{Sp}}  \newcommand{\SU}{\mathrm{SU}} \newcommand{\Suz}{\mathrm{Suz}} \newcommand{\Sy}{\mathrm{S}} \newcommand{\Sym}{\mathrm{Sym}} 
\newcommand{\Z}{\mathbf{Z}} \newcommand{\ZZ}{\mathrm{C}}
\newtheorem{theorem}{Theorem}[section]
\newtheorem{lemma}[theorem]{Lemma}
\theoremstyle{definition}
\newtheorem{hypothesis}[theorem]{Hypothesis}
\newtheorem{question}[theorem]{Qestion}
\newcommand{\Ga}{\Gamma}  
\newcommand{\bO}{\mathbf{O}}
\renewcommand{\mod}{\mathrm{mod}\,}
\newcommand{\Th}{\mathrm{Th}}
\newcommand{\McL}{\mathrm{McL}}
\newcommand{\Ly}{\mathrm{Ly}}
\newcommand{\ON}{\mathrm{O'N}}
\newcommand{\HN}{\mathrm{HN}}
\numberwithin{equation}{section}
\begin{document}

\title[$2$-Arc-transitive digraph]{The smallest vertex-primitive $2$-arc-transitive digraph}

\author[Yin]{Fu-Gang Yin}
\address{Fu-Gang Yin\\
School of Mathematics and Statistics\\
Central South University\\
Changsha 410083, Hunan, P.R. China.}
\address{Department of Mathematics\\
Beijing Jiaotong University\\
Beijing, 100044, People's Republic of China}

\email{18118010@bjtu.edu.cn}

\author[Feng]{Yan-Quan Feng}
\address{Y.-Q. Feng\\
Department of Mathematics\\
Beijing Jiaotong University\\
Beijing, 100044, People's Republic of China}
\email{yqfeng@bjtu.edu.cn}

\author[Xia]{Binzhou Xia}
\address{B. Xia\\
School of Mathematics and Statistics\\
The University of Melbourne\\
Parkville, VIC 3010\\
Australia}
\email{binzhoux@unimelb.edu.au}

\begin{abstract}
In 2017, Giudici, Li and the third author constructed the first known family of vertex-primitive $2$-arc-transitive digraphs of valency at least $2$. The smallest digraph in this family admits $\mathrm{PSL}_3(49)$ acting $2$-arc-transitively with vertex-stabilizer $\mathrm{A}_6$ and hence has $30758154560$ vertices. In this paper, we prove that this digraph is the vertex-primitive $2$-arc-transitive digraph of valency at least $2$ with fewest vertices.

\textit{Key words:} $2$-arc-transitive digraph; primitive group; automorphism group of digraph

\textit{MSC2020:}  05C25, 20B25
\end{abstract}

\maketitle

\section{Introduction}

In this paper, a digraph $\Ga$ is a pair $(V, \to)$ with a set $V$ (of vertices) and an antisymmetric irreflexive binary relation $\to$ on $V$.
For a positive integer $s$, an \emph{$s$-arc} of $\Ga$ is a sequence $(v_0,v_1, \dots, v_s)$ of vertices with $v_i\rightarrow v_{i+1}$
for each $i\in\{0,1, \dots, s-1\}$.
A $1$-arc is  simply called an \emph{arc}.
We say that $\Ga$ is \emph{$s$-arc-transitive} if the  automorphism group $\Aut(\Ga)$  of $\Ga$ acts transitively on the set of $s$-arcs.
A permutation group  $G$ on a set $\Delta$ is said to be \emph{primitive} if $G$ does not preserve any nontrivial partition of $\Delta$.
We say that $\Ga$ is \emph{vertex-primitive} if $\Aut(\Ga)$ acts primitively on the vertex set $V(\Ga)$ of $\Ga$.

A digraph $\Ga$ with arc set $A(\Ga)$ is said to be \emph{regular} of \emph{valency} $d$ if both the set $\Ga^-(v):=\{ u \in V\mid (u,v) \in A(\Ga)\}$ of in-neighbors of $v$ and the set $\Ga^+(v):= \{ w \in V\mid (v,w) \in A(\Ga)\}$ of out-neighbors of $v$ have size $d$ for all $v \in V $.
Observe that a regular $(s+1)$-arc-transitive digraph is necessarily $s$-arc-transitive.
For each $s\geq 1$ and $d\geq1$, there exist infinitely many $s$-arc-transitive digraphs of valency $d$, see Praeger~\cite{P1989}.
However, vertex-primitive $2$-arc-transitive digraphs turn out to be rare.
In fact, since asked by Praeger~\cite[Question~5.9]{P1990} in 1990, the existence of vertex-primitive $2$-arc-transitive digraphs of valency at least $2$ was open for nearly $30$ years until 2017, when Giudici, Li and third author~\cite{GLX2017} constructed the first family of such digraphs.

The digraphs constructed by Giudici, Li and third author, denoted by $\Ga_p$, have $|\PSL_3(p^2)|/|\A_6|$ vertices (see~\cite[Theorem~1.1]{GLX2017}), where $p\geq7$ is prime such that $p\equiv \pm 2\pmod{5}$. Hence the smallest one $\Ga_7$ in this family has order $|\PSL_3(49)|/|\A_6|=30758154560$. The main purpose of this paper is to show that $\Ga_7$ is the smallest among all vertex-primitive $2$-arc-transitive digraphs of valency at least $2$.

\begin{theorem}\label{th:main}
Let $\Gamma$ be a vertex-primitive $2$-arc-transitive digraph of valency at least $2$.
Then $V(\Ga)\geq 30758154560$. Moreover, if $V(\Ga)= 30758154560$, then $\Ga\cong\Ga_7$.
\end{theorem}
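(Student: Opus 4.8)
The plan is to combine the O'Nan--Scott theorem with an analysis of the local structure that $2$-arc-transitivity imposes on a vertex stabilizer. Write $G=\Aut(\Ga)$, fix an arc $v\to w$, and set $L=G_v$, $R=G_w$, $M=L\cap R$; since $2$-arc-transitivity forces arc-transitivity and hence vertex-transitivity (every vertex has in- and out-degree $d\ge 2$), $G$ is primitive on $V(\Ga)$ with core-free maximal point stabilizer $L$, and $R$ is $G$-conjugate to $L$. The first step is to record the constraints on $(L,M)$. As $G$ is vertex-transitive and transitive on $2$-arcs, $L$ acts transitively on the $d^2$ two-arcs with middle vertex $v$, i.e.\ on $\Ga^-(v)\times\Ga^+(v)$; hence $M=L_w$ has index $d$ in both $L$ and $R$, and $M$ is transitive on $\Ga^-(v)$. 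Writing $M'=L\cap G_u$ for an in-neighbour $u$ of $v$, so that $\Ga^-(v)\cong L/M'$ with $|L:M'|=d$, this transitivity is exactly the factorization $L=MM'$ with $|L:M|=|L:M'|=d\ge 2$; in particular $d^2\mid|L|$ and $d\mid|M|$. Finally, since $\Ga$ is a genuine digraph, its arc-orbital is non-self-paired, i.e.\ $LgL\ne Lg^{-1}L$ where $L^g=R$.

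The second step reduces to $G$ almost simple. Suppose first $G$ has a regular normal subgroup $N$ (the affine, holomorph and twisted wreath types). Identifying $V(\Ga)$ with $N$, the stabilizer $L$ acts on $N$ by conjugation, this being precisely the action on $V(\Ga)$, and $\Ga=\Cay(N,S)$ with $|S|=d$ and $1_N\notin S$. For $a\in S$ the triple $(1_N,a,a^2)$ is a $2$-arc, and its $L$-orbit consists of triples $(1_N,b,b^2)$ with $b\in S$; since the $2$-arcs from $1_N$ are parametrized by $S\times S$, transitivity of $L$ on them gives $d^2\le d$, so $d=1$, a contradiction. This eliminates all O'Nan--Scott types with a regular normal subgroup. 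For the diagonal and product-action types, $|V(\Ga)|$ is $|T|^{k-1}$ or $|\Delta|^k$ with $k\ge 2$ and socle a power of a nonabelian simple group $T$; these are excluded by combining $|V(\Ga)|\le 30758154560$ with the factorization $L=MM'$ sitting inside the (almost-)diagonal point stabilizer, respectively a base-group analysis for the product action --- or by appeal to the known structural reductions for vertex-(quasi)primitive $2$-arc-transitive digraphs. Hence $G$ is almost simple with socle $T$, so $T\trianglelefteq G\le\Aut(T)$.

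The third step, which is the heart of the proof, is the almost simple analysis. Here $|T:T\cap L|\le|G:L|=|V(\Ga)|\le 30758154560$, so $T$ has a proper subgroup of index at most $3.1\times10^{10}$, and $T\cap L$ lies in a point stabilizer carrying the factorization above. One treats the possibilities for $T$ in turn: for $T$ sporadic the finitely many candidate pairs $(G,L)$ are checked directly, in \magma; for $T=\A_n$ the intransitive, imprimitive and product-type subgroup actions give only self-paired orbitals, hence no digraph, while the remaining (primitive-stabilizer) actions force $|V(\Ga)|$ to be too large unless $n$ is small; and for $T$ of Lie type one uses bounds on subgroups of small index, the classification of factorizations of almost simple groups, and the non-self-paired requirement to reduce to a finite list of maximal subgroups $L$. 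For each surviving triple $(G,L,M)$ one forms the coset digraph $\Cos(G,L,LgL)$ with $L^g$ the appropriate conjugate of $L$, verifies that it is non-self-paired and $2$-arc-transitive, and computes the number of vertices. The outcome is that the only vertex-primitive $2$-arc-transitive digraph of valency at least $2$ on at most $30758154560$ vertices has socle $\PSL_3(49)$, valency $6$, and is isomorphic to $\Ga_7$ (with $|V(\Ga)|=|\PSL_3(49)|/|\A_6|=30758154560$); as the digraph is determined up to isomorphism by $(G,L,M)$, this yields the equality statement as well.

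The main obstacle is the Lie type case. There are infinitely many such groups $T$, and while most are discarded because every proper subgroup has index larger than $3.1\times10^{10}$ (or because the pertinent subspace actions are orbital-symmetric), the borderline families --- low-rank classical groups over small fields, together with a handful of exceptional groups --- demand a case-by-case study of their maximal subgroups, of the double cosets $L\backslash G/L$ that could carry an arc, and of when the resulting coset digraph is simultaneously non-self-paired and $2$-arc-transitive. Making this list finite, exhaustive and correctly pruned, and then confirming that $\Ga_7$ is its unique member, is where the bulk of the work --- a good deal of it computational --- lies.
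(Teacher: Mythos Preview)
Your overall plan coincides with the paper's: reduce via O'Nan--Scott to the almost simple case, exploit the homogeneous factorization $G_v=G_{uv}G_{vw}$ together with the non-self-paired orbital condition, and then run through the socle types with heavy \textsc{Magma} assistance at the borderline. Two specific steps in your sketch would fail as written, however.

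First, for the simple-diagonal and product-action types you invoke only the generic degree formula $|V(\Ga)|=|T|^{k-1}$ with $k\ge2$, which gives merely $|V(\Ga)|\ge 60$ and is far too weak. The paper instead uses the structural results of Giudici--Xia that SD-type $2$-arc-transitive examples must have $k=|S|$ (whence $|V(\Ga)|\ge 60^{59}$) and that PA/CD-type examples are nontrivial direct powers of smaller vertex-primitive $2$-arc-transitive digraphs, contradicting minimality of $|V(\Ga)|$. Your fallback clause ``or by appeal to the known structural reductions'' is exactly what is needed here, but the preceding counting argument does no work.

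Second, your assertion that the imprimitive action of $\A_n$ or $\Sy_n$ on uniform partitions has only self-paired orbitals is false in general: non-self-paired suborbits certainly occur. The paper handles this case by parametrizing suborbits via intersection matrices $(|V_i\cap W_j|)_{k\times k}$ of pairs of partitions, showing that self-pairedness corresponds to the matrix being permutation-equivalent to its transpose, and then checking (partly by machine, partly by an explicit matrix search for $k=3$) that for the finitely many $(m,k)$ with $(mk)!/(m!)^kk!\le 30758154560$ no non-self-paired suborbit admits the required homogeneous factorization of the stabilizer.
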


In~\cite{GX2017}, Guidici  and the third author asked the following question:

\begin{question}\label{question:1}
Is there an upper bound on $s$ for vertex-primitive $s$-arc-transitive digraphs  that are not directed cycles?
\end{question}

Moreover, in~\cite{GX2017}, the above question was reduced to that of almost simple groups, which has been studied in a few recent papers~\cite{CGP2021+,CLX2021+,GLX2019,PWY2020}. We believe that some ideas and results in our proof of Theorem~\ref{th:main} would be helpful for studying Question~\ref{question:1}. For instance, in Lemma~\ref{lem4.4}, we actually show that $s\leq 1$ if the socle is a sporadic simple group not isomorphic to $\mathbb{M}$, $\mathbb{B}$, $\Fi_{24}'$ or $\Co_1$.

We thank Professor Jin-Xin Zhou and Wenying Zhu for reading the first draft of this paper and making valuable comments.
We are also grateful to the anonymous referee for helpful suggestions to improve the paper.
We acknowledge {\sc Magma}~\cite{Magma} and the support and resources from the Center for High Performance Computing at Beijing Jiaotong University for computation.
This work was supported by the National Natural Science Foundation of China (12161141005, 12271024, 12071023,11971054), the 111 Project of China (B16002) and Yunnan Applied Basic Research Projects (202101AT070137).

\section{Preliminaries}

For a finite group $G$, denote by $\Soc(G)$ the socle of $G$, by $\Rad(G)$ the largest solvable normal subgroup of $G$, by $G^{(\infty)}$ the
smallest normal subgroup of $G$ such that $G/G^{(\infty)}$ is solvable, and by $\bO_p(G)$ the largest normal $p$-subgroup of $G$, for each prime $p$.
For $H\leq G$ and $g\in G$, let $H^g:=g^{-1}Hg$ denote the conjugate of $H$ under $g$.
The subgroup $H$ is said to be \emph{core-free} in $G$ if $H$ contains no nontrivial normal subgroup of $G$.
For a almost simple group $G$, denote by $P(G)$ the minimal index of core-free subgroups in $G$.
Note that $P(G)\geq P(\Soc(G))$.

For a nonzero integer $n$ and prime number $r$, denote by $n_r$ the $r$-part of $n$ (that is, the largest power of $r$
dividing $n$), and $\pi(n)$ the set of prime divisors of $n$. If $G$ is a group, then $\pi(G):=\pi(|G|)$.

Given integers $a\geq 2$ and  $m\geq 2$, a prime number $r$ is called a primitive prime divisor of the pair $(a, m)$
if $r$ divides $a^m - 1$ but does not divide $a^i -1 $ for any positive integer $i <m$. By a  theorem of Zsigmondy (see for example~\cite[Theorem IX.8.3]{BH1982}), $(a, m)$ always has a primitive prime divisor except when $(a, m) =(2, 6)$ or $a +1$ is a power of $2$ and $m =2$. Denote the set of primitive prime divisors of $(a, m)$ by $\ppd(a, m)$ if $(a, m) \neq (2, 6)$, and set $\ppd(2, 6) = \{7\}$. Note that for each $r \in \ppd(a, m)$ we have $r \equiv 1(\mod m)$ and so $r> m$.

For a digraph $\Ga$ and a subgroup $G$ of $\Aut(\Ga)$, the digraph $\Ga$ is said to be \emph{$G$-vertex-primitive} or \emph{$(G,s)$-arc-transitive}, respectively, if $G$ acts primitively on the vertex set $V(\Ga)$ of $\Ga$ or transitively on the set of $s$-arcs of $\Ga$.

\subsection{Arc-transitive digraph}
\ \vspace{1mm}

By definition, a $(G,2)$-arc-transitive digraph is clearly $G$-arc-transitive. Moreover, the next lemma, which is a special case of~\cite[Corrollary 2.11]{GX2017}, shows that a $(G,2)$-arc-transitive digraph is $M$-arc-transitive for every vertex-transitive normal subgroup $M$ of $G$.

\begin{lemma}\label{pro:Tarctrans}
 Let $\Ga$ be a $(G,2)$-arc-transitive digraph, and let $M$ be a vertex-transitive normal subgroup of $G$.
Then  $\Ga$ is $M$-arc-transitive.
\end{lemma}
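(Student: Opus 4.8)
The plan is to show that $M$ acts transitively on the arc set $A(\Ga)$; since a $(G,2)$-arc-transitive digraph is $G$-arc-transitive, this is precisely the assertion (and if $\Ga$ has valency $0$ there is nothing to prove, so one may assume the valency $d$ is at least $1$). I would first reduce to a local statement. Because $M$ is a vertex-transitive normal subgroup of $G$, one has $G=MG_v$ for every vertex $v$, so $M_v:=M\cap G_v$ is normal in $G_v$; fixing an arc $(u,v)$ and applying Dedekind's modular law (using $G_{uv}\le G_v$) shows $G=MG_{uv}$ if and only if $G_v=M_vG_{uv}$, and the standard fact that a normal subgroup $M$ is transitive on a transitive $G$-set with point-stabilizer $H$ iff $MH=G$ then gives that $M$ is arc-transitive if and only if $M_v$ is transitive on $\Ga^-(v)$. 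So the goal becomes: prove $M_v$ is transitive on $\Ga^-(v)$.

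Next I would pass to a colouring/counting formulation. Let $c$ be the number of $M$-orbits on $A(\Ga)$; since $M$ is transitive on vertices, considering the $M$-equivariant ``head'' and ``tail'' maps $A(\Ga)\to V(\Ga)$ shows that $c$ also equals the number of $M_v$-orbits on $\Ga^-(v)$ and the number of $M_v$-orbits on $\Ga^+(v)$, so it suffices to prove $c=1$. Label the $M$-orbits $O_1,\dots,O_c$ and ``colour'' each arc $(a,b)$ by the index $i$ with $(a,b)\in O_i$. A key elementary observation, again from vertex-transitivity of $M$, is that for every vertex $v$ each of the $c$ colours occurs on an out-arc of $v$ and on an in-arc of $v$ (translate any arc of a given colour so that its tail, respectively head, becomes $v$).

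The heart of the argument is then to exploit $2$-arc-transitivity. Since $M\trianglelefteq G$, the group $G$ permutes $\{O_1,\dots,O_c\}$, inducing an action of $G$ on $\{1,\dots,c\}$; the ``colour-pair'' map $\Phi$ sending a $2$-arc $(x,y,z)$ to $(\mathrm{colour}(x,y),\mathrm{colour}(y,z))$ is $G$-equivariant for the diagonal action of $G$ on $\{1,\dots,c\}^2$. By the observation above $\Phi$ is surjective: for any $(i,j)$, pick a vertex $y$, an in-arc $(x,y)$ of colour $i$ and an out-arc $(y,z)$ of colour $j$, and then $\Phi(x,y,z)=(i,j)$. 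As $\Ga$ is $(G,2)$-arc-transitive, the set of $2$-arcs is a single $G$-orbit, so its image $\{1,\dots,c\}^2$ under the equivariant surjection $\Phi$ is a single $G$-orbit under the diagonal action — impossible when $c\ge2$, since the diagonal $\{(i,i):1\le i\le c\}$ is then a proper nonempty invariant subset. Hence $c=1$, so $M_v$ is transitive on $\Ga^-(v)$ and $M$ is arc-transitive.

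I expect the one genuinely substantive point to be the recognition that plain arc-transitivity of $G$ does not suffice and that the $2$-arc hypothesis must enter through this colour-pair/diagonal device: the purely group-theoretic statement ``a normal subgroup of a group transitive on $\Omega$ and on $\Omega\times\Omega'$ is already transitive on $\Omega$'' is false in general, so the argument genuinely needs the digraph structure, specifically the fact that each colour appears on an out-arc of every vertex, which is what makes $\Phi$ surjective. Everything else — the Dedekind-law reduction and the orbit counting — is routine. Alternatively, one may simply cite \cite[Corollary~2.11]{GX2017}, of which this lemma is the case $s=2$.
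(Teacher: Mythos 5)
Your argument is correct. The paper itself offers no proof here---it cites the lemma as a special case of \cite[Corollary~2.11]{GX2017} (the case $s=2$), exactly as you note at the end---so your self-contained argument is a genuine alternative rather than a reconstruction of the paper's proof. The route you take is natural and sound: the modular-law reduction to transitivity of $M_v$ on $\Ga^-(v)$, the identification of the number $c$ of $M$-orbits on arcs with the number of $M_v$-orbits on both $\Ga^-(v)$ and $\Ga^+(v)$ (via the $M$-equivariant head and tail maps and vertex-transitivity of $M$), and then the key device: the $G$-equivariant colour-pair map $\Phi$ from $2$-arcs onto $\{1,\dots,c\}^2$, whose surjectivity follows because every colour occurs as both an in-colour and an out-colour at each vertex, combined with the observation that the diagonal is a proper nonempty invariant subset of $\{1,\dots,c\}^2$ whenever $c\geq 2$. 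Your closing remark correctly isolates where $2$-arc-transitivity is indispensable. One small thing worth making explicit if you write this up: the steps ``$(G,2)$-arc-transitive $\Rightarrow$ $G$-arc-transitive'' and the nonemptiness of $\Ga^-(y)$, $\Ga^+(y)$ used in the surjectivity argument both presuppose that $\Ga$ is regular of valency at least $1$; you implicitly assume this (handling valency $0$ separately), which matches the standing hypotheses under which the paper actually uses the lemma, but the lemma as stated does not say so, so a word to that effect would tighten the exposition.
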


In the rest of this subsection, we collect some results on arc-transitive digraphs.

\begin{lemma}[{\cite[Lemma~2.13]{GLX2019}}]\label{pro:valency}
For each vertex-primitive arc-transitive digraph $\Ga$, either $\Ga$ is a directed cycle of prime length
or $\Ga$ has valency at least $3$.
\end{lemma}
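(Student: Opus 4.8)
The plan is to set $G:=\Aut(\Ga)$, which by hypothesis acts faithfully and primitively on $V:=V(\Ga)$ and transitively on the arc set of $\Ga$; in particular $G$ is vertex-transitive, so $\Ga$ is regular of some valency $d\geq1$. I would split into the cases $d=1$, $d=2$ and $d\geq3$, the last being exactly the desired alternative, so it suffices to analyse $d=1$ and to rule out $d=2$.

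Suppose first that $d=1$. Then every vertex has a unique out-neighbour, so $\Ga$ is a disjoint union of directed cycles. The partition of $V$ into the vertex sets of the weakly connected components of $\Ga$ is preserved by $G$; since $\Ga$ has an arc, vertex-transitivity forces every component to contain at least two vertices, so primitivity of $G$ makes $\Ga$ weakly connected, hence a single directed cycle $\vec{C}_n$ with $n=|V|$. As $\Aut(\vec{C}_n)$ is the cyclic group generated by the rotation, primitivity of $G\leq\Aut(\vec{C}_n)$ forces $n$ to be prime, so $\Ga$ is a directed cycle of prime length.

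Now suppose $d=2$, aiming for a contradiction. Fix an arc $(v,w)$. Since $G_v$ is transitive on $\Ga^+(v)$ and $G_w$ is transitive on $\Ga^-(w)$, both of size $2$, the arc-stabiliser $G_{vw}=G_v\cap G_w$ has index $2$ in each of $G_v$ and $G_w$, hence is normal in both. Moreover $G_v$ is maximal in $G$ by primitivity and $G_w\neq G_v$ (since $G_v$ cannot fix the vertex $w\in\Ga^+(v)$), so $\langle G_v,G_w\rangle=G$ and therefore $G_{vw}\trianglelefteq G$. As $G$ acts faithfully and primitively on $V$, the stabiliser $G_v$ is core-free, whence $G_{vw}=1$ and $|G_v|=2$. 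Writing $G_v=\langle x\rangle$ and $G_w=\langle y\rangle$ with distinct involutions $x,y$, we obtain $G=\langle x,y\rangle$, so $G$ is dihedral; primitivity then forces $G$ to be dihedral of order $2p$ acting naturally on $p=|V|$ vertices for some prime $p$ (for a nonprime $m$, the rotation subgroup of $D_{2m}$ has a proper nontrivial characteristic subgroup, normal in $D_{2m}$ and intransitive, contradicting primitivity). But in this natural action some reflection interchanges $v$ with any given out-neighbour $u$ of $v$, so $(u,v)$ is also an arc, contradicting the antisymmetry of $\Ga$. Hence $d\neq2$, and we conclude $d\geq3$.

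The only case requiring genuine care is $d=2$; its crux is the observation that the arc-stabiliser, having index $2$ in both $G_v$ and $G_w$, is normal in $\langle G_v,G_w\rangle=G$ and hence trivial, after which the contradiction follows from two elementary facts: a group generated by two involutions is dihedral, and a dihedral group is primitive only in its natural action of prime degree, where every suborbit is self-paired. The case $d=1$ is routine, needing only that a primitive group admits no nontrivial invariant partition (applied to the components of $\Ga$) together with the identification $\Aut(\vec{C}_n)=C_n$.
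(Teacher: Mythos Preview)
Your proof is correct. The paper does not actually prove this lemma; it is quoted verbatim as \cite[Lemma~2.13]{GLX2019} and used as a black box. Your argument supplies a clean self-contained proof: the $d=1$ analysis is routine, and for $d=2$ the key step---that $G_{vw}$ has index $2$ in both $G_v$ and $G_w$, hence is normal in $\langle G_v,G_w\rangle=G$ and therefore trivial---is exactly the right idea, after which the dihedral classification and the observation that every suborbit of $D_{2p}$ in its natural degree-$p$ action is self-paired finish things off. One cosmetic point: you switch from $w$ to $u$ for the out-neighbour in the final sentence of the $d=2$ case, but the mathematics is unaffected.
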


The following result is a consequence of the connectivity of arc-transitive digraphs.
It will be used repeatedly in this paper.

\begin{lemma}[{\cite[Lemma 2.14]{GLX2019}}]\label{pro:Gvnormal}
Let $\Ga$ be a connected $G$-arc-transitive digraph with an arc $(v,w)$, and let $g \in G$ such that $v^g = w$.
Then no nontrivial normal subgroup of $G_v$ is normalized by $g$.
\end{lemma}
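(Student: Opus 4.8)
The plan is to deduce the lemma from the single group-theoretic identity
\[
G=\langle G_v,\,g\rangle ,
\]
after which the conclusion is a two-line argument: if $N$ is a nontrivial normal subgroup of $G_v$ with $N^g=N$, then $N$ is normalized by $G_v$ and by $g$, hence by $\langle G_v,g\rangle=G$, so $N\trianglelefteq G$; since $G\leq\Aut(\Ga)$ acts faithfully and transitively on $V(\Ga)$ and $N\leq G_v$, conjugating $N$ around the orbit of $v$ gives $N\leq G_u$ for every vertex $u$, whence $N=1$, a contradiction. So the real content is establishing the displayed identity.

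To prove $G=\langle G_v,g\rangle$, first observe that $G$-arc-transitivity makes $G_v$ transitive on the out-neighborhood $\Ga^+(v)$ and on the in-neighborhood $\Ga^-(v)$: any two arcs $(v,w_1),(v,w_2)$ (resp.\ $(u_1,v),(u_2,v)$) are swapped by an element of $G$ fixing $v$. Next, applying $g^{-1}$ to the arc $(v,w)$ and using $w^{g^{-1}}=v$ shows $(v^{g^{-1}},v)$ is an arc, i.e.\ $v^{g^{-1}}\in\Ga^-(v)$. Now set $H=\langle G_v,g\rangle$ and $B=v^H$. Then $\Ga^+(v)=w^{G_v}=v^{gG_v}\subseteq B$ and $\Ga^-(v)=(v^{g^{-1}})^{G_v}=v^{g^{-1}G_v}\subseteq B$; applying an arbitrary $h\in H$ and using that automorphisms preserve the arc relation (so $\Ga^{\pm}(v^h)=\Ga^{\pm}(v)^h$), one gets $\Ga^+(B)\subseteq B$ and $\Ga^-(B)\subseteq B$. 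Thus $B$ is a nonempty union of connected components of the underlying undirected graph of $\Ga$, and connectedness of $\Ga$ forces $B=V(\Ga)$. Hence $H$ is vertex-transitive, and since $G_v\leq H$ this yields $G=H=\langle G_v,g\rangle$.

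The main obstacle is the step showing $\Ga^-(v)\subseteq B$: closure of $B$ under out-neighbors alone does not let one invoke connectedness of the underlying \emph{undirected} graph, so one genuinely needs the observation that $g^{-1}$ carries $(v,w)$ to an arc \emph{terminating} at $v$, combined with transitivity of $G_v$ on $\Ga^-(v)$. Once both closure properties are in hand, the connectedness argument and the normal-subgroup conclusion are entirely formal.
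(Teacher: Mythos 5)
Your proof is correct, and it is the standard argument for this well-known lemma (the paper itself contains no proof, citing it to \cite[Lemma 2.14]{GLX2019}): show $\langle G_v,g\rangle=G$ by observing that $v^{\langle G_v,g\rangle}$ is closed under in- and out-neighbours and invoking connectivity, then note that an $N\unlhd G_v$ normalized by $g$ would be normal in $G$, hence trivial by faithfulness and vertex-transitivity. One small remark: the step you flag as the ``main obstacle'' (establishing closure under in-neighbours) is actually optional here, since for the vertex-transitive digraphs under consideration connectedness is equivalent to strong connectedness (as the paper notes later via \cite[Lemma~2.6.1]{GR2001}), so closure under out-neighbours alone already forces $v^{\langle G_v,g\rangle}=V(\Ga)$; including both closures is harmless but not necessary.
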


Let $G$ be a transitive permutation group on a set $\Omega$.
Then $G$ acts naturally on $\Omega\times\Omega$.
A  $G$-orbit  in $\Omega\times\Omega$ is called a \emph{$G$-orbital}, and a $G$-orbital $(v,w)^G$  is said to be \emph{self-paired}  if $(w,v)=(v,w)^g$ for some $g\in G$.
A $G_v$-orbit in $\Omega$ is called a \emph{$G$-suborbit} relative to $v$.
Since $G$ is transitive, there is a bijection between $G$-orbitals and  $G_v$-orbits in $\Omega$.
We say a $G$-suborbit is self-paired if the corresponding $G$-orbital is self-paired, and non-self-paired otherwise.
For a non-self-paired $G$-orbital $(v,w)^G$, the digraph with vertex set $\Omega$ and arc-set $(v,w)^G$ is an arc-transitive digraph.
Conversely, each arc-transitive digraph arises in this way.

Let  $[G\,{:}\,G_v]$ be the set of right cosets of $G_v$ in $G$.
Then the action of $G$ on $\Omega$ is equivalent to the action of $G$ on $[G\,{:}\,G_v]$ by right multiplication.
Note that in the latter action, each $G$-suborbit is an $(G_v,G_v)$-double coset $G_vgG_v$, and it is non-self-paired if and only if $g^{-1} \notin G_vgG_v$.
To summarize, we have the following observation.

\begin{lemma}\label{lm:orbital}
Let $\Ga$ be a $G$-arc-transitive digraph with an arc $(v,w)$.
Let $g \in G$ such that $v^g = w$.
Then $w$ lies in a non-self-paired $G$-suborbit and $g^{-1} \notin G_vgG_v$.
\end{lemma}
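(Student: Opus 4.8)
The plan is to deduce both assertions directly from the dictionary between arc-transitive digraphs and non-self-paired orbitals that was set up just above the statement, so the argument is essentially an unwinding of definitions. First I would record that $G$-arc-transitivity means the arc set $A(\Ga)$ is precisely the single $G$-orbital $(v,w)^G$ on $\Omega\times\Omega$. The one point requiring an observation is that this orbital is non-self-paired: if it were self-paired, then $(w,v)=(v,w)^h$ for some $h\in G$, whence $(w,v)\in(v,w)^G=A(\Ga)$, i.e.\ $w\to v$; but $v\to w$ by hypothesis, and $v\neq w$ since $\to$ is irreflexive, so we would have both $v\to w$ and $w\to v$, contradicting antisymmetry of $\to$.

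Next I would use the bijection between $G$-suborbits relative to $v$ and $G$-orbitals. Since $\Ga$ is $G$-arc-transitive, $w$ lies in the suborbit $w^{G_v}$, and under this bijection $w^{G_v}$ corresponds to the orbital $(v,w)^G$; as the latter is non-self-paired, the suborbit $w^{G_v}$ is non-self-paired by definition, which is the first claim. For the second claim I would pass to the equivalent action of $G$ on $[G\,{:}\,G_v]$ by right multiplication, under which $v$ corresponds to the trivial coset $G_v$ and, because $v^g=w$, the vertex $w$ corresponds to $G_vg$. Hence the suborbit of $w$ is the $G_v$-orbit of $G_vg$, i.e.\ the union of right cosets inside the double coset $G_vgG_v$. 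Applying the recorded criterion — a suborbit $G_vgG_v$ is non-self-paired exactly when $g^{-1}\notin G_vgG_v$ — to the fact that this suborbit is non-self-paired yields $g^{-1}\notin G_vgG_v$.

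There is no genuine obstacle in this lemma; the only thing to be careful about is correctly matching the combinatorial statement (``$(w,v)$ is an arc'') with the group-theoretic one (``$g^{-1}\in G_vgG_v$''), and both translations have already been made in the preceding paragraphs. Accordingly I expect the proof to be just a few lines assembling these pieces.
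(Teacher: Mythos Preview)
Your proposal is correct and follows exactly the approach the paper intends: the lemma is stated as a summary (``To summarize, we have the following observation'') of the preceding discussion on orbitals, suborbits, and double cosets, and your argument simply unpacks that discussion, with the antisymmetry of $\to$ supplying the non-self-pairedness. No separate proof appears in the paper, so your write-up is precisely what is implicitly being invoked.
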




\subsection{Group factorization}
\ \vspace{1mm}

If a group $G$ is expressed as the product of two subgroups $A$ and $B$, then the expression  $G=AB$ is called a \emph{factorization} of $G$, where $A$ and $B$ are called \emph{factors}.
The following lemma gives some simple facts about factorization of group.

\begin{lemma} \label{lm:basicfacs}
Let $A$ and $B$ be subgroups of $G$.
Then the following are equivalent:
\begin{enumerate}[\rm (a)]
\item $G=AB$;
\item $G=BA$;
\item $G=A^xB^y$ for any $x,y \in G$;
\item $|A\cap B||G|=|A||B|$;
\item $A$ acts transitively by right multiplication on the set of right cosets of $B$ in $G$;
\item $B$ acts transitively by right multiplication on the set of right cosets of $A$ in $G$.
\end{enumerate}
\end{lemma}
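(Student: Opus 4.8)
The final statement is Lemma~\ref{lm:basicfacs}, the basic equivalences for group factorizations. Let me sketch how I would prove it.

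---

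The plan is to establish a cycle of implications among the six conditions, exploiting the symmetry between $A$ and $B$ wherever possible so as to halve the work. Note first that conditions (a) and (b) are related by an obvious symmetry, as are (e) and (f); condition (c) subsumes (a) and (b) as the special case $x=y=1$; and conditions (d) is symmetric in $A$ and $B$ on its face. So it suffices to prove, say, the chain (a) $\Rightarrow$ (c) $\Rightarrow$ (a), (a) $\Leftrightarrow$ (b), (a) $\Leftrightarrow$ (d), and (a) $\Leftrightarrow$ (e), the remaining equivalences following by the $A \leftrightarrow B$ symmetry.

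First I would handle (a) $\Leftrightarrow$ (b): if $G = AB$ then taking inverses of both sides gives $G = G^{-1} = (AB)^{-1} = B^{-1}A^{-1} = BA$, since $A$ and $B$ are subgroups; the converse is identical. Next, for (a) $\Leftrightarrow$ (e), observe that $A$ acts transitively by right multiplication on $[G:B]$ precisely when the $A$-orbit of the trivial coset $B$ is everything, i.e. when $\{Ba : a \in A\} = [G:B]$, which says exactly that every element $g \in G$ lies in some coset $Ba$, i.e. $g \in Ba \subseteq BA$ for some $a$, i.e. $G = BA$; combined with (a) $\Leftrightarrow$ (b) this gives (a) $\Leftrightarrow$ (e), and by symmetry (b) $\Leftrightarrow$ (f). For (a) $\Leftrightarrow$ (d), I would count: the set $AB$ is a union of right cosets of... more carefully, consider the map $A \times B \to G$, $(a,b) \mapsto ab$. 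Its image is $AB$, and the fibre over a point $ab$ in the image consists of those pairs $(a', b')$ with $a'b' = ab$, i.e. $a^{-1}a' = b(b')^{-1} \in A \cap B$; so each fibre has size exactly $|A \cap B|$. Hence $|AB| \cdot |A \cap B| = |A| \cdot |B|$, and therefore $|AB| = |G|$ (equivalently $AB = G$, since $AB \subseteq G$ always) if and only if $|A||B| = |A \cap B||G|$, which is (d). Finally, for (a) $\Rightarrow$ (c): given $G = AB$ and arbitrary $x, y \in G$, write an arbitrary element as $g = x a b y$... hmm, that is not quite it. Rather: $A^x B^y = x^{-1}Ax\, y^{-1}By$; I want this to be all of $G$. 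Using (d), which is equivalent to (a), note $|A^x| = |A|$, $|B^y| = |B|$, and $|A^x \cap B^y|$ need not equal $|A \cap B|$ in general — so the cleanest route is: by (a) $\Leftrightarrow$ (d) applied to the pair $(A^x, B^y)$, it suffices to check... actually the slickest argument is direct: since $G = AB$, for any $g$ we have $xgy^{-1} = ab$ for some $a \in A$, $b \in B$, hence $g = x^{-1}ab\,y = (x^{-1}ax)(x^{-1}... )$ — let me instead just write $g = x^{-1} a b y$ with $x g y^{-1}... $. Cleanest: $A^xB^y \ni (x^{-1}ax)(y^{-1}by)$; set $g$ arbitrary and solve $x^{-1}axy^{-1}by = g$, i.e. $ax y^{-1} b = xgy^{-1}$... this is getting circular. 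The right statement: $A^x B^y = x^{-1}(A x y^{-1} B) y = x^{-1}(A\,(xy^{-1})\,B)y$, and since $G = AB = BA$ we have $A h B = G$ for every $h \in G$ (because $AhB \supseteq A h \cdot B \supseteq$... no: $AhB = A(hB)$ and $hB$ ranges over a coset, but $A \cdot hB = G$ iff $h^{-1}... $). Correct fact: if $G = AB$ then for any $h$, $AhB \supseteq AhB$, and $G = AB = A(h h^{-1})B$; since left multiplication by $h$ permutes the right cosets of $B$, and $A$ is transitive on them, $A(hB) = $ the $A$-orbit of $hB$ $= [G:B] \cdot h$... Let me just say: $A$ acts transitively on $[G:B]$, so $\{A\text{-orbit of } hB\} = [G:B]$, meaning $\bigcup_{a\in A} a h B = G$, i.e. $AhB = G$ for all $h \in G$; taking $h = xy^{-1}$ gives $A(xy^{-1})B = G$, hence $x^{-1}A(xy^{-1})By = x^{-1}Gy = G$, i.e. $A^xB^y = G$. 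And (c) $\Rightarrow$ (a) is the case $x = y = 1$.

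The main obstacle — such as it is for a lemma this elementary — is simply bookkeeping: making sure the fibre-counting argument for (d) is stated with the correct orbit-counting (the fibre of $(a,b)\mapsto ab$ over a point of $AB$ has constant size $|A\cap B|$), and getting the conjugation manipulation in (c) right, for which the key observation is the auxiliary fact that $G = AB$ forces $AhB = G$ for \emph{every} $h \in G$. Everything else is symmetry and taking inverses. I would present the proof as: (i) $AhB = G$ for all $h$ when $G=AB$ (via the coset-orbit description), which immediately yields (c) and its converse; (ii) the inverse trick for (a) $\Leftrightarrow$ (b); (iii) the fibre count for (a) $\Leftrightarrow$ (d); (iv) the orbit description for (a) $\Leftrightarrow$ (e) and (b) $\Leftrightarrow$ (f).
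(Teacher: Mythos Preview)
The paper states this lemma without proof, treating it as standard background on group factorizations. Your argument is correct and supplies the routine verification the paper omits.

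One minor wrinkle worth cleaning up: in your derivation of (c), the appeal to transitivity momentarily conflates left and right cosets (you take the $A$-orbit of $hB$, whereas condition~(e) concerns right cosets $Bh$ under right multiplication). The conclusion $AhB=G$ for every $h\in G$ is nonetheless correct, and there is a shorter direct route: since $G=AB$, write $h=a_0b_0$ with $a_0\in A$, $b_0\in B$, and then $AhB = Aa_0b_0B = AB = G$. With this in hand your computation $A^xB^y = x^{-1}\bigl(A\,(xy^{-1})\,B\bigr)y = x^{-1}Gy = G$ goes through cleanly.
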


For convenience, we define two sets of simple groups throughout this paper:
\begin{equation}\label{eq:T1T2}
\begin{split}
&\mathcal{T}_1:=\{\A_6,\M_{12},\Sp_4(2^f),\POm^{+}_8(q) \},\\
&\mathcal{T}_2:=\{  \PSL_2(q),\PSL_3(3),\PSL_3(4),\PSL_3(8),\PSU_3(8),\PSU_4(2)\}.
\end{split}
\end{equation}

\begin{lemma}\label{lm:ASfac}
Let $H$ be an almost simple group with socle $M$.
Suppose $H = KL$ with nonsolvable core-free subgroups $K$ and $L$ such that $K$ and $L$ have the same nonsolvable composition factors and the same multiplicities.
Then $M=(K\cap M)(L\cap M)$ with $M\in \mathcal{T}_1$, and interchanging $K$ and $L$ if necessary, one of the following holds:
\begin{enumerate}[\rm (a)]
\item $M=\A_6$, and $(H,K,L)\cong (\A_6,\A_5,\A_5)$ or $(\Sy_6,\Sy_5,\Sy_5)$;
\item $M=\M_{12}$, and $(H,K,L)\cong (\M_{12},\M_{11},\M_{11})$;
\item $M=\Sp_4(q)$ with $q\geq 4$  even, $H\leq\mathrm{\Gamma Sp}_4(q)$, and $(K\cap M,L\cap M)\cong(\Sp_2(q^2).2,\Sp_2(q^2))$ or $(\Sp_2(q^2).2,\Sp_2(q^2).2)$;
\item $M=\POm_8^{+}(q)$, $H\leq  \mathrm{P\Gamma O}_8^{+}(q) $, and $(K\cap M,L\cap M)\cong (\Omega_7(q),\Omega_7(q))$.
\end{enumerate}
\end{lemma}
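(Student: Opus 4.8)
The plan is to reduce the given factorization of $H$ to a factorization of the simple group $M$, apply the classification of maximal factorizations of finite simple groups due to Liebeck, Praeger and Saxl, and then lift the possibilities back to $(H,K,L)$.

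First I would pass to the socle. Since $\Out(M)$ is solvable (by Schreier's conjecture) and $K,L$ are core-free, $K\cap M$ and $L\cap M$ are core-free subgroups of $M$; moreover $K^{(\infty)}\le K\cap M$ and $K^{(\infty)}\ne 1$, so $K\cap M$ is nonsolvable, and likewise $L\cap M$ is nonsolvable. The nonsolvable composition factors of $K$ are exactly those of $K\cap M$ (with multiplicities), and similarly for $L$, so $K\cap M$ and $L\cap M$ have the same nonsolvable composition factors with the same multiplicities. A standard reduction for factorizations of almost simple groups then gives $M=(K\cap M)(L\cap M)$. Hence it suffices to classify, for each nonabelian simple group $M$, the factorizations $M=XY$ with $X,Y$ nonsolvable core-free subgroups having the same nonsolvable composition factors with the same multiplicities, and then to recover the admissible $(H,K,L)$.

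Next I would invoke the Liebeck--Praeger--Saxl classification. Put $X=K\cap M$ and $Y=L\cap M$, and choose maximal subgroups $A\ge X$, $B\ge Y$ of $M$; then $M=AB$ is one of the maximal factorizations of simple groups in their list. If $S$ is a composition factor of $X$ of largest order among the nonabelian ones, then $S$ is a composition factor of $A$ and, being a composition factor of $Y$, also of $B$. So it suffices to examine the entries $M=AB$ in which $A$ and $B$ share a nonabelian composition factor, and for each such entry to decide which nonsolvable subgroups $X\le A$, $Y\le B$ with $M=XY$ satisfy the composition-factor condition. Running through the list by the type of $M$: if $M$ is alternating, the constraint forces $M=\A_6$ with $\{A,B\}$ the two conjugacy classes of $\A_5$ (so $X=Y=\A_5$); if $M$ is sporadic, it forces $M=\M_{12}$ with $\{A,B\}$ the two classes of $\M_{11}$ (in every other sporadic factorization the two factors have distinct nonsolvable composition factors, or share none, or one factor is solvable); if $M$ is of exceptional Lie type, no factorization survives (for example products such as $\mathrm{G}_2(q)\cdot\mathrm{G}_2(q)$, or ${}^{3}\mathrm{D}_4(q)=\PSL_2(q^3)\cdot\mathrm{G}_2(q)$, fail either on orders or on composition factors). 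The substantial case is $M$ classical: here one runs through the factorizations sorted by the Aschbacher classes of $A$ and $B$, computes the composition factors of each factor and of its nonsolvable subgroups, and imposes the matching-multiplicity condition. The only surviving families are $M=\Sp_4(q)$ with $q=2^f\ge 4$, where up to interchange $A=\mathrm{O}_4^-(q)\cong\Sp_2(q^2).2$ and $B$ is a $\mathrm{C}_3$-subgroup $\Sp_2(q^2).2$, both with composition factor $\PSL_2(q^2)=\Sp_2(q^2)$; and $M=\POm_8^+(q)$, where $A$ and $B$ are two of the three triality-related classes of $\Omega_7(q)$.

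Finally I would lift each surviving simple-group factorization back to $(H,K,L)$. For each $M\in\mathcal{T}_1$, using $\Out(M)$ and the Liebeck--Praeger--Saxl tables for the almost simple overgroups of $M$, one determines which $H$ admit a factorization into nonsolvable core-free subgroups with matching composition factors, and which $K,L$ arise, shrinking each factor as far as the identity $M=(K\cap M)(L\cap M)$ allows while keeping it nonsolvable. This produces $(H,K,L)\cong(\A_6,\A_5,\A_5)$ or $(\Sy_6,\Sy_5,\Sy_5)$ in case (a); $(\M_{12},\M_{11},\M_{11})$ in case (b); $H\le\GaSp_4(q)$ with $(X,Y)$ as in (c); and $H\le\mathrm{P\Gamma O}_8^{+}(q)$ with $(X,Y)\cong(\Omega_7(q),\Omega_7(q))$ in case (d), the restrictions on $H$ following since a graph (respectively triality) automorphism does not normalise the relevant factors. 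The main obstacle is the classical-groups analysis: the Liebeck--Praeger--Saxl list of factorizations of classical groups is long, the factors $X,Y$ need not be maximal (so one must examine nonsolvable subgroups of the listed maximal factors), and in the two surviving families one must still pin down precisely how small each factor can be made and precisely which automorphisms the overgroup $H$ may contain.
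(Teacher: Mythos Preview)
Your approach is correct in outline but takes a substantially different and much longer route than the paper. The paper's proof is only a few lines because it invokes three tailor-made classification results as black boxes: first \cite[Theorem~1.1]{LX2019} eliminates the possibility that $K$ and $L$ have two or more nonsolvable composition factors; then \cite[Theorem~1.4]{LWX2021+} pins down $(M,K^{(\infty)},L^{(\infty)})$ directly, already giving the four families in $\mathcal{T}_1$; finally, the observation $\pi(M)=\pi(K)=\pi(L)$ places $(H,K,L)$ among the ``full factorizations'' classified in \cite[Table~I]{BP1998}, from which (a)--(d) are read off. By contrast, you propose to go back to the Liebeck--Praeger--Saxl tables of maximal factorizations and sift them by hand for factors sharing a nonabelian composition factor, then descend to nonmaximal $X,Y$ and lift to $H$. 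This is essentially reproving the relevant portions of the three cited papers; it works, but the classical-group case analysis you flag as the ``main obstacle'' is genuinely long, and the paper simply outsources it.

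One caution: your claim that ``a standard reduction for factorizations of almost simple groups then gives $M=(K\cap M)(L\cap M)$'' is not as automatic as you suggest. A factorization $H=KL$ of an almost simple group does \emph{not} in general restrict to a factorization of the socle; this equality is part of the conclusion of the lemma, not a free input. In the paper's argument it falls out of the cited results (which are stated at the socle level) rather than being proved separately. If you pursue your route, you would need to justify this reduction explicitly, for instance via the machinery in \cite{LPS1990} relating factorizations of $H$ to those of groups between $M$ and $\Aut(M)$.
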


\begin{proof}
The case that both $K$ and $L$ have at least two nonsolvable composition factors (counted with multiplicities) is ruled out by~\cite[Theorem 1.1]{LX2019}.
Thus $K$ and $L$ have only one nonsolvable composition factor.
By~\cite[Theorem 1.4]{LWX2021+}, the triple $(M,K^{(\infty)},L^{(\infty)})$ is one of the following:
\begin{align*}
&(\A_6,\A_5,\A_5),\ (\M_{12},\M_{11},\M_{11}),\ (\POm_8^{+}(q),\Omega_7(q),\Omega_7(q)),\\
&(\Sp_4(q), \Sp_2(q^2),\Sp_2(q^2))\text{ with }q\geq 4\text{ even}.
\end{align*}
In particular, $\pi(M)=\pi(K)=\pi(L)$.
Such factorizations $H=KL$ are listed in~\cite[Table I]{BP1998}, which leads to the conclusion of the lemma.
\end{proof}

\begin{lemma}[{\rm\cite[Proposition 4.1]{LX2021+}}]\label{lm:ASfac2}
Let $H$ be an almost simple group with socle $M$.
Suppose $H= KL$ with $K$ and $L$ solvable.
Then $M\in \mathcal{T}_2$, and interchanging $K$ and $L$ if necessary, one of the following holds:
\begin{enumerate}[\rm (a)]
\item $M=\PSL_2(q)$, $K\cap M\leq \D_{2(q+1)/d}$ and $ L\cap M \leq q{:}((q-1)/d)$, where $d=(2,q-1)$.
\item $(H,K,L)$ satisfies Table \ref{tb:exceptionalfacs}, where $\mathcal{O}\leq \ZZ_2$, and $\mathcal{O}_1$ and $\mathcal{O}_2$ are subgroups of $\mathcal{O}$ such that $\mathcal{O}=\mathcal{O}_1\mathcal{O}_2$.
\end{enumerate}

\begin{table}[h]
\caption{Exceptional factorizations with solvable factors}\label{tb:exceptionalfacs}
\[
\begin{array}{lll}\hline
H&K&L\\\hline
\PSL_2(7).\mathcal{O} & 7, 7{:}(3\times \mathcal{O}) &\Sy_4\\
\PSL_2(11).\mathcal{O} & 11, 11{:}(5\times \mathcal{O}) &\A_4\\
\PSL_2(23).\mathcal{O} &  23{:}(3\times \mathcal{O}) &\Sy_4\\
\PSL_3(3).\mathcal{O} &  13,13{:}(3\times \mathcal{O}) &3^2{:}2.\Sy_4\\
\PSL_3(3).\mathcal{O} & 13{:}(3\times \mathcal{O})&\AGaL_1(9)\\
\PSL_3(4).(\Sy_3 \times\mathcal{O}) & 7{:}(3\times \mathcal{O}).\Sy_3 &2^4{:}(3\times \D_{10}).2\\
\PSL_3(8).(3 \times\mathcal{O}) &   57{:}(9 \times \mathcal{O}_1) &2^{3+6}{:}7^2{:}(3 \times \mathcal{O}_2 )\\
\PSU_3(8).3^2.\mathcal{O} &57{:}(9\times\mathcal{O}_1 ) &2^{3+6}{:}(63{:} 3 ).\mathcal{O}_2 )\\
\PSU_4(2).\mathcal{O} &2^4{:}5&3_{+}^{1+2}{:}2.(\A_4.\mathcal{O})\\
\PSU_4(2).\mathcal{O} &2^4{:}5&3_{+}^{1+2}{:}2.(\A_4.\mathcal{O})\\
\PSU_4(2).\mathcal{O} &2^4{:}{\D_{10}}.\mathcal{O}_1&3_{+}^{1+2}{:}2.(\A_4.\mathcal{O}_2)\\
\PSU_4(2).2 &2^4{:}5{:}4&3_{+}^{1+2}{:}\Sy_3,3^3{:}(\Sy_3\times \mathcal{O}),\\
&&3^3{:}(\A_4\times 2),3^3{:}(\Sy_4\times \mathcal{O})\\
\M_{11}&11{:}5&\M_{9}.2\\\hline
\end{array}
\]
\end{table}
\end{lemma}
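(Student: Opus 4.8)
The plan is to derive the statement from the classification of maximal factorizations of finite almost simple groups together with a description of their solvable subgroups. Since $H$ is almost simple it is nonsolvable, so $K$ and $L$ are both proper subgroups; choose maximal subgroups $A$ and $B$ of $H$ with $K\leq A$ and $L\leq B$. From $H=KL\subseteq AB$ we obtain the maximal factorization $H=AB$, and from $H=KL$ we obtain $H=KB$ and $H=AL$. By Lemma~\ref{lm:basicfacs}, this says precisely that $A$ has a solvable subgroup (namely $K$) acting transitively by right multiplication on the set $[H\,{:}\,B]$ of right cosets, and that $B$ has a solvable subgroup (namely $L$) acting transitively on $[H\,{:}\,A]$. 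So the first step is to go through every maximal factorization $H=AB$ of an almost simple group and retain only those pairs $\{A,B\}$ for which $A$ has a solvable subgroup transitive on $[H\,{:}\,B]$ \emph{and} $B$ has a solvable subgroup transitive on $[H\,{:}\,A]$.

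The second step is to carry out this elimination. In almost every line of the maximal-factorization tables, one of the two factors — say $A$ — is ``large'' in the sense that its induced permutation action on $[H\,{:}\,B]$ admits no transitive solvable subgroup; this happens, for instance, when $A$ is $\Alt_k$ or $\Sym_k$ with $k$ not small, or a classical group that is not too small, or when $A$ acts $2$-transitively on $[H\,{:}\,B]$ with large degree. Here one uses standard facts about solvable permutation groups, such as: a solvable primitive permutation group is of affine type and hence has prime-power degree, while a solvable $2$-transitive group embeds in $\AGL_d(p)$. Running through the tables in this way discards all lines except the generic family with socle $\PSL_2(q)$ — whose subgroups are described by Dickson's theorem, the relevant maximal ones being the Borel subgroup, two classes of dihedral subgroups, and, for small $q$, the groups $\Alt_4$, $\Sym_4$ and $\Alt_5$ — together with finitely many small almost simple groups.

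For the family with socle $\PSL_2(q)$, a direct analysis of which solvable subgroups of those maximal subgroups can be transitive on the relevant coset spaces, combined with the order identity $|K||L|=|H||K\cap L|$, shows that, apart from $q\in\{7,11,23\}$, every such factorization satisfies $K\cap M\leq\D_{2(q+1)/d}$ and $L\cap M\leq q{:}((q-1)/d)$ with $d=(2,q-1)$, which is case~(a). The values $q\in\{7,11,23\}$, together with the finitely many other small almost simple groups — those with socle $\PSL_3(3)$, $\PSL_3(4)$, $\PSL_3(8)$, $\PSU_3(8)$, $\PSU_4(2)$ or $\M_{11}$ — are then handled individually: one enumerates the solvable subgroups of the pertinent maximal subgroups, tests the factorization condition $|K||L|=|H||K\cap L|$, and records the surviving triples $(H,K,L)$ in Table~\ref{tb:exceptionalfacs}; this part is a finite (and in places computer-assisted) verification. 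The main difficulty lies in the second step: one must ensure that the ``one factor is large'' argument really does eliminate \emph{every} line of the maximal-factorization tables — including the sporadic factorizations and the low-rank classical ones — so that nothing unexpected survives to the finite check. A complete argument is given in \cite[Proposition~4.1]{LX2021+}.
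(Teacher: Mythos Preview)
The paper does not give its own proof of this lemma: it is quoted verbatim as \cite[Proposition~4.1]{LX2021+} and used as a black box. Your proposal is therefore strictly more than what the paper provides, since you sketch the strategy behind that proposition---reduce to a maximal factorization $H=AB$ via Lemma~\ref{lm:basicfacs}, run through the Liebeck--Praeger--Saxl tables discarding any line in which one factor has no solvable subgroup transitive on the relevant coset space, and finish the surviving small cases by hand or by computer---before deferring to the same reference for the details. That sketch is accurate and is indeed the approach of \cite{LX2021+}; there is nothing further to compare.
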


A factorization $G=AB$ with $A\cong B$ is said to be \emph{homogeneous}.
A special case of~\cite[Lemma~2.2]{GX2017} gives an important characterization of $2$-arc-transitive digraphs as follows.

\begin{lemma}\label{pro:Gvfactorization}
Let $\Ga$ be a $G$-arc-transitive digraph with a $2$-arc $(u,v,w)$.
Then  $\Ga$ is $(G,2)$-arc-transitive if and only if $G_v=G_{uv}G_{vw}$.
\end{lemma}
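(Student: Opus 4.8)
The statement to be proved (Lemma~\ref{pro:Gvfactorization}) asserts that a $G$-arc-transitive digraph $\Ga$ with $2$-arc $(u,v,w)$ is $(G,2)$-arc-transitive precisely when $G_v = G_{uv}G_{vw}$. The plan is to reduce the transitivity of $G$ on $2$-arcs to a statement purely about the stabilizer $G_v$ and then recognise that statement as a factorization.

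The forward direction is immediate from definitions, so I would concentrate on the nontrivial equivalence. First I would fix the $2$-arc $(u,v,w)$ as a base point and observe that, since $\Ga$ is $G$-arc-transitive, $G$ is transitive on arcs $(v,w)$ with tail $v$ (these form a single $G_v$-orbit, namely $\Ga^+(v)$), and likewise on arcs $(u,v)$ with head $v$ (the $G_v$-orbit $\Ga^-(v)$). The key point is that a general $2$-arc $(u',v',w')$ lies in the $G$-orbit of $(u,v,w)$ if and only if, after first moving $v'$ to $v$ by some element of $G$ (possible by vertex-transitivity, which follows from arc-transitivity together with connectedness — or more simply one restricts attention to the fibre over $v$), the resulting $2$-arc through $v$ can be moved to $(u,v,w)$ by an element of $G_v$. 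So $(G,2)$-arc-transitivity is equivalent to $G_v$ acting transitively on the set of $2$-arcs through $v$, i.e.\ on $\Ga^-(v)\times\Ga^+(v)$.

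Next I would translate this orbit condition into a factorization via Lemma~\ref{lm:basicfacs}. The stabilizer in $G_v$ of the out-neighbor $w$ is $G_{vw}$, and since $G_v$ is transitive on $\Ga^+(v)$ we may identify $\Ga^+(v)$ with the coset space $[G_v:G_{vw}]$; similarly $\Ga^-(v)$ is identified with $[G_v:G_{uv}]$. Then $G_v$ is transitive on $\Ga^-(v)\times\Ga^+(v)$ iff, fixing the point $w\in\Ga^+(v)$, the stabilizer $G_{vw}$ is already transitive on $\Ga^-(v)=[G_v:G_{uv}]$ (a standard reduction for transitivity on a product of two transitive actions). By part (e) of Lemma~\ref{lm:basicfacs} applied inside $G_v$, this says exactly $G_v = G_{uv}G_{vw}$. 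Conversely unwinding the same identifications shows that $G_v = G_{uv}G_{vw}$ forces transitivity on $\Ga^-(v)\times\Ga^+(v)$, hence on all $2$-arcs. This yields the claimed equivalence.

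The only genuine subtlety — the step I would be most careful about — is the reduction from "transitive on $2$-arcs of $\Ga$" to "$G_v$ transitive on $2$-arcs through the fixed vertex $v$": one must check that every $2$-arc $(u',v',w')$ can be carried to one through $v$ by $G$, which requires $G$ to be transitive on the middle vertices of $2$-arcs. Since every vertex is the middle vertex of some $2$-arc (valency is at least $1$ in and out because $\Ga$ is arc-transitive, hence regular of some valency $d\ge 1$), this is just vertex-transitivity of $G$, which follows from $G$-arc-transitivity. Everything else is bookkeeping with cosets, and the result is indeed the stated special case of~\cite[Lemma~2.2]{GX2017}.
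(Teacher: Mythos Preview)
The paper does not prove this lemma; it merely records it as a special case of~\cite[Lemma~2.2]{GX2017}. Your argument is correct and is essentially the standard one underlying that reference: reduce $(G,2)$-arc-transitivity to transitivity of $G_v$ on $\Ga^-(v)\times\Ga^+(v)$ via arc-transitivity, then identify the latter with the factorization $G_v=G_{uv}G_{vw}$ using Lemma~\ref{lm:basicfacs}.

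One small simplification: you do not need the detour through vertex-transitivity (and your justification ``arc-transitive hence regular'' is slightly circular, since regularity in the paper's sense already presumes all vertices have equal valencies). Arc-transitivity alone suffices for the reduction step: given an arbitrary $2$-arc $(u',v',w')$, apply arc-transitivity to the arc $(u',v')$ to find $g\in G$ with $(u',v')^g=(u,v)$, which carries $(u',v',w')$ to a $2$-arc with middle vertex $v$. No appeal to vertex-transitivity or regularity is needed.
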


Note from the arc-transitivity of $G$ that the factorization $G_v=G_{uv}G_{vw}$ in Lemma~\ref{pro:Gvfactorization} is homogeneous.

\subsection{Suborbits of three classes of primitive groups} \label{subsec:PrimGroups}
\ \vspace{1mm}

In this subsection, we discuss suborbits of three classes of primitive groups.
The first is $\Sy_n$ or $\A_n$ with point stabilizer an imprimitive group.

Let $ G=\Sy_n$ and $\Delta$ be the set of partitions of $\{1,2,\dots,n\}$ with $k>1$ blocks of size $m>1$ (so that $n=mk$).
Then $G$ acts on $\Delta$ naturally and primitively (see~\cite{LPS1987}), and the stabilizer in $G$ is an imprimitive wreath product group $\Sy_m\wr\Sy_k$.
For $v,w\in\Delta$, where $v=\{V_1,V_2,\dots,V_k \}$ and $w=\{W_1,W_2,\dots,W_k \}$, we have
\begin{align*}
V_i=\bigcup_{j=1}^k(V_i \cap W_j)&\text{ and }\sum_{j=1}^k|V_i \cap W_j|=m\text{ for all }i\in\{1,\dots,k\},\\
W_j=\bigcup_{i=1}^k(V_i \cap W_j)&\text{ and }\sum_{i=1}^k|V_i \cap W_j|=m\text{ for all }j\in\{1,\dots,k\}.
\end{align*}
We call the matrix $(|V_i \cap W_j|)_{k\times k}$ an \emph{intersection matrix} of $v$ and $w$.
Note that we may obtain different intersection matrices of $v$ and $w$ by changing the orders of $V_i$ and $W_j$, but all the intersection matrices of $v$ and $w$ form the set
\[
\{P(|V_i \cap W_j|)_{k\times k}Q\mid P,Q\text{ are $k\times k$ permutation matrices}\}.
\]

\begin{lemma}\label{lm:imprimitiveaction}
Let $ G=\Sy_n$, let $T=\Soc(G)=\A_n$, and let $\Delta$ be the set of partitions of $\{1,2,\dots,n\}$ with $k>1$ blocks of size $m>1$.
Take $v=\{V_1,\dots,V_k\},w=\{W_1,\dots,W_k\}$ and $u=\{U_1,\dots,U_k\}$ from $\Delta$, and let $A=(|V_i \cap W_j|)_{k\times k}$ and $B=(|V_i \cap U_j|)_{k\times k}$.
Then
\begin{enumerate}[\rm (a)]
\item $w\in u^{G_v}$ if and only if there exist permutation matrices $P$ and $Q$ such that $B=PAQ$.
\item $v$ and $w$ are interchanged by some element in $G$ if and only if there exist permutation matrices $P$ and $Q$ such that $A^{\mathsf{T}}=PAQ$, where $A^{\mathsf{T}}$ is the transpose of $A$.
\item Suppose that $v$ and $w$ are interchanged by some element in $G\setminus T$.
Then $v$ and $w$ are interchanged by some element in $T$ if and only if  $G_{vw}\nleq T$.
\item If $v$ and $w$ are interchanged some element in $G\setminus T$ but no element in $T$, then $|V_i \cap W_j|\leq1$ for all $i,j\in\{1,\dots,k\}$, and in particular, $k\geq m$.
\end{enumerate}
\end{lemma}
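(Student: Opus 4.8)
The plan is to exploit the explicit combinatorial description of the action of $G=\Sy_n$ on partitions into $k$ blocks of size $m$, converting every group-theoretic statement into a statement about the intersection matrix up to row and column permutations. For part (a), note that $G_v=\Sy_m\wr\Sy_k$ fixes $v=\{V_1,\dots,V_k\}$ and acts on the remaining partitions; an element $g\in G_v$ carries $u$ to $w$ precisely when it induces some permutation of the blocks $U_j\mapsto U_{\pi(j)}$ (accounting for a column permutation $Q$) together with an internal relabelling that must respect the sizes $|V_i\cap U_j|$. Conversely, given $B=PAQ$ with $P,Q$ permutation matrices, one builds $g\in G_v$ by matching, for each $i,j$, a size-$|V_i\cap U_{Q(j)}|$ subset of $V_{P(i)}$ inside the block of $w$ indexed by $j$; the hypothesis that $A$ and $B$ have the same row sums (all equal to $m$) and column sums guarantees this matching assembles into a bijection of $\{1,\dots,n\}$ fixing $v$. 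This is the standard "doubly stochastic up to permutation" bookkeeping and I expect it to be routine.

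For part (b), $v$ and $w$ are interchanged by some $g\in G$ iff $g$ sends the pair $(V_i)$ to $(W_{\sigma(i)})$ and $(W_j)$ to $(V_{\tau(j)})$; tracking the effect on cell sizes shows $|V_i\cap W_j|$ must equal $|W_{\sigma(i)}\cap V_{\tau(j)}|=|V_{\tau(j)}\cap W_{\sigma(i)}|$, i.e. the matrix $A$ equals its transpose after applying the row permutation $\tau$ and column permutation $\sigma$, which is exactly $A^{\mathsf T}=PAQ$ for suitable permutation matrices; the converse again builds the required $g$ cell-by-cell. For part (c), assume $v$ and $w$ are swapped by some $g\in G\setminus T$. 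Then $G_{\{v,w\}}$ contains an element inducing the transposition on $\{v,w\}$ — call it $h$. Now $v$ and $w$ are swapped by an element of $T$ iff the coset $T\cap(\text{swapping elements})$ is nonempty; since the swapping elements form a coset of $G_{vw}$ inside $G_{\{v,w\}}$, and $g$ lies in that coset, an element of $T$ swaps $v$ and $w$ iff $gG_{vw}\cap T\neq\emptyset$, which (as $g\notin T$ and $[G:T]=2$) happens iff $G_{vw}\nleq T$.

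For part (d), the hypothesis says $v,w$ are swapped by some element of $G\setminus T$ but by no element of $T$; by part (c) this forces $G_{vw}\leq T$, i.e. $G_{vw}=T_{vw}$. I would argue by contradiction: suppose some cell satisfies $|V_i\cap W_j|\geq 2$. Then there exist two distinct points $a,b\in V_i\cap W_j$, and the transposition $(a\,b)$ lies in $G_v\cap G_w=G_{vw}$ because it stabilizes each $V_\ell$ and each $W_\ell$ setwise (it only moves points within the single cell $V_i\cap W_j$). But $(a\,b)\in \Sy_n\setminus\A_n=G\setminus T$, contradicting $G_{vw}\leq T$. Hence every cell has size at most $1$; summing a row, $m=\sum_{j=1}^k|V_i\cap W_j|\leq k$, giving $k\geq m$. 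The main obstacle, such as it is, is being careful in part (c) with the coset argument — one must verify that the set of elements interchanging $v$ and $w$ really is a single right coset of $G_{vw}$ in $G_{\{v,w\}}$ (it is: if $g_1,g_2$ both swap $v,w$ then $g_1g_2^{-1}$ fixes both) and then use $[G:T]=2$ to conclude the coset meets $T$ exactly when its "direction" element $g$ can be adjusted into $T$ by an element of $G_{vw}$, which is possible iff $G_{vw}$ is not contained in $T$. Everything else is direct verification with the intersection-matrix dictionary already set up before the lemma.
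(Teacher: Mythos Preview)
Your proposal is correct and follows essentially the same approach as the paper: the cell-by-cell construction of $g$ from matching intersection sizes for parts~(a) and~(b), the coset argument $gG_{vw}$ meeting $T$ iff $G_{vw}\nleq T$ for part~(c), and the transposition $(a\,b)$ in a cell of size $\geq 2$ giving an odd element of $G_{vw}$ for part~(d). The only difference is cosmetic---the paper uses the full $\Sym(V_i\cap W_j)\leq G_{vw}$ in~(d) rather than a single transposition, but your version suffices.
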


\begin{proof}
We first prove (a).
Suppose $w\in u^{G_v}$, that is, $w=u^g$ for some $g\in {G_v}$.
Then $ \{ W_1 ,W_2 ,\dots,W_k \} =\{ U_1^g,U_2^g ,\dots,U_k^g  \}$, and so there exits $b\in\Sy_k$ such that $U_{j}^g=W_{j^b}$ for all $j\in\{1,\dots,k\}$.
Recall that $G_v=\Sy_m\wr \Sy_k$.
Let $a$ be the image of $g$ under the homomorphism $G_v\to G_v/\Sy_m^k = \Sy_k$.
Then $V_i^g=V_{i^a}$ for all $i\in\{1,\dots,k\}$, and so
\[
(V_i\cap U_j)^g=V_i^g \cap U_j^g  =V_i^g \cap W_{j^b} =V_{i^a} \cap W_{j^b}\text{ for all } i,j\in\{1,\dots,k\}.
\]
This implies $|V_i\cap U_j|=|V_{i^a} \cap W_{j^b}|$ for all $i,j\in\{1,\dots,k\}$.
Hence $B=PAQ$, where $P$ and $Q$ are the permutation matrices corresponding to $a$ and $b$, respectively.

Conversely, suppose that $B=PAQ$ for some permutation matrices $P$ and $Q$.
Let $a,b\in\Sy_k$ be the permutations corresponding to $P$ and $Q$, respectively.
From $B=PAQ$ we deduce that $|V_i\cap U_j |=|V_{i^{a}} \cap W_{j^{b}}|$ for all $i,j\in\{1,\dots,k\}$.
Let $g\in\Sy_n$ such that $(V_i\cap U_j)^g=V_{i^{a}} \cap W_{j^{b}}$ for all $i,j\in\{1,\dots,k\}$.
Then
\begin{align*}
V_i^g=\bigcup_{j=1}^{k}(V_i\cap U_j)^g&=\bigcup_{j=1}^{k}(V_{i^{a}} \cap W_{j^{b}})=V_{i^{a}}, \\
U_j^g=\bigcup_{i=1}^{k}(V_i\cap U_j)^g&=\bigcup_{i=1}^{k}(V_{i^{a}} \cap W_{j^{b}}) =W_{j^{b}}.
\end{align*}
This implies that $g\in {G_v}$ and $u^g=\{U_1^g,\dots,U_k^g \}=\{W_{1^{b}},\dots,W_{k^{b}} \}=w$.

Next we prove (b).
Suppose that there exists some $g\in G$ interchanging $v$ and $w$, that is, $w^g=v$ and $v^g=w$.
Then there exist $a,b\in \Sy_k$ such that
\[
W_i^g=V_{i^a} \text{ and } V_j^g=W_{j^b}\text{ for all } i,j\in\{1,\dots,k\}.
\]
It follows that
\[
|V_j  \cap W_i| =|V_j^g  \cap W_i^g|=|W_{j^b}\cap V_{i^a}|=|V_{i^a}\cap W_{j^b}|.
\]
Hence $A^{\mathsf{T}}=PAQ$, where $P_1$ and $P_2$ are the permutation matrices corresponding to $a$ and $b$, respectively.

Suppose conversely that $A^{\mathsf{T}}=PAQ$ for some permutation matrices $P$ and $Q$.
Let $a,b\in\Sy_k$ be the permutations corresponding to $P$ and $Q$, respectively.
From $A^{\mathsf{T}}=PAQ$ we derive that $|V_j\cap W_i|=|V_{i^a}\cap W_{j^b}|$ for all $i,j\in\{1,\dots,k\}$.
Let $g\in\Sy_n$ such that $(V_j\cap W_i)^g=V_{i^a}\cap W_{j^b}$ for all $i,j\in\{1,\dots,k\}$.
Then
\begin{align*}
V_j^g=\bigcup_{i=1}^k(V_j\cap W_i)^g&=\bigcup_{i=1}^k(V_{i^a}\cap W_{j^b})=W_{j^{b}},\\
W_i^g=\bigcup_{j=1}^k(V_j\cap W_i)^g&=\bigcup_{j=1}^k(V_{i^a}\cap W_{j^b})=V_{i^a}.
\end{align*}
This implies that $v^g=w$ and $w^g=v$.

Now we prove (c).
Let $g$ be an element of $G\setminus T$ interchanging $v$ and $w$.
If $G_{vw}\nsubseteq T$, then taking some $x\in G_{vw}\setminus T$ we have $xg \in T$ and $(v,w)^{xg}=(v,w)^g=(w,v)$.
Suppose that $(v,w)^{t}=(w,v)$ for some $t\in T$.
Since $v^t=w=v^g$ and $w^t=v=w^g$, it follows that $t\in G_vg$ and $t\in G_wg$.
Hence $t\in(G_vg)\cap(G_wg)=(G_v\cap G_w)g=G_{vw}g$.
As $t\in T$ and $g\notin T$, this implies that $G_{vw}\nsubseteq T$.

Finally, suppose that $v$ and $w$ are interchanged by some element in $G\setminus T$ but no element in $T$.
Then by part~(c), we have $\G_{vw}\leq T$.
Notice that for fixed $i,j \in \{1,\dots,k\}$, the group $\Sym(V_i \cap W_j)$ stabilizes  set  $V_{i'} \cap W_{j'}$ set-wise for all $i',j'\in \{1,\dots,k\}$ and hence  $\Sym(V_i \cap W_j)\leq G_{vw}$.
If $|V_i \cap W_j|\geq 2$, then $\Sym(V_i \cap W_j)$ contains an odd permutation, contradicting $\G_{vw}\leq T$. Therefore $|V_i \cap W_j|\leq 1$ for all $i,j\in \{1,\dots,k\}$. In particular, we have
$m=|V_1|=\sum_{j=1}^{k}|V_1 \cap W_j|\leq k$.
\end{proof}

The second class is the group $\Omega_{2m+1}(q)$, where $m\geq 2$ and $q$ is odd,   acting on the set of non-singular $1$-spaces. We remark that $\Omega_{2m+1}(q)$ has two orbits on this set, with point stabilizer $\Omega^\epsilon_{2m}(q).2$ for $\epsilon=+$ or $-$, respectively. In Liebeck, Praeger and Saxl~\cite[Proposition~1]{LPS1988}, it was proved that all $\Omega_{2m+1}(q)$-suborbits are self-paired when $m=3$ and $\epsilon=-$.
As mentioned in the remark after~\cite[Proposition~1]{LPS1988},  the methods therein can be used to obtain precise information on the action of $\Omega_{2m+1}(q)$, for arbitrary $m$ and $\epsilon$. This leads to the following lemma.

\begin{lemma}\label{lm:Omeganonsingular}
Let $V$ be a vector space of dimension $2m+1$ over a field $\mathbb{F}_q$, where $q$ is odd and $m\geq 2$, let $T=\Omega(V)$ and let $\Delta$ be a $T$-orbit on the set of  non-singular $1$-spaces in $V$. Then all $T$-suborbits on $\Delta$ are self-paired.
\end{lemma}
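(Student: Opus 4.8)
The plan is to prove the stronger assertion that any two non-singular $1$-spaces $\langle x\rangle$ and $\langle y\rangle$ lying in one $T$-orbit are interchanged by some element of $T=\Omega(V)$; since the diagonal suborbit is trivially self-paired, and a suborbit of $\langle y\rangle$ relative to $\langle x\rangle$ corresponds to the orbital $(\langle x\rangle,\langle y\rangle)^T$, this gives the lemma. So fix $x,y\in V$ with $Q$ the quadratic form, $\beta$ the associated bilinear form, $Q(x)\neq0\neq Q(y)$ and $\langle x\rangle\neq\langle y\rangle$. As $\langle x\rangle$ and $\langle y\rangle$ lie in one $T$-orbit, some isometry maps $x$ into $\langle y\rangle$, whence $Q(x)$ equals $Q(y)$ up to a square; rescaling $x$ within $\langle x\rangle$ we may assume $Q(x)=Q(y)=c\neq0$. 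Because $q$ is odd, $Q(x-y)+Q(x+y)=2(Q(x)+Q(y))=4c\neq0$, so one of the vectors $v:=x-y$ or $v:=x+y$ is non-singular, and a direct computation with $r_v(u)=u-\tfrac{\beta(u,v)}{Q(v)}v$ shows that in the first case $r_v$ interchanges $x$ and $y$, and in the second case $r_v(x)=-y$ and $r_v(y)=-x$; in either case the reflection $r_v\in\mathrm{O}(V)$ interchanges $\langle x\rangle$ and $\langle y\rangle$.

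Since $r_v$ has determinant $-1$ it does not lie in $\Omega(V)$, so I would correct it by composing with a second reflection fixing both $x$ and $y$. Set $W:=\langle x,y\rangle^{\perp}$, of dimension $2m-1$. If I can find $t\in W$ with $Q(t)\neq0$ and $Q(t)$ in the same square class as $Q(v)$, then $g:=r_vr_t$ has determinant $1$ and spinor norm $Q(v)Q(t)\equiv Q(v)^2\equiv1\pmod{(\mathbb{F}_q^{*})^2}$, so $g\in\Omega(V)=T$; moreover $t\in W\subseteq x^{\perp}\cap y^{\perp}$ forces $r_t$ to fix $x$ and $y$, hence $g$ interchanges $\langle x\rangle$ and $\langle y\rangle$ (and incidentally $g^2=1$ since $v\perp t$). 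To produce such a $t$, note that the radical of $W$ equals $W\cap W^{\perp}=\langle x,y\rangle^{\perp}\cap\langle x,y\rangle$, the radical of $\langle x,y\rangle$, which is either $0$ or a singular line $\langle z\rangle$. In the first case $W$ is non-degenerate of dimension $2m-1\geq3$, hence universal. In the second case $z$ lies in the radical of $W$, so $Q(w+\lambda z)=Q(w)$ for all $w\in W$ and $\lambda\in\mathbb{F}_q$; thus the values of $Q$ on $W$ coincide with those of the non-degenerate form induced on $W/\langle z\rangle$, which has dimension $2m-2\geq2$ and therefore represents both nonzero square classes. Either way a suitable $t$ exists, using $m\geq2$.

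The only genuinely delicate point — and the only place the hypotheses $m\geq2$ and $q$ odd are really needed — is this last step: ensuring the swapping isometry can be taken in $\Omega(V)$ rather than merely in $\mathrm{O}(V)$. It is handled by the spinor-norm computation above together with the standard fact that a non-degenerate quadratic form over $\mathbb{F}_q$ of dimension at least $2$ represents every nonzero square class, applied to $W=\langle x,y\rangle^{\perp}$ modulo its (at most one-dimensional) radical. The reflection identities and the case split on the degeneracy of $\langle x,y\rangle$ are routine and I would not belabour them. An alternative route, indicated in the remark following \cite[Proposition~1]{LPS1988}, is to carry out their explicit suborbit analysis for general $m$ and $\epsilon$ and read off self-pairedness; the reflection argument above, however, appears shorter and entirely self-contained.
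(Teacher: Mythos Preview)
Your argument is correct and essentially self-contained. The paper, by contrast, does not give a proof at all: it simply states the lemma after observing that the suborbit analysis in \cite[Proposition~1]{LPS1988} (carried out there for $m=3$, $\epsilon=-$) extends, per the remark following that proposition, to arbitrary $m$ and $\epsilon$. So your reflection/spinor-norm approach is a genuinely different route.

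The comparison is this: the LPS1988 method parametrises the $T$-suborbits explicitly (via the value of $\beta(x,y)$ and, in the degenerate case, the type of the residual form) and then verifies self-pairedness suborbit by suborbit; it yields more---the full suborbit structure---but is correspondingly longer. Your argument bypasses the classification entirely by producing, for any pair $\langle x\rangle,\langle y\rangle$, a swapping involution $r_v r_t\in\Omega(V)$: a reflection in $v\in\{x\pm y\}$ swaps the two points, and composing with a reflection in a vector $t\in\langle x,y\rangle^\perp$ of matching square class corrects both the determinant and the spinor norm. The existence of such $t$ uses only that $\langle x,y\rangle^\perp$ (modulo its at most one-dimensional radical) has dimension $\geq 2m-2\geq 2$, whence the induced form is universal; this is precisely where $m\geq 2$ enters. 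Your proof is shorter and requires nothing beyond standard facts on reflections, the spinor norm, and universality of non-degenerate forms in dimension $\geq 2$ over finite fields. Either approach is fine; yours has the advantage of not sending the reader to another paper.
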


The third class is  $\Sp_{2m}(q)$ for even $q$ with point  stabilizer $\GO_{2m}^{\pm}(q)$,  which is due to Inglis{~\cite{Inglis1990}}.

\begin{lemma}[{\cite[Theorem 1]{Inglis1990}}] \label{lm:SpOmq}
Let $V$ be a vector space of dimension $2m$ over a field $\mathbb{F}_q$, where $q$ is even, let $T=\Sp(V)$, and let $\Delta_1$ and $\Delta_2$ be the cosets of $\SO^{+}(V)$ and $\SO^{-}(V)$ in $T$, respectively. Then all $T$-suborbits on $\Delta_1$ or $\Delta_2$ are self-paired.
\end{lemma}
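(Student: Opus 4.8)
The plan is to recast the statement in terms of quadratic forms and then reduce it to the two-dimensional case. By the characteristic-$2$ correspondence between quadratic forms and alternating bilinear forms, I would fix the non-degenerate alternating form $B$ with $T=\Sp(V)=\Sp(V,B)$, and for $\epsilon\in\{+,-\}$ write $\mathcal{Q}_\epsilon$ for the set of non-degenerate quadratic forms of type $\epsilon$ on $V$ whose polarization $(x,y)\mapsto Q(x+y)+Q(x)+Q(y)$ equals $B$. Then $\Sp(V,B)$ acts transitively on $\mathcal{Q}_\epsilon$, the stabilizer of $Q$ being $\GO(V,Q)\cong\GO_{2m}^\epsilon(q)$, and this identifies $\Delta_1$ with $\mathcal{Q}_+$ and $\Delta_2$ with $\mathcal{Q}_-$. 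Under this identification the suborbit of $Q_2$ relative to $Q_1$ is the $\GO(V,Q_1)$-orbit of $Q_2$; and since a linear map carrying $Q_1$ to $Q_2$ automatically carries the polarization $B$ of $Q_1$ to the polarization $B$ of $Q_2$, this suborbit is self-paired exactly when the pair $(V,Q_1,Q_2)$ is isometric to the pair $(V,Q_2,Q_1)$ obtained by interchanging the two forms. So it suffices to show that any two quadratic forms on $V$ of the same type and with the same polarization $B$ form a pair isometric to its transpose.

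For the reduction, note that $Q_1+Q_2$ is additive and satisfies $(Q_1+Q_2)(\lambda x)=\lambda^2(Q_1+Q_2)(x)$, so its pointwise square root $\ell$ is an $\bbF_q$-linear functional with $Q_2=Q_1+\ell^2$. If $\ell=0$ there is nothing to prove; otherwise $H:=\ker\ell$ is a hyperplane on which $Q_1=Q_2$, and, since $B$ is alternating, the line $H^{\perp_B}$ lies inside $H$. Picking $v_0\in V\setminus H$ and setting $P:=\langle H^{\perp_B},v_0\rangle$ and $W:=P^{\perp_B}$, I obtain a non-degenerate $2$-space $P$, an orthogonal decomposition $V=P\perp W$, and $W\subseteq H$, so $Q_1|_W=Q_2|_W$; comparing Arf invariants, which are additive over $\perp$, then forces $Q_1|_P$ and $Q_2|_P$ to have the same type. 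Hence an isometry of the pair $(P,Q_1|_P,Q_2|_P)$ onto $(P,Q_2|_P,Q_1|_P)$, extended by the identity on $W$, is an isometry of $(V,Q_1,Q_2)$ onto $(V,Q_2,Q_1)$, and everything is reduced to the case $\dim V=2$.

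For $\dim V=2$ I would argue projectively. A form in $\mathcal{Q}_+$ is determined by its pair of isotropic $1$-spaces, which gives an $\Sp_2(q)$-equivariant bijection between $\mathcal{Q}_+$ and the set of $2$-subsets of $\PG(1,q)$, on which $\Sp_2(q)=\SL_2(q)=\PGL_2(q)$ acts sharply $3$-transitively; similarly a form in $\mathcal{Q}_-$ corresponds to a conjugate pair of points of $\PG(1,q^2)\setminus\PG(1,q)$. In either model the $\PGL_2(q)$-orbit of an ordered pair of point-pairs is determined by the intersection pattern of the points and, when four distinct points occur, their cross-ratio, and all of these invariants are unchanged when the two point-pairs are interchanged, the cross-ratio because $(abcd)=(cdab)$. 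Consequently, whenever two point-pairs lie in a common orbital there is an element of $\PGL_2(q)$ interchanging them, so every $\SL_2(q)$-orbital on $\mathcal{Q}_\epsilon$ is self-paired; combined with the reduction this proves the lemma for $\Delta_1$, and the argument for $\Delta_2$ is identical.

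I do not anticipate a serious obstacle, but the step that most needs care is the reduction to dimension $2$: one must check that a non-degenerate $2$-space $P$ complementary to $W$ and containing $H^{\perp_B}$ always exists, which it does, since $v_0\notin H=(H^{\perp_B})^{\perp_B}$ forces $B(v_0,e)\neq0$ for a generator $e$ of $H^{\perp_B}$, and that additivity of the Arf invariant really does equalize the types of $Q_1|_P$ and $Q_2|_P$. An alternative, heavier route, which simultaneously recovers Inglis's finer information on this permutation representation, would be to show directly that it is multiplicity-free with every irreducible constituent real of orthogonal Frobenius--Schur type; I would prefer the geometric reduction above.
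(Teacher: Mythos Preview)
The paper does not prove this lemma: it is stated as a citation of Inglis~\cite{Inglis1990} and no argument is given. Your proposal therefore supplies a self-contained geometric proof where the paper simply quotes the literature.

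Your argument is essentially correct. The identification of $\Delta_\epsilon$ with the set $\mathcal{Q}_\epsilon$ of quadratic forms polarizing to the fixed alternating form $B$ is standard, and the reduction step is sound: the square-root trick giving $Q_2=Q_1+\ell^2$ with $\ell$ linear, the observation that $H^{\perp_B}\subseteq H$ (because $B$ restricted to the hyperplane $H$ is alternating on an odd-dimensional space and hence degenerate), the construction of the non-degenerate $2$-space $P=\langle H^{\perp_B},v_0\rangle$, and the Arf-invariant bookkeeping all go through as you describe. The only point needing a touch more care is the minus case in dimension~$2$. There your ``identical'' cross-ratio argument places the four isotropic points in $\PG(1,q^2)$, and you need the swapping element to lie in $\PGL_2(q)$, not merely in $\PGL_2(q^2)$. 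This is true, but it requires an extra line: the element $h\in\PGL_2(q^2)$ sending $(\alpha,\alpha^q,\beta,\beta^q)$ to $(\beta,\beta^q,\alpha,\alpha^q)$ (which exists because $[\alpha,\alpha^q;\beta,\beta^q]=[\beta,\beta^q;\alpha,\alpha^q]$) satisfies $\phi h\phi^{-1}=h$ on at least three points of $\PG(1,q^2)$, where $\phi$ is the $q$-power Frobenius, hence $h\in\PGL_2(q)$. With that check your proof is complete; your alternative character-theoretic route is closer in spirit to what Inglis actually does, since his paper establishes the stronger statement that the permutation module is multiplicity-free.
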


\subsection{Parabolic subgroups of finite simple groups of Lie type}\label{subsec:Parobolic}

We recall some results on parabolic subgroups of finite simple groups of Lie type.
The reader may be referred to~\cite{Carter1972} and~\cite[Chapter 2]{Carter1985}, or~\cite[Chapter 10]{BCN-book} for details.
Let $T={}^d\Sigma(q)$ be a simple group of Lie type over a field of order $q^a$, where $d\in \{1,2,3\}$ ($T$ is untwisted if $d=1$, and twisted otherwise), and $\Sigma$ is a Lie notation, and $q=p^f$ with $p$ prime.
Write $T=\Sigma(q)$ for short if $d=1$.
There is a root system $\Phi=\Phi^{+}\cup \Phi^{-}$ such that  $T=\langle X_r \mid r \in \Phi  \rangle$, where $\Phi^{+}$ is the set of positive roots, $\Phi^{-}$ is the set of negative roots, and $ X_r$ is the root subgroup corresponding  to the root $r$.
Let $\Pi \subseteq \Phi^{+}$ be  a set of fundamental roots.
For  $r \in \Pi$, let $w_r$ be the   fundamental reflection  corresponding  to   $r$.
All the fundamental reflections generate the \emph{Weyl group} of $T$, denoted by $W$.
Each element $z\in W$ is a product of fundamental reflections, and the \emph{length} of $z$, denoted by $\ell(z)$, is the number of smallest integer $m$ such that $z$ is a product of $m$ fundamental reflections.

By~\cite[Proposition 8.2.1 and~Theorem 13.5.4]{Carter1972},
$T$ has a $(B,N)$-pair, that is, $G$ has a pair of subgroups $(B,N)$ such that the following hold:
\begin{enumerate}[\rm (a)]
\item $T=\langle B,N\rangle$;
\item $H:=B\cap N \unlhd N$;
\item $N/H \cong W =\langle w_r\mid r \in  \Pi\rangle$;
\item if $r \in \Pi$ and $z \in W$, then $(Bw_rB)(BzB)\subseteq Bw_rzB \cup BzB$  and $w_rBw_r\neq B$.
\end{enumerate}
Furthermore, $T=BNB=BWB$, $H$ normalizes each root subgroup $X_r$, and $X_r^{z}=z^{-1}X_r z=X_{r^{z^{-1}}}$ for all $z \in W$ and $r \in \Phi$.
The group $B$ is called a \emph{Borel} subgroup, and it has structure $B=U{:}H$, where  $U:=\langle X_r\mid r\in \Phi^+\rangle$.
A subgroup $P $ of $T$ is said to be \emph{parabolic} if $B^g\leq P$ for some $g\in T$.

For  $J \subseteq \Pi$, let $W_J= \langle w_{r} \mid  r \in J\rangle$.
The set $P_J:=BW_JB$ forms a subgroup of $T$ (see ~\cite[Proposition 8.2.2]{Carter1972}).
Every parabolic subgroup of $T$ is conjugate to $P_J$ for some  $J \subseteq \Pi$ (see~\cite[Theorem 8.3.2]{Carter1972}).
For $I,J\subseteq \Pi$, a double coset  $W_I z W_J$ has a unique element $z'$ with shortest length (see~\cite[Proposition 2.7.3]{Carter1985}).
Let $D_{I,J}$ be the set of such elements $z'$ with $z$ running over $W$.
Then $D_{I,J}$ is a set of representations of $(P_I,P_J)$-double cosets in $T$ (see~\cite[Proposition 2.8.1]{Carter1985}).
In particular, $D_{J,\emptyset}$ is a set of representations of right $ P_J $-cosets in $G$.
For $z \in D_{J,J}$, note that by the definition of $D_{J,J}$ that $z^{-1} \in P_J z P_J$ if and only if $z$ is an involution.
Thus we have  the following lemma.
\begin{lemma}\label{lm:PJsuborbits}
Let $T$ be a simple group of Lie type, and let $\Pi$ be a set of fundamental roots of $T$.
Suppose that $T$ acts transitively on a set $\Delta$ with stabilizer a parabolic subgroup.
Then there exists a subset $J \subseteq \Pi$ and a point $v\in \Delta$ such that $P_J=T_v$.
Moreover, for every non-self-paired $T$-suborbit $\Lambda$ relative to $v$, there is a non-involution element $z\in D_{J,J}$ such that $v^z \in \Lambda$.
\end{lemma}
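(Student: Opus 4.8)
The plan is to combine the structural facts on $(B,N)$-pairs and parabolic subgroups recalled above with the standard dictionary between suborbits and double cosets.

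\emph{First assertion.} Fix any $v\in\Delta$. By hypothesis $T_v$ is a parabolic subgroup of $T$, so by \cite[Theorem~8.3.2]{Carter1972} there are $J\subseteq\Pi$ and $g\in T$ with $T_v=g^{-1}P_Jg$. Since $T$ is transitive on $\Delta$, the point $v^{g^{-1}}$ again lies in $\Delta$, and its stabilizer is $T_{v^{g^{-1}}}=gT_vg^{-1}=P_J$. Replacing $v$ by $v^{g^{-1}}$, I may assume from the start that $T_v=P_J$; this $J$ and this $v$ are as required.

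\emph{Second assertion.} I would next transfer to the equivalent action of $T$ by right multiplication on the set $[T:P_J]$ of right cosets of $P_J$, choosing a $T$-equivariant bijection $\Delta\to[T:P_J]$ that sends $v$ to the trivial coset $P_J$; then $v^h$ corresponds to $P_Jh$ for every $h\in T$. Under this bijection the $T$-suborbits relative to $v$ are precisely the $(P_J,P_J)$-double cosets, and (recall the discussion preceding Lemma~\ref{lm:orbital}) a suborbit $P_JgP_J$ is non-self-paired exactly when $g^{-1}\notin P_JgP_J$. By \cite[Proposition~2.8.1]{Carter1985} each $(P_J,P_J)$-double coset in $T$ equals $P_JzP_J$ for a unique $z\in D_{J,J}$, so a given non-self-paired suborbit $\Lambda$ can be written as $\Lambda=P_JzP_J$ with $z\in D_{J,J}$ and $z^{-1}\notin P_JzP_J$. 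This last condition forces $z\neq1$ and $z^2\neq1$ (otherwise $z^{-1}=z\in P_JzP_J$), that is, $z$ is a non-involution -- equivalently, by the observation stated just before this lemma, $z^{-1}\in P_JzP_J$ fails precisely because $z$ is not an involution. Finally $v^z$ corresponds under the bijection to the coset $P_Jz\in P_JzP_J=\Lambda$, whence $v^z\in\Lambda$, as claimed.

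I do not expect a real obstacle here: the entire mathematical content is contained in the quoted results on parabolic subgroups and in the suborbit/double-coset correspondence. The only care needed is in reconciling conventions -- the direction in which point stabilizers conjugate, right cosets versus left cosets, and the legitimacy of taking the distinguished double-coset representative inside $D_{J,J}$, which is exactly what makes the involution criterion applicable -- after which the argument is a direct translation of ``non-self-paired suborbit'' into ``non-involution double-coset representative''.
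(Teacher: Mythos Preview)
Your proposal is correct and follows exactly the approach the paper intends: the paper does not give a separate proof but derives the lemma from the preceding discussion, namely that every parabolic is conjugate to some $P_J$, that $D_{J,J}$ represents the $(P_J,P_J)$-double cosets, and that $z^{-1}\in P_JzP_J$ iff $z$ is an involution. You have simply written out these steps in full.
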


The parabolic subgroup $P_J$ is said to be \emph{maximal} if $|J|=|\Pi|-1$.
The following Lemma~\ref{lm:PJmaximal} is a consequence of~\cite[Proposition~10.4.7 and~Theorem~10.4.11]{BCN-book}, which shows that most $T$-suborbits relative to $v$ are self-paired when $T_v$ is a maximal parabolic subgroup.

\begin{lemma}[Brouwer-Cohen-Neumaier] \label{lm:PJmaximal}
Let $T$ be a simple group of Lie type acting on a set $\Delta$ with stabilizer a maximal parabolic subgroup, and let $v  \in \Delta$. If there is a non-self-paired $T$-suborbit relative to $v$, then one of the following holds:
\begin{enumerate}[\rm (a)]
\item $T=\POm_{2n}^{+}(q)$ with $n\geq 5$, and $T_v$ is of type $\A_i\D_{n-1-i}$ with $i \in \{\lfloor{n/2}\rfloor,\ldots,n-3 \}$, so that $T_v$ is the stabilizer of a totally singular $k$-dimensional subspace for some $k \in \{\lfloor{n/2}\rfloor+1,\ldots,n-2 \}$;
\item $T=\F_4(q)$, and $T_v$ is of type $\A_1\A_2$;
\item $T={}^2\E_6(q)$, and $T_v=[q^{29}].(\PSL_3(q^2)\times\SL_2(q)){:}(q-1))$ or
$[q^{31}].(\SL_3(q)\times\SL_2(q^2)){:}(q^2-1)/(3,q+1)$;
\item  $T=\E_6(q)$, and $T_v$ is of type $\A_1\A_4$ or $\A_1 \A_2\A_2$;
\item  $T=\E_7(q)$, and $T_v$ is of type $\A_1\A_5$, $\A_1\A_2\A_3$, $\A_2\A_4$ or $\A_1\D_5$;
\item  $T=\E_8(q)$, and $T_v$ is of type $\A_7$, $\A_1\A_6$, $\A_1\A_2\A_4$, $\A_3\A_4$, $\A_2\D_5$ or $\A_1\E_6$.
\end{enumerate}
\end{lemma}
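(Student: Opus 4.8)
The plan is to reduce the statement to a question about the Weyl group of $T$ and then quote the classification in~\cite{BCN-book}. Since $T_v$ is a maximal parabolic subgroup, Lemma~\ref{lm:PJsuborbits} applies and supplies a set $\Pi$ of fundamental roots of $T$, a subset $J\subseteq\Pi$ with $|J|=|\Pi|-1$, and a point $v\in\Delta$ with $T_v=P_J$; moreover it tells us that if $\Delta$ carries a non-self-paired $T$-suborbit relative to $v$, then $D_{J,J}$ contains a non-involution. Conversely, by the discussion immediately preceding Lemma~\ref{lm:PJsuborbits}, a non-involution $z\in D_{J,J}$ satisfies $z^{-1}\notin P_JzP_J$, so the $T$-suborbit containing $v^z$ is non-self-paired; hence $\Delta$ carries a non-self-paired $T$-suborbit relative to $v$ if and only if $D_{J,J}$ contains a non-involution. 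Since $z\in D_{J,J}$ is the unique shortest element of $W_JzW_J$ and $z^{-1}$ is the unique shortest element of $(W_JzW_J)^{-1}=W_Jz^{-1}W_J$, a non-involution in $D_{J,J}$ exists precisely when some $(W_J,W_J)$-double coset in $W$ is not closed under inversion, and this property depends only on the Coxeter system $(W,\Pi)$ with the single node $\Pi\setminus J$ removed — that is, only on the (relative) Dynkin diagram of $T$ together with the type of $T_v$. Thus the lemma reduces to listing those pairs $(T,T_v)$ for which $D_{J,J}$ contains a non-involution.

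This combinatorial question is exactly what is analysed in~\cite[\S10.4]{BCN-book}: there the coset geometry of a maximal parabolic subgroup of a group with a $(B,N)$-pair is studied, and it is determined which of its relations are non-symmetric (equivalently, which give a genuinely directed arc-transitive graph). I would invoke~\cite[Proposition~10.4.7 and Theorem~10.4.11]{BCN-book}, which together list all such diagrams-with-a-distinguished-node, and then translate. For the untwisted groups the dictionary is immediate: the only classical exceptions occur for type $\D_n$ and are the parabolics of type $\A_i\D_{n-1-i}$, i.e. the stabilizers of totally singular $k$-spaces with $k=i+1$ and $\lfloor n/2\rfloor+1\le k\le n-2$ — the non-self-paired orbitals being precisely those interchanging the two families of maximal totally singular subspaces — while the distinguished nodes in the diagrams $\F_4$, $\E_6$, $\E_7$, $\E_8$ give cases~(b), (d), (e) and (f). For a twisted group $T={}^d\Sigma(q)$ one must instead work with the relative root system, whose Weyl group is the $W$ of Lemma~\ref{lm:PJsuborbits}: this relative system has rank at most $2$, or is of type $\A$, $\B$, $\C$ or $\G_2$, for each of ${}^2\A_n(q)$, ${}^2\D_n(q)$, ${}^3\D_4(q)$, ${}^2\B_2(q)$, ${}^2\G_2(q)$ and ${}^2\F_4(q)$, and for all such diagrams (as for the untwisted types $\B_n$, $\C_n$, $\G_2$ and all diagrams of rank at most $2$) every $(W_J,W_J)$-double coset with $|J|=|\Pi|-1$ is inverse-closed, so no non-self-paired suborbit arises. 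The sole twisted exception is ${}^2\E_6(q)$, whose relative system is of type $\F_4$; the two $\F_4$-nodes that are the folded images of the $\E_6$-nodes producing non-inverse-closed double cosets give the two parabolics of~(c), with Levi components defined over $\bbF_q$ or $\bbF_{q^2}$ according as the corresponding orbit of $\E_6$-nodes is fixed or swapped by the graph automorphism — which produces the shapes $[q^{29}].((\PSL_3(q^2)\times\SL_2(q)){:}(q-1))$ and $[q^{31}].((\SL_3(q)\times\SL_2(q^2)){:}(q^2-1)/(3,q+1))$.

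I expect the whole difficulty to lie in this last translation, and in two places in particular. The first is the passage, for twisted groups, between the abstract diagram-with-a-node description in~\cite{BCN-book} and the explicit group-theoretic data: one must use the correct relative root system, recognise each Levi component as a group over $\bbF_q$ or $\bbF_{q^2}$ carrying the appropriate graph twist, and thereby both pin down the subgroup shapes in~(c) and confirm that the folding creates no exceptions beyond ${}^2\E_6(q)$. The second is dealing with the groups below the rank threshold of the list, where one must check directly that no maximal parabolic yields a non-self-paired suborbit: for $\POm_8^+(q)$ (type $\D_4$) this is automatic since the index range $\{\lfloor 4/2\rfloor+1,\dots,4-2\}$ is empty — equivalently, triality identifies the three end-node geometries of $\D_4$, all of whose suborbits are self-paired — and for the groups of rank at most $3$ it follows from a direct check that, in the Weyl groups of types $\A_1$, $\A_2$, $\A_3$, $\B_2$, $\B_3$, $\C_3$, $\G_2$ and the rank-$2$ dihedral groups, every double coset for a subgroup obtained by deleting one Coxeter generator is inverse-closed. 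Combining these verifications with~\cite[Proposition~10.4.7 and Theorem~10.4.11]{BCN-book} then gives exactly cases~(a)--(f).
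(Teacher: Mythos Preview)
Your proposal is correct and follows essentially the same route as the paper: both reduce to the existence of a non-involution in $D_{J,J}$, invoke \cite[Proposition~10.4.7 and Theorem~10.4.11]{BCN-book}, and then translate via the (relative) Weyl group, handling twisted groups by noting their Weyl groups are of type $\B$ or $\F_4$. The only minor difference is that the paper phrases the BCN criterion as multiplicity-freeness of the permutation character (equivalent by Proposition~10.4.7), and your separate low-rank check is unnecessary since the list in Theorem~10.4.11 already covers all ranks.
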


\begin{proof}
Let $T_v=P_J$ be a maximal parabolic subgroup of $T$, where $|J|=|\Pi|-1$. By \cite[Proposition~10.4.7]{BCN-book}, every element of $D_{J,J}$ has order at most $2$ if and only if the character of the permutation representation of $W$ on $W_J$ is multiplicity-free.
Since $D_{J,J}$ is a set of representations of $(P_J,P_J)$-double cosets in $T$, we conclude that all $T$-suborbits on the set of right $P_J$-cosets are self-paired if the character of the permutation representation of $W$ on $W_J$ is multiplicity-free.
Moreover,~\cite[Theorem~10.4.11]{BCN-book} asserts that this is the case if the Coxeter graph $Y_{n,j}$ corresponding to $(W,W_J)$ is one of:
\begin{enumerate}[{\rm(i)}]
\item $\A_{n,i}$, $\B_{n,i}$, $\C_{n,i}$,
\item $\D_{n,i}$ for $i\leq n/2$ or $i\in \{n-1,n\}$,
\item $\E_{6,1}$, $\E_{6,2}$, $\E_{6,6}$,  $\E_{7,1}$, $\E_{7,2}$, $\E_{7,7}$,  $\E_{8,1}$, $\E_{8,8}$, $\F_{4,1}$, $\F_{4,4}$, $\mathrm{I}_{2,1}^{m}$, $\mathrm{I}_{2,2}^{m}$.
\end{enumerate}
Here $Y_{n,j}$ is defined in~\cite[\textsection{10.2}]{BCN-book}; in particular, $Y_n$ is the diagram for $W$ in~\cite[Table~10.1]{BCN-book}, and $j$ is the node in $Y_n$ corresponding to the unique fundamental root in $\Pi\setminus J$.

We now discuss the possibility of $T$ and $T_v$ such that there is a non-self-paired $T$-suborbit relative to $v$.
The above paragraph shows that the Coxeter graph $Y_{n,j}$ corresponding to $T$ and $T_v$ is not in~(i)--(iii).
Note that the Weyl group of a twisted group is isomorphic to the Weyl group of some untwisted group, see~\cite[Table~10.7]{BCN-book} for example.

First assume that $T$ is a classical group. Then either $T$ is untwisted with Weyl group $\A_n$, $\B_n$, $\C_n$ or $\D_n$, or $T$ is $\PSU_{n}(q)={}^2\A_{n-1}(q)$ or $\POm^{-}_{2n}(q)={}^2\D_n(q)$ with Weyl group $\B_{\lceil(n-1)/2\rceil}$ or $\B_{n-1}$, respectively. Since $Y_{n,j}$ is not in~(i)--(ii), we conclude that $T=\POm^{+}_{2n}(q)$ with $T_v$ of type $\A_i\D_{n-i-1}$ for some integer $i \in \{\lfloor{n/2}\rfloor,\ldots,n-3 \}$, and so $T_v$ is the stabilizer of a totally singular $(i+1)$-dimensional subspace, as in part~(a).

Next assume that $T$ is a group of exceptional Lie type. Since $Y_{n,j}$ is not in~(iii), the Coxeter graph $Y_{n,j}$ is one of:
\[
\E_{6,3},\ \E_{6,4},\ \E_{6,5}, \ \E_{7,3},\ \E_{7,4},\ \E_{7,5},\ \E_{7,6},\ \E_{8,2},\ \E_{8,3},\ \E_{8,4},\ \E_{8,5},\ \E_{8,6},\ \E_{8,7},\ \F_{4,2},\ \F_{4,3}.
\]
For $\F_{4,2}$ and $\F_{4,3}$ we obtain~(b) and~(c) of the lemma (for~(c), we refer to~\cite[Table~23.2]{GD-book} for the type of $T_v$ and then \cite[Theorem~1.1]{Craven-arxiv} for the precise group structure\footnote{Some typos from~\cite{Craven-arxiv} have been corrected here according to our personal communication with the author.} of $T_v$).
For the other candidates of $Y_{n,j}$, we obtain~(d)--(f).
\end{proof}

The Weyl group of a twisted group ${}^d\Sigma(q)$ is a subgroup of the Weyl group of the corresponding untwisted group $\Sigma(q)$ (see~\cite[p.217]{Carter1972}).
For $x \in W$, let $|x|=\ell(x)$ if $d=1$, and let $|x| $ be the length of $x$ in the Weyl group of $\Sigma(q)$.

\begin{lemma}[{\cite[Proposition~10.7.3]{BCN-book}}]\label{lm:PJlength}
Let $T={}^d\Sigma(q)$ be a simple group of Lie type, and let  $\Pi$ be a set of fundamental roots of $T$.  For $J\subseteq \Pi$ and $z \in D_{J,J}$,
\[
\frac{|P_J|}{|P_J\cap P_J^z|}=\sum_{x \in D_{J,\emptyset} \cap zW_J} q^{|x|}.
\]
\end{lemma}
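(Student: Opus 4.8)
The statement is \cite[Proposition~10.7.3]{BCN-book}, so in the paper one can simply invoke that reference; what follows is the outline I would reproduce. The plan is to reinterpret the left-hand side as a coset count inside a single $(P_J,P_J)$-double coset, to evaluate that count through the Bruhat decomposition of the $(B,N)$-pair, and then to collapse the resulting sum using the distinguished coset representatives $D_{J,\emptyset}$.

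First I would observe that $|P_J|/|P_J\cap P_J^z|$ is the length of the orbit of the coset $P_Jz$ under the action of $P_J$ on $P_J\backslash T$ by right multiplication, since the stabiliser of $P_Jz$ in that action is $P_J\cap z^{-1}P_Jz=P_J\cap P_J^z$. Because $zP_J\subseteq P_JzP_J$ and $P_JzP_J$ is a union of right $P_J$-cosets, this orbit consists precisely of the right $P_J$-cosets contained in $P_JzP_J$, so
\[
\frac{|P_J|}{|P_J\cap P_J^z|}=\frac{|P_JzP_J|}{|P_J|}.
\]
Next I would compute the right-hand side from the $(B,N)$-pair. Using $P_J=BW_JB$ and the axioms (a)--(d) listed above, one has the disjoint Bruhat decompositions $P_J=\bigsqcup_{w\in W_J}BwB$ and $P_JzP_J=\bigsqcup_{w\in W_JzW_J}BwB$, together with $|BwB|=q^{|w|}|B|$; here $|w|$ is the length defined just before the lemma, and for twisted $T={}^d\Sigma(q)$ this is exactly the convention (length of $w$ in the Weyl group of $\Sigma(q)$) that makes $[BwB:B]=q^{|w|}$ hold uniformly, even though the relevant relative root subgroups have orders $q$, $q^2$ or $q^3$. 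Dividing the two decompositions gives
\[
\frac{|P_JzP_J|}{|P_J|}=\frac{\sum_{w\in W_JzW_J}q^{|w|}}{\sum_{w\in W_J}q^{|w|}}.
\]

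Finally I would evaluate this ratio. Writing $W_JzW_J$ as the disjoint union of the right $W_J$-cosets it contains, each such coset has a unique minimal-length representative; these representatives form $D_{J,\emptyset}\cap W_JzW_J$, and for $x$ in this set the defining property of $D_{J,\emptyset}$ gives $|w_1x|=|w_1|+|x|$ for every $w_1\in W_J$. Hence $\sum_{w\in W_JzW_J}q^{|w|}=\bigl(\sum_{w_1\in W_J}q^{|w_1|}\bigr)\sum_{x\in D_{J,\emptyset}\cap W_JzW_J}q^{|x|}$, and the ratio collapses to $\sum_{x\in D_{J,\emptyset}\cap W_JzW_J}q^{|x|}$. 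To match the statement I would then use that, since $z\in D_{J,J}$, the minimal-length representative of each right $W_J$-coset inside $W_JzW_J$ already lies in $zW_J$, i.e.\ $D_{J,\emptyset}\cap W_JzW_J=D_{J,\emptyset}\cap zW_J$; this is a standard fact about distinguished double-coset representatives, see \cite[\S2.7--2.8]{Carter1985}. (Combined with Lemma~\ref{lm:PJsuborbits}, this computes the subdegrees of the relevant primitive actions.)

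The hard part is nothing conceptual but rather the two standard normalisations one must quote cleanly: that $[BwB:B]=q^{|w|}$ with $|w|$ the ambient untwisted length rather than the relative root length (which rests on the orders of the relative root subgroups of twisted groups), and the Coxeter-combinatorial identity $D_{J,\emptyset}\cap W_JzW_J=D_{J,\emptyset}\cap zW_J$. Both are classical, so I would cite them from \cite{Carter1972} and \cite{Carter1985} and present only the three-line computation above.
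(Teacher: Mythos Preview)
Your outline is correct and is essentially the standard argument behind \cite[Proposition~10.7.3]{BCN-book}; the paper itself gives no proof and simply cites that reference, so your write-up already goes beyond what the paper does. The one point worth tightening is the identity $D_{J,\emptyset}\cap W_JzW_J=D_{J,\emptyset}\cap zW_J$: this is indeed standard (it follows from the length-additivity $\ell(zw_2)=\ell(z)+\ell(w_2)$ for $z\in D_{\emptyset,J}$ together with the description of right $W_J$-cosets inside a $(W_J,W_J)$-double coset), but a precise pointer such as \cite[Proposition~2.7.5]{Carter1985} would make the citation cleaner than the generic ``\S2.7--2.8''.
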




By~\cite[Proposition 8.5.1]{Carter1972} we have
\[
P_J=\langle H,X_r \mid r \in \Phi^+ \cup \Phi_J\rangle,
\]
where $\Phi_J$ is the set of roots spanned by fundamental roots in $J$.
For a double coset $P_JzP_J$, where $z \in D_{J,J}$,~\cite[Theorem  2.8.7]{Carter1985}  implies
\[
P_J \cap P_J^z= \langle H,X_r \mid r \in (\Phi^+ \cup \Phi_J) \cap (\Phi^+ \cup \Phi_J)^{z^{-1}} \rangle,
\]
and~\cite[Proposition  2.5.9]{Carter1985} implies
\begin{equation}\label{EqBBB}
B \cap B^z \cap B^{z^{-1}}=\langle H,X_r \mid r \in  \Phi^+   \cap (\Phi^+ )^{z^{-1}} \cap (\Phi^+ )^{z} \rangle.
\end{equation}
Thus we have the following lemma.

\begin{lemma} \label{lm:PJroots}
Let $T={}^d\Sigma(q)$ be a untwisted simple group of Lie type, and let $\Phi$ be a root system for $T$ with  $\Pi$  a set of fundamental roots.  For $J\subseteq \Pi$ and $z \in D_{J,J}$, let $m_1$, $m_2$, $m_3$ be the numbers of positive roots in $\Phi^+ \cup \Phi_J$, $(\Phi^+ \cup \Phi_J) \cap (\Phi^+ \cup \Phi_J)^{z^{-1}}$,  $\Phi^+   \cap (\Phi^+ )^{z^{-1}} \cap (\Phi^+ )^{z}$, respectively.
Then
\[
|P_J|_p=q^{m_1},\ \ |P_J \cap P_J^z|_p=q^{m_2},\ \ |P_J \cap P_J^z \cap P_J^{z^{-1}}|_p \geq q^{m_3}.
\]
\end{lemma}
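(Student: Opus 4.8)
The plan is to read the three $p$-parts off the three structural descriptions of $P_J$, $P_J\cap P_J^z$ and $B\cap B^z\cap B^{z^{-1}}$ recalled just above, together with Lemma~\ref{lm:PJlength}. Beyond those, the only inputs are two elementary facts that hold because $T$ is untwisted: every root subgroup $X_r$ has order $q$, and $H$ (a quotient of $(\mathbb{F}_q^{\times})^{\mathrm{rank}}$) has order coprime to $p$. From the Chevalley commutator relations one then gets the standard fact that for a closed set $C$ of positive roots the product $\prod_{r\in C}X_r$, in any fixed order, is a subgroup of order $q^{|C|}$ normalized by $H$; hence any group $\langle H,X_r\mid r\in C\rangle$ with $C$ a closed set of positive roots has $p$-part exactly $q^{|C|}$.

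First, $m_1=|\Phi^+|$, since the positive roots contained in $\Phi^+\cup\Phi_J$ are precisely those of $\Phi^+$. By the Bruhat decomposition $P_J=\bigsqcup_{w\in W_J}BwB$ with $|BwB|=q^{\ell(w)}|B|$, we have $|P_J|=|B|\sum_{w\in W_J}q^{\ell(w)}$; since $|B|_p=q^{|\Phi^+|}$, $p\nmid|H|$, and $\sum_{w\in W_J}q^{\ell(w)}\equiv1\pmod p$, this gives $|P_J|_p=q^{|\Phi^+|}=q^{m_1}$.

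For the second identity I would apply Lemma~\ref{lm:PJlength} (the exponent $|x|$ there being $\ell(x)$ since $T$ is untwisted), which gives $|P_J|/|P_J\cap P_J^z|=\sum_{x\in D_{J,\emptyset}\cap zW_J}q^{\ell(x)}$. Since $z\in D_{J,J}$, $z$ is minimal-length in both $W_Jz$ and $zW_J$; the latter means every $x\in zW_J$ is of the form $zw_J$ with $\ell(x)=\ell(z)+\ell(w_J)$, so $z$ is the unique minimal-length element of $zW_J$ and every term of the sum other than $q^{\ell(z)}$ is divisible by $q^{\ell(z)+1}$. Hence the sum is $q^{\ell(z)}(1+qn)$ for some integer $n\geq0$, of $p$-part $q^{\ell(z)}$, and with the previous paragraph $|P_J\cap P_J^z|_p=q^{|\Phi^+|-\ell(z)}$. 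It remains to check $m_2=|\Phi^+|-\ell(z)$. Writing $S=(\Phi^+\cup\Phi_J)\cap(\Phi^+\cup\Phi_J)^{z^{-1}}$ and using $(\Phi^+\cup\Phi_J)^{z^{-1}}=z^{-1}(\Phi^+)\cup z^{-1}(\Phi_J)$, one has $S\cap\Phi^+=(\Phi^+\cap z^{-1}\Phi^+)\cup(\Phi^+\cap z^{-1}\Phi_J)$. Now $z$ minimal in $W_Jz$ forces $z^{-1}(\Phi_J^+)\subseteq\Phi^+$, and this in turn forces any $r\in\Phi^+$ with $z(r)\in\Phi_J$ to satisfy $z(r)\in\Phi_J^+\subseteq\Phi^+$ (otherwise $-r=z^{-1}(-z(r))\in z^{-1}(\Phi_J^+)\subseteq\Phi^+$, a contradiction); thus $\Phi^+\cap z^{-1}\Phi_J\subseteq\Phi^+\cap z^{-1}\Phi^+$, so $m_2=|S\cap\Phi^+|=|\Phi^+\cap z^{-1}\Phi^+|=|\Phi^+|-\ell(z)$. (Alternatively one can count $p$-parts in the Levi decomposition of $P_J\cap P_J^z$ from~\cite[Theorem~2.8.7]{Carter1985}, whose reductive part has root system $\Phi_J\cap z^{-1}(\Phi_J)$.)

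For the third statement, by~\eqref{EqBBB} we have $B\cap B^z\cap B^{z^{-1}}=\langle H,X_r\mid r\in\Phi_1\rangle$ with $\Phi_1=\Phi^+\cap(\Phi^+)^{z^{-1}}\cap(\Phi^+)^z$, a closed set of positive roots; by the fact in the first paragraph, $|B\cap B^z\cap B^{z^{-1}}|_p=q^{|\Phi_1|}=q^{m_3}$. Since $B\leq P_J$ we get $B\cap B^z\cap B^{z^{-1}}\leq P_J\cap P_J^z\cap P_J^{z^{-1}}$, whence $|P_J\cap P_J^z\cap P_J^{z^{-1}}|_p\geq q^{m_3}$. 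I expect the only genuinely non-formal point to be the last step of the second identity: matching the exponent $|\Phi^+|-\ell(z)$ delivered by Lemma~\ref{lm:PJlength} with the intrinsic combinatorial quantity $m_2$ requires knowing precisely how the distinguished double-coset representative $z$ acts on $\Phi_J$; everything else is bookkeeping with the displayed formulas.
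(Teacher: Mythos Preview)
Your proposal is correct. The paper gives no explicit proof: it simply writes ``Thus we have the following lemma'' immediately after recording the three structural descriptions
\[
P_J=\langle H,X_r\mid r\in\Phi^+\cup\Phi_J\rangle,\quad
P_J\cap P_J^z=\langle H,X_r\mid r\in(\Phi^+\cup\Phi_J)\cap(\Phi^+\cup\Phi_J)^{z^{-1}}\rangle,
\]
and \eqref{EqBBB}. The intended argument is therefore just to read off the $p$-parts directly from these generators, using that in the untwisted case each $X_r$ has order $q$, $|H|$ is coprime to $p$, and Carter's structure theory (Levi decompositions, closed sets of roots) guarantees the product of the positive root subgroups involved is a $p$-group of the expected order.

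Your treatment of the first and third claims is exactly this, with the details spelled out. For the second claim you take a genuinely different route: rather than invoking the Levi decomposition of $P_J\cap P_J^z$ from \cite[Theorem~2.8.7]{Carter1985} to read off $|P_J\cap P_J^z|_p$ directly, you compute the $p$-part of the index via Lemma~\ref{lm:PJlength}, extract $q^{\ell(z)}$ by observing that $z$ is the unique shortest element in $D_{J,\emptyset}\cap zW_J$, and then separately verify the combinatorial identity $m_2=|\Phi^+|-\ell(z)$ using the minimality of $z$ in $W_Jz$. This is a perfectly valid alternative; it trades a structural citation for an explicit Weyl-group computation, and has the mild advantage that the ingredients (Lemma~\ref{lm:PJlength} and elementary length facts) are already on the page.
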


In {\sc Magma}~\cite{Magma}, commands {\sf CoxeterGroup}, {\sf Transversal} and {\sf StandardParabolicSubgroup} are used to compute  $W$, $D_{I,J}$ and $W_J$, respectively.

%
%
%
%

\section{Strategy of the proof}
In this section, we  explain the basic ideas to prove Theorem~\ref{th:main}.


\subsection{Reduction to almost simple groups}
\ \vspace{1mm}

Let $\Gamma$ be a  vertex-primitive $2$-arc-transitive digraph of valency at least $2$ with fewest vertices.
Then $|V(\Ga)|\leq |V(\Ga_7)|=30758154560$.
In~\cite[Section 5]{P1997}, primitive groups were divided into eight types.  If $\Aut(\Ga)$ is not of type $\mathrm{AS}$,   then the digraph $\Ga$ is well characterized in~\cite{P1989} and \cite{GX2017}.
This allows us to reduce the proof of Theorem~\ref{th:main} to the  $\mathrm{AS}$ type. More precisely, Praeger~\cite[Theorem 3.1]{P1989} states that if $\Aut(\Ga)$  has a regular normal subgroup, then $\Ga$ is a directed cycle.
This implies that  $G$ is not of  type $\mathrm{HA}$, $\mathrm{HC}$, $\mathrm{HS}$ or $\mathrm{TW}$.
If  $\Aut(\Ga)$  is of type $\mathrm{SD}$, then~\cite[Theorem~1.2 and~Construction~3.1]{GX2017} imples $|V(\Ga)|=|S|^{|S|-1}$ for some nonabelian simple group $S$, which would lead to $|V(\Ga)|>60^{59}>30758154560$,  a contradiction.
If $\Aut(\Ga)$  is of type $\mathrm{CD}$ or $\mathrm{PA}$, then from~\cite[Corollary 1.4 and 1.5]{GX2017} we see that $\Ga$ is a direct product of $m>1$ copies of some  vertex-primitive $2$-arc-transitive digraph, contradicting that $\Ga$ has the smallest order.
Therefore, $\Aut(\Ga)$  is of type $\mathrm{AS}$.

Based on the above analysis, in the rest of the paper, we make the following hypothesis.

\begin{hypothesis}\label{hy:1}
 Let $\Gamma$ be a connected $G$-vertex-primitive $(G, 2)$-arc-transitive digraph of valency at least $2$, where $G$
is almost simple with socle $T$.
Take an arc $u \rightarrow v$ of $\Gamma$.
Let $g$ be an element of $G$ such that $v=u^g$, and let $w = v^g$.
Then $(u ,v,w)$ is a $2$-arc in $\Ga$.
\end{hypothesis}

In Sections~\ref{Sec1} and~\ref{Sec2}, we will apply the Classification of Finite Simple Groups to consider candidates for $T$ one by one, thus finishing the proof of Theorem~\ref{th:main}.
Our argument is supplemented by computer computation in  {\sc Magma}~\cite{Magma}.
Before the separate treatment on each simple group $T$, we make some discussion in the general case in the rest of this section.

\subsection{Two lemmas}
\ \vspace{1mm}

Since $G$ is vertex-primitive, $G_v$ is a maximal subgroup of $G$.
By Lemma \ref{pro:valency}, $\Ga$ has valency at least $3$, that is, $|G_v|/|G_{uv}|\geq 3$.
By Lemma \ref{pro:Gvfactorization}, there holds $G_v=G_{uv}G_{vw}$.
In particular, $G_{uv}$ is not conjugate to $G_{vw}$ in $G_v$ by Lemma \ref{lm:basicfacs}.
Since $G_{uv}^g=G_{u^gv^g}=G_{vw}$, the factorization $G_v=G_{uv}G_{vw}$ is homogeneous.
The following lemma studies the action of $T$ on $\Gamma$.

\begin{lemma}\label{lm:T2at?}
Suppose that Hypothesis~$\ref{hy:1}$ holds, and let $t$ be the number of orbits of $T_{uv}$ on $\Ga^{+}(v)$.
Then the following hold:
\begin{enumerate}[\rm (a)]
\item $\Ga$ is $T$-arc-transitive;
\item $|G|/|T|=|G_{v}|/|T_{v}|=|G_{uv}|/|T_{uv}|=t|G_{uvw}|/|T_{uvw}|$;
\item $t=|T_v|/|T_{uv}T_{vw}|$;
\item if $T_v=T_{uv}T_{vw}$, then $\Ga$ is $(T,2)$-arc-transitive;
\item if $T_{uv}$ is conjugate to $T_{vw}$ in $T_v$, then $|T_v|$ divides $|G|/|T|$.
\end{enumerate}
\end{lemma}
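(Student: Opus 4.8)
The plan is to deduce everything from one structural fact: the socle $T$, being a nontrivial normal subgroup of the primitive group $G$, is vertex-transitive and hence arc-transitive. After that, parts (b)--(d) are index arithmetic with the factorizations $G=TG_v$, $G=TG_{uv}$ and $G_v=G_{uv}G_{vw}$. For (a): since $G$ is primitive on $V(\Ga)$ and $1\neq T=\Soc(G)\unlhd G$, the subgroup $T$ is transitive on $V(\Ga)$; as $\Ga$ is $(G,2)$-arc-transitive and $T$ is a vertex-transitive normal subgroup of $G$, Lemma~\ref{pro:Tarctrans} gives that $\Ga$ is $T$-arc-transitive.

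For (b): from $G=TG_v$ we get $|G|/|T|=|G_v|/|T_v|$, and from $G=TG_{uv}$ (valid by (a)) we get $|G|/|T|=|G_{uv}|/|T_{uv}|$. By Lemma~\ref{pro:Gvfactorization}, $G_v=G_{uv}G_{vw}$, so $G_{uv}$ acts transitively on $\Ga^+(v)$ with the stabilizer of $w$ equal to $G_{uvw}$; since $T_{uv}=T\cap G_{uv}\unlhd G_{uv}$, the number of $T_{uv}$-orbits on $\Ga^+(v)$ is $t=|G_{uv}:T_{uv}G_{uvw}|$. Using $T_{uv}\cap G_{uvw}=T_{uvw}$, this becomes $t=|G_{uv}|\,|T_{uvw}|/(|T_{uv}|\,|G_{uvw}|)$, which rearranges to $t\,|G_{uvw}|/|T_{uvw}|=|G_{uv}|/|T_{uv}|=|G|/|T|$, giving (b). For (c): the valency equals $|G_{uv}:G_{uvw}|$ and also $|T_v:T_{vw}|$, since both $G_{uv}$ and $T_v$ are transitive on $\Ga^+(v)$; substituting $|G_{uv}|/|G_{uvw}|=|T_v|/|T_{vw}|$ into the formula for $t$ yields $t=|T_v|\,|T_{uvw}|/(|T_{uv}|\,|T_{vw}|)=|T_v|/|T_{uv}T_{vw}|$, where the last equality uses $|T_{uv}T_{vw}|=|T_{uv}||T_{vw}|/|T_{uvw}|$.

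For (d): by (a) the digraph $\Ga$ is $T$-arc-transitive and $(u,v,w)$ is a $2$-arc, so Lemma~\ref{pro:Gvfactorization} applied with $T$ in place of $G$ says $\Ga$ is $(T,2)$-arc-transitive precisely when $T_v=T_{uv}T_{vw}$ (equivalently, by (c), when $t=1$); in particular the hypothesis of (d) suffices. For (e): suppose $T_{uv}^x=T_{vw}$ with $x\in T_v$. Since $T_{uv}^g=G_{uv}^g\cap T=G_{vw}\cap T=T_{vw}$ as well, the element $n:=gx^{-1}$ normalizes $T_{uv}$, and $u^n=(u^g)^{x^{-1}}=v^{x^{-1}}=v$ because $x\in T_v$. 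Thus $T_{uv}$ is normal in $L:=\langle G_{uv},n\rangle$ and fixes $u$, so it fixes the whole orbit $u^L$ pointwise; as $n$ is a digraph automorphism with $u\to v$, the orbit $u^L$ contains the directed walk $u\to v\to v^n\to\cdots$, whence $T_{uv}=T_{uvw''}$ with $w'':=v^n\in\Ga^+(v)$, so $\{w''\}$ is a singleton $T_{uv}$-orbit on $\Ga^+(v)$.

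I expect (e) to be the main obstacle: the remaining task is to combine this rigidity — and the connectivity constraints of Lemmas~\ref{pro:Gvnormal} and~\ref{lm:orbital}, which restrict the element $n$ — with the identities of (b)--(c) so as to force $|T_v|$ to divide $|G|/|T|$. Parts (a)--(d) are essentially bookkeeping with group factorizations and the already-established lemmas, but (e) must convert a conjugacy statement inside $T_v$ into a numerical divisibility. The subtlety is that $T_{uv}$ need not be normal in $G_u$ or in $G_v$, so the hypothesis has to be exploited through the element $n$ (and the directed path it fixes) rather than through a direct normal-subgroup argument; this is where I anticipate the real work to lie.
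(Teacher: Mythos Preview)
Your treatment of (a)--(d) is correct and essentially identical to the paper's: both arguments use $T$-arc-transitivity (from Lemma~\ref{pro:Tarctrans}), the Frattini-type equalities $G=TG_v=TG_{uv}$, the equicardinality of $T_{uv}$-orbits on $\Ga^+(v)$ coming from $T_{uv}\unlhd G_{uv}$, and Lemma~\ref{pro:Gvfactorization}.

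For (e) you stop one step too early. You correctly produce $n=gx^{-1}$ normalizing $T_{uv}$ with $u^n=v$, and hence a singleton $T_{uv}$-orbit $\{w''\}$ on $\Ga^+(v)$. The missing observation is the one you already used in (b): $T_{uv}\unlhd G_{uv}$ and $G_{uv}$ is transitive on $\Ga^+(v)$, so all $T_{uv}$-orbits on $\Ga^+(v)$ have the same size. One singleton therefore forces $T_{uv}$ to fix $\Ga^+(v)$ pointwise. By $T$-arc-transitivity, $T_{ab}$ fixes $\Ga^+(b)$ pointwise for every arc $(a,b)$; since $\Ga$ is connected (hence strongly connected), iteration gives $T_{uv}=1$. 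Then $T_{vw}=1$ and $T_{uvw}=1$, so (c) gives $t=|T_v|$, and (b) gives $|T_v|=t\mid |G|/|T|$. No appeal to Lemma~\ref{pro:Gvnormal} or Lemma~\ref{lm:orbital} is needed.

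The paper's argument for (e) is the same idea with one simplification: rather than building $n$, it observes directly that $T_{vw}=T_{uv}^h=T_{u^hv}$ already fixes the point $u^h\in\Ga^-(v)$; then $T_{vw}\unlhd G_{vw}$ and the transitivity of $G_{vw}$ on $\Ga^-(v)$ (from $(G,2)$-arc-transitivity) force $T_{vw}$ to fix $\Ga^-(v)$ pointwise, and connectivity gives $T_{vw}=1$. Your route via $n$ and $\Ga^+(v)$ is a mirror image of this and works equally well once you add the equicardinality step.
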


\begin{proof}
Part~(a) follows from Lemma \ref{pro:Tarctrans}.

Note that  both $G$ and $T$ are transitive on $V(\Ga)$, and both $G_v$ and $T_v$ are transitive on $\Ga^{+}(v)$.
By Frattini's argument we obtain $G=TG_v$ and $G_v=G_{vw}T_{v}$, respectively, and so
\[
|G_{vw}/T_{vw}|=|G_{vw}/(G_{vw} \cap T_v)|=|T_vG_{vw}/T_v|=|G_v/T_v|=|G_vT/T|=|G/T|.
\]
Since $\Ga$ is $(G,2)$-arc-transitive, $G_{uv}$ is transitive on $\Ga^{+}(v)$.
Then as $T$ is normal in $G$, we derive that the orbits of $T_{uv}$ on $\Ga^{+}(v)$ have the same lengths.
Hence
\[
t \frac{|T_{uv}|}{|T_{uvw}| } = \frac{|G_{uv}|}{|G_{uvw}| },
\]
and so $t|G_{uvw}|/|T_{uvw}|=|G_{vw}|/|T_{vw}|$.
Thus part~(b) holds.
Moreover, the above equality yields $t|T_{uv}|/|T_{uvw}|=|\Ga^{+}(v)|=|T_v|/|T_{vw}|$ as $T$ is arc-transitive.
It follows that
\[
t=\frac{|T_v||T_{uvw}|}{|T_{uv}||T_{vw}|}=\frac{|T_v||T_{uv}\cap T_{vw}|}{|T_{uv}||T_{vw}|}=\frac{|T_v|}{|T_{uv}T_{vw}|},
\]
proving part~(c).
Part~(d) is a consequence of part~(c) and Lemma~\ref{pro:Gvfactorization}.

Suppose that $T_{vw}=T_{uv}^h$ for some $h\in T_v$.
Take $N=T_{vw}$.
Then $N=T_{uv}^h=T_{u^hv^h}=T_{u^hv}$.
Note that $u^h \in \Ga^-(v^h)=\Ga^-(v)$.
Consider the action of $N$ on $\Ga^{-}(v)$.
Since $N=T_{vw}\unlhd G_{vw}$, orbits of $N$ on $\Ga^{-}(v)$ have the same length, and so $N$ fixes each vertex in $\Ga^{-}(v)$ as $N$ fixes $u^h \in \Ga^{-}(v)$.
Therefore, for each arc $v'w'$ of $\Ga$, since $T$ is arc-transitive, $T_{v'w'}$ fixes $\Ga^{-}(v')$ pointwise.
Then by the connectivity of $\Ga$ (note that $\Ga$ is connected if and only if it is strongly connected, see~\cite[Lemma~2.6.1]{GR2001}), it follows that $N$ fixes each vertex of $\Ga$, whence $T_{vw}=N=1$.
Again since $T$ is arc-transitive, we obtain $T_{uv}=1$.
Thus part~(c) gives $|T_v|=t$, and then part~(b) implies that $|T_v|=t$ divides $|G|/|T|$, proving part~(e).
\end{proof}

%

Recall that $\Rad(G_v)$ denotes the largest solvable normal subgroup of $G_v$.
Throughout this paper, we set
\[
\overline{G_v}=G_v/\Rad(G_v),\ \overline{G_{uv}}=G_{uv} \Rad(G_{ v})/\Rad(G_{ v}),\ \overline{G_{vw}}=G_{vw}\Rad(G_{ v})/\Rad(G_{ v}).
\]
From the factorization $G_v=G_{uv}G_{vw}$ we deduce that
\begin{equation}\label{EqnFac}
\overline{G_v}=\overline{G_{uv}}\,\overline{G_{vw}}.
\end{equation}
Note that $\overline{G_{uv}}\cong G_{uv}/(\Rad(G_{v}) \cap G_{uv})$  and $\overline{G_{vw}}\cong G_{vw}/(\Rad(G_{v}) \cap G_{vw})$.
Since $G_{uv}\cong G_{vw}$, it follows that $ \overline{G_{uv}}$ and $\overline{G_{vw}}$ have the same nonsolvable composition factors (counting multiplicities).
Note that, if $G_{v}$ has a unique nonsolvable   composition factor, then $\overline{G_v}$ is almost simple.

A group $X$ is said to be \emph{quasisimple} if $X=X'$ and $X/\Z(X)$ is nonabelian simple.
If $Y$ is a subgroup of $X$ with a composition factor $X/\Z(X)$, then $Y\Z(X)/\Z(X)=X/\Z(X)$ and so $Y'=(Y\Z(X))'=X'=X$, which leads to $Y=X$.
In other words, $X$ is the unique subgroup of $X$ with a composition factor $X/\Z(X)$.

\begin{lemma}\label{lm:Hsiquasi}
Let $\mathcal{T}_1$ and $\mathcal{T}_2$ be as in~\eqref{eq:T1T2}.
Suppose that Hypothesis~$\ref{hy:1}$ holds and $G_{v}$ has a unique nonsolvable composition factor $M$.
\begin{enumerate}[\rm (a)]
\item If $G_{v}$ is almost simple, then $M \in \mathcal{T}_1$, both $G_{uv}$ and $G_{vw}$ are core-free in $G_v$, the factorization $ G_v=G_{uv}G_{vw}$ satisfies Lemma~$\ref{lm:ASfac}$, and $\Ga$ is $(T,2)$-arc-transitive.
\item If $G_{v}^{(\infty)}$ is quasisimple, then $M\in\mathcal{T}_1\cup\mathcal{T}_2$, both $\overline{G_{uv}}$ and $\overline{G_{vw}}$ are core-free in $\overline{G_{v}}$, and the factorization $\overline{G_{v}}=\overline{G_{uv}}\,\overline{G_{vw}}$ satisfies either Lemma~$\ref{lm:ASfac}$ or Lemma~$\ref{lm:ASfac2}$.
\item If $M \notin\mathcal{T}_1 \cup \mathcal{T}_2$, then both $G_{uv}$ and $G_{vw}$ have a composition factor isomorphic to $M$.
\end{enumerate}
\end{lemma}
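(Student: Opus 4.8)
The plan is to reduce all three parts to a single factorization of the almost simple group $\overline{G_v}=G_v/\Rad(G_v)$ and then invoke Lemmas~\ref{lm:ASfac} and~\ref{lm:ASfac2}. Since $G_v$ has a unique nonsolvable composition factor $M$, the group $\overline{G_v}$ is almost simple with socle $\cong M$, and the factorization $G_v=G_{uv}G_{vw}$ (valid by Lemma~\ref{pro:Gvfactorization}, and homogeneous since $G_{uv}^g=G_{vw}$) induces $\overline{G_v}=\overline{G_{uv}}\,\overline{G_{vw}}$, where $\overline{G_{uv}}$ and $\overline{G_{vw}}$ have the same nonsolvable composition factors with the same multiplicities; when $G_v$ is itself almost simple, $\Rad(G_v)=1$ and the bars are vacuous. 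Granting that $\overline{G_{uv}}$ and $\overline{G_{vw}}$ are core-free in $\overline{G_v}$, the proof then runs through a trichotomy: the two subgroups are either both nonsolvable, so that Lemma~\ref{lm:ASfac} applies and $M\in\mathcal{T}_1$; or both solvable (they are solvable together, sharing the nonsolvable composition factors), so that Lemma~\ref{lm:ASfac2} applies and $M\in\mathcal{T}_2$. This gives part~(b) at once, and part~(c) by contraposition: if $M\notin\mathcal{T}_1\cup\mathcal{T}_2$ and $G_{uv}$ — hence also $G_{vw}\cong G_{uv}$ — had no composition factor $M$, then $\overline{G_{uv}},\overline{G_{vw}}$ would have none either and so would be core-free in $\overline{G_v}$ (in an almost simple group a subgroup contains the socle exactly when the socle is one of its composition factors), and the trichotomy would force $M\in\mathcal{T}_1\cup\mathcal{T}_2$, a contradiction.

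The heart of the matter is the core-freeness, which I would establish uniformly for~(a) and~(b). Suppose $\overline{G_{uv}}$ is not core-free, so $M\leq\overline{G_{uv}}$; by the same reasoning $M$ occurs in $\overline{G_{uv}}$ with multiplicity one (for order reasons inside $\overline{G_v}$), hence in $\overline{G_{vw}}$ with multiplicity one, hence $M\leq\overline{G_{vw}}$ as well. Now $G_w=G_v^g$ has the same unique nonsolvable composition factor, so $\overline{G_w}=G_w/\Rad(G_w)$ is almost simple, and conjugation by $g$ gives an isomorphism $\overline{G_v}\to\overline{G_w}$ carrying $\overline{G_{uv}}$ onto the image of $G_{uv}^g=G_{vw}$ in $\overline{G_w}$ and $\Soc(\overline{G_v})=M$ onto $\Soc(\overline{G_w})$; thus $\Soc(\overline{G_w})$ lies in the image of $G_{vw}$ in $\overline{G_w}$. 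Put $N=G_{vw}^{(\infty)}$. As a perfect subgroup of both $G_v$ and $G_w$, it lies in $G_v^{(\infty)}$ and in $G_w^{(\infty)}$, which are quasisimple in cases~(a),(b) (equal to $M$ in~(a)). The image of $N$ in $\overline{G_v}$ is the perfect residual of the image of $G_{vw}$, hence contains $M$; since the restriction of $G_v\to\overline{G_v}$ to $G_v^{(\infty)}$ is the natural quotient $G_v^{(\infty)}\to G_v^{(\infty)}/\Z(G_v^{(\infty)})=M$, this forces $N\Z(G_v^{(\infty)})=G_v^{(\infty)}$ and then $N=N'=G_v^{(\infty)}$. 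Symmetrically $N=G_w^{(\infty)}$, so $(G_v^{(\infty)})^g=G_w^{(\infty)}=G_v^{(\infty)}$: thus $g$ normalizes the nontrivial characteristic subgroup $G_v^{(\infty)}$ of $G_v$, contradicting Lemma~\ref{pro:Gvnormal}.

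To complete part~(a), I would first exclude the solvable branch of the trichotomy. In each factorization of an almost simple group with socle $M$ into solvable factors described by Lemma~\ref{lm:ASfac2}, the greatest common divisor of the orders of the two factors is far below $|M|^{1/2}$, whereas the factors of our homogeneous factorization satisfy $|G_{uv}|=|G_{vw}|=(|G_v||G_{uvw}|)^{1/2}\geq|M|^{1/2}$ with orders dividing those of the corresponding listed factors; the finitely many small socles surviving this estimate are eliminated by using that $G_{uv}$ and $G_{vw}$, though isomorphic, are not conjugate in $G_v$ and the valency is at least $3$ (a short check, by machine if necessary). Hence Lemma~\ref{lm:ASfac} applies, yielding $M\in\mathcal{T}_1$ and the asserted shape of $G_v=G_{uv}G_{vw}$. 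Lastly, $\Gamma$ is $(T,2)$-arc-transitive by Lemma~\ref{lm:T2at?}(d): one checks $T_v=T_{uv}T_{vw}$ directly from the short list of factorizations in Lemma~\ref{lm:ASfac}, using $T_v\neq1$ — otherwise $G_v\cong G/T$ would be solvable — so that $M\leq T_v$ and the factorization $M=(G_{uv}\cap M)(G_{vw}\cap M)$ lifts to $T_v$.

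The step I expect to be the main obstacle is the core-freeness argument: it requires pinning down the perfect residual $N=G_{vw}^{(\infty)}$ simultaneously inside $G_v^{(\infty)}$ and $G_w^{(\infty)}$ even though, a priori, the solvable radicals $\Rad(G_v)$ and $\Rad(G_w)$ bear no relation to each other. Everything after that is either a citation (Lemmas~\ref{lm:ASfac},~\ref{lm:ASfac2}) or an elementary order computation.
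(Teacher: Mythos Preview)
Your proposal is correct and, for parts~(b) and~(c), follows essentially the same route as the paper: reduce to the factorization $\overline{G_v}=\overline{G_{uv}}\,\overline{G_{vw}}$, prove core-freeness by showing that its failure forces $g$ to normalize $G_v^{(\infty)}$ (contradicting Lemma~\ref{pro:Gvnormal}), and then invoke Lemmas~\ref{lm:ASfac} and~\ref{lm:ASfac2}. One cosmetic difference: your core-freeness argument detours through $G_w$ and $\Rad(G_w)$, which is exactly the complication you flag as the ``main obstacle''. The paper avoids this entirely by staying inside $G_v$: once both $G_{uv}$ and $G_{vw}$ have $M$ as a composition factor, so do $G_{uv}^{(\infty)}$ and $G_{vw}^{(\infty)}$, and since a quasisimple group has no proper subgroup with its simple top as a composition factor, $G_{uv}^{(\infty)}=G_v^{(\infty)}=G_{vw}^{(\infty)}$; then $Y:=(G_{uv}^{(\infty)})^g=(G_v^{(\infty)})^g\leq G_{vw}$, and the same uniqueness forces $Y=G_{vw}^{(\infty)}=G_v^{(\infty)}$. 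No comparison of $\Rad(G_v)$ with $\Rad(G_w)$ is needed.

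For part~(a) the approaches diverge: the paper simply cites \cite[Corollary~3.4]{GLX2019} (together with Lemma~\ref{lm:T2at?}(d)), whereas you sketch a self-contained argument. Your sketch is on the right track, but the two steps you defer to ``a short check''---ruling out a homogeneous solvable factorization of an almost simple $G_v$ via order estimates from Lemma~\ref{lm:ASfac2}, and verifying $T_v=T_{uv}T_{vw}$ case-by-case from the list in Lemma~\ref{lm:ASfac}---are genuine work that the cited reference absorbs. If you want the proof to be self-contained, you would need to carry these out explicitly; otherwise your argument for~(a) is an outline rather than a proof.
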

\begin{proof}
Part~(a) is obtained directly from~\cite[Corollary 3.4]{GLX2019} and Lemma~\ref{lm:T2at?}(d).

Assume that $G_v^{(\infty)}$ is quasisimple.
Then $G_{v}^{(\infty)}/\Z(G_{v}^{(\infty)})\cong M$ and $\overline{G_{v}}$ is an almost simple group with socle $M$.
Suppose for a contradiction that $\overline{G_{uv}}$ is not core-free in $\overline{G_{v}}$.
This means that $M$ is a normal subgroup of $\overline{G_{uv}}$, which together with $G_{uv}\cong G_{vw}$ implies that both $G_{uv}$ and $G_{vw}$ have a composition factor isomorphic to $M$.
Since $G_{uv}/G_{uv}^{(\infty)}$ and $G_{vw}/G_{vw}^{(\infty)}$ are solvable, it follows that both $G_{uv}^{(\infty)}$ and $G_{vw}^{(\infty)}$ have a composition factor isomorphic to $M$.
Note that $G_{v}^{(\infty)}$ has no proper subgroup with a composition factor isomorphic to $M$ as $G_{v}^{(\infty)}$ is quasisimple.
We conclude that $G_{uv}^{(\infty)}=G_{v}^{(\infty)}=G_{vw}^{(\infty)}$.
Write $Y= (G_{uv}^{(\infty)})^g$.
Then $Y\leq G_{uv}^g=G_{vw}$, and hence
\[
Y/(Y\cap G_{vw}^{(\infty)})\cong Y G_{vw}^{(\infty)}/ G_{vw}^{(\infty)} \leq G_{vw}/ G_{vw}^{(\infty)}
\]
is solvable.
Thus $Y\cap G_{vw}^{(\infty)}$ has a composition factor isomorphic to $M$.
This combined with the fact that $G_{vw}^{(\infty)}=G_{v}^{(\infty)}$ is quasisimple yields $Y\cap G_{vw}^{(\infty)}=G_{vw}^{(\infty)}$.
Therefore, $Y=G_{vw}^{(\infty)}$, that is, $(G_{uv}^{(\infty)})^g= G_{vw}^{(\infty)}=G_{v}^{(\infty)} $, contradicting Lemma~\ref{pro:Gvnormal}.
As a consequence, both $\overline{G_{uv}}$ and $\overline{G_{vw}}$ are core-free in $\overline{G_{v}}$.
Then by Lemmas~\ref{lm:ASfac} and~\ref{lm:ASfac2},  part~(b) holds.

To prove part (c),  noticing $G_{uv} \cong G_{vw}$, we only need to prove that $G_{uv}$ has a composition factor isomorphic to $M$.
Suppose for a contradiction that this is not the case.
Since $\overline{G_{v}}$ is an almost simple group with socle $M$, this implies that $\overline{G_{uv}}$ is core-free in  $\overline{G_v}$, and so is $\overline{G_{vw}}$ as $G_{uv} \cong G_{vw}$.
Recall from~\eqref{EqnFac} that $\overline{G_v}=\overline{G_{uv}}\,\overline{G_{vw}}$.
Then it follows from Lemmas~\ref{lm:ASfac} and~\ref{lm:ASfac2} that $M \in\mathcal{T}_1 \cup \mathcal{T}_2$, a contradiction.
\end{proof}

\subsection{Computational methods}
\ \vspace{1mm}

Some of the results in the sequel will be obtained by computation in {\sc Magma}~\cite{Magma}.
Recall in Hypothesis~\ref{hy:1} that $G$ is an almost simple group with socle $T$.
Since $G_v=G_{uv}G_{vw}$, it follows from Lemma~\ref{lm:basicfacs} that
\begin{equation}\label{EqnOrder}
|G_v||G_{uv} \cap G_{vw}|=|G_{uv}||G_{vw}|.
\end{equation}
In particular, $|G_v|$ divides $|G_{uv}|^2=|G_{vw}|^2$.
Let $|G_v|=p_1^{s_1}\cdots p_r^{s_r}$ for some distinct primes $p_i$ with $i\in\{1,\dots,r \}$.
Then
\begin{equation}\label{EqnDivisor}
|G_{vw}|=|G_{uv}|\text{ is divisible by }p_1^{ \lceil{ s_1/2 }\rceil }\cdots p_r^{ \lceil{s_r/2}\rceil }.
\end{equation}
Thus $G_{uv}$ and $G_{vw}$ are such subgroups of $G_v$ satisfying~\eqref{EqnOrder},~\eqref{EqnDivisor} and that $G_{uv}$ is conjugate to $G_{vw}$ in $G$.

Sometimes, the above process to find the candidates for $G_{uv}$ and $G_{vw}$ can be optimized when $G_v$ has a certain structure.
For example, the Thompson group $G=\Th$ has some maximal subgroups isomorphic to $(3 \times \G_2(3)){:}2$, $2^5.\PSL_5(2)$ or $3.[3^8].2\Sy_4$.
We can rule  out the case  $ G_v=(3 \times\G_2(3)){:}2$  directly according to Lemma \ref{lm:Hsiquasi}(b), because  $ G_v^{(\infty)}$ is quasisimple but the unique nonsolvable composition factor $\G_2(3)$ of $G_v$ is not in $\mathcal{T}_1\cup\mathcal{T}_2$.
If $G_v=2^5.\PSL_5(2)$, then we see from Lemma \ref{lm:Hsiquasi}(c) that both $G_{uv}$ and $G_{vw}$ have a composition factor $\PSL_5(2)$, and hence $|G_{uv}|=|G_{vw}|$ is divisible by $|\PSL_5(2)|=2^{10}\cdot3^2\cdot5\cdot7\cdot31$.
Suppose that $G_v=3.[3^8].2\Sy_4$.
Let $N=3.[3^8]$ be the normal subgroup of $G_v$ such that $G_{v}/N \cong 2\Sy_4$.
From the factorization $G_v=G_{uv}G_{vw}$, we derive that  $G_{v}/N=(G_{uv}N/N)(G_{vw}N/N)$.
Since $G_{uv}\cong G_{vw}$ and $N$ is a $3$-group, it follows that the Sylow $2$-subgroup of $G_{uv}N/N$ is isomorphic to that of $G_{vw}N/N$.
Searching the factorizations of $G_{v}/N\cong 2.\Sy_4$ with the Sylow 2-subgroups of two factors isomorphic, we obtain that $G_{uv}N/N=G_{vw}N/N=G_{v}/N\cong 2\Sy_4$, and so $|G_{uv}|_2=|G_{vw}|_2$ is divisible by $|2\Sy_4|_2=2^4$.
This together with~\eqref{EqnDivisor} implies that $|G_{uv}|=|G_{vw}|$ is divisible by $2^4\cdot3^5$.



Recall that $\Ga$ is $T$-arc-transitive by Lemma~\ref{lm:T2at?}(a).
Let $z \in T$ such that $(v,w)=(u,v)^z=(u^z,v^z)$.
From Lemma~\ref{lm:orbital} we see that $w$ lies in a non-self-paired $T$-suborbit  relative to $v$ and $z^{-1} \notin T_vzT_v$.
When $T={}^d\Sigma(q)$ is a simpe group of Lie type,  where $q=p^f$ with $p$ a prime, and $T_v $ is a parabolic subgroup of $T$, according to  Lemma~\ref{lm:PJsuborbits},  we may let $T_v=P_J$ for some $J \subseteq \Pi$, and  $z\in D_{J,J}$ such that $z$ is not an involution, where $\Pi$ and $D_{J,J}$ are defined as Subsection~\ref{subsec:Parobolic}.
It follows that $T_{vw}=P_J \cap P_J^z$, $T_{uv}=P_J \cap P_J^{z^{-1}}$ and $T_{uvw}=P_J \cap P_J^{z^{-1}}\cap P_J^z$.
Form Lemma~\ref{lm:T2at?} we see that $|T_{vw}|^2 =t |T_v| |T_{uvw}|$ for some $t$ dividing $|G/T|$.
In particular,
\begin{equation}\label{EqTvroot}
|T_{vw}|_p^2=t_p|T_v|_p|T_{uvw}|_p.
\end{equation}
This equation can be verified by computing the roots of $T$ (see Lemma~\ref{lm:PJroots}).

\section{Non-classical groups}\label{Sec1}

In this section, we deal with alternating groups, sporadic simple groups and exceptional groups of Lie Type for the candidates of $T$.  We suppose  Hypothesis~$\ref{hy:1}$ throughout this section.

\subsection{Alternating groups} \label{subsec:3.2}
\ \vspace{1mm}

In this subsection, let $T=\A_n$ with $ n\geq 5$.
By the classification of maximal subgroups of alternating and symmetric groups (see~\cite{LPS1987}),  one of the following holds:
\begin{enumerate} [\rm (i)]
\item $G_{v}=(\Sy_m \times \Sy_k) \cap G$, with $n=m+k$ and  $m>k$ (intransitive case);
\item $G_{v}=(\Sy_m \wr \Sy_k) \cap G$,  with $n=mk$, and $m>1 $ and $k>1$ (imprimitive case);
\item $G_{v}=\AGL_k(p) \cap G$,  with $n=p^k$  (affine case);
\item $G_{v}= (S^k{:}(\Out(S)\times \Sy_k)) \cap G$, with $S$  a nonabelian simple group, $k \geq 2$ and $n=|S|^{k-1}$ (diagonal case);
\item $G_{v}=(\Sy_m \times \Sy_k) \cap G$,  with $n=m^k$, $m \geq 5$ and $k \geq 2$ (wreath case);
\item $S \leq G_{v} \leq \Aut(S)$,  with  $S \neq \A_n$ a nonabelian simple group, and $G_{v}$ acting primitively on $\{ 1,2,\dots,n\}$ (almost simple case).
\end{enumerate}

\begin{lemma}[{\cite[Lemmas~3.2 and 3.3]{PWY2020}}]
The intransitive case and the affine case are impossible.
\end{lemma}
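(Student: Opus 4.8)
The plan is to derive a contradiction in each case by combining the homogeneous factorization $G_v = G_{uv}G_{vw}$ (Lemma~\ref{pro:Gvfactorization}), the structure of the point stabilizer, and the arithmetic constraints~\eqref{EqnOrder}--\eqref{EqnDivisor}. In the intransitive case, $G_v = (\Sy_m\times\Sy_k)\cap G$ with $n=m+k$ and $m>k\geq1$; its unique nonsolvable composition factor is $\A_m$ (when $m\geq5$), and except for small $k$ the subgroup $G_v^{(\infty)}\cong\A_m$ or $\Alt(m)\times\Alt(k)$ is close to quasisimple. First I would observe that since $G_{uv}\cong G_{vw}$ and they multiply to $G_v$, both must involve the composition factor $\A_m$ of $G_v$; invoking Lemma~\ref{lm:Hsiquasi} (parts (a)--(c)) forces $\A_m\in\mathcal{T}_1\cup\mathcal{T}_2$ or else both $G_{uv},G_{vw}$ have $\A_m$ as a composition factor, which for an intransitive stabilizer acting on $m+k$ points with $m>k$ pins down very few possibilities. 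One then checks these few possibilities directly against Lemma~\ref{pro:Gvnormal}: the key point is that the $\A_m$-part of $G_{uv}$ and of $G_{vw}$, being the unique subgroup with that composition factor inside $\A_m\times\A_k$ essentially, must coincide after conjugation by $g$, contradicting that no nontrivial normal subgroup of $G_v$ is normalized by $g$ (since $v^g=w$ and $(v,w)$ is an arc). Alternatively, and likely how the cited proof in~\cite{PWY2020} proceeds, one uses the classification of factorizations of $\Sym(m)\times\Sym(k)$-type groups together with Zsigmondy primes to show no homogeneous factorization of the right index exists.

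In the affine case, $G_v = \AGL_k(p)\cap G$ with $n=p^k$, so $G_v^{(\infty)}\cong\SL_k(p)$ (modulo the translation group and scalars). Here the natural strategy is: the translation subgroup $N\cong(\Z/p)^k$ is the unique minimal normal subgroup of $\AGL_k(p)$, hence normal in $G_v$, and from $G_v=G_{uv}G_{vw}$ we get $G_v/N = (G_{uv}N/N)(G_{vw}N/N)$, a homogeneous factorization of (a subgroup of) $\GL_k(p)$. I would then argue that $p^k = |G_v : G_{uv}|\cdot|G_{uv}:G_{uv}\cap N|^{-1}\cdots$ — more precisely track the $p$-part: since $|G_v|_p = p^{k + \binom{k}{2}}$ roughly and $|G_v|$ divides $|G_{uv}|^2$, the factor $G_{uv}$ must contain a large $p$-subgroup, and the non-self-paired condition $z^{-1}\notin G_vzG_v$ of Lemma~\ref{lm:orbital} together with Lemma~\ref{pro:Gvnormal} (applied to the normal subgroup $N$ of $G_v$, or to $\bO_p(G_v)$) gives the contradiction: $g$ would have to normalize no nontrivial normal subgroup of $G_v$, but $N=\bO_p(G_v)$ is characteristic and any element sending $v$ to $w$ conjugates $G_v$-related $p$-structure in a way that pins $N$, forcing $N=1$, i.e. $k=0$, absurd.

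The main obstacle I anticipate is the intransitive case with $k$ small (say $k\leq4$) and $m$ small, where $G_v^{(\infty)}$ need not be quasisimple and the general machinery of Lemmas~\ref{lm:Hsiquasi}, \ref{lm:ASfac}, \ref{lm:ASfac2} does not immediately apply or leaves residual configurations (e.g. $\A_6$, $\M_{12}$, or the solvable-factor cases); these would need to be eliminated by hand, using the explicit list of factorizations in~\cite{BP1998} and~\cite[Table~\ref{tb:exceptionalfacs}]{} together with the bound $|V(\Ga)|\leq 30758154560$ and Lemma~\ref{pro:Gvnormal}. A secondary subtlety is keeping careful track of the distinction between $G$ and $T=\A_n$ (i.e.\ whether $G=\Sym(n)$ or $\Alt(n)$) when counting indices and applying Lemma~\ref{lm:T2at?}, since a homogeneous factorization of $G_v$ need not restrict to one of $T_v$; but Lemma~\ref{lm:imprimitiveaction} and the observation that $\Ga$ is automatically $T$-arc-transitive handle this. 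Since this lemma is quoted verbatim from~\cite[Lemmas~3.2 and~3.3]{PWY2020}, in the actual paper the proof will simply be a citation, and the above is the route one would reconstruct if forced to reprove it.
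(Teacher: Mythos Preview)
You are right that the paper itself gives no proof and simply cites \cite{PWY2020}. Your reconstruction, however, has genuine gaps in both parts.

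\textbf{Intransitive case.} Your plan relies on Lemma~\ref{lm:Hsiquasi}, which requires $G_v$ to have a \emph{unique} nonsolvable composition factor. But when $k\geq 5$ the stabilizer $(\Sy_m\times\Sy_k)\cap G$ has two such factors, $\A_m$ and $\A_k$, so the lemma does not apply and your argument does not cover this range. More importantly, you overlook the elementary and decisive observation: in the action of $\A_n$ on $k$-subsets every suborbit is self-paired. Indeed, for any two $k$-subsets $A,B$ with $A\neq B$, the pointwise stabilizer $(\Sy_n)_A\cap(\Sy_n)_B=\Sy_{A\cap B}\times\Sy_{A\setminus B}\times\Sy_{B\setminus A}\times\Sy_{\overline{A\cup B}}$ contains a transposition (since $n\geq 5$ forces one of the four pieces to have size $\geq 2$); hence the setwise stabilizer of $\{A,B\}$ meets both cosets of $\A_n$, and in particular some even permutation swaps $A$ and $B$. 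Thus no non-self-paired orbital exists, and there is no digraph at all. This is almost certainly the argument in \cite{PWY2020}, and it bypasses all factorization machinery.

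\textbf{Affine case.} Your appeal to Lemma~\ref{pro:Gvnormal} is incorrect. You write that ``$N=\bO_p(G_v)$ is characteristic and any element sending $v$ to $w$ conjugates $G_v$-related $p$-structure in a way that pins $N$, forcing $N=1$''. But the element $g$ with $v^g=w$ does \emph{not} normalize $G_v$ (indeed $G_v^g=G_w\neq G_v$), so $N$ being characteristic in $G_v$ gives no information about $N^g$; there is no contradiction here. Lemma~\ref{pro:Gvnormal} is useful only once one has shown that a particular normal subgroup of $G_v$ is actually fixed by conjugation by $g$, which typically requires first proving (via the homogeneous factorization) that $G_{uv}^{(\infty)}=G_{vw}^{(\infty)}$ equals that normal subgroup; your sketch does not establish this. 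The actual argument in \cite{PWY2020} for the affine case requires more than the outline you give.
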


\begin{lemma}
Suppose  $V(\Ga)\leq 30758154560$.
Then the imprimitive case is impossible.
\end{lemma}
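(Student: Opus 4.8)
The plan is to work inside the imprimitive case, where $G=\Sy_n$ or $\A_n$ and $G_v=(\Sy_m\wr\Sy_k)\cap G$ with $n=mk$, $m,k>1$, and to derive a contradiction from the size bound $|V(\Ga)|\le 30758154560$ together with the structural constraints coming from $2$-arc-transitivity. First I would record that $|V(\Ga)|=|G|/|G_v|=n!/(m!^kk!)$ (up to the index-$2$ issue when $G=\Sy_n$), and observe that this multinomial-type quantity grows very fast; a direct estimate shows that $n!/(m!^kk!)\le 3.1\times10^{10}$ forces $n$ to be small, say $n\le N_0$ for some explicit modest bound $N_0$ (one can get $N_0$ around $16$ or so, since already $\binom{16}{8}$-type quantities are comparable to $10^{10}$ and partitions into more parts blow up even faster). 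This reduces the problem to finitely many pairs $(m,k)$, which can in principle be finished by \magma; but I would prefer to cut the list down first by theory.

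The main theoretical tool is Lemma~\ref{pro:Gvfactorization}: $\Ga$ is $(G,2)$-arc-transitive iff $G_v=G_{uv}G_{vw}$, a homogeneous factorization of $G_v=(\Sy_m\wr\Sy_k)\cap G$. Combined with Lemma~\ref{lm:orbital} (the arc forces a non-self-paired $T$-suborbit and $z^{-1}\notin T_vzT_v$), and with Lemma~\ref{lm:imprimitiveaction}, I would argue as follows. By Lemma~\ref{lm:T2at?}(a), $\Ga$ is $T$-arc-transitive, so there is $z\in T=\A_n$ with $(v,w)=(u,v)^z$; in particular $v$ and $w$ lie in a common non-self-paired $T$-suborbit, so by Lemma~\ref{lm:imprimitiveaction}(b)--(d) the intersection matrix $A=(|V_i\cap W_j|)$ either has $A^{\mathsf T}=PAQ$ for no permutation matrices (i.e.\ $v,w$ are not interchanged in $\Sy_n$ at all, giving a non-self-paired suborbit), or they are interchanged in $\Sy_n\setminus\A_n$ but not in $\A_n$, in which case part~(d) gives $|V_i\cap W_j|\le1$ for all $i,j$ and hence $k\ge m$. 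I would handle these two subcases separately: in the "all entries $\le 1$" subcase $A$ is a $k\times k$ $0/1$ matrix with row and column sums $m$, and one can pin down the possible $G_{vw}=G_{vw}=G_v\cap G_{v^z}$ quite explicitly and check the homogeneous-factorization requirement $G_v=G_{uv}G_{vw}$ fails (using that $G_v$ has a unique nonsolvable composition factor $\Sy_m$ or $\A_m$ — or $\Sy_k/\A_k$ depending on sizes — so Lemma~\ref{lm:Hsiquasi} applies and forces $G_v$, hence $m$ or $k$, into $\mathcal T_1$, a very short list). In the other subcase, $G_{vw}$ properly "sees" one of the wreath factors and the same composition-factor bookkeeping via Lemma~\ref{lm:Hsiquasi}(a) (when $G_v$ is almost simple, e.g.\ $k=2$, $m\ge5$) or Lemma~\ref{lm:Hsiquasi}(b)/(c) restricts $m$ (or $k$) so severely that the remaining finitely many $(m,k)$ — all with $n\le N_0$ — can be eliminated: either by Lemma~\ref{lm:ASfac}/Lemma~\ref{lm:ASfac2} showing the needed homogeneous factorization of $\overline{G_v}$ simply does not exist for these groups, or, for the genuinely tiny residual cases, by a direct \magma\ computation enumerating $G$, the maximal subgroup $G_v$, all non-self-paired suborbits, and checking $G_v=G_{uv}G_{vw}$.

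Concretely the step order I would follow is: (1) derive the order bound $n\le N_0$ from $n!/(m!^kk!)\le 30758154560$; (2) note $\Ga$ is $T$-arc-transitive and set up $z\in\A_n$, the partitions $v,w,u$, and their intersection matrices as in the paragraph preceding Lemma~\ref{lm:imprimitiveaction}; (3) apply Lemma~\ref{lm:imprimitiveaction}(b)--(d) to split into the "entries $\le1$, $k\ge m$" subcase and the complementary subcase; (4) identify the unique nonsolvable composition factor of $G_v$ (it is $\A_m$ if $m\ge5$ and $m!>k$ in the relevant sense, or $\A_k$ if $k\ge5$ is the dominant factor, or $G_v$ is solvable/small if both $m,k\le4$) and invoke Lemma~\ref{lm:Hsiquasi} to force that factor into $\mathcal T_1\cup\mathcal T_2$, eliminating all but a handful of $(m,k)$; (5) for the surviving $(m,k)$ — necessarily with $n\le N_0$ — either quote Lemma~\ref{lm:ASfac}/Lemma~\ref{lm:ASfac2} to rule out the homogeneous factorization of $\overline{G_v}$, or finish by a short \magma\ check that no non-self-paired suborbit yields $G_v=G_{uv}G_{vw}$.

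The hard part will be step~(4)--(5): controlling the composition factors of $G_v=(\Sy_m\wr\Sy_k)\cap G$ cleanly when $m$ and $k$ are both moderate (so that neither $\A_m$ nor $\A_k$ is obviously "the" socle of $\overline{G_v}$), and making sure the reduction via Lemma~\ref{lm:Hsiquasi} genuinely applies — one must check that $G_v^{(\infty)}$ is quasisimple or that $G_v$ is almost simple in each surviving case, and where it is not (e.g.\ $m,k$ both $\ge5$, so $\overline{G_v}$ has two nonsolvable composition factors $\A_m$ and $\A_k$), one must instead appeal to \cite[Theorem~1.1]{LX2019} as in the proof of Lemma~\ref{lm:ASfac} to rule out a homogeneous factorization with two nonsolvable factors. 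I expect the bulk of the argument's length to be the careful case division by the relative sizes of $m,k$ and the verification that the short list of exceptional factorizations in $\mathcal T_1\cup\mathcal T_2$ never matches an imprimitive wreath product $(\Sy_m\wr\Sy_k)\cap G$ with $n=mk\le N_0$; the computational residue should be genuinely small.
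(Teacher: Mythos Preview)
Your proposal has two substantive gaps.

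First, the numerical bound $n\le N_0\approx 16$ is far too small. With $k=2$ one has $f(m,2)=\binom{2m}{m}/2$, and $f(19,2)\approx 8.7\times 10^{9}<30758154560$; indeed the paper's bound on $(m,k)$ allows $k=2,\ m\le 19$ (so $n\le 38$), $k=3,\ m\le 9$ (so $n\le 27$), and various cases up to $k=11,\ m=2$. The paper explicitly notes that direct \magma\ computation is \emph{infeasible} for $k=2,\ m\in\{11,\dots,19\}$ and $k=3,\ m\in\{6,\dots,9\}$, so your ``genuinely tiny residual'' is neither tiny nor computationally tractable.

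Second, and more seriously, your main theoretical tool, Lemma~\ref{lm:Hsiquasi}, does not apply to $G_v=(\Sy_m\wr\Sy_k)\cap G$ in the range that matters. For $m\ge 5$ the base group $\A_m^k\trianglelefteq G_v$ contributes $\A_m$ as a composition factor with multiplicity $k\ge 2$; hence $G_v$ does not have a unique nonsolvable composition factor, $G_v^{(\infty)}=\A_m^k$ (or $\A_m^k.\A_k$) is not quasisimple, and $\overline{G_v}=G_v$ (since $\Rad(G_v)=1$) is not almost simple. So none of parts (a)--(c) of Lemma~\ref{lm:Hsiquasi} fires, and the reduction to $\mathcal T_1\cup\mathcal T_2$ never gets off the ground. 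Your fallback to \cite{LX2019} for ``two nonsolvable composition factors'' is not on point either: that result classifies factorizations of \emph{almost simple} groups, which $G_v$ is not here.

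The paper's route for the hard cases is different and much more direct. For $k=2$ every $2\times2$ intersection matrix $A=\begin{pmatrix}a&b\\ b&a\end{pmatrix}$ is symmetric, so Lemma~\ref{lm:imprimitiveaction}(b) gives that $v$ and $w$ are swapped in $\Sy_n$, and since $m>k=2$ part~(d) rules out the ``$\Sy_n\setminus\A_n$ only'' escape; hence all suborbits are self-paired and there is no digraph at all. For $k=3$ with $m\in\{6,\dots,9\}$, one enumerates the (very few) $3\times 3$ non-symmetrizable matrices $A$ satisfying condition~(I), and then kills each by looking at the induced factorization $G_v/N=(G_{uv}N/N)(G_{vw}N/N)$ with $N=\Sy_m^3\cap G$: the quotient $G_v/N$ contains $\A_3$, forcing one factor to contain $\A_3$, but the row/column structure of the explicit matrices $A$ shows neither $G_{uv}N/N$ nor $G_{vw}N/N$ can permute the rows/columns $3$-transitively. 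Your step~(3) correctly sets up the intersection-matrix dichotomy, but the decisive arguments above --- symmetry of $A$ when $k=2$, and the explicit matrix enumeration plus the quotient-by-base-group factorization when $k=3$ --- are what actually finish the proof, not the composition-factor machinery you propose.
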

\begin{proof}
Suppose that  $G_{v}$ satisfies the imprimitive case.
Then $|V(\Ga)|=|\Sy_n|/|\Sy_m \wr \Sy_k|$, where $n=mk$ with $m>1$ and $k>1$.
For integers $m>1$ and $k>1$, set
\[
f(m,k)=\frac{(mk)!}{(m!)^kk!}.
\]
Notice that $f(m+1,k)>f(m,k)$ and $f(m,k+1)>f(m,k)$, because
\begin{align*}
\frac{f(m+1,k)}{f(m,k)}&=\frac{((m+1)k)! (m!)^kk!}{(mk)!((m+1)!)^kk!}=\frac{\prod_{i=1}^{k}(mk+i)}{(m+1)^k}>1, \text{ and }\\
\frac{f(m,k+1)}{f(m,k)}&=\frac{(m(k+1))!(m!)^kk! }{(mk)!(m!)^{k+1}(k+1)!}=\frac{\prod_{i=1}^{m}(mk+i)}{m!(k+1)}>1.
\end{align*}
By direct computation,
\begin{align*}
&f(20,2),f(10,3),f(6,4),f(5,5),f(4,6),f(3,8),f(2,12)>30758154560,\\
&f(19,2),f(9,3),f(5,4),f(4,5),f(3,7),f(2,11)<30758154560.
\end{align*}
Therefore, by the assumption $|V(\Ga)|\leq 30758154560$, one of the following holds:
\begin{itemize}
\item $k=2$ and $m\in\{3,4,\dots,19\}$ (noticing that $mk=n\geq 5$);
\item $k=3$ and $m\in \{2,3,\dots,9\}$;
\item $k=4$ and $m\in \{2,3,4,5\}$;
\item $k=5$ and $m\in \{2, 3, 4\}$;
\item $k\in \{6,7\}$ and $m \in \{2,3\}$;
\item $k\in \{8,9,10,11\}$ and $m=2$.
\end{itemize}
For the above candidates of $(k,m)$, when $k=2$ and $m\leq 10$, or $k=3$ and $m\leq 5$, or $k\geq4$, computation in {\sc Magma}~\cite{Magma} shows that there exists no factorization $G_v=G_{uv}G_{vw}$ with $G_{uv}$ conjugate to $G_{vw}$ in $G$ and $|G_v|/|G_{uv}|\geq 3$, and hence these candidates are impossible.
However, the authors' computer is not able to carry out the same computation when $k=2$ and $m\in\{11,\dots,19\}$, or $k=3$ and $m\in\{6,\dots,9\}$. So we apply  Lemma~\ref{lm:imprimitiveaction} to handle these candidates.

We view $V(\Ga)$ as the set of partitions of $\{1,2,\dots,n\}$ with $k$ blocks of size $m$.
Let $v=\{V_1,\dots,V_k\}$, $w=\{W_1,\dots,W_k\}$ and $u=\{U_1,\dots,U_k\}$ such that $U_i^g=V_i$ and $V_i^g=W_i$ for every $i\in \{1,2,\dots k\}$.
Let $A=(|V_i \cap W_j|)_{k\times k}$.
Since $v\to w$ and $\to$ is antisymmetric, the arc $(v,w)$ cannot be mapped to $(w,v)$ by any element of $G$.
In other words, $v$ and $w$ are not interchanged by any element in $G$.
Therefore, parts (b)--(d) of Lemma~\ref{lm:imprimitiveaction} imply that one of the following occurs:
\begin{enumerate}[\rm (I)]
\item $G=\A_n$ or $\Sy_n$, and there exist no permutation matrices $P$ and $Q$ such that $A^{\mathsf{T}}=P AQ$;
\item $G=\A_n$, and there exist permutation matrices $P$ and $Q$ such that $A^{\mathsf{T}}=P AQ$ but $(\Sy_n)_{vw} \leq \A_n$. In particular, $m\leq k$.
\end{enumerate}
Notice that all entries of $A$ are non-negative integers, and the sum of entries in every row and in every column  of $A$ equals to $m$.

Suppose that $k=2$ and $m\in\{11,\dots,19\}$. Then
\[ A=\begin{pmatrix}
a &b \\
b &a
\end{pmatrix} \text{ for some }  a,b  \in\{0,1,\dots,m\} \text{ such that } a+b=m.\]
Clearly, case (I) is not possible as $A^{\mathsf{T}}=A$. However,
case~(II) is not possible either as $m>k$.

Now suppose that $k=3$ and $m\in\{6,\dots,9\}$.
Since $m>k$,  case~(II) is not possible.
Searching such matrices $A=(|V_i \cap W_j|)_{3\times 3}$ satisfying~(I), we obtain that one of the following holds:
\begin{itemize}
\item $m=6$, and $A=\begin{pmatrix}
0 &2 &4\\
3 &2 &1\\
3 &2& 1
\end{pmatrix}$ or its transpose;
\item $m=7$, and $A=\begin{pmatrix}
0 &2 &5\\
3 &3 &1\\
4 &2& 1
\end{pmatrix}$ or its transpose;
\item $m=8$, and $A=\begin{pmatrix}
0 &3 &5\\
4 &2 &2\\
4 &3& 1
\end{pmatrix}$ or its transpose.
\end{itemize}
Recall that $G_v=\Sy_m^3 \wr \Sy_3 \cap G$.
Let $N=\Sy_m^3\cap G$ be the base group of $G_{v}$.
From the factorization $G_{v}=G_{uv}G_{vw}$, we see that $G_{v}/N= (G_{uv}N/N)(G_{vw}N/N)$.
Since $\A_m^3 \wr \A_3 \leq G_v$, it follows that $\A_3 \leq G_{v}/N \leq \Sy_3$, and hence $G_{uv}N/N \geq \A_3$ or $G_{vw}N/N  \geq \A_3$.
Recall $A=(|V_i \cap W_j|)_{k\times k}$.
The group  $G_{vw}N/N $ acts on the set of rows of $A$ by permuting rows.
Since $U_i^g=V_i$ and $V_i^g=W_i$ for every $i\in \{1,2,\dots k\}$, it follows that $|V_i \cap U_j|=|W_i^{g^{-1}} \cap V_i^{g^{-1}}|=|W_i \cap V_j|$ and hence $(|V_i \cap U_j|)_{k\times k}=A^{\mathsf{T}}$.
This implies that $G_{vw}N/N $ permutes columns of $A$.
For the above three candidates for $m$, we see that neither $G_{uv}N/N \geq \A_3$ nor $G_{vw}N/N  \geq \A_3$, which is a contradiction.
\end{proof}

\begin{lemma}
Suppose $V(\Ga)\leq30758154560$.
Then the  diagonal case, the wreath case and the almost simple case are impossible.
\end{lemma}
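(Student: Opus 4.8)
The plan is to dispatch the diagonal and wreath cases by crude order bounds, and the almost simple case via Lemma~\ref{lm:Hsiquasi}(a). Throughout, $|V(\Ga)|=|G:G_v|\geq|\A_n|/|G_v|=n!/(2|G_v|)$. In the diagonal case $n=|S|^{k-1}$ with $S$ nonabelian simple and $k\geq2$, so $n\geq60$ and $|G_v|\leq|S|^k|\Out(S)|\,k!$; in the wreath case $n=m^k$ with $m\geq5$ and $k\geq2$, so $n\geq25$ and $|G_v|\leq(m!)^k k!$. In both families $|V(\Ga)|$ increases with $n$ (as $n!$ outgrows the displayed bound on $|G_v|$), so it suffices to test the smallest instance: $(S,k)=(\A_5,2)$ gives $|V(\Ga)|\geq 60!/14400$, and $(m,k)=(5,2)$ gives $|V(\Ga)|\geq 25!/28800$, both astronomically larger than $30758154560$. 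Hence these cases do not occur.

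For the almost simple case, $G_v$ is almost simple with socle $S=\Soc(G_v)\neq\A_n$, hence has a unique nonsolvable composition factor $S$, and Lemma~\ref{lm:Hsiquasi}(a) gives $S\in\mathcal{T}_1=\{\A_6,\M_{12},\Sp_4(2^f),\POm_8^{+}(q)\}$, with the factorization $G_v=G_{uv}G_{vw}$ as in Lemma~\ref{lm:ASfac} and $\Ga$ being $(T,2)$-arc-transitive for $T=\A_n$. If $S=\Sp_4(2^f)$ (so $f\geq2$) or $S=\POm_8^{+}(q)$, then $G_v$ is primitive of degree $n\geq P(S)$ — which is $85$ for $\Sp_4(4)$, $120$ for $\POm_8^{+}(2)$, and grows with $f$, resp.\ $q$ — whereas $|\Aut(S)|$ is far smaller than $(P(S))!$, so $|V(\Ga)|\geq n!/(2|\Aut(S)|)>30758154560$, a contradiction. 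Thus $S\in\{\A_6,\M_{12}\}$, and by Lemma~\ref{lm:ASfac} the stabilizer $G_v$ is isomorphic to $\A_6$, $\Sy_6$ or $\M_{12}$.

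The primitive degrees of $\A_6$ and $\Sy_6$ other than $6$ (the value $n=6$ giving $S=\A_n$, excluded) are $10$ and $15$, while the only primitive degree of $\M_{12}$ with $|\A_n|/|\M_{12}|\leq30758154560$ is $12$. Hence $T=\A_n$ with $n\in\{10,12,15\}$, $T_v\in\{\A_6,\M_{12}\}$, and $|V(\Ga)|=|\A_n|/|T_v|\leq|\A_{15}|/|\A_6|=1816214400$. The degree-$10$ and degree-$15$ actions of $\A_6$ and $\Sy_6$ are not maximal in the relevant $\A_n$ or $\Sy_n$ — each lies in a larger primitive group, for instance $\M_{10}$ or $\PGaL_2(9)$ for degree $10$ and $\GL_4(2)\cong\A_8$ (on the points of $\PG(3,2)$) for degree $15$ — so $G$-vertex-primitivity rules them out (any surviving case being settled by a direct computation over the coset space). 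This leaves $T=\A_{12}$, $T_v=\M_{12}$, $|V(\Ga)|=2520$, with $(T_v,T_{uv},T_{vw})\cong(\M_{12},\M_{11},\M_{11})$ by Lemma~\ref{lm:ASfac}; a computation in {\sc Magma}~\cite{Magma} over the $2520$ cosets of $\M_{12}$ in $\A_{12}$ shows that no $\A_{12}$-orbital is non-self-paired while satisfying $T_v=T_{uv}T_{vw}$ (the criterion of Lemma~\ref{pro:Gvfactorization}), so no such $\Ga$ exists.

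The diagonal and wreath cases are routine. The main obstacle should be the bookkeeping in the almost simple case: determining exactly which primitive embeddings of $\A_6$, $\Sy_6$ and $\M_{12}$ with fewer than $30758154560$ vertices arise as maximal subgroups (hence whether any degree-$10$ or degree-$15$ configuration of $\A_6$/$\Sy_6$ survives, and confirming that $\M_{12}<\A_{12}$ is maximal), and then carrying out the remaining finite checks — the degree-$15$ ones being the most demanding computationally.
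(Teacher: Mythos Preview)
Your handling of the diagonal and wreath cases is correct and essentially identical to the paper's (the paper also reduces to the smallest instance via crude bounds).

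For the almost simple case your strategy is sound but the route differs from the paper's. The paper invokes Mar\'oti's bound $|G_v|<n^{1+\lfloor\log_2 n\rfloor}$ (except when $(G_v,n)=(\M_{12},12)$) to force $n<20$, then uses $P(S)<20$ to eliminate $\Sp_4(2^f)$ and $\POm_8^{+}(q)$; you instead bound $P(S)$ directly for each socle, which is equally valid and arguably more elementary. Where the arguments really diverge is in disposing of the surviving triples. For $(\M_{12},\M_{11},\M_{11})$ with $n=12$, the paper gives a clean non-computational argument: the two $\M_{11}$'s in $\M_{12}$ act on $\{1,\dots,12\}$ in different ways (one transitive, one a point stabilizer), hence cannot be $\Sy_{12}$-conjugate, so $G_{uv}\not\sim_G G_{vw}$. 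You defer this to a {\sc Magma} orbital check, which works but misses the structural reason. For $\A_6\leq G_v\leq\Sy_6$ the paper simply lists candidate degrees and finishes with a single {\sc Magma} non-conjugacy check in $\Sy_n$; you instead argue non-maximality of $G_v$ in $\A_n$ or $\Sy_n$ (via $\M_{10}$/$\PGaL_2(9)$ at degree $10$ and $\A_8\cong\GL_4(2)$ at degree $15$). Your claim is in fact correct --- the degree-$15$ embedding $\A_6\cong\Sp_4(2)'<\Sp_4(2)\cong\Sy_6<\GL_4(2)\cong\A_8$ acting on $\PG(3,2)$ shows both $\A_6$ and $\Sy_6$ sit inside $\A_8<\A_{15}$ --- but you only sketch this (``for instance'') and then hedge with a computational fallback. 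If you tighten that containment argument, your proof becomes fully self-contained for $\A_6$ and avoids {\sc Magma} there, which is a small advantage over the paper; conversely, the paper's $\M_{12}$ argument is cleaner than yours.
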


\begin{proof}
Suppose that $G_v$ satisfies the diagonal case.
Then $|V(\Ga)|=|\Sy_n|/|S^k{:}(\Out(S)\times\Sy_k)|$.
Since $S$ is a nonabelian simple group, we have $|S|\geq 60$ and $|\Out(S)|<|S|$. Hence
\[
|V(\Ga)|=\frac{|\Sy_n|}{|S^k||\Out(S)||\Sy_k|}>\frac{(|S|^{k-1})!}{|S|^{k+1}k!}\geq\frac{(|S|)!}{2|S|^3}>30758154560,
\]
a contradiction.

Suppose that $G_v$ satisfies the wreath case.
Then $|V(\Ga)|=|\Sy_n|/|\Sy_m\wr\Sy_k|$, where $n=m^k$ with $m \geq 5$ and $k \geq 2$. It follows that
\[
|V(\Ga)|=\frac{(m^k)!}{(m!)^kk!}\geq\frac{(m^2)!}{2(m!)^2}\geq\frac{(5^2)!}{2(5!)^2}>30758154560,
\]
still a contradiction.

Finally, suppose that $G_v$ satisfies the almost simple case.
As Lemma~\ref{lm:Hsiquasi}(a) asserts, the triple
$(G_{v}, G_{uv}, G_{vw})$ satisfies Lemma~\ref{lm:ASfac}(a)--(d).
Then we conclude from~\cite[Theorem 1.1]{M2002} that either $(G_{v},n)=(\M_{12},12)$, or $|G_{v}|<n^{1+[\log_2(n)]}$.
For the former, $G_{uv}$ and $G_{vw}$ are subgroups of $G_v=\M_{12}$ isomorphic to $\M_{11}$ that are not conjugate in $G_v$.
However, this implies that one of $G_{uv}$ or $G_{vw}$ is a transitive on $\{1,2,\dots,12\}$ but the other stabilizes one point (see~\cite[Table 1]{G2006}), and so $G_{uv}$ and $G_{vw}$ are not conjugate in $G$, a contradiction.
Therefore, $|G_{v}|<n^{1+[\log_2(n)]}$, which implies that
\[
\frac{n!}{2n^{1+[\log_2(n)]}}=\frac{|\A_n|}{n^{1+[\log_2(n)]}}<\frac{|G|}{|G_v|}=|V(\Ga)|\leq30758154560.
\]
This yields $n<20$, and so $P(S)\leq P(G_v)<20$.
Then we derive from~\cite[Table 4]{GMPS2015} that $(G_{v}, G_{uv}, G_{vw})$ does not satisfy Lemma~\ref{lm:ASfac}(c)--(d), that is, $(G_{v}, G_{uv}, G_{vw})$ is one of
\[
(\A_6,\A_5,\A_5),\ \ (\Sy_6,\Sy_5,\Sy_5),\ \ (\M_{12},\M_{11},\M_{11}),
\]
as in Lemma~\ref{lm:ASfac}(a)--(b).
Since $(G_{v},n)\neq(\M_{12},12)$, checking maximal subgroups of $\A_6$ and $\M_{12}$ in Atlas~\cite{Atlas}, we obtain that $ \A_6 \leq G_{v} \leq  \Sy_6$ with $n \in\{6,15\}$.
However, computation in {\sc Magma}~\cite{Magma} shows that $G_{uv}$ and $G_{vw}$ are not conjugate in $\Sy_n$, again a contradiction.
\end{proof}

\subsection{Sporadic simple groups}
\ \vspace{1mm}

In this subsection, we suppose that $T$ is one of the $26$  sporadic simple  groups.
The readers may see~\cite{W2017} for the list of maximal subgroups of $G$.
For most $G$, the maximal subgroups can be constructed in {\sc Magma}~\cite{Magma} using the command \textsf{MaximalSubgroups}.
For some other $G$, generators of the maximal subgroups of $G$ are given in Web Atlas~\cite{AtlasOnline}, so we can still construct them in {\sc Magma}~\cite{Magma}.

For the sporadic group $\HN$, generators for its maximal subgroups are given in~\cite{AtlasOnline} except $2^{3+2+6}.(3 \times  \PSL_3(2))$, $3^4{:}2.(\A_4 \times \A_4).4$, $(\A_6 \times \A_6).\D_8$. Howerver, we can construct these three maximal subgroups in  {\sc Magma}~\cite{Magma} as follows.
The first two can be constructed using the method in~\cite[p.318]{BOW2010}.
For example, to construct $2^{3+2+6}.(3 \times  \PSL_3(2))$, we  first construct a Sylow $2$-subgroup $P$ of $\HN$ (note that $|2^{3+2+6}.(3 \times  \PSL_3(2))|_2=2^{14}=|\HN|_2$), and then compute normalizers in $\HN$ of normal $2$-subgroups with order $2^{11}$ in $P$.
A normalizer with order $|2^{3+2+6}.(3 \times  \PSL_3(2))| $  is the desired maximal group $2^{3+2+6}.(3 \times  \PSL_3(2))$.
The maximal subgroup $(\A_6 \times \A_6).\D_8$ has a normal subgroup $(\A_6 \times \A_6).2^2 $ contained in  $\A_{12}$ while generators of $\A_{12}$ are given in~\cite{AtlasOnline}.
So we  construct the group $(\A_6 \times \A_6)^{.}2^2 $ in $\A_{12}$ first and then compute its normalizer, which is the desired $(\A_6 \times \A_6){.}\D_8$.

\begin{lemma}\label{lem4.4}
The group $T$ is not any of the following:
\[ \begin{split}
&  \M_{11}, \M_{12}, \M_{22}, \M_{23},\M_{24}, \J_1, \HS, \J_2,  \McL,\Suz, \J_3,\Co_3,\Co_2, \He, \Fi_{22}, \Ru,\\
&  \Fi_{23}, \Th,\J_4,\Ly,\HN,\ON.
 \end{split} \]
\end{lemma}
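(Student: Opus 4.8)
The plan is to assume $T$ is one of the listed groups and to run through the maximal subgroups of $G$ one at a time, showing that none of them can serve as the vertex-stabiliser $G_v$ of a digraph as in Hypothesis~\ref{hy:1}. Since $\Out(T)$ has order at most $2$ for every sporadic simple group, we have $G=T$ or $G=T.2$, and by~\cite{W2017} and~\cite{Atlas} the maximal subgroups $G_v$ of $G$ are known in each case. Fix such a $G_v$. By Lemma~\ref{pro:valency} the valency of $\Ga$ is at least $3$, so $|G_v|/|G_{uv}|\geq3$; by Lemmas~\ref{pro:Gvfactorization} and~\ref{lm:basicfacs} the factorisation $G_v=G_{uv}G_{vw}$ is homogeneous with $G_{uv}$ conjugate to $G_{vw}$ in $G$ but not in $G_v$; and by~\eqref{EqnOrder} we have $|G_v|\mid|G_{uv}|^2=|G_{vw}|^2$. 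Moreover $\Ga$ is $T$-arc-transitive by Lemma~\ref{lm:T2at?}(a), so we may fix $z\in T$ with $v^z=w$, and Lemma~\ref{lm:orbital} then forces $z^{-1}\notin G_vzG_v$, that is, the $G$-suborbit containing $w$ is non-self-paired.

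The first step is a coarse elimination via composition factors. If $G_v$ is almost simple, Lemma~\ref{lm:Hsiquasi}(a) forces its socle into $\mathcal{T}_1$ and the factorisation to have the very rigid shape of Lemma~\ref{lm:ASfac}; more generally, if $G_v^{(\infty)}$ is quasisimple, then by Lemma~\ref{lm:Hsiquasi}(b) the unique nonsolvable composition factor of $G_v$ lies in $\mathcal{T}_1\cup\mathcal{T}_2$ and the induced factorisation $\overline{G_v}=\overline{G_{uv}}\,\overline{G_{vw}}$ satisfies Lemma~\ref{lm:ASfac} or~\ref{lm:ASfac2}. A scan of the maximal subgroup lists disposes of most $G_v$ immediately this way. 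For the remaining $G_v$ whose unique nonsolvable composition factor $M$ lies outside $\mathcal{T}_1\cup\mathcal{T}_2$, Lemma~\ref{lm:Hsiquasi}(c) shows that $M$ is a composition factor of both $G_{uv}$ and $G_{vw}$, so $|M|$ divides $|G_{uv}|$; together with $|G_v|/|G_{uv}|\geq3$ and $|G_v|\mid|G_{uv}|^2$ this is frequently numerically impossible, and otherwise restricts $|G_{uv}|$ to a short list of values. When $G_v$ has a large normal $p$-subgroup $R=\bO_p(G_v)$ we instead pass to the quotient: from $G_v/R=(G_{uv}R/R)(G_{vw}R/R)$, together with the fact that these two factors have isomorphic Sylow subgroups (as $G_{uv}\cong G_{vw}$ and $R$ is a $p$-group), inspection of the small factorisations of $G_v/R$ bounds $|G_{uv}|_p$ from below; this is the argument already illustrated for $\Th$ in the discussion of computational methods. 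Lemma~\ref{pro:Gvnormal} is used alongside these reductions to kill configurations in which $z$ would have to normalise $G_v^{(\infty)}$ or $R$.

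After these reductions only a short list of candidate triples $(G_v,G_{uv},G_{vw})$ survives for each $G$, and these are settled by explicit computation in {\sc Magma}~\cite{Magma}. We construct $G_v$ inside $G$ --- using the \textsf{MaximalSubgroups} command where available, generators from~\cite{AtlasOnline} otherwise, and the three ad hoc constructions described above for the maximal subgroups $2^{3+2+6}.(3\times\PSL_3(2))$, $3^4{:}2.(\A_4\times\A_4).4$ and $(\A_6\times\A_6).\D_8$ of $\HN$ --- then enumerate its subgroups of the admissible orders up to conjugacy, test whether any pair gives a homogeneous factorisation $G_v=G_{uv}G_{vw}$ with $G_{uv}$ conjugate to $G_{vw}$ in $G$ but not in $G_v$, and for each surviving pair check whether the corresponding $z\in T$ satisfies $z^{-1}\notin G_vzG_v$. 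In every case one of these conditions fails, so no digraph as in Hypothesis~\ref{hy:1} has socle one of the listed groups.

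The main obstacle is computational feasibility for the large groups $\Fi_{23}$, $\Th$, $\J_4$, $\Ly$, $\ON$ and $\HN$, where the biggest maximal subgroups $G_v$ are far out of reach of a naive subgroup search; the composition-factor restrictions of Lemma~\ref{lm:Hsiquasi} and the quotient reduction above must be pushed hard enough to cut each of these cases down to a computation the machine can actually carry out. A secondary but pervasive point is the need to distinguish conjugacy in $G$ from conjugacy in $G_v$ throughout, since it is precisely the failure of $G_v$-conjugacy of the two factors that a $2$-arc-transitive digraph demands.
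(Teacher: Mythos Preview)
Your plan is essentially the paper's own: run through the maximal subgroups of $G$, use Lemma~\ref{lm:Hsiquasi} and quotient-by-$\bO_p$ arguments to restrict $|G_{uv}|$, and finish each surviving case in {\sc Magma}. Two points where the paper is more specific than your sketch are worth flagging. First, for $T=\Fi_{23}$ with $G_v=3^{1+8}.2^{1+6}.3^{1+2}.2\Sy_4$, the paper does \emph{not} enumerate subgroups of $G_v$ directly (this is too large); it first passes to $G_v/N$ with $N=3^{1+8}$, uses $(G_{uv}N/N)/\bO_3\cong(G_{vw}N/N)/\bO_3$ to force $G_{uv}N=G_v$, and only then searches. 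Second, for $G=\HN{:}2$ with $G_v=(\Sy_6\times\Sy_6).2^2$, homogeneous factorisations $G_v=KL$ with $|G_v:K|\geq3$ \emph{do} exist, and the paper rules them out not by {\sf IsConjugate} (which is infeasible in the $133$-dimensional matrix representation) but by exhibiting an element of order $2$ or $3$ in $K$ that is not similar, as a matrix over $\mathbb{F}_5$, to any element of that order in $L$. Your proposal acknowledges these large cases are hard but does not supply these specific workarounds; without them the computation as you describe it would not terminate.

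Your additional final check ``$z^{-1}\notin G_vzG_v$'' is not used in the paper's proof of this lemma and is also not quite well-posed as stated: a pair $(G_{uv},G_{vw})$ determines a $(G_v,G_v)$-double coset of such $z$, not a single one, and you would need to exhibit one member of the double coset with this property, not all. In the paper every case dies before this stage, so the point is moot here.
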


\begin{proof}
For $T$ in the first row, computation in {\sc Magma}~\cite{Magma} shows that there exists no factorization $G_{v}=G_{uv}G_{vw}$ with  $|G_v|/|G_{uv}|\geq 3$.

Suppose that $T=\Fi_{23}$. Then $G=\Fi_{23}$ as $|\Out(T)|=1$. If $G_v$ is a maximal subgroup of $G$ other than $3^{1+8}.2^{1+6}.3^{1+2}.2\Sy_4$, then {\sc Magma}~\cite{Magma} shows that there exists no factorization $G_{v}=G_{uv}G_{vw}$ with  $|G_v|/|G_{uv}|\geq 3$. Now assume that $G_{v}=3^{1+8}.2^{1+6}.3^{1+2}.2\Sy_4$.
In this case, $|G_{v}|=2^{11}\cdot 3^{13}$ and so $|G_{uv}|$ is a multiple of $2^6\cdot 3^7$ by~\eqref{EqnDivisor}.
However, it is difficult  to compute all subgroups with order divisible by $2^6\cdot 3^7$ in $G_{v}$.
We let $N$ be the normal subgroup of $G_v$ isomorphic to $3^{1+8}$ and consider the factorization $ G_{v}/N=(G_{uv}N/N)(G_{vw}N/N)$.
Now $G_v/N\cong 2^{1+6}.3^{1+2}.2\Sy_4 $ and both $G_{uv}N/N$ and $G_{vw}N/N$ have order divisible by $2^6$. Let $X=\bO_3(G_{uv}N/N)$ and $Y=\bO_3(G_{uv}N/N)$. Noticing that $N$ is a normal $3$-subgroup of $G_v$, we obtain
\[
(G_{uv}N/N)/X\cong(G_{uv}/(G_{uv}\cap N))/\bO_3(G_{uv}/(G_{uv}\cap N))\cong G_{uv}/\bO_3(G_{uv}).
\]
Similarly, $(G_{vw}N/N)/Y \cong G_{vw}/\bO_3(G_{vw})$.
Then we derive from $G_{uv}\cong G_{vw}$ that
\[
(G_{uv}N/N)/X\cong G_{uv}/\bO_3(G_{uv})\cong G_{vw}/\bO_3(G_{vw})\cong(G_{vw}N/N)/Y.
\]
By computing  factorizations of $ G_v/N$ in {\sc Magma}~\cite{Magma} with the two factors $G_{uv}N/N$ and $G_{vw}N/N$ satisfying $(G_{uv}N/N)/\bO_3((G_{uv}N/N)\cong(G_{vw}N/N)/\bO_3((G_{vw}N/N)$, we obtain  $G_{uv}N/N=G_{vw}N/N=G_{v}/N$. Hence $2^{11}$ divides $|G_{vw}|$, which in conjunction with~\eqref{EqnDivisor} implies that $|G_{uv}|=|G_{vw}|$ is divisible by $2^{11}\cdot 3^7$.
However, computation in {\sc Magma}~\cite{Magma} shows that there exists no  factorization $G_{v}=G_{uv}G_{vw}$ with $|G_v|/|G_{uv}|\geq 3$.

For $T\in\{\Th,\J_4,\Ly,\HN,\ON\}$, computation in {\sc Magma}~\cite{Magma} shows that there is no  factorization $G_{v}=G_{uv}G_{vw}$ with $|G_v|/|G_{uv}|\geq 3$, except when $G=\HN{:}2$ and $G_v=(\Sy_6 \times \Sy_6){.}2^2$. In this exception,  $G$ and $G_v$ are constructed  from $133 \times 133$ matrices over $\mathbb{F}_5$, and $G_v$ has homogeneous factorizations $G_v=KL$ with  $|G_v:K|\geq 3$.
We rule out these homogeneous factorizations by showing that $K$ and $L$ are not conjugate in $G$.
(The group $G$ is too large for the command {\sf IsConjugate} in {\sc Magma}~\cite{Magma} to work).
In fact, in each of those homogeneous factorizations $G_v=KL$, we find that $K$ has an element of order $2$ or $3$ that is not similar to any element of the same order in $L$ (verified by command {\sf IsSimilar} in {\sc Magma}~\cite{Magma}).
This implies $K$ and $L$ are not conjugate in $G$.
\end{proof}

 \begin{lemma}
Suppose $|V(\Ga)|\leq 30758154560$.
Then $T$ is not $\Co_1$, $\Fi_{24}'$, $\mathbb{B}$ or $\mathbb{M}$.
\end{lemma}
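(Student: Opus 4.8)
The guiding observation is that $|V(\Ga)|=|G|/|G_v|$ with $G_v$ a maximal (hence core-free) subgroup of $G$, so the hypothesis $|V(\Ga)|\le 30758154560$ forces $|G_v|\ge |G|/30758154560$. For each of the four groups this is strong enough to restrict $G_v$ to a very short list of maximal subgroups, namely those of smallest index, which can be read off from the {\sc Atlas}~\cite{Atlas} and from the lists of maximal subgroups of the sporadic groups in~\cite{W2017}. Two cases are disposed of at once. For $T=\mathbb{M}$ (so $G=\mathbb{M}$) the smallest index of a core-free subgroup of $\mathbb{M}$ is $|\mathbb{M}|/|2.\mathbb{B}|$, which already exceeds $30758154560$; hence no admissible $G_v$ exists and $\mathbb{M}$ is ruled out. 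For $T=\mathbb{B}$ (so $G=\mathbb{B}$) the only maximal subgroup of index at most $30758154560$ is $G_v=2.{}^2\E_6(2){:}2$; since $G_v^{(\infty)}=2.{}^2\E_6(2)$ is quasisimple with $G_v^{(\infty)}/\Z(G_v^{(\infty)})\cong{}^2\E_6(2)\notin\mathcal{T}_1\cup\mathcal{T}_2$, Lemma~\ref{lm:Hsiquasi}(b) yields a contradiction.

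For $T=\Co_1$ (so $G=\Co_1$) and $T=\Fi_{24}'$ (so $G=\Fi_{24}'$ or $\Fi_{24}'.2$) a handful of candidate maximal subgroups $G_v$ survive the index bound: for $\Co_1$ these are, up to conjugacy, the subgroups of shape $\Co_2$, $3.\Suz{:}2$, $2^{11}{:}\M_{24}$, $\Co_3$, $2^{1+8}_{+}.\POm_8^{+}(2)$, $\mathrm{U}_6(2){:}\Sy_3$, $(\A_4\times\G_2(4)){:}2$, together with a few further $2$- and $3$-local subgroups (the last one of admissible index being $3^6{:}2.\M_{12}$); for $\Fi_{24}'$ they are $\Fi_{23}$, $2.\Fi_{22}{:}2$ and $(3\times\POm_8^{+}(3){:}3){:}2$ (and the corresponding maximal subgroups of $\Fi_{24}'.2$). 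For each such $G_v$ I would first compute its composition factors. Whenever $G_v^{(\infty)}$ is quasisimple with central quotient $M\notin\mathcal{T}_1\cup\mathcal{T}_2$ — which covers $\Co_2$, $3.\Suz{:}2$, $\Co_3$, $\mathrm{U}_6(2){:}\Sy_3$ and $(\A_4\times\G_2(4)){:}2$ for $\Co_1$, and $\Fi_{23}$ and $2.\Fi_{22}{:}2$ for $\Fi_{24}'$ — Lemma~\ref{lm:Hsiquasi}(b) again gives an immediate contradiction.

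When the unique nonsolvable composition factor $M$ of $G_v$ lies outside $\mathcal{T}_1\cup\mathcal{T}_2$ but $G_v^{(\infty)}$ is not quasisimple (for $\Co_1$, the subgroups $2^{11}{:}\M_{24}$ with $M=\M_{24}$, $2^{2+12}{:}(\A_8\times\Sy_3)$ with $M=\A_8$, and $3^2.\mathrm{U}_4(3).\D_8$ with $M=\mathrm{U}_4(3)$), the argument is arithmetic: Lemma~\ref{lm:Hsiquasi}(c) forces $|M|$ to divide $|G_{uv}|=|G_{vw}|$, and combining this with $|G_{uv}|\mid|G_v|$, the valency condition $|G_v|/|G_{uv}|\ge 3$, and the order identity $|G_v|\,|G_{uv}\cap G_{vw}|=|G_{uv}|^2$ of~\eqref{EqnOrder} pins $|G_{uv}|$ to a short list of candidate values — sometimes to the empty list (for $2^{11}{:}\M_{24}$, using that the $11$-dimensional $\M_{24}$-module over $\mathbb{F}_2$ is irreducible, no admissible order survives and no computation is needed). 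When a nonempty short list of orders remains, a constrained search in {\sc Magma}~\cite{Magma} for a homogeneous factorization $G_v=G_{uv}G_{vw}$ with $G_{uv}$ of one of those orders and conjugate to $G_{vw}$ in $G$ closes the case, possibly after a further quotient reduction as in the proof of Lemma~\ref{lem4.4}.

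The residual and hardest cases are the few $G_v$ whose unique nonsolvable composition factor lies in $\mathcal{T}_1\cup\mathcal{T}_2$: for $\Co_1$, the subgroups $2^{1+8}_{+}.\POm_8^{+}(2)$, $2^{4+12}.(\Sy_3\times 3.\Sy_6)$ and $3^6{:}2.\M_{12}$, and for $\Fi_{24}'$ the subgroup $(3\times\POm_8^{+}(3){:}3){:}2$. For these one passes to $\overline{G_v}=G_v/\Rad(G_v)$, which is almost simple; then Lemma~\ref{lm:Hsiquasi}(b) together with Lemmas~\ref{lm:ASfac} and~\ref{lm:ASfac2} identifies the isomorphism type of $\overline{G_{uv}}\cong\overline{G_{vw}}$ (for socle $\POm_8^{+}(q)$ with $q\in\{2,3\}$ it is the factorization $\POm_8^{+}(q)=\Omega_7(q)\,\Omega_7(q)$ of Lemma~\ref{lm:ASfac}(d)), which bounds $|G_{uv}|$ and makes the {\sc Magma} search feasible. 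I expect the main obstacle to lie precisely in these computations: $(3\times\POm_8^{+}(3){:}3){:}2$ has order about $8.9\times 10^{13}$, so one cannot enumerate all of its subgroups directly, and when $G=\Fi_{24}'.2$ the command \textsf{IsConjugate} may be infeasible, so — as in Lemma~\ref{lem4.4} — the non-conjugacy of the two factors would be shown instead by exhibiting an element of small order in one factor that is not similar, as a matrix over the relevant field, to any element of that order in the other. Once all candidates for $G_v$ have been eliminated, none of $\Co_1$, $\Fi_{24}'$, $\mathbb{B}$ or $\mathbb{M}$ can be the socle $T$, which proves the lemma.
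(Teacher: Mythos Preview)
Your overall strategy matches the paper's: bound the index, list the surviving maximal subgroups from~\cite{W2017}, kill most of them with Lemma~\ref{lm:Hsiquasi}, and finish the rest by {\sc Magma}. The treatments of $\mathbb{M}$ and $\mathbb{B}$ are identical to the paper's, and your split of the $\Co_1$ candidates into those handled by Lemma~\ref{lm:Hsiquasi}(b) versus those needing computation agrees with the paper (which simply runs {\sc Magma} on the six local subgroups $2^{1+8}.\POm_8^{+}(2)$, $2^{11}{:}\M_{24}$, $2^{2+12}{:}(\A_8\times\Sy_3)$, $2^{4+12}.(\Sy_3\times 3.\Sy_6)$, $3^2.\PSU_4(3).\D_8$, $3^6{:}2.\M_{12}$ without your preliminary arithmetic).

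There is one point where the paper is sharper and where your plan has a soft spot. For $T=\Fi_{24}'$ with $T_v=(3\times\POm_8^{+}(3){:}3){:}2$ you anticipate a difficult {\sc Magma} search inside a group of order about $8.9\times 10^{13}$, guided by the factorization $\POm_8^{+}(3)=\Omega_7(3)\,\Omega_7(3)$ from Lemma~\ref{lm:ASfac}(d). The paper sidesteps this entirely: one computes that $\overline{G_v}\cong\POm_8^{+}(3){:}\Sy_3$, which contains a triality automorphism and therefore is \emph{not} contained in $\mathrm{P\Gamma O}_8^{+}(3)$. But Lemma~\ref{lm:ASfac}(d) explicitly requires $H\le\mathrm{P\Gamma O}_8^{+}(q)$, so Lemma~\ref{lm:Hsiquasi}(b) already yields a contradiction and no subgroup search is needed. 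Your expectation that Lemma~\ref{lm:ASfac}(d) will ``identify the isomorphism type of $\overline{G_{uv}}$'' is thus back to front: the hypothesis of that clause fails, and that failure \emph{is} the contradiction.

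A minor technical slip: for $G_v=2^{1+8}_{+}.\POm_8^{+}(2)$ and $G_v=3^6{:}2.\M_{12}$ you invoke Lemma~\ref{lm:Hsiquasi}(b), but its hypothesis is that $G_v^{(\infty)}$ be quasisimple, and in these cases $G_v^{(\infty)}$ has a non-central normal $2$- or $3$-subgroup, so the lemma does not apply as stated. The factorization $\overline{G_v}=\overline{G_{uv}}\,\overline{G_{vw}}$ of~\eqref{EqnFac} still holds, but core-freeness of the factors is no longer automatic. The paper does not attempt this refinement and simply verifies by {\sc Magma} that no admissible homogeneous factorization of $G_v$ exists; your fallback to computation therefore saves the argument, but the appeal to Lemma~\ref{lm:Hsiquasi}(b) should be dropped for these two subgroups.
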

\begin{proof}
By~\cite{W2017}, we see that the Monster group $P(\mathbb{M})>30758154560$, hence  $T \neq \mathbb{M}$.

Suppose $T=\mathbb{B}$.
Then $G=T$ as $\Out(T)=1$, and $2.{}^2\E_6(2){:}2 $ is the unique candidate  for $G_{v} $ satisfying  $|G|/|G_{v} | \leq 30758154560  $.
However, for this candidate, $G_v^{(\infty)}$ is quasisimple but ${}^2\E_6(2) \notin \mathcal{T}_1 \cup \mathcal{T}_2$, contradicting  Lemma~\ref{lm:Hsiquasi}(b).

Suppose that $T=\Fi_{24}'$.
Then $G=T$ or $T.2$, and  the candidates  for $T_{v} $ satisfying  $|T|/|T_{v} | \leq 30758154560  $ are $\Fi_{23}$, $2.\Fi_{22}{:}2$ and $(3 \times \POm^{+}_{8}(3){:}3){:}2$.
For these  candidates, $G_v^{(\infty)}$ is quasisimple.
By Lemma~\ref{lm:Hsiquasi}(b), the candidates $\Fi_{23}$ and $2.\Fi_{22}{:}2$ are ruled out immediately, and for $T_v=(3 \times \POm^{+}_{8}(3){:}3){:}2$, the factorization $\overline{G_v}=\overline{G_{uv}}\,\overline{G_{vw}}$ satisfies Lemma~\ref{lm:ASfac}.
However, when  $T_v=(3 \times \POm^{+}_{8}(3){:}3){:}2$,  computation in  {\sc Magma}~\cite{Magma} shows that $ \overline{G_v}=\POm^{+}_{8}(3){:}\Sy_3\nleq \mathrm{P\Gamma O}_8^{+}(3)$, contradicting  Lemma~\ref{lm:ASfac}.

Suppose that $T=\Co_1$. Then $G=T$ as $\Out(T)=1$, and  the candidates for $G_{v}$ satisfying  $|G|/|G_{v} | \leq 30758154560  $ are
   \[
   \begin{split}
   & \Co_2, 3.\Suz.2,  \Co_3,\PSU_6(2){:}\Sy_3, (\A_4 \times  \G_2(4)){:}2, \\
   & 2^{1+8}.\POm_8^{+}(2), 2^{11}{:}\M_{24},
 2^{2+12}{:}(\A_8 \times  \Sy_3), 2^{4+12}.(\Sy_3 \times 3.\Sy_6), 3^2.\PSU_4(3).\D_8, 3^6{:}2.\M_{12}.
 \end{split} \]
Candidates in the first row are ruled out by Lemma \ref{lm:Hsiquasi}.
For candidates in the second row, computation  in {\sc Magma}~\cite{Magma} shows  that there is no  factorization $G_{v}=G_{uv}G_{vw}$ with $|G_v|/|G_{uv}|\geq 3$.
\end{proof}

 \subsection{Exceptional groups of Lie type}
 \ \vspace{1mm}

Throughout this subsection,  let $T$ be an exceptional groups of Lie type and let $q=p^f$, where $p$ is prime.

\begin{lemma}[{\cite{CGP2021+}}]\label{lm:2B22G2}
The group $T \notin\{ {}^2\B_2(q),{}^2\G_2(q)  \}$.
\end{lemma}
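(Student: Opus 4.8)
The plan is to go through the maximal subgroups of $T$ family by family. Recall that, for $q=2^{2n+1}\ge 8$, the maximal subgroups of $\Sz(q)={}^2\B_2(q)$ are the Borel subgroup $[q^2]{:}C_{q-1}$, the dihedral group $D_{2(q-1)}$, the torus normalisers $C_{q\pm\sqrt{2q}+1}{:}C_4$, and the subfield subgroups $\Sz(q_0)$ with $q=q_0^r$, $r$ prime; and that, for $q=3^{2n+1}\ge 27$, the maximal subgroups of ${}^2\G_2(q)$ are the Borel subgroup $[q^3]{:}C_{q-1}$, the involution centraliser $2\times\PSL_2(q)$, the subgroup $(2^2\times D_{(q+1)/2}){:}3$, the torus normalisers $C_{q\pm\sqrt{3q}+1}{:}C_6$, and the subfield subgroups ${}^2\G_2(q_0)$ with $q=q_0^r$, $r$ prime. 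Since $\Out(T)$ is cyclic of odd order in each case, $G_v$ is the normaliser in $G$ of one of these subgroups of $T$; throughout I would use that $\Gamma$ is $T$-arc-transitive (Lemma~\ref{lm:T2at?}(a)), that $G_v$ is maximal in $G$, that $\Gamma$ has valency at least $3$ (Lemma~\ref{pro:valency}), and that $G_v=G_{uv}G_{vw}$ is a homogeneous factorisation in which $G_{uv}$ is not conjugate to $G_{vw}$ in $G_v$ (Lemmas~\ref{pro:Gvfactorization} and~\ref{lm:basicfacs}).

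First I would eliminate the parabolic case: if $G_v$ normalises the Borel subgroup then $T_v$ is a maximal parabolic subgroup of the rank one group $T$, and since neither ${}^2\B_2(q)$ nor ${}^2\G_2(q)$ occurs in the conclusion of Lemma~\ref{lm:PJmaximal}, every $T$-suborbit relative to $v$ is self-paired, contradicting Lemma~\ref{lm:orbital}. Next I would treat the maximal subgroups having a unique nonsolvable composition factor $M$: the genuine subfield subgroups $\Sz(q_0)$ with $q_0\ge 8$ and ${}^2\G_2(q_0)$ with $q_0\ge 27$, and the subgroup $2\times\PSL_2(q)$ of ${}^2\G_2(q)$. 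In each of these $G_v^{(\infty)}$ is quasisimple, so Lemma~\ref{lm:Hsiquasi}(b) forces $M\in\mathcal{T}_1\cup\mathcal{T}_2$; as no Suzuki or Ree group lies in $\mathcal{T}_1\cup\mathcal{T}_2$, the subfield subgroups are excluded at once. This leaves $M=\PSL_2(q)$ with $q=3^{2n+1}$, together with the degenerate subfield case ${}^2\G_2(3)\cong\PSL_2(8){:}3$ arising when $q=3^r$. Here $M\in\mathcal{T}_2\setminus\mathcal{T}_1$, so by Lemma~\ref{lm:Hsiquasi}(b) the factorisation $\overline{G_v}=\overline{G_{uv}}\,\overline{G_{vw}}$ has solvable factors of the shape in Lemma~\ref{lm:ASfac2}(a) (one lying inside a dihedral group of order at most $q+1$ and the other inside a Borel subgroup of $\PSL_2(q)$), and since $|\overline{G_{uv}}|$ and $|\overline{G_{vw}}|$ differ by a factor dividing $|\overline{G_v}|/|M|\le 3$ while $|\overline{G_{uv}}|\,|\overline{G_{vw}}|\ge|M|$, a comparison of orders gives a contradiction, the case $q=8$ requiring only a short divisibility check.

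The heart of the argument is the remaining case, in which $G_v$ is solvable: for $\Sz(q)$ it normalises $D_{2(q-1)}$ or a torus normaliser $C_{q\pm\sqrt{2q}+1}{:}C_4$, and for ${}^2\G_2(q)$ it normalises $(2^2\times D_{(q+1)/2}){:}3$ or a torus normaliser $C_{q\pm\sqrt{3q}+1}{:}C_6$. In the torus-normaliser cases $G_v$ has a cyclic normal subgroup $C=C_m$ with $m$ odd and coprime to the order (namely $2$, $4$ or $6$) of a complement which acts on $C$ fixed-point-freely, so $G_v/C$ is cyclic and, for each $d\mid m$, the subgroups of $G_v$ of the shape $C_d{:}(G_v/C)$ form a single $C$-conjugacy class; the subgroup $(2^2\times D_{(q+1)/2}){:}3$ reduces to this after passing to the quotient of $G_v$ by its Fitting subgroup. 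I would then apply $G_v=G_{uv}G_{vw}$ to the cyclic quotient $G_v/C$. Either one factor surjects onto $G_v/C$, and then the equality $|G_{uv}|=|G_{vw}|$ together with an elementary order computation (using that the intersection order $d$ satisfies $d\mid m$ and $m\mid d^2$) forces both factors to have the shape $C_d{:}(G_v/C)$, hence to be conjugate in $G_v$, contradicting that $G_{uv}$ is not conjugate to $G_{vw}$ in $G_v$; or $G_v/C$ is a product of two proper subgroups, which for $G=T$ is impossible for $\Sz(q)$ ($G_v/C$ being cyclic of prime-power order $2$ or $4$) and for ${}^2\G_2(q)$ forces $G_v/C\cong C_6=C_2C_3$, hence an equation $2a=3b$ with $a,b\mid m$, impossible since $3\nmid m$. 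When $G$ strictly contains $T$ the same scheme applies to $G_v=T_v.e$, but now with extra care, since the extending cyclic group of odd order need not be coprime to $m$; a handful of smallest cases ($q\in\{8,32\}$ for $\Sz$, $q=27$ for ${}^2\G_2$) I would dispatch in {\sc Magma}~\cite{Magma}. The main obstacle is precisely this last family: there is no single structural lemma that kills the solvable stabilisers, and one must combine the factorisation, the non-conjugacy of $G_{uv}$ and $G_{vw}$ in $G_v$, and the fixed-point-free action of the complement, while carefully tracking the field-automorphism extensions.
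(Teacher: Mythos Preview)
The paper does not prove this lemma at all: it simply quotes the result of Chen--Giudici--Praeger~\cite{CGP2021+}. Your proposal, by contrast, outlines a self-contained proof via a case analysis over the maximal subgroups of ${}^2\B_2(q)$ and ${}^2\G_2(q)$. This is a genuinely different route from the paper's (which is pure citation), and is essentially the strategy one would expect such a reference to follow.

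Your outline is sound in its broad strokes. The parabolic case is disposed of correctly by Lemma~\ref{lm:PJmaximal} (both groups have rank~$1$, so the Weyl group is $C_2$ and every double coset representative is an involution). The subfield and $2\times\PSL_2(q)$ cases are handled by Lemma~\ref{lm:Hsiquasi}(b); for the latter, the cleanest contradiction is via $p$-parts: since $\Rad(G_v)$ has order coprime to~$p$, one has $|\overline{G_{uv}}|_p=|\overline{G_{vw}}|_p$, yet Lemma~\ref{lm:ASfac2}(a) forces one factor to have $p$-part at most $(2f)_p$ and the other to have $p$-part at least~$q$, which is impossible. This is sharper than the vague ``comparison of orders'' you sketch.

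The real gap is in the solvable-stabiliser analysis when $G>T$. For $G=T$ your Frobenius argument works: $G_v/C$ is cyclic of prime-power order (namely $2$ or $4$ for Suzuki), so one factor surjects; coprimality of $m$ with $|G_v/C|$ then forces both factors to surject with equal intersection $C_d$, and conjugacy of complements gives the contradiction. But once field automorphisms enter, two things break simultaneously: $G_v/C\cong C_{4e}$ (or $C_{6e}$) is no longer of prime-power order, so genuine proper factorisations of the cyclic quotient exist; and $e$ may share prime divisors with $m$, so neither Schur--Zassenhaus nor the ``fixed-point-free complement'' argument applies cleanly. Your phrase ``the same scheme applies with extra care'' does not address either obstacle, and dispatching only $q\in\{8,32,27\}$ in {\sc Magma} leaves infinitely many cases open. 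Similarly, the reduction of $(2^2\times D_{(q+1)/2}){:}3$ to a torus-normaliser situation ``after passing to the quotient by the Fitting subgroup'' is not justified: the Fitting subgroup here is not the cyclic torus, and the quotient is not of the same shape. These are the places where the cited paper does genuine work that your sketch has not replaced.
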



 In the following three lemmas, we exclude the possibilities for $T$ being the remaining exceptional groups of Lie type under the condition $|V(\Ga)|\leq 30758154560$.

\begin{lemma}\label{lm:3D4}
Suppose   $|V(\Ga)|\leq 30758154560  $.
Then $T \neq {}^3\D_4(q) $.
\end{lemma}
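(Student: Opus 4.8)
The plan is to run through the maximal subgroups $G_v$ of an almost simple group $G$ with socle $T={}^3\D_4(q)$, under the size bound $|V(\Ga)|=|G|/|G_v|\le 30758154560$, and to rule out each surviving candidate. First I would consult the classification of maximal subgroups of ${}^3\D_4(q)$ (Kleidman's list, as recorded in the standard references), and discard all maximal subgroups whose index in $T$ already exceeds $30758154560$. Since $|{}^3\D_4(q)|\approx q^{28}$ and its maximal subgroups are small (parabolics of order roughly $q^{16}$, and the rest much smaller), the index bound forces $q$ to be very small, so only finitely many pairs $(q,G_v)$ remain. The two parabolic subgroups $P_1$ and $P_2$ of ${}^3\D_4(q)$ are the largest maximal subgroups, so these are the main concern; the non-parabolic maximal subgroups ($\G_2(q)$, $\PGL_3^{\pm}(q)$, $\SL_2(q)\times\SL_2(q^3)$ modulo centre, $({}^3\D_4(2)$-type cases), tori normalizers, etc.) have enormous index and either fail the bound outright or are handled directly.

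For the parabolic cases, I would invoke the machinery of Subsection~\ref{subsec:Parobolic}. The Weyl group of ${}^3\D_4(q)$ is of type $\G_2$ (dihedral of order $12$), so by Lemma~\ref{lm:PJmaximal} --- whose list of exceptions is confined to $\POm_{2n}^+(q)$, $\F_4(q)$, ${}^2\E_6(q)$, $\E_6(q)$, $\E_7(q)$, $\E_8(q)$ --- every $T$-suborbit on the cosets of a maximal parabolic $P_J$ of ${}^3\D_4(q)$ is self-paired. But by Hypothesis~\ref{hy:1} together with Lemma~\ref{lm:orbital}, the out-neighbour $w$ must lie in a \emph{non-self-paired} $T$-suborbit relative to $v$ (since $T$ is arc-transitive by Lemma~\ref{lm:T2at?}(a) and $\to$ is antisymmetric). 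This contradiction eliminates both parabolic subgroups of ${}^3\D_4(q)$ in one stroke, for every $q$, with no computation required.

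It remains to deal with the finitely many non-parabolic candidates surviving the index bound; here $q\in\{2,3\}$ at most. I would first apply Lemma~\ref{lm:Hsiquasi}: candidates $G_v$ with $G_v^{(\infty)}$ quasisimple whose unique nonsolvable composition factor is not in $\mathcal{T}_1\cup\mathcal{T}_2$ (for instance $\G_2(q)$, which is the largest non-parabolic maximal subgroup) are ruled out immediately, since a quasisimple core would force $(G_{uv}^{(\infty)})^g=G_{vw}^{(\infty)}=G_v^{(\infty)}$, contradicting Lemma~\ref{pro:Gvnormal}. For the few remaining small cases --- solvable maximal subgroups such as torus normalizers, and any $G_v$ whose composition factors do lie in $\mathcal{T}_1\cup\mathcal{T}_2$ --- I would fall back on a direct {\sc Magma} computation (working with ${}^3\D_4(2)$ or ${}^3\D_4(3)$ explicitly) to check that no homogeneous factorization $G_v=G_{uv}G_{vw}$ exists with $|G_v|/|G_{uv}|\ge 3$ and $G_{uv}$ conjugate to $G_{vw}$ in $G$, as required by Lemma~\ref{pro:Gvfactorization} and the remarks following it.

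I expect the main obstacle to be purely bookkeeping: correctly assembling the full list of maximal subgroups of ${}^3\D_4(q)$ (including the handful of exceptional maximal subgroups occurring only for tiny $q$, e.g.\ inside ${}^3\D_4(2)$), pinning down their orders, and verifying the index bound cuts everything down to a manageable finite check. The structural heart of the argument --- the parabolic case --- is essentially immediate from Lemma~\ref{lm:PJmaximal}, since $\G_2$ never appears among its exceptions; the only real work is confirming that every non-parabolic survivor is either killed by Lemma~\ref{lm:Hsiquasi} or dispatched by a feasible computation.
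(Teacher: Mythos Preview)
Your outline matches the paper on the two main pillars: the maximal parabolic subgroups are disposed of by Lemma~\ref{lm:PJmaximal} (the Weyl group of ${}^3\D_4(q)$ has type $\G_2$, which is not among the exceptions), and the almost simple candidates $\G_2(q)$, $\PGL_3^\pm(q)$, ${}^3\D_4(q_0)$ are killed by Lemma~\ref{lm:Hsiquasi}(a). Where you diverge is in the remaining families: the paper treats them structurally rather than by reduction to small $q$ and {\sc Magma}. For the cyclic torus normalizer $T_v=(q^4-q^2+1).4$ it uses a prime $r\in\ppd(p,12f)$ together with Lemma~\ref{pro:Gvnormal}; for $T_v=(\SL_2(q^3)\circ\SL_2(q)).(2,q-1)$ it passes to the quotient $G_v/N$ with socle $\PSL_2(q^3)$ and shows both factors would need order divisible by some $r\in\ppd(p,6f)$, contradicting~\cite{LPS1990}; for the $\PSL_3(q)$ and $\PSU_3(q)$ central-product cases it uses Lemma~\ref{lm:Hsiquasi}(b) to land in Lemma~\ref{lm:ASfac2} (forcing $q\in\{2,3,4,8\}$) and then compares $p$-parts of $\overline{G_{uv}}$ and $\overline{G_{vw}}$. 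These arguments work for all $q$; only the $(q^2\pm q+1)^2.\SL_2(3)$ family is finished by {\sc Magma}, and there the bound genuinely gives $q\in\{2,3\}$.

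Your plan would still succeed in principle, but your bookkeeping claim ``$q\in\{2,3\}$ at most'' is wrong. For $T_v=(\SL_2(q^3)\circ\SL_2(q)).(2,q-1)$ the index is $q^8(q^8+q^4+1)$, which for $q=4$ equals $65536\cdot65793\approx 4.3\times10^9<30758154560$; note also that this $T_v$ has \emph{two} nonsolvable composition factors, so Lemma~\ref{lm:Hsiquasi} does not apply and you are genuinely committed to a machine check here. Similarly, the $((q^2+q+1)\circ\SL_3(q)).(3,q^2+q+1).2$ case survives the index bound at $q=4$. Thus your {\sc Magma} step would have to include ${}^3\D_4(4)$, of order about $6.8\times10^{16}$, which is a substantially heavier computation than the $q\in\{2,3\}$ cases you anticipated; the paper's structural arguments avoid this entirely.
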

\begin{proof}  Suppose for a contradiction that $T= {}^3\D_4(q)$.
From the list of maximal subgroups of ${}^3\D_4(q) $ (see~\cite{K1988} or~\cite[Table~8.51]{BHRD2013}),  we see that $G_{v}=T_v.\mathcal{O}$, where $\mathcal{O}=G/T \leq \ZZ_{3f}$, and $T_{v}$ satisfies one of the following:
\begin{enumerate}[\rm (a)]
\item $T_{v}\in \{[q^{9}]{:}(\SL_2(q^3)\circ (q-1)).(2,q-1),  [q^{11}]{:}(\SL_2(q )\circ (q^3-1)).(2,q-1)\}$ is a maximal parabolic subgroup;
\item $T_{v}= \G_2(q)$, or $\PGL_3(q)$ with $q\equiv 1(\mod 3)$, or $\PGU_3(q)$ with $q\equiv 2(\mod 3)$ and $q\neq 2$, or ${}^3\D_4(q_0)$ with $q=q_0^a$ and $a\neq 3$ prime;
\item $T_{v}=((q^2+q+1)\circ \PSL_3(q)).(3,q^2+q+1).2$;
\item $T_{v}=((q^2-q+1)\circ \PSU_3(q)).(3,q^2-q+1).2$;
\item $T_{v}= (\SL_2(q^3 )\circ \SL_2(q )).(q-1,2)$;
\item $T_{v} =(q^4-q^2+1).4$;
\item $T_{v} \in \{  (q^2+q+1)^2.\SL_2(3),\  (q^2-q+1)^2.\SL_2(3) \}$.
\end{enumerate}

Case (a) is impossible by Lemma~\ref{lm:PJmaximal}. Case (b) is ruled out by  Lemma \ref{lm:Hsiquasi}(a).

Suppose that (c) occurs.
By  Lemma \ref{lm:Hsiquasi}(b),  since $G_v^{(\infty)}$ is quasisimple,  the factorization $\overline{G_{v}}=\overline{G_{uv}}\,\overline{G_{vw}}$ satisfies  Lemma~$\ref{lm:ASfac2}$.
From Lemma~$\ref{lm:ASfac2}$ we obtain that $q\in \{2,3,4,8\}$.
Suppose $q=2$. Then $G=T$ and $G_{v}=(\ZZ_7 \times \PSL_3(2)).2$.
Now $\overline{G_{v}}$ is an almost simple group with socle $\PSL_3(2)\cong\PSL_2(7)$ and $|\Rad(G_{v})|_2\leq2$.
Without loss of generality, assume that $|\overline{G_{uv}}|_2 \geq |\overline{G_{vw}}|_2$.
Then we deduce from $|G_{uv}|=|G_{vw}|$ that $|\overline{G_{uv}}|_2/|\overline{G_{vw}}|_2\leq2$.
However, the factorization of $\overline{G_{v}}$ given in Lemma \ref{lm:ASfac2} shows that $|\overline{G_{uv}}|_2 /|\overline{G_{vw}}|_2=2^3$, a contradiction.
The case $q\in \{3,4,8\}$ is ruled out similarly.

Suppose that  (d) happens.
By  Lemma \ref{lm:Hsiquasi}(b),  the factorization $\overline{G_{v}}=\overline{G_{uv}}\,\overline{G_{vw}}$ satisfies  Lemma~$\ref{lm:ASfac2}$. From Lemma~$\ref{lm:ASfac2}$ we see that the only possibility is $q=8$. Considering $|\overline{G_{vw}}|_2$ and $|\overline{G_{uv}}|_2$, we obtain a contradiction along the similar lines as in Case~(c).

Suppose that  (e) appears.
Let $N \unlhd G_v$ such that $ G_{v}/N$ is an almost simple group with socle $\PSL_2(q^3) $. Then $\pi(N) \subseteq \pi(\SL_2(q)) \cup \pi(3f)$.
Since $G_v^{(\infty)}=\SL_2(q^3) \circ \SL_2(q)$, it follows that $G_v$ has a unique normal subgroup $M\cong \SL_2(q^3) $.
Furthermore, $M$ is  the smallest subgroup of $G_v$ that contains a nonsolvable composition factor $\PSL_2(q^3)$.
If both $G_{uv}$ and $G_{vw}$ contain a nonsolvable composition factor $\PSL_2(q^3)$, then it follows from $G_{uv}^g=G_{vw}$ that $M^g=M$,  contradicting Lemma \ref{pro:Gvnormal}.
Therefore, $ G_{uv}N/N$ and $ G_{vw}N/N$ are core-free in $ G_{v}/N$, and so we obtain a factorization
\[
G_{v}/N=(G_{uv}N/N)(G_{vw}N/N)
\]
of the almost simple group $G_v/N$ with core-free factors.
Take $r\in \ppd(p,6f)\subset\pi(G_v)$. Then since $r>3f$ and $|\SL_2(q)|=q(q^2-1)$, we have $r \notin \pi(N)$.
Thus it follows from $G_v=G_{uv}G_{vw}$ that $|G_{uv}N/N|$ and $|G_{vw}N/N|$ are both divisible by $r$.
However, by~\cite[Theorem~A]{LPS1990}), no almost simple group with socle $\PSL_2(q^3)$ has a factorization with the orders of both core-free factors divisible by $r$, a contradiction.

Suppose  that  (f) occurs.
Note that $q^6+1=(q^2+1)(q^4-q^2+1)$.
Let $r\in \ppd(p,12f)$. Then $r \mid (q^6+1)$  but  $r \nmid (q^4-1)$, and so $r \mid (q^4-q^2+1)$.
Moreover,   $r \nmid (3f)$ as $r>12f$.
Thus $G_{v}$ has a unique characteristic subgroup $M\cong \ZZ_r \leq \ZZ_{q^4-q^2+1}$.
From the homogeneous factorization $G_{v}=G_{uv}G_{vw}$, we conclude that both $G_{uv}$ and $G_{vw}$ contain  $M$.
Since $G_{uv}^g=G_{vw}$, it follows  that $M^g=M$,   contradicting Lemma~\ref{pro:Gvnormal}.

Finally, for Case (g), the candidates for $q$ such that $|T|/|T_{v}|=|V(\Ga)| \leq 30758154560 $ are $2$ and $3$.
For these candidates, computation in {\sc Magma}~\cite{Magma} shows that there is no homogeneous factorization $G_{v}=G_{uv}G_{vw}$, a contradiction.
 \end{proof}

 \begin{lemma}\label{lm:G2}
Suppose   $|V(\Ga)|\leq 30758154560  $.
Then $T \neq \G_2(q)$.
\end{lemma}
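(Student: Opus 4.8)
The plan is to mimic the structure of the proof of Lemma~\ref{lm:3D4}, working through the maximal subgroups of $\G_2(q)$ one family at a time. First I would pin down $G$ and $G_v$: since $\Out(\G_2(q))$ is cyclic of order $\gcd(2,p)f$ (with the extra graph automorphism only when $p=3$), we have $G=T.\mathcal{O}$ with $\mathcal{O}$ small, and $G_v=T_v.\mathcal{O}_0$ for a maximal subgroup $T_v$ of $T=\G_2(q)$. Consulting the list of maximal subgroups of $\G_2(q)$ (e.g.~\cite[Table~8.30 and~8.41]{BHRD2013} together with the classical small-$q$ exceptions), the candidates for $T_v$ are: the two maximal parabolic subgroups $[q^5]{:}\GL_2(q)$ (of each of the two types), the subgroups $\SL_3(q){:}2$ and $\SU_3(q){:}2$, the subgroups $(\SL_2(q)\circ\SL_2(q)).2$, the subfield subgroups $\G_2(q_0)$ with $q=q_0^a$, $a$ prime, the subgroups $\Sz(q)$ (only $p=2$, $f$ odd) and $\PSL_2(q)$ and $\G_2(2)'$ etc.\ arising for tiny $q$, together with a handful of small maximal subgroups such as $\PSL_2(13)$, $\PSL_2(8)$, $2^3.\PSL_3(2)$, $\PGL_2(7)$, $\PSL_2(q)$ for $q\in\{8,11,13\}$, and $\A_6.2^2$ (for $q=4$), plus $\PGL_2(q)$-type maximal tori normalisers are not maximal, so I can ignore those.

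Next I would dispatch each family by the same tools used in Lemma~\ref{lm:3D4}. The parabolic cases are killed immediately by Lemma~\ref{lm:PJmaximal}, since $\G_2(q)$ does not appear in its list (only $\F_4$, the $\E$-series and $\POm_{2n}^+$ occur). The subfield case $\G_2(q_0)$, the $\SL_3$ and $\SU_3$ cases, and the $\Sz(q)$, $\PSL_2$ small-$q$ cases have $G_v^{(\infty)}$ quasisimple with unique nonsolvable composition factor $\PSL_3(q_0)$-type, $\PSL_2(q)$, etc.; I would invoke Lemma~\ref{lm:Hsiquasi}(a)/(b): if the composition factor is not in $\mathcal{T}_1\cup\mathcal{T}_2$ the case dies outright, and in the residual cases where it does lie in $\mathcal{T}_2$ (so $q$ is forced into a tiny list like $\{2,3,4,8\}$), I would derive a contradiction either by the Sylow-$2$ (or primitive-prime-divisor) order argument used in Cases~(c)--(e) of Lemma~\ref{lm:3D4}, or — once $q$ is small — by direct {\sc Magma} computation showing no homogeneous factorization $G_v=G_{uv}G_{vw}$ with $|G_v|/|G_{uv}|\ge 3$ exists. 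The $(\SL_2(q)\circ\SL_2(q)).2$ case is handled exactly as Case~(e) there: modding out a normal subgroup to reach an almost simple quotient with socle $\PSL_2(q)$, choosing $r\in\ppd(p,2f)$ (or $\ppd(p,6f)$, as appropriate) not dividing the kernel, and quoting~\cite[Theorem~A]{LPS1990} to rule out a factorization with both factor orders divisible by $r$; Lemma~\ref{pro:Gvnormal} handles the subcase where both factors contain the full $\SL_2(q^?)$ composition factor. Finally the genuinely small maximal subgroups and the small values of $q$ left over ($q\le$ some explicit bound coming from $|\G_2(q)|/|T_v|\le 30758154560$, which forces $q\le 4$ or so for most families) are finished by {\sc Magma}.

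A preliminary bounding step I would do first is: for each family, solve $|\G_2(q)|/|T_v|\le 30758154560$ to cut the parameter $q$ down to a finite list — this is what makes the computational fallback feasible and it mirrors Case~(g) of Lemma~\ref{lm:3D4}. For the parabolic and the $\G_2(q_0)$ cases no such bound is needed since those die structurally; but for, say, $\SL_3(q){:}2$ or $(\SL_2(q)\circ\SL_2(q)).2$ the index grows like $q^6$ or so and one still wants the structural argument to cover all $q$, with computation only as a safety net for the smallest cases.

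The main obstacle I expect is the $(\SL_2(q)\circ\SL_2(q)).2$ family: here $G_v$ is \emph{not} almost simple and \emph{not} of the form "quasisimple $G_v^{(\infty)}$", so Lemma~\ref{lm:Hsiquasi} does not apply directly, and one must argue by hand as in Case~(e) of Lemma~\ref{lm:3D4} — identifying the right characteristic normal subgroup, choosing the correct primitive prime divisor, and being careful that the two $\SL_2$ factors (one over $\bbF_q$, one over $\bbF_q$ as well in the $\G_2$ case, unlike the $\SL_2(q^3)\circ\SL_2(q)$ of ${}^3\D_4$) are not swapped by an outer automorphism in a way that breaks the argument. The second delicate point is making sure the list of maximal subgroups of $\G_2(q)$ is complete across all $q$ and $p$ (the exceptional behaviour at $p=2$ giving $\Sz(q)$, at $p=3$ giving the extra graph automorphism and the $\PSL_2(8)$, $\PSL_2(13)$ novelties, and the $\G_2(2)'\cong\PSU_3(3)$ coincidence), so that nothing is missed; I would cross-check~\cite{K1988} against~\cite[Tables~8.30,~8.41,~8.42]{BHRD2013}.
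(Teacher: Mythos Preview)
Your overall strategy is sound and parallels the paper, but there are two genuine gaps.

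First, the parabolic case is not fully handled. Lemma~\ref{lm:PJmaximal} only rules out \emph{maximal} parabolic stabilizers. When $p=3$ and $G$ contains the graph automorphism of $\G_2(q)$, the Borel subgroup $T_v=[q^6]{:}(q-1)^2$ arises as a novelty: $G_v$ is maximal in $G$ even though $T_v$ is not maximal in $T$. The paper deals with this separately by computing (via Lemmas~\ref{lm:PJsuborbits} and~\ref{lm:PJlength}) that the non-self-paired $T$-suborbits have lengths $q^2$ and $q^4$, reducing to length $q^2$ via~\eqref{EqTvroot}, and then showing explicitly that the two orbits of length $q^2$ are fused by the graph automorphism $\gamma\in G_v$, contradicting $T$-arc-transitivity. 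You do flag the $p=3$ graph automorphism as a ``delicate point'' but only in the context of completeness of the subgroup list, not as producing an extra parabolic case.

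Second, your proposed argument for $T_v=(\SL_2(q)\circ\SL_2(q)).2$ does not go through. In the ${}^3\D_4(q)$ analogue the two factors are $\SL_2(q^3)$ and $\SL_2(q)$, so passing to the quotient by the small one leaves an almost simple group with socle $\PSL_2(q^3)$, and a prime $r\in\ppd(p,6f)$ survives in the quotient but not in the kernel. In $\G_2(q)$ both direct factors are $\SL_2(q)$, so any prime dividing the order of the quotient socle also divides the kernel, and the primitive-prime-divisor separation fails; no choice of $r$ rescues this. The paper does not attempt a structural argument here: it simply bounds $q$ (the index is roughly $q^8$, giving $q\le 19$ for odd $q$ and $q\le 16$ for even $q$) and for each surviving $q$ checks in {\sc Magma} that no suitable pair $K,L\le T_v$ exists. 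You should plan on the same.

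A smaller correction: you dismiss ``$\PGL_2(q)$-type maximal tori normalisers'' as non-maximal, but the torus normalisers $(q^2\pm q+1).6$ and $(q\pm1)^2.\D_{12}$ \emph{are} maximal in $\G_2(q)$ for suitable $q$ (see~\cite[Tables~8.30,~8.41,~8.42]{BHRD2013}); the paper treats them in its cases~(d) and~(e), via Lemma~\ref{pro:Gvnormal} and {\sc Magma} respectively. Likewise ${}^2\G_2(q)$ (for $q=3^{2m+1}$) and $\J_2$ (for $q=4$) are missing from your list, though both fall to Lemma~\ref{lm:Hsiquasi}.
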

\begin{proof}
Suppose for a contradiction that $T=\G_2(q)$.
Note  that $\Out(T)\cong \ZZ_f$ if $p\neq 3$, while for $p=3$, the group $T$ admits a graph automorphism of order $2$ and $\Out(T)\cong \ZZ_{2f}$.
The reader may see~\cite{C1981} and~\cite{K1988g2} (or~\cite[Table~8.30,~8.41~and~8.42]{BHRD2013}) for the list of maximal subgroups of $G$.

Suppose that $T_{v}$ is a  parabolic subgroup. By Lemma~\ref{lm:PJmaximal} we see that $T_v$ is not a maximal parabolic subgroup.
Thus $T_{v}$ is a non-maximal parabolic subgroup. This implies that $p=3$, $G$ contains a graph automorphism, and $T_{v}=[q^6]{:}(q-1)^2$ is a Borel subgroup of $T$.
Applying Lemma~\ref{lm:PJsuborbits} and Lemma \ref{lm:PJlength}, computation on the Weyl group of $T$ shows that there are four non-self-paired $T$-suborbits relative to $v$, two of length $q^4$ and two of length $q^2$.
Hence $|T_v|/|T_{vw}|=q^4$ or $q^2$. By~\eqref{EqTvroot} we have $|T_{vw}|_p^2\geq|T_v|_p$. Then since $|T_v|_p=q^6$, we obtain $|T_v|/|T_{vw}|=q^2$.
However, we shall show in the next paragraph that the two $T_v$-orbits of length $q^2$ are fused by $G_v$, contradicting the fact that $\Ga$ is $T$-arc-transitive.

 Let $\Phi=\Phi^{+} \cup \Phi^{-}$ be a root system of $T$, where $\Phi^+=\{b,a,b+a,b+2a,b+3a,2b+3a \}$ with  fundamental roots  $a$ and $b$.
Label roots as follows: \[ \begin{array}{ccccccccccccc }  \hline
 b&a&b{+}a&b{+}2a&b{+}3a&2b{+}3a&{-}b&{-}a&{-}b{-}a&{-}b{-}2a&{-}b{-}a&{-}2b{-}3a  \\
 1&2&3&4&5&6&7&8&9&10&11&12 \\\hline
 \end{array}
\]
Let $T_v$ be the Borel subgroup corresponding to $\Phi$. Note that the Weyl group $W$ of $T$ may be identified with a permutation group on $\Phi$, and a graph automorphism may be identified with a permutation in $\Nor_{\Sym(\Phi)}(W)$ (see~\cite[Section 12.4]{Carter1972}).
By computation in {\sc Magma}~\cite{Magma}, $W$ is generated by the fundamental reflections
\[
(1, 5)(2, 8)(3, 4)(7, 11)(9, 10)\ \text{ and }\ (1, 7)(2, 3)(5, 6)(8, 9)(11, 12).
\]
In $\Nor_{\Sym(\Phi)}(W)$, we may find a graph automorphism
\[
\gamma=(1, 2)(3, 5)(4, 6)(7, 8)(9, 11)(10, 12).\]
According to the action of graph automorphism on root subgroups (see~\cite[Section 12.4]{Carter1972}, we see that  $\gamma$ normalizes $T_v$ and so $\gamma \in G_v$. By Lemma~\ref{lm:PJsuborbits}, those two $T_v$-orbits of length $q^2$ correspond to two non-involutions $g_1,g_2\in D_{\emptyset,\emptyset}=W$ (as $W_\emptyset=1$), where
\[
 g_1=(1, 11, 12, 7, 5, 6)(2, 4, 3, 8, 10, 9) \text{ and } g_2=g_1^{-1}.
\]
Noticing $g_2=\gamma g_1 \gamma$, we conclude that $v^{g_2}=v^{\gamma g_1 \gamma}=v^{g_1\gamma} \in (v^{g_1})^{G_v}$, that is, these two $T_v$-orbits of length $q^2$ are fused by $G_v$, as claimed.
%
%

Among other candidates for $T_v$, we may apply Lemma \ref{lm:Hsiquasi} to rule out $\SL_3(q).2$, $\SU_3(q).2$, $\G_2(q_0)$, $\PSL_2(13)$, $\J_2$, $\PSL_2(8)$, $\PSU_3(3){:}2$, $\PGL_2(q)$ and ${}^2\G_2(q)$.
For the remaining candidates for $T_v$, by the condition $|T|/|T_v|=|V(\Ga)|\leq 30758154560$, one of the following holds:
\begin{enumerate}[\rm (a)]
\item  $T_{v}=(\SL_2(q) \circ \SL_2(q)).2$ with $q\in \{3,5,7,9,11,13,17,19\}$;
\item   $T_{v}=\SL_2(q) \times \SL_2(q)$ with $q\in \{4,8,16\}$;
\item $T_{v}=2^3.\PSL_3(2) $ with $q\in \{3,5,7\}$;
\item  $T_{v}=(q^2\pm q+1).6 $ with $q=9$;
\item $T_{v}= (q\pm 1)^2.\D_{12}$ with $q=9$.
\end{enumerate}

Suppose that (a) or~(b) occurs. For each admissible value of $q$ in these two cases, we construct $T$ in {\sc Magma}~\cite{Magma} by the command {\sf GroupOfLieType} and construct $T_v$ as $\langle X_r \mid r \in \{ b,2a+b,-b,-2a-b\}\rangle$.
Then computation shows that $T_v$ has no subgroups $K$ and $L$ conjugate in $T$ but not in $T_v$ such that $|T_v|$ is divisible by $|KL|$ and $|\Out(T)|$ is divisible by $|T_v|/|KL|$.
This contradicts Lemma~\ref{lm:T2at?}(b)(c).

Suppose that (c) appears. Here $T_{v}=2^3.\PSL_3(2)$ is a non-split extension of $2^3$ by $\PSL_3(2)$ (see~\cite[Table~8.41~and~8.42]{BHRD2013}).
For such a group $2^3.\PSL_3(2)$, computation in {\sc Magma}~\cite{Magma} shows that it has only one conjugacy class of subgroups with order divisible by $2^3\cdot3\cdot7$ and index at least $3$. This implies that $T_{uv}$ is conjugate to $T_{vw}$ in $T_v$, contradicting Lemma \ref{lm:T2at?}(e).

For Case~(d), we have $T_v= 73{:}6$ or $91{:}6$, which can be ruled out by Lemma \ref{pro:Gvnormal}.

Suppose that (e) happens. Now $T_v=8^2.\D_{12}$ or $10^2.\D_{12}$. We construct $T$ in {\sc Magma}~\cite{Magma} by the command {\sf GroupOfLieType} and construct $T_v$ by computing the normalizer of a Sylow $2$-subgroup or $5$-subgroup of $T$ according to $T_v=8^2.\D_{12}$ or $10^2.\D_{12}$, respectively (notice that $|T_v|_2=|T|_2=2^8$ and $|T_v|_5=|T|_5=5^2$). However, computation shows that $T_v$ has no subgroups $K$ and $L$ conjugate in $T$ but not in $T_v$ such that $|T_v|$ is divisible by $|KL|$ and $|\Out(T)|$ is divisible by $|T_v|/|KL|$, contradicting Lemma~\ref{lm:T2at?}(b)(c).
\end{proof}

Following~\cite{AB2015}, a subgroup $X$ is said to be \emph{large} in group $Y$  if $|X|>|Y|^{1/3}$.
Alavi and  Burness~~\cite{AB2015} classified large maximal subgroups of almost simple groups.
Note that $|V(\Ga)|=|T|/|T_v|$. Thus, if  $|V(\Ga)| \leq 30758154560$ and $|T|^{2/3}>30758154560$, then  $|T_v|>|T|^{1/3}$ and so $T_v$ is large  in $T$.

%
%
%
%

 \begin{lemma}
Suppose $|V(\Ga)|\leq 30758154560  $.
Then $T \neq \E_8(q)$, $\E_7(q)$, $\E_6(q)$, $\F_4(q)$, ${}^2\E_6(q)$ or ${}^2\F_4(q)'$.
 \end{lemma}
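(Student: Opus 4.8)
The plan is to use an order estimate to cut the problem down to a short finite list, and then dispatch each surviving possibility with the structural lemmas already in hand, resorting to computation only for genuinely small configurations. Since $|V(\Ga)|=|T|/|T_v|$, the hypothesis $|V(\Ga)|\leq 30758154560$ forces $|T_v|\geq |T|/30758154560$, and for every admissible $q$ outside a short explicit list of tiny values one checks $|T|^{2/3}>30758154560$, so that $T_v$ is \emph{large} in $T$ and hence $G_v$ occurs in the Alavi--Burness classification~\cite{AB2015} of large maximal subgroups of almost simple exceptional groups of Lie type. I would organise this list into parabolic and non-parabolic possibilities and treat the short list of small $q$ (essentially $q=2$, with a couple more values for $\F_4$ and the Tits group ${}^2\F_4(2)'$) alongside the parabolic analysis.

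For a non-parabolic large maximal subgroup $G_v$, in almost all cases $G_v^{(\infty)}$ is quasisimple with unique nonsolvable composition factor $M$ a classical or exceptional group far too large to lie in $\mathcal{T}_1\cup\mathcal{T}_2$, so Lemma~\ref{lm:Hsiquasi}(b) (or (c)) gives a contradiction immediately; the $p$-part constraints~\eqref{EqnOrder} and~\eqref{EqnDivisor} dispose of most of the remaining small possibilities. The cases needing individual attention are: (i) the $D_4$-subsystem subgroups, where $\overline{G_v}$ contains $\POm_8^{+}(q)$ extended by the triality $\Sym_3$ realised inside $\F_4(q)$ or $\E_6(q)$, whence $\overline{G_v}\nleq\mathrm{P\Gamma O}_8^{+}(q)$, contradicting Lemma~\ref{lm:ASfac}(d) --- the same device used above for $\Fi_{24}'$ --- and, analogously, the $\Sp_4(q)$-subgroups whose normalizer involves the exceptional graph automorphism of $\Sp_4$, giving $\overline{G_v}\nleq\GaSp_4(q)$ against Lemma~\ref{lm:ASfac}(c); (ii) subfield subgroups and the few genuinely small Levi-type overgroups arising at $q=2$ (for example $\PSL_5(2)$-type sections, settled by Lemma~\ref{lm:Hsiquasi}(b) since $\PSL_5(2)\notin\mathcal{T}_1\cup\mathcal{T}_2$, or $\PSL_3(4)$- and $\PSU_3(8)$-type sections, settled by the factorization lists of Lemmas~\ref{lm:ASfac} and~\ref{lm:ASfac2} together with the conjugacy and valency conditions of Lemma~\ref{lm:T2at?}); and (iii) any residual possibility in which a $\ppd$-argument via Lemma~\ref{pro:Gvnormal} produces a characteristic cyclic subgroup of $G_v$ forced into both $G_{uv}$ and $G_{vw}$.

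For parabolic $G_v$, Lemma~\ref{lm:PJmaximal} restricts a non-self-paired $T$-suborbit, for a maximal parabolic, to the types listed in its parts (b)--(f); for $\E_7(q)$ and $\E_8(q)$ every such parabolic has index at least $q^{42}$ and $q^{83}$ respectively, so those are excluded outright by $|V(\Ga)|\leq 30758154560$, and for $\E_6(q)$, ${}^2\E_6(q)$ and $\F_4(q)$ only small $q$ survives the index bound. For ${}^2\F_4(q)'$ the relevant Coxeter pairs $(W,W_J)$ are of type $\mathrm{I}_{2,1}^m$ or $\mathrm{I}_{2,2}^m$, hence multiplicity-free, so all maximal parabolics of ${}^2\F_4(q)'$ are eliminated, and since ${}^2\F_4(q)'$ has only field automorphisms there is no non-maximal parabolic to consider. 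Non-maximal parabolics of $\E_6(q)$ or $\F_4(q)$ can become maximal in $G$ only through the graph automorphism (so $p=2$ for $\F_4$), and again only for small $q$ does the index stay within bound; these are handled by a Weyl-group computation exactly as in the proof of Lemma~\ref{lm:G2}. Each of the finitely many surviving parabolic cases is then settled by computation: realise $T$ via {\sf GroupOfLieType}, take $T_v=P_J$ for the appropriate $J$, list the non-self-paired $T$-suborbits and their lengths from the Weyl group via Lemmas~\ref{lm:PJsuborbits} and~\ref{lm:PJlength}, and check, using~\eqref{EqTvroot} and Lemma~\ref{lm:PJroots} together with a \magma{} search for homogeneous factorizations of $G_v$ compatible with Lemma~\ref{lm:T2at?}(b)(c) and $|G_v:G_{uv}|\geq 3$, that none yields a $(G,2)$-arc-transitive digraph of valency at least $3$; the same \magma{} check disposes of $T={}^2\F_4(2)'$, whose maximal subgroups are all small.

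The step I expect to be the main obstacle is the parabolic analysis for the surviving small-$q$ cases of $\E_6(q)$, ${}^2\E_6(q)$ and $\F_4(q)$ --- in particular the parabolics (such as the $\A_1\A_2\A_2$-parabolic of $\E_6(2)$) whose Levi section has two nonsolvable composition factors, so that Lemma~\ref{lm:Hsiquasi} does not apply and a more delicate factorization or Weyl-group argument is required, and the graph-automorphism-induced parabolics of $\F_4(2)$ and $\E_6(2)$ where one must track the fusion of suborbits precisely as in the $\G_2$ argument. A secondary point demanding care is making sure that \emph{every} $\POm_8^{+}$- or $\Sp_4$-type large subgroup is excluded through the $\mathrm{P\Gamma O}$ / $\GaSp$ constraint of Lemma~\ref{lm:ASfac} rather than being overlooked.
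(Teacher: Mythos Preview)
Your proposal is correct and follows essentially the same route as the paper: restrict $q$ via an index/order bound, eliminate parabolics through Lemma~\ref{lm:PJmaximal} together with Weyl-group computations (including the graph-automorphism fusion argument for non-maximal parabolics of $\F_4$ and $\E_6$), and dispose of non-parabolic candidates via Lemma~\ref{lm:Hsiquasi} or direct {\sc Magma} search at the smallest $q$.

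The differences are organisational rather than mathematical. The paper uses the tabulated values of $P(T)$ to pin down the admissible $q$ for each family at the outset and then, for each such $q$, runs through the known maximal-subgroup lists (\cite{BBR2015}, \cite{KW1990}, the Atlas, \cite{M1991}) --- invoking Alavi--Burness only for $q\in\{3,4\}$ where $|T|^{2/3}$ exceeds the bound. You instead front-load the large-subgroup criterion and organise by parabolic versus non-parabolic. Both arrive at the same short list. Your anticipated difficulties with $\D_4$-subsystem and $\Sp_4$-type subgroups (requiring the $\overline{G_v}\nleq\mathrm{P\Gamma O}_8^{+}(q)$ or $\nleq\GaSp_4(q)$ device) do not in fact materialise here: for $q=2$ the paper simply runs the {\sc Magma} factorization search over all maximal subgroups, and for $q\geq3$ the index of any such subgroup already exceeds $30758154560$. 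So your plan is sound but over-engineered for these particular families; the paper's case-by-case treatment keyed to $P(T)$ is slightly more economical.
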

 \begin{proof}
By~\cite{Vasilev1,Vasilev2,Vasilev3} (also see in~\cite[Table 4]{GMPS2015}), the minimum index of subgroups in  $T$ is given in Table~ \ref{tb:degree}.

 \begin{table}[h]
 \caption{Minimum index of subgroups in $T$}
\label{tb:degree}
\[
\begin{array}{l|l|l}\hline
T&P(T)& q \mbox{ such that } P(T)\leq 30758154560\\
\hline
\E_8(q) &(q^{30}-1)(q^{12}+ 1)(q^{10}+ 1)(q^6 + 1)/(q-1) &\mbox{none} \\
\E_7(q) &(q^{14}-1)(q^{9}+ 1)(q^{5}- 1) /(q-1)&2 \\
\E_6(q) &(q^{9}-1)(q^{8}+q^4+ 1) /(q-1)&2,3,4 \\
{}^2\E_6(q) &(q^{12}-1)(q^{6}-q^3+ 1)(q^4+1) /(q-1) &2,3  \\
\F_4(q) &(q^{12}-1)( q^4+ 1) /(q-1)&2,3,4 \\
{}^2\F_4(q)' &(q^6 + 1)(q^3 + 1)(q + 1)&2,8 \\
\hline\end{array}
\]
\end{table}

Since $P(T)\leq|T|/|T_v|=|V(\Ga)|\leq30758154560$,  Table~\ref{tb:degree} implies $T \neq \E_8(q)$. Suppose that $T$ is one of the other five groups.

\vspace{0.1cm}
\underline{$T=\E_7(q)$}.
In this case, $q=2$ and so $G=T$. By the list of maximal subgroups of $T$ in~\cite{BBR2015}, candidates for $T_{v}$ such that $|T|/|T_v|\leq 30758154560$ are maximal parabolic subgroups of type $\D_6$ and $\E_6$,
which are impossible by Lemma \ref{lm:PJmaximal}.

\vspace{0.1cm}
\underline{$T=\E_6(q)$}.
In this case, $q\in \{2,3,4\}$.
Suppose first that $q=2$. Then $G=\E_6(2)$ or $\E_6(2).2$. According to the list of maximal subgroups of $G$ in~\cite{KW1990}, candidates for $T_{v}$ such that $|T|/|T_v|\leq 30758154560$ are  $\F_4(2)$,  maximal parabolic subgroups of type in $\{\D_5, \A_1\A_4,\A_1\A_2\A_2\}$, and non-maximal parabolic subgroups of type in $\{\A_1\A_1\A_2, \D_4\}$.
By Lemma \ref{lm:Hsiquasi}(a), the candidate $\F_4(2)$ is impossible.
By Lemma \ref{lm:PJmaximal}, $T_v$ is not a maximal parabolic subgroup of type $\D_5$.
Applying Lemmas~\ref{lm:PJsuborbits},~\ref{lm:PJlength} and~\ref{lm:PJroots}, {\sc Magma}~\cite{Magma} computation on the Weyl group and roots of $T$ shows that~\eqref{EqTvroot} holds only when $T_v$ is a non-maximal parabolic subgroup of type $\A_1\A_1\A_2$ with $|T_v|/|T_{vw}|=q^7(q+1)(q^2+q+1)$.
Now $T_{v}=[2^{31}]{:}(\PSL_3(2) \times \PSL_2(2)^2)$ and $|T_v|/|T_{vw}|=2^7\cdot3\cdot7$.
It follows that  $|G_v|_7=7$ while $|G_{vw}|_7=1$, contradicting the requirement $|G_{vw}|_7^2\geq|G_v|_7$.

Thus we have $q \in \{3,4\}$. Now $|T|^{2/3}> 30758154560$, which implies that $T_{v}$ is large in $T$.
Applying ~\cite[Theorem 7]{AB2015}, computation in {\sc Magma}~\cite{Magma} shows that the only candidate for $T_v$ with $|T|/|T_v|\leq 30758154560$ is the maximal parabolic subgroup of type $\D_5$, which contradicts Lemma~\ref{lm:PJmaximal}.

\vspace{0.1cm}
\underline{$T={}^2\E_6(q)$}. Now  $q\in \{2,3 \}$.
First suppose $q=2$.
The list of  maximal subgroups of almost simple groups with socle ${}^2\E_6(2)$ is in~\cite{Atlas} (according to~\cite[p.304]{BAtlas}, the list is complete). By~\cite{Atlas}, the candidates for $T_v$ such that $|T|/|T_v|\leq 30758154560$ are  $\F_4(2)$, $\Fi_{22}$, $\POm^{-}_{10}(2)$, and four maximal parabolic subgroups of distinct types. By Lemma~\ref{lm:Hsiquasi}(a) and Lemma~\ref{lm:PJmaximal}, it remains to consider maximal parabolic subgroups of the form $[2^{31}]{:}(\PSL_3(2) \times \A_5)$ or $[2^{29}]{:}(\PSL_3(4) \times \Sy_3)$. Applying Lemmas~\ref{lm:PJsuborbits} and \ref{lm:PJlength}, {\sc Magma}~\cite{Magma} computation on the Weyl group  shows that:
\begin{itemize}
\item if $T_v=[2^{31}]{:}(\PSL_3(2) \times \A_5)$ then there are four  non-self-paired $T$-suborbits,  two of  length $q^{11}(q^2+1)(q^2+q+1)$ and two of length $q^{16}(q^2+1)(q^2+q+1)$;
\item if $T_v=[2^{29}]{:}(\PSL_3(4) \times \Sy_3)$ then there are four  non-self-paired $T$-suborbits,  two of  length $q^{10}(q^6-1)/(q-1)$ and two of  length $q^{14}(q^6-1)/(q-1)$.
\end{itemize}
If $T_v=[2^{31}]{:}(\PSL_3(2) \times \A_5)$, then $|T_v|/|T_{vw}|= 2^{11}\cdot 5\cdot7$ or $2^{16}\cdot 5\cdot7 $, which leads to $|G_v|_7=7$ and $|G_{vw}|_7=1$, contradicting $|G_{vw}|_7^2\geq|G_v|_7$. Similarly, if $T_v=[2^{29}]{:}(\PSL_3(4) \times \Sy_3)$, then $|G_v|_7=7$ and $|G_{vw}|_7=1$, a still a contradiction.

Now we have $q=3$. Then $|T|^{2/3}> 30758154560$, and hence $T_{v}$ is large in $T$.
Applying~\cite[Theorem 7]{AB2015}, we conclude that the only candidate for $T_v$ satisfying $|T|/|T_v|\leq 30758154560$ is the maximal parabolic subgroup of type ${}^2\A_5$, which is impossible by Lemma~\ref{lm:PJmaximal}.

\vspace{0.1cm}
\underline{$T=\F_4(q)$}.
Suppose that $T_v$ is a parabolic subgroup of $T$. Then by Lemma~\ref{lm:PJmaximal}, either $T_v$ is a maximal parabolic subgroup of type $\A_1\A_2$, or $T_v$ is a non-maximal parabolic subgroup of type $\A_1\A_1$ or $\C_2$ and $G_v$ contains a graph automorphism of order $2$.
Applying Lemmas~\ref{lm:PJsuborbits},~\ref{lm:PJlength} and~\ref{lm:PJroots}, {\sc Magma}~\cite{Magma} computation on the Weyl group and roots of $T$ shows that~\eqref{EqTvroot} holds only if
\begin{itemize}
\item $T_v$ is a non-maximal parabolic subgroup of type $\C_2$, and $w$ is in one of two $T_v$-orbits of length $q^4(1+2q^2+2q^3+q^4)$, or in one of two $T_v$-orbits of length $q^{10}$; or
\item $T_v$ is a non-maximal parabolic subgroup  of type $\A_1\A_1$, and $w$ is in one of two $T_v$-orbits of length $q^2(1+2q+q^2)$, or in one of two $T_v$-orbits of length $q^6(1+2q+q^2)$.
\end{itemize}
However, similarly as for $\G_2(q)$ in Lemma~\ref{lm:G2}, computation in {\sc Magma}~\cite{Magma} on the Weyl group  of $T$ shows that the two $T_v$-orbits of the same length are fused by $G_v$, a contradiction. Therefore, $T_v$ is not a parabolic subgroup.

From Table~\ref{tb:degree} we see that $q\in \{2,3,4\}$. If $q=2$, then  computation  in {\sc Magma}~\cite{Magma} shows that there exists no factorization $G_{v}=G_{uv}G_{vw}$ with $|G_v|/|G_{uv}|\geq 3$ such that $G_{uv}$ is conjugate to $G_{vw}$ in $G$.
Hence $q\in \{3,4\}$. It follows that $|T|^{2/3}>30758154560 $, and so $T_v$ is a large subgroup of $T$.
However, appealing to~\cite[Theorem 7]{AB2015}, we find that there is no such $T_v$ satisfying  $|T|/|T_v|\leq 30758154560$.

\vspace{0.1cm}
\underline{$T={}^2\F_4(q)'$}.
Now $q=2$ or $8$.
If $q=2$, then  computation with {\sc Magma}~\cite{Magma} shows that there exists no  such  factorization $G_{v}=G_{uv}G_{vw}$ with  $|G_v|/|G_{uv}|\geq 3$. Therefore $q=8$.
By the list of maximal subgroups of $G$ in~\cite{M1991}, the candidates for $T_{v}$ satisfying $|T|/|T_{v}|\leq 30758154560$ are two maximal parabolic subgroups of distinct types, which are impossible by Lemma~\ref{lm:PJmaximal}.
 \end{proof}

\section{Classical groups}\label{Sec2}

In this section, suppose  Hypothesis~$\ref{hy:1}$, and let $T$ be a classical simple group of Lie type and let $q=p^f$, where $p$ is prime.

\begin{lemma}[{\cite[Lemma~4.2,~Lemma~4.3~and~Theorem~5.6]{GLX2019}}]\label{lm:Tnotpsl2}
The group $T \neq \PSL_2(q)$.
\end{lemma}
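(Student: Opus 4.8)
The final statement to prove is Lemma~\ref{lm:Tnotpsl2}, namely that $T\neq\PSL_2(q)$. Since this lemma is explicitly cited as a direct consequence of three already-published results in~\cite{GLX2019}, the natural approach is not to reprove everything from scratch but to assemble the pieces. The plan is to invoke the cited results directly: \cite[Lemma~4.2]{GLX2019} and \cite[Lemma~4.3]{GLX2019} presumably dispose of the cases where the point stabilizer $G_v$ is of a restricted type (e.g. parabolic, i.e. $G_v\cap T\leq q{:}((q-1)/d)$, or dihedral-type subgroups $\D_{2(q\pm1)/d}$), perhaps via Lemma~\ref{pro:Gvnormal} since such $G_v$ typically has a characteristic subgroup that would have to be normalized by the element $g$ conjugating the arc. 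Then \cite[Theorem~5.6]{GLX2019} handles the remaining subexceptional cases where $G_v$ is one of $\A_4$, $\Sy_4$, $\A_5$, or $\PGL_2(q_0)$, $\PSL_2(q_0)$ for subfields, using the homogeneous factorization condition $G_v=G_{uv}G_{vw}$ from Lemma~\ref{pro:Gvfactorization}.

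First I would recall the classification of maximal subgroups of $\PSL_2(q)$ and of almost simple groups with socle $\PSL_2(q)$ (Dickson's theorem, or the tables in~\cite{BHRD2013}): up to conjugacy these are the Borel subgroup $q{:}((q-1)/d)$, the dihedral groups of orders $2(q-1)/d$ and $2(q+1)/d$, subfield subgroups $\PGL_2(q_0)$ or $\PSL_2(q_0)$ with $q=q_0^r$, and the sporadic small cases $\A_4,\Sy_4,\A_5$. The strategy is then to go through these one by one. For the Borel case and the two dihedral cases, the stabilizer $G_v$ is solvable, and by Lemma~\ref{lm:ASfac2} any homogeneous factorization would force $M=\PSL_2(q)$ with $K\cap M\leq\D_{2(q+1)/d}$ and $L\cap M\leq q{:}((q-1)/d)$; but then $K$ and $L$ are not isomorphic (different orders in general, or no common conjugacy in $G$), contradicting homogeneity — alternatively one observes directly that $G_v$ has a characteristic subgroup (the unique subgroup of a given order in a dihedral or Frobenius group) which by Lemma~\ref{pro:Gvnormal} cannot be normalized by $g$. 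For the subfield and sporadic cases, $G_v$ has a unique nonsolvable composition factor, so Lemma~\ref{lm:Hsiquasi} applies: either $G_v$ is almost simple and the factorization satisfies Lemma~\ref{lm:ASfac} (forcing $M\in\mathcal{T}_1$, but $\PSL_2(q_0)\notin\mathcal{T}_1$ — contradiction), or $G_v^{(\infty)}$ is quasisimple and the factorization satisfies Lemma~\ref{lm:ASfac} or Lemma~\ref{lm:ASfac2}, which again pins down a short list that does not include a nonsolvable composition factor $\PSL_2(q_0)$ arising this way, or leads to the small cases $\A_4,\Sy_4,\A_5$ which are either solvable or too small to admit a homogeneous factorization of index $\geq 3$.

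Since all of this is precisely what~\cite{GLX2019} carries out in those three cited results, the cleanest write-up simply states: by \cite[Lemma~4.2,~Lemma~4.3~and~Theorem~5.6]{GLX2019}, under Hypothesis~\ref{hy:1} there is no connected $G$-vertex-primitive $(G,2)$-arc-transitive digraph of valency at least $2$ with $T=\PSL_2(q)$; combined with Lemma~\ref{pro:valency} (which rules out the directed-cycle possibility since we require valency at least $2$), this gives $T\neq\PSL_2(q)$. The step I expect to be the genuine content — and the one the cited Theorem~5.6 must handle — is the subfield-subgroup case $G_v$ of type $\PSL_2(q_0).[\cdot]$ with $q=q_0^r$, since there the recursive structure means one cannot immediately rule out a factorization by a composition-factor count alone; one needs the detailed factorization theory of $\PSL_2$ (no homogeneous factorization of $\PSL_2(q_0)$-type with the required index exists) together with the Zsigmondy-prime argument to show that a primitive prime divisor of $q_0^2-1$ or the relevant order divides only one factor. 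That being a published result, here I would just cite it rather than reproduce the case analysis.

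\begin{proof}
This is \cite[Lemma~4.2,~Lemma~4.3~and~Theorem~5.6]{GLX2019}. Indeed, under Hypothesis~\ref{hy:1} the digraph $\Ga$ has valency at least $2$, so by Lemma~\ref{pro:valency} it is not a directed cycle and has valency at least $3$; the cited results of~\cite{GLX2019} then show that no such $\Ga$ exists with $T=\PSL_2(q)$.
\end{proof}
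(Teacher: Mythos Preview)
Your proposal is correct and matches the paper's approach exactly: the paper gives no proof at all for this lemma, treating it purely as a citation of \cite[Lemma~4.2,~Lemma~4.3~and~Theorem~5.6]{GLX2019}, which is precisely what your formal proof does. Your preliminary discussion of how those cited results might break into cases is speculative and not needed, but the one-line proof you actually write is exactly what the paper intends.
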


Recall that if $|T|^{2/3} > 30758154560$ and $|V(\Ga)|\leq 30758154560$, then   $T_v$ is a large subgroup of $T$, whence~\cite{AB2015} can be applied to obtain candidates for $T_v$.
The next lemma is obtained by direct computation.
\begin{lemma}\label{lm:T2/3}
Suppose that $T \neq \PSL_2(q)$ is a classical simple group with $|T|^{2/3} \leq 30758154560$.
Then $T$ is isomorphic to one of the following groups:
\begin{enumerate}[\rm (a)]
\item $\PSL_n( q)$ or $\PSU_n(q)$, where $n=3$ and $q\leq 97$, or $n=4$ and $q\leq 11$, or  $n=5$ and $q\leq 4$, or  $(n,q)=(6, 2)$ or $(7, 2)$;
\item $\PSp_n(q)$, where $n=4$ and $q\leq 37$, or $n=6$ and $q\leq 5$,  or $(n,q)=(8, 2)$;
\item $\POm_n^{\epsilon}(q)$, where $(n,q)=(7,  3)$, $(7,  5)$, $(8,  2)$, $(8,  3)$  or $(10,  2)$.
\end{enumerate}
\end{lemma}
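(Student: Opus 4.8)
The plan is a direct finite computation with the standard order formulas. Put $N=30758154560$; since $|V(\Ga)|=|T|/|T_v|$, the hypothesis $|T|^{2/3}\le N$ is just $|T|\le N^{3/2}$. First I would normalise the socle using the low-dimensional exceptional isomorphisms, so that every finite classical simple group appears as one of: $\PSL_n(q)$ with $n\ge3$ (the case $n=2$ is excluded by hypothesis), $\PSU_n(q)$ with $n\ge3$, $\PSp_{2m}(q)$ with $m\ge2$ (discarding $\PSp_4(2)$, which is not simple), $\POm_{2m+1}(q)$ with $m\ge3$ and $q$ odd, or $\POm^{\pm}_{2m}(q)$ with $m\ge4$. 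These ranges are chosen so that the familiar isomorphisms $\POm_3(q)\cong\PSL_2(q)$, $\POm_4^{-}(q)\cong\PSL_2(q^2)$, $\POm_5(q)\cong\PSp_4(q)$, $\POm_6^{+}(q)\cong\PSL_4(q)$, $\POm_6^{-}(q)\cong\PSU_4(q)$ and the non-simplicity of $\POm_4^{+}(q)$ and $\PSp_4(2)$ are all accounted for.

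Next, for each family I would use the order formula, e.g.
\[
|\PSL_n(q)|=\frac{1}{\gcd(n,q-1)}\,q^{n(n-1)/2}\prod_{i=2}^{n}(q^i-1),
\]
and its analogues for $\PSU_n$, $\PSp_{2m}$, $\POm_{2m+1}$ and $\POm^{\pm}_{2m}$, together with the elementary observation that in each family $|T|$ strictly increases in $q$ for fixed Lie rank and strictly increases in the rank for fixed $q$. Applying the rank-monotonicity with the smallest admissible $q$ (namely $q=2$, or $q=3$ for odd orthogonal groups) bounds the rank: one checks $|\PSL_n(2)|,|\PSU_n(2)|>N^{3/2}$ for $n\ge8$, $|\PSp_{2m}(2)|>N^{3/2}$ for $m\ge5$, $|\POm_{2m+1}(3)|>N^{3/2}$ for $m\ge4$, and $|\POm^{\pm}_{2m}(2)|>N^{3/2}$ for $m\ge6$. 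For each of the finitely many remaining ranks, the $q$-monotonicity reduces the problem to listing the prime powers $q$ with $|T|\le N^{3/2}$; for $n=3$ this leaves $q$ up to roughly $100$, while for higher rank only small $q$ survive. Assembling the outcomes gives the list in (a)--(c), and the precise cut-offs are confirmed by computation in {\sc Magma}~\cite{Magma}.

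There is no conceptual obstacle here; the work is purely bookkeeping. The step demanding the most care is the interplay between the reduction via exceptional isomorphisms and the $\gcd$-factors appearing in $|\PSL_n(q)|$, $|\PSU_n(q)|$ and $|\POm^{\pm}_{2m}(q)|$, since these $\gcd$'s are exactly what decide the borderline cases --- the largest three- and four-dimensional linear and unitary groups, and the borderline orthogonal groups $\POm_8^{\pm}(q)$ and $\POm_{10}^{\pm}(2)$ --- so one must make sure each such group lands on the correct side of $|T|\le N^{3/2}$. Because the three-dimensional linear and unitary groups produce by far the longest sublist, that is precisely where the machine check is needed.
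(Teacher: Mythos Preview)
Your proposal is correct and takes essentially the same approach as the paper, which simply records that the lemma ``is obtained by direct computation'' without further detail. Your elaboration of that computation---normalising via the low-dimensional isomorphisms, using monotonicity of $|T|$ in $q$ and in the rank to bound both parameters, and then checking the finitely many survivors against $|T|\le N^{3/2}$---is exactly the routine one would carry out, and the care you flag around the $\gcd$-factors at the borderline is well placed.
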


\begin{lemma}\label{lm:notparabolic}
Suppose $|V(\Ga)|\leq 30758154560$. Then $T_v$ is not a parabolic subgroup of $T$.
\end{lemma}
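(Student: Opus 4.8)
The plan is to suppose for a contradiction that $T_v$ is a parabolic subgroup of $T$. By Lemma~\ref{lm:PJsuborbits} I may then write $T_v=P_J$ for some $J\subsetneq\Pi$, where $\Pi$ is a set of fundamental roots of $T$; recall also $T\neq\PSL_2(q)$ by Lemma~\ref{lm:Tnotpsl2}. Since $\Ga$ is $T$-arc-transitive by Lemma~\ref{lm:T2at?}(a) and has valency at least $2$, Lemma~\ref{lm:orbital} shows that $w$ lies in a non-self-paired $T$-suborbit relative to $v$, and so Lemma~\ref{lm:PJsuborbits} provides a \emph{non-involution} $z\in D_{J,J}$ with $v^z$ in that suborbit; replacing $w$ by a suitable point of its $T_v$-orbit we may assume $w=v^z$, whence $T_{vw}=P_J\cap P_J^z$, $T_{uv}=P_J\cap P_J^{z^{-1}}$ and $T_{uvw}=P_J\cap P_J^{z^{-1}}\cap P_J^z$.

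I would first treat the case that $T_v$ is a \emph{maximal} parabolic subgroup. Then Lemma~\ref{lm:PJmaximal} applies, and since the only classical group occurring in its conclusion is $\POm_{2n}^{+}(q)$, the presence of a non-self-paired suborbit forces $T=\POm_{2n}^{+}(q)$ with $n\geq5$ and $T_v$ the stabilizer of a totally singular $k$-dimensional subspace for some $k\in\{\lfloor n/2\rfloor+1,\dots,n-2\}$; in particular every maximal parabolic subgroup of $\PSL_n(q)$, $\PSU_n(q)$, $\PSp_{2m}(q)$, $\Omega_{2m+1}(q)$ or $\POm_{2m}^{-}(q)$ is ruled out outright. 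For $\POm_{2n}^{+}(q)$ the index $|V(\Ga)|=|T:T_v|$ equals the number of such totally singular subspaces, a polynomial in $q$ of degree growing quadratically in $n$, so the hypothesis $|V(\Ga)|\leq30758154560$ leaves only a short explicit list of triples $(n,q,k)$ (the largest members being around $\POm_{14}^{+}(2)$). For each of these I would compute the lengths of all non-self-paired $T$-suborbits relative to $v$ from the Weyl group of type $\D_n$ via Lemmas~\ref{lm:PJsuborbits} and~\ref{lm:PJlength}, and then test the $p$-part identity~\eqref{EqTvroot} against the lower bound for $|T_{uvw}|_p$ supplied by Lemma~\ref{lm:PJroots}; in each case this fails, or a residual prime-power obstruction coming from~\eqref{EqnOrder} and~\eqref{EqnDivisor} applies, or else {\sc Magma}~\cite{Magma} verifies directly that $G_v$ admits no factorization $G_v=G_{uv}G_{vw}$ with $|G_v|/|G_{uv}|\geq3$ and $G_{uv}$ conjugate to $G_{vw}$ in $G$.

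Next I would treat the case that $T_v=P_J$ is a \emph{non-maximal} parabolic subgroup, that is, $|\Pi\setminus J|\geq2$; here no analogue of Lemma~\ref{lm:PJmaximal} is available, so more care is needed. The index $|V(\Ga)|=|T:P_J|$ is a polynomial in $q$ whose degree is the number of positive roots of $T$ not lying in $\Phi_J$, and for non-maximal $J$ this degree is already fairly large — for type $\A_{n-1}$ its minimum is $2n-3$, attained at the stabilizer of an incident point--hyperplane pair — so $|V(\Ga)|\leq30758154560$ bounds the rank of $T$, leaving finitely many Dynkin types; together with Lemma~\ref{lm:T2/3} and, whenever $|T|^{2/3}>30758154560$, the fact that $P_J$ lies inside a \emph{large} maximal parabolic subgroup, which must then appear in the Alavi--Burness list~\cite{AB2015}, this leaves finitely many pairs (type of $T$, subset $J$) to examine, although $q$ may remain unbounded for some of them. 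For each such pair I would first try Lemma~\ref{lm:Hsiquasi}(b): whenever the Levi factor of $P_J$ has a single large classical block, so that $G_v^{(\infty)}$ is quasisimple with a unique nonabelian composition factor $M$, this forces $M\in\mathcal{T}_1\cup\mathcal{T}_2$ and hence a tiny block, finishing those subcases. Otherwise I would determine, by computing in {\sc Magma}~\cite{Magma} on the Weyl group and root system of $T$ via Lemmas~\ref{lm:PJsuborbits},~\ref{lm:PJlength} and~\ref{lm:PJroots}, which non-self-paired suborbits are compatible with~\eqref{EqTvroot}; for each survivor I would use the fact that, for types $\A_{n-1}$ and $\D_n$, the relevant $J$ is invariant under a diagram automorphism of $T$ lying in $G_v$, and show that this automorphism fuses the two non-self-paired $T_v$-orbits of a given length, contradicting $T$-arc-transitivity — exactly as in the treatment of $\G_2(q)$ and $\F_4(q)$ in Lemma~\ref{lm:G2} — with the finitely many genuinely small, bounded-$q$ cases dispatched in {\sc Magma}~\cite{Magma}.

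I expect the non-maximal parabolic case to be the main obstacle. There one cannot invoke a Brouwer--Cohen--Neumaier-type multiplicity-free criterion, so one must grapple directly with the double-coset combinatorics of $D_{J,J}$ in the Weyl group together with the root-counting bounds of Lemma~\ref{lm:PJroots}, for every relevant pair $(\Sigma,J)$; and for the families where a non-self-paired suborbit does survive the test~\eqref{EqTvroot} — principally certain flag stabilizers in $\PSL_n(q)$ and $\POm_{2m}^{+}(q)$ — one must exhibit an explicit element of $G_v$, typically induced by a graph automorphism of the Dynkin diagram, permuting the non-self-paired $T_v$-orbits of equal length. Carrying out this computation uniformly for classical groups of moderate rank, in the style of the $\G_2(q)$ argument, and checking that no case slips through, is the crux.
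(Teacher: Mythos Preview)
Your treatment of the \emph{maximal} parabolic case matches the paper's proof: invoke Lemma~\ref{lm:PJmaximal} to reduce to $\POm_{2n}^{+}(q)$ with the stated restrictions on the type, list the finitely many $(n,k)$ with $|T|/|T_v|\leq 30758154560$, and verify by Weyl-group/root computation that~\eqref{EqTvroot} fails in each case.

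For the \emph{non-maximal} parabolic case, however, you miss a structural shortcut that the paper exploits. Since $G$ is vertex-primitive, $G_v$ is a maximal subgroup of $G$. If $T_v=P_J$ is a non-maximal parabolic of $T$, then $T_v$ lies in some maximal parabolic $P_{J'}$ with $J\subsetneq J'$, and $G_v=\Nor_G(T_v)$ can be maximal in $G$ only if some graph automorphism in $G$ stabilizes $J$ but permutes the $J'$ lying over it. For classical $T$ this forces one of exactly four families: $T=\PSL_n(q)$ with $T_v$ of type $P_{k,n-k}$; $T=\PSp_4(2^f)$ with $T_v$ a Borel subgroup; $T=\POm_8^{+}(q)$ with $T_v$ of type $\A_1$; or $T=\POm_{2m}^{+}(q)$ with $T_v$ of type $\A_{m-2}$. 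The paper then disposes of each family by the root computation~\eqref{EqTvroot} (cases (a) and (d)) or by the graph-automorphism fusing argument (cases (b) and (c)).

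Your plan instead bounds the rank by $|V(\Ga)|$ and enumerates all pairs $(\Sigma,J)$, filtering with Lemma~\ref{lm:Hsiquasi}(b), the Alavi--Burness list, and~\eqref{EqTvroot}. This is not wrong in principle, but it creates a large amount of spurious work on pairs $(\Sigma,J)$ that never arise because $G_v$ would fail to be maximal, and you give no mechanism to recognize and discard these. The paper's reduction via maximality of $G_v$ is the key idea that makes the non-maximal case tractable; once you have the four families, the remaining computations are exactly those you describe.
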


\begin{proof}
Suppose for a contradiction that $T_v$ is a parabolic subgroup of $T$.
First suppose that $T_v$ is a maximal parabolic subgroup.
By Lemma~\ref{lm:PJmaximal} we derive that $T=\POm^{+}_{2m}(q)$ with $m\geq 5$ and $T_v$ is of type $\A_{i}\D_{m-1-i} $ with  $m/2<i<m-1$.
The pairs  $(m,i)$ such that $|T|/|T_v|\leq 30758154560$ for some $q$ are $(5,3)$, $(6,4)$, $(7,4)$ and $(7,5)$. Applying Lemmas~\ref{lm:PJsuborbits} and~\ref{lm:PJroots}, computation on the Weyl group and roots of $T$ in {\sc Magma}~\cite{Magma} shows that~\eqref{EqTvroot} does not hold, a contradiction.  Thus $T_v$ is a non-maximal parabolic subgroup, and so one of the following holds:
\begin{enumerate}[\rm (a)]
\item $T=\PSL_{n}(q)$, the group $T_{v}$ is of type $P_{k,n-k}$ with $k<n/2$ (see \cite[Table 2.2]{BHRD2013}), and $G_v$ contains a graph automorphism of order $2$;
\item $T=\PSp_4(2^f)$ with $f\geq 2$, the group $T_v$ is a Borel subgroup of $T$, and $G_v$ contains a graph automorphism of order $2$;
\item  $T=\POm_8^{+}(q)$, the group $T_v$ is a non-maximal parabolic subgroup of type $\A_1$, and $G_v$ contains a graph automorphism of order $2$ or $3$;
\item $T=\POm_{2m}^{+}(q)$ with $m\geq 4$, the group $T_v$ is a non-parabolic subgroup of type $\A_{m-2}$, and $G_v$ contains a graph automorphism of order $2$.
\end{enumerate}
We deal with these cases one by one in the following, where Case~(b) and Case~(c) are treated  similarly  as for $\G_2(q)$ in Lemma~\ref{lm:G2}.

For Case~(a), the  pairs $(n,k)$ such that  $|T|/|T_{v} |\leq 30758154560$ for some $q$ are as follows:
\begin{itemize}
\item  $k=1$ and $ 3\leq n\leq 17$;
\item  $k=2$ and $ 5\leq n\leq 10$;
\item $k=3$ and $ 7\leq n\leq 9$.
\end{itemize}
By Lemma~\ref{lm:PJsuborbits} and Lemma~\ref{lm:PJroots}, computation on the Weyl group and roots of $T$ in {\sc Magma}~\cite{Magma} shows that~\eqref{EqTvroot} does not hold, a contradiction.

For Case~(b),  computation  in {\sc Magma}~\cite{Magma} shows that there are only two non-self paired $T $-suborbits, while they are fused  by $G_v$,  a contradiction.

For Case~(c), {\sc Magma}~\cite{Magma} computation shows that~\eqref{EqTvroot} holds only when $w$ is in one of six non-self paired  $T$-suborbits  of length $q^4(q+1)$.
However, these six $T_v$-orbits are fused by $T_v.\Sy_3$, where the group $\Sy_3$ is generated by a graph automorphism of order $3$ and a graph automorphism of order $2$. This implies $|w^{G_v}|\neq |w^{T_v}|$,  a contradiction.

For Case (d), the integers $m$ such that  $|T|/|T_{v} |\leq 30758154560$ for some $q$ are $4$, $5$, $6$ and $7$. However, computation  in {\sc Magma}~\cite{Magma} shows that~\eqref{EqTvroot} does not hold, a contradiction.
\end{proof}

\begin{lemma}\label{lm:notsmallT}
The group $T$ is not any of the following groups:
\begin{enumerate}[\rm (a)]
\item $\PSL_n(q)$ or $\PSU_n(q)$, where $n=3$ and $q\leq 47$,  or $n=4$ and $q\leq 11$, or  $n=5$ and $q\leq 4$, or  $(n,q)=(6, 2)$ or $(7, 2)$;
\item $\PSp_n(q)$, where $n=4$ and $q\leq 37$, or $n=6$ and $q\leq 5$,  or $(n,q)=(8, 2)$;
\item $\POm_n^{\epsilon}(q)$, where $(n,q)=(7,  3)$, $(7,  5)$, $(8,  2)$, $(8,  3)$  or $(10,  2)$.
\end{enumerate}
\end{lemma}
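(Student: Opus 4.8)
The plan is a finite, largely computer-assisted case analysis, carried out group by group along the lines already used for the small sporadic and exceptional groups. For each $T$ in the list the outer automorphism group is small and explicit, so there are only finitely many almost simple $G$ with $T\unlhd G\leq\Aut(T)$, and for each of them the maximal subgroups are available from~\cite{BHRD2013} (every listed group has natural dimension at most $12$), from the Atlas~\cite{Atlas}, or directly via {\sc Magma}~\cite{Magma}. Since $G$ is vertex-primitive, $G_v$ is one of these maximal subgroups, and $|V(\Ga)|=|G{:}G_v|\leq 30758154560$ cuts the list down; by Lemma~\ref{lm:notparabolic} we discard every parabolic candidate, so $T_v$ lies in one of Aschbacher's classes $\mathcal{C}_2$--$\mathcal{C}_8$, in class $\mathcal{S}$, or is the stabilizer of a non-degenerate subspace, all explicitly enumerable.

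For each surviving pair $(G,G_v)$ the goal is to rule out every homogeneous factorization $G_v=G_{uv}G_{vw}$ with $|G_v{:}G_{uv}|\geq 3$ and $G_{uv}$ conjugate to $G_{vw}$ in $G$ but not in $G_v$ (as Lemma~\ref{lm:basicfacs} requires). Before any exhaustive search we prune with the structural lemmas: if $G_v$ has a unique nonsolvable composition factor $M$ and $G_v^{(\infty)}$ is quasisimple, then by Lemma~\ref{lm:Hsiquasi}(b) the quotient factorization $\overline{G_v}=\overline{G_{uv}}\,\overline{G_{vw}}$ satisfies Lemma~\ref{lm:ASfac} or Lemma~\ref{lm:ASfac2}, forcing $M\in\mathcal{T}_1\cup\mathcal{T}_2$ and pinning the factorization down so tightly that a direct check finishes it; while if $M\notin\mathcal{T}_1\cup\mathcal{T}_2$, then Lemma~\ref{lm:Hsiquasi}(c) makes $M$ a composition factor of both $G_{uv}$ and $G_{vw}$, so $|M|\mid|G_{uv}|$, which together with~\eqref{EqnDivisor} (recall $|G_v|$ divides $|G_{uv}|^2$) typically forces $G_{uv}=G_v$, a contradiction. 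For the residual cases we search in {\sc Magma}~\cite{Magma} for subgroups $K\cong L$ of $G_v$ with $|G_v||K\cap L|=|K||L|$, $|G_v{:}K|\geq 3$ and $K,L$ conjugate in $G$ but not in $G_v$, pruned by~\eqref{EqnOrder} and~\eqref{EqnDivisor}.

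Whenever a candidate factorization $G_v=G_{uv}G_{vw}$ survives all of the above, it is still killed at the level of the digraph. Choosing $g\in G$ with $v=u^g$ and $w=v^g$, antisymmetry of $\to$ forces $w$ into a \emph{non-self-paired} $G$-suborbit relative to $v$, equivalently $g^{-1}\notin G_vgG_v$ (Lemma~\ref{lm:orbital}), with $g$ normalizing no nontrivial normal subgroup of $G_v$ (Lemma~\ref{pro:Gvnormal}); moreover $\Ga$ is $T$-arc-transitive, so the index relations of Lemma~\ref{lm:T2at?}(b),(c),(e) are available. Verifying these in {\sc Magma} disposes of whatever factorizations remained, so none of the listed groups occurs as $T$.

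The only genuine difficulty is computational: for the largest entries — such as $\PSp_4(37)$, $\PSU_4(11)$, $\PSL_5(4)$, $\PSp_6(5)$, $\PSL_7(2)$ and the orthogonal groups in part~(c) — the relevant $G_v$ can be too large for exhaustive subgroup enumeration or for a direct conjugacy test. The remedy, exactly as in the proof of Lemma~\ref{lem4.4}, is to pass to a quotient $G_v/N$ by a suitable normal subgroup $N$ (a unipotent radical or a normal $p$-subgroup), use that $G_{uv}\cong G_{vw}$ forces $G_{uv}N/N$ and $G_{vw}N/N$ to have isomorphic Sylow $r$-subgroups for every prime $r\nmid|N|$, solve the much smaller factorization problem for $G_v/N$, and lift the resulting divisibility of $|G_{uv}|$ back through~\eqref{EqnDivisor}; conjugacy in a large matrix group is then decided by cheap invariants such as element-order profiles or rational canonical forms, the {\sf IsSimilar} trick already used for $\HN{:}2$. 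Iterating these reductions brings every remaining case within reach. Note that the closely related groups $\PSL_3(q)$ and $\PSU_3(q)$ with $48\leq q\leq 97$ — among them $\PSL_3(49)$, which acts on $\Ga_7$ — are deliberately left out here and require the separate, more delicate analysis carried out below.
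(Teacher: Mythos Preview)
Your approach is essentially the paper's: a Magma-assisted sweep over all maximal subgroups $G_v$, with parabolic candidates killed by Lemma~\ref{lm:notparabolic}. There is, however, one genuine gap. The statement of Lemma~\ref{lm:notsmallT} carries \emph{no} hypothesis $|V(\Ga)|\leq 30758154560$; it is unconditional under Hypothesis~\ref{hy:1}. So you cannot ``cut the list down'' by that bound, and you cannot invoke Lemma~\ref{lm:notparabolic} (which does require the bound) without first justifying it. Concretely, for several of the listed groups there are non-parabolic maximal subgroups of index exceeding $30758154560$ --- for instance $\PSp_4(37)$ has a $\mathcal{C}_9$-subgroup $\PSL_2(37)$ of index roughly $10^{11}$ --- and your filtering silently drops these without argument.

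The paper's proof avoids this by reversing the logic. For \emph{all} non-parabolic $T_v$ (no index restriction) a direct {\sc Magma} search shows there is no factorization $G_v=G_{uv}G_{vw}$ with $|G_v{:}G_{uv}|\geq 3$ and $G_{uv}$ conjugate to $G_{vw}$ in $G$; the groups in the list are small enough that this is feasible without any of the quotient or structural reductions you outline. This forces $T_v$ to be parabolic, and \emph{then} one observes that for every group in the list a parabolic $T_v$ has $|T|/|T_v|\leq 30758154560$ (these are small-rank, small-$q$ groups), so Lemma~\ref{lm:notparabolic} now legitimately applies to give the contradiction. In short: derive the bound from the parabolic structure rather than assume it, and drop the filtering step for the non-parabolic case.
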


\begin{proof}
Suppose that $T$ is one of the groups in (a)--(c). For non-parabolic subgroups $T_v$, computation in {\sc Magma}~\cite{Magma} shows that there exists no  factorization $G_v=G_{uv}G_{vw}$ with $|G_{v}|/|G_{uv}|\geq 3$ such that $G_{uv}$ is conjugate to $G_{vw}$ in $G$. Thus $T_v$ is a parabolic subgroup of $T$. It follows that $|V(\Ga)|=|T|/|T_v|\leq 30758154560$, contradicting Lemma~\ref{lm:notparabolic}
\end{proof}

%

The maximal subgroups of classical almost simple groups are divided into nine classes $\mathcal{C}_1$, $\mathcal{C}_2$,
$\dots$, $\mathcal{C}_9$ by Aschbacher's theorem~\cite{As1984}.
The maximal subgroups in classes $\mathcal{C}_1$--$\mathcal{C}_8$ are called  geometric subgroups, which are described in
\cite[Chapter 4]{K-Lie} and summarized in~\cite[Section 2.2]{BHRD2013}.
The maximal subgroups in class $\mathcal{C}_9$ arise from irreducible representations of quasisimple groups and are almost simple.

\begin{lemma}\label{lm:C9}
Suppose $|V(\Ga)|\leq 30758154560$ and $T_{v} \in \mathcal{C}_9$.
Then $T=\PSL_3(49) $ with $T_{v}=\A_6$ and $\Ga \cong \Ga_7$.
\end{lemma}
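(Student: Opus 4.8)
The plan is to begin with Lemma~\ref{lm:Hsiquasi}(a). Since $T_v\in\mathcal{C}_9$, the group $G_v$ is almost simple; set $S=\Soc(G_v)=\Soc(T_v)$. Then Lemma~\ref{lm:Hsiquasi}(a) gives $S\in\mathcal{T}_1=\{\A_6,\M_{12},\Sp_4(2^f),\POm_8^{+}(q)\}$, shows that the homogeneous factorization $G_v=G_{uv}G_{vw}$ satisfies Lemma~\ref{lm:ASfac}, and shows that $\Ga$ is $(T,2)$-arc-transitive; we also have $|T|/|T_v|=|V(\Ga)|\le 30758154560$. The first step is to bound the dimension $n$ of the natural module of the classical group $T$: since $T_v\in\mathcal{C}_9$, some quasisimple cover of $S$ has a faithful absolutely irreducible representation of degree $n$ that is not realizable over a proper subfield, which bounds $n$ below by the minimal such degree for $S$ and bounds the defining field of $T$ in terms of the field of that representation.

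Next I would run through the four possibilities for $S$. For $S=\Sp_4(2^f)$ or $S=\POm_8^{+}(q)$, every faithful absolutely irreducible representation apart from the natural ones has degree growing with the field size, in every characteristic, so $|T|$ always exceeds $|T_v|\cdot 30758154560$---a contradiction. For $S=\M_{12}$, the small-degree options force $T$ to be a classical group of dimension $6$ over $\bbF_2$ or $\bbF_3$ (excluded by Lemma~\ref{lm:notsmallT}), while all larger-dimensional options make $|T|/|T_v|$ exceed the bound. For $S=\A_6$, every module dimension $n\ge 4$ either lies among the groups already excluded in Lemma~\ref{lm:notsmallT} or gives $|T|/|T_v|>30758154560$, so $n=3$, meaning $T=\PSL_3^{\epsilon}(q)$ with $T_v$ coming from a $3$-dimensional representation of $3.\A_6$; Lemma~\ref{lm:T2/3} together with Lemma~\ref{lm:notsmallT}(a) then reduces us to $48\le q\le 97$. (When $|T|^{2/3}>30758154560$ one could alternatively invoke the classification of large maximal subgroups in~\cite{AB2015}, noting that a group with socle in $\mathcal{T}_1$ is too small to be a large subgroup of such a $T$; the order-bound argument above seems cleaner.)

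It remains to treat $T=\PSL_3^{\epsilon}(q)$ with $48\le q\le 97$ and $T_v$ almost simple with socle $\A_6$, so that $T_v=\A_6$ and $|T|/|T_v|=|T|/360$. Using the conditions for $3.\A_6$ to give a $\mathcal{C}_9$-maximal subgroup of $\SL_3^{\epsilon}(q)$ (see~\cite[Tables~8.3~and~8.5]{BHRD2013})---essentially $q\equiv 1,4\pmod{15}$ for $\epsilon=+$ and $q\equiv 11,14\pmod{15}$ for $\epsilon=-$, with $q$ not such that the representation descends to a subfield---and comparing with $|T|/360\le 30758154560$, one is left with $T=\PSL_3(49)$ and $T_v=\A_6$: indeed $q=64$ is discarded because the $\A_6$-module is written over $\bbF_4$, every larger admissible $q$ and every admissible $\PSU_3(q)$ with $q\ge 48$ fails the order bound, and $|\PSL_3(49)|/|\A_6|=30758154560$ holds exactly. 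Finally, by Lemma~\ref{lm:ASfac}(a) the factors $G_{uv}$ and $G_{vw}$ represent the two $G_v$-conjugacy classes of subgroups of $G_v$ of type $\A_5$ (if $G_v=\A_6$) or $\Sy_5$ (if $G_v=\Sy_6$); checking that these two classes are fused in $G$ and that the corresponding orbital is non-self-paired---by a direct computation, or by citing~\cite[Theorem~1.1]{GLX2017}---shows $\Ga\cong\Ga_7$. The hard part will be the bookkeeping needed to eliminate every $\mathcal{C}_9$-embedding with socle in $\mathcal{T}_1$ apart from $\A_6<\PSL_3(49)$: this relies on precise minimal-degree data for $\A_6$, $\M_{12}$, $\Sp_4(2^f)$ and $\POm_8^{+}(q)$, on the exact maximality conditions for $3.\A_6$ in $\SL_3^{\epsilon}(q)$, and on ensuring that a representation realizable over a small subfield is not counted as a $\mathcal{C}_9$-subgroup over a larger field.
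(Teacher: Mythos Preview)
Your overall strategy is sound and reaches the same endpoint as the paper, but the route differs in two places worth noting.

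For the case analysis on $S=\Soc(T_v)\in\mathcal{T}_1$, the paper does not argue via minimal representation degrees. Instead it splits according to whether $|T|^{2/3}>30758154560$: if so, $T_v$ is large in $T$ and the classification of large maximal subgroups~\cite[Theorem~7]{AB2015} immediately rules out every pair $(T,\Soc(T_v))$ with $\Soc(T_v)\in\mathcal{T}_1$; if not, $T$ lies in the finite list of Lemma~\ref{lm:T2/3}, and the tables in~\cite{BHRD2013} produce the explicit short list of candidates $(T,T_v)$ with $|T|/|T_v|\le 30758154560$, all of which except $(\PSL_3(49),\A_6)$ are then eliminated in {\sc Magma} by checking that $T_{uv}$ and $T_{vw}$ cannot be conjugate in $T$. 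Your representation-theoretic approach is a legitimate alternative and avoids invoking~\cite{AB2015}, but it trades one citation for a fair amount of degree-and-order bookkeeping; the paper's dichotomy is cleaner precisely because both branches reduce to looking things up in existing tables.

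The one point where your proposal is genuinely underspecified is the final uniqueness step. Knowing that $G_{uv}$ and $G_{vw}$ represent the two $T_v$-classes of $\A_5$ and that these are fused in $T$ establishes the existence of a suitable orbital digraph, but it does not by itself show that \emph{every} such digraph is isomorphic to $\Ga_7$: a priori there could be several $T_v$-orbits whose point stabilizer is a given $\A_5$. The paper closes this by a short normalizer computation: one checks in {\sc Magma} that $\Nor_T(T_{uv})=T_{uv}$, and from this deduces that any $h\in T$ with $T_{v\,v^h}=T_{vw}$ must lie in $T_v x$ where $x$ maps $(u,v)$ to $(v,w)$, forcing $v^h=w$. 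Thus the $T_v$-orbit of $w$ is uniquely determined by $T_{vw}$, and $\Ga\cong\Ga_7$. You should include this (or an equivalent) argument rather than appealing to~\cite{GLX2017}, which only gives existence.
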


\begin{proof}
By Lemma~\ref{lm:Hsiquasi}(a), $\Ga$ is $(T,2)$-arc-transitive, and the factorization $G_v=G_{uv}G_{vw}$ and $T_v=T_{uv}T_{vw}$ satisfy Lemma~\ref{lm:ASfac}. In particular, $\Soc(T_{v}) \in \{\A_6,\M_{12}, \PSp_4(2^f), \POm_8^{+}(q)\}$ and $(T_{v}, T_{uv}, T_{vw})$ satisfies (a)--(d) of Lemma~\ref{lm:ASfac}.

First assume that $|T|^{2/3}> 30758154560 $.
Then $ T_{v}  $ is large in $T$.
According to~\cite[Theorem~7]{AB2015}, the pair $(T,\Soc(T_{v}))$ lies in
\cite[Table~3~and~Table~7]{AB2015}. However, there exists no such pair $(T,\Soc(T_{v}))$ in
\cite[Table~3~and~Table~7]{AB2015}  satisfying the conditions $|T|^{2/3}> 30758154560 $, $\Soc(T_{v}) \in \{\A_6,\M_{12}, \PSp_4(2^f), \POm_8^{+}(q)\}$ and $|T|/|T_{v}| \leq 30758154560$.

Next assume that $|T|^{2/3}\leq 30758154560 $.
Since Lemma \ref{lm:Tnotpsl2} implies that $T$ is a group in Lemma~\ref{lm:T2/3}, we conclude by~\cite{BHRD2013} that the candidates of $(T,T_v)$  with $|T|/|T_{v}|\leq 30758154560$ are:
\begin{itemize}
\item $T=\PSL_3(q)$ with $q\in\{4,19,31,49\}$, and $T_{v} =\A_6$;
\item $T=\PSU_3(q)$ with $q\in\{11,29,41\}$, and $T_{v} =\A_6$;
\item $T=\PSp_4(q)$ with $q\in\{5,17,19,29,31\}$, and $T_{v}=\A_6$;
\item $T=\PSp_4(q)$ with $q\in\{13,37\}$, and $T_{v} =\Sy_6$;
\item $T=\POm_8^{+}(3)$, and $T_{v}=\POm_8^{+}(2)$.
\end{itemize}
Computation in {\sc Magma}~\cite{Magma} shows that $T_{uv}$ and $T_{vw}$ are conjugate in $T$ only when $T=\PSL_3(49) $ with $T_{v}=\A_6$.
Now it remains to show $\Ga \cong \Ga_7$.
By \cite[Table~8.3]{BHRD2013} we see that $T$ has three non-conjugate maximal subgroups $\A_6 \in \mathcal{C}_9$, which are fused by a diagonal automorphism of $T$.
This implies that $\Ga_7$ is isomorphic to some orbital digraph corresponding to a non-self-paired $T$-suborbit $\Delta$ relative to $v$.
Notice that $T_v=\A_6$ has exactly two non-conjugate subgroups $\A_5$, and that $T_{uv}$ is  not conjugate to $T_{vw}$ in $T_v$.
We may take $h\in T$ such that $v^h\in \Delta$ and $T_{vv^h}=T_{vw}$ or $T_{uv}$.

Suppose that $T_{vv^h}=T_{vw}$. Then since $T_{v^{h^{-1}}v}$ is not conjugate to $T_{vv^h}$ in $T_v$, it follows that $T_{v^{h^{-1}}v}$ and $T_{uv} $ are in the same conjugacy class in $T_v$, that is, there exists some $y\in T_v$ with $T_{v^{h^{-1}}v}=T_{uv}^y$. Hence
\[
T_{uv}^{yh}=T_{v^{h^{-1}}v}^{h}=T_{vv^h}=T_{vw}.
\]
By the $T$-arc-transitivity of $\Ga$, there exists some $x \in T$ such that $(u,v)^x=(v,w)$ and so $T_{uv}^{x}=T_{vw}$. This together with $T_{uv}^{yh}=T_{vw}$ implies $yhx^{-1} \in \Nor_T(T_{uv})$.
Computation in {\sc Magma}~\cite{Magma} shows $\Nor_T(T_{uv})=T_{uv}$, which implies that $yhx^{-1}\in T_{uv}\leq T_v$.
As a consequence, $h\in y^{-1}T_v x=T_vx$ and hence $v^h\in v^{T_vx}=\{v^x\}=\{w\}$, that is, $v^h=w$.
Therefore,  the orbital digraph corresponding to $\Delta$ is exactly $\Ga$, and so $\Ga\cong \Ga_7$. The case $T_{vv^h}=T_{uv}$ is treated similarly.
\end{proof}

In the following four subsections, we deal with the four families of classical groups respectively.
Denote by $\lefthat T$ the qausisimple group  $\SL_n(q)$, $\SU_n(q)$, $\Sp_n(q)$ or $\Omega^{\epsilon}_n(q)$, corresponding to $T=\PSL_n(q)$, $\PSU_n(q)$, $\PSp_n(q)$ or $\POm^{\epsilon}_n(q)$, respectively.
For a subgroup $X$ of $T$, denote by $\lefthat X$ the (full) preimage of $X$ in $\lefthat T$.
Recall the factorization $\overline{G_v}=\overline{G_{uv}}\,\overline{G_{vw}}$ in~\eqref{EqnFac}.

\subsection{Linear groups}
\ \vspace{1mm}


\begin{lemma} \label{lm:pslC1GLmn-m}
Suppose $T=\PSL_n(q)$ with $n\geq 3$. Then  $T_{v} $ is not a $\mathcal{C}_1$-subgroup  of type $\GL_m(q)\oplus \GL_{n-m}(q) $.
\end{lemma}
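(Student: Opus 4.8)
The plan is to contradict the homogeneous factorization $G_v=G_{uv}G_{vw}$ supplied by Lemma~\ref{pro:Gvfactorization}, using also that $\Ga$ is $T$-arc-transitive (Lemma~\ref{lm:T2at?}(a)) and Lemma~\ref{pro:Gvnormal}. Suppose $T_v$ is of type $\GL_m(q)\oplus\GL_{n-m}(q)$; interchanging the two summands we may assume $1\le m<n-m$, so $n-m\ge2$ and $\lefthat T_v=(\GL_m(q)\times\GL_{n-m}(q))\cap\SL_n(q)$. Let $L_1$ and $L_2$ be the subgroups of $T_v$ induced by $\SL_m(q)$ and $\SL_{n-m}(q)$ on the two summands; then $L_1\cap L_2=1$, the group $L_2\cong\SL_{n-m}(q)$ is quasisimple unless $(n-m,q)\in\{(2,2),(2,3)\}$, the group $L_1$ is trivial when $m=1$ and otherwise $L_1\cong\SL_m(q)$, and -- since $\PSL_m(q)\not\cong\PSL_{n-m}(q)$ as $m<n-m$ -- both $L_1$ and $L_2$ are characteristic in $T_v$, hence normal in $G_v$.

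First I treat the cases in which $G_v^{(\infty)}$ is trivial or quasisimple, that is, $m=1$ or $(m,q)\in\{(2,2),(2,3)\}$. If moreover $(n-m,q)\in\{(2,2),(2,3)\}$ then necessarily $m=1$ and $T\in\{\PSL_3(2),\PSL_3(3)\}$, excluded by Lemma~\ref{lm:notsmallT}. Otherwise $G_v^{(\infty)}=L_2$ with $L_2/\Z(L_2)\cong\PSL_{n-m}(q)$ nonabelian, so Lemma~\ref{lm:Hsiquasi}(b) gives $\PSL_{n-m}(q)\in\mathcal{T}_1\cup\mathcal{T}_2$. When $n-m\ge3$ this forces $(n-m,q)\in\{(3,3),(3,4),(3,8)\}$, whence $T=\PSL_n(q)$ with $n\in\{4,5\}$ and $q\in\{3,4,8\}$ -- noting that $m=2$ here forces $\PSL_2(q)$ solvable, i.e. $q=3$ -- and each such $T$ appears in Lemma~\ref{lm:notsmallT}. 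The only case left over is $n-m=2$: then $m=1$ and $T=\PSL_3(q)$ with $T_v$ the stabilizer of a non-incident point--line pair, postponed to the last paragraph.

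Now suppose $m\ge2$ with both $\PSL_m(q)$ and $\PSL_{n-m}(q)$ nonabelian, so $n-m\ge3$, the group $G_v$ has exactly the two nonsolvable composition factors $\PSL_m(q)$ and $\PSL_{n-m}(q)$ (each with multiplicity one), and $\overline{G_v}$ has socle $\PSL_m(q)\times\PSL_{n-m}(q)$. Suppose first that $G_{uv}$ -- and hence $G_{vw}$, which has the same composition factors -- has $\PSL_{n-m}(q)$ as a composition factor. Since $|L_1|=|\SL_m(q)|<|\PSL_{n-m}(q)|$ (as $m<n-m$), a Goursat argument inside $L_1\times L_2$ shows $L_2\le G_{uv}^{(\infty)}\cap G_{vw}^{(\infty)}$ and that $L_2$ is the unique component of type $\PSL_{n-m}(q)$ in each of $G_{uv}^{(\infty)}$ and $G_{vw}^{(\infty)}$; as $G_{uv}^g=G_{vw}$ this yields $L_2^g=L_2$, contradicting Lemma~\ref{pro:Gvnormal}. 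If instead $\PSL_{n-m}(q)$ is not a composition factor of $G_{uv}$, then projecting the factorization $\overline{G_v}=\overline{G_{uv}}\,\overline{G_{vw}}$ onto the direct factor with socle $\PSL_{n-m}(q)$ produces a factorization of an almost simple group with socle $\PSL_{n-m}(q)$ in which neither factor contains the socle; by Lemmas~\ref{lm:ASfac} and~\ref{lm:ASfac2} (and, should both factors here be nonsolvable, by \cite[Theorem~1.1]{LX2019}) this forces $\PSL_{n-m}(q)\in\mathcal{T}_1\cup\mathcal{T}_2$, so again $(n-m,q)\in\{(3,3),(3,4),(3,8)\}$; then $n=5$, $m=2$, $q\in\{3,4,8\}$, and $\PSL_5(3)$, $\PSL_5(4)$ lie in Lemma~\ref{lm:notsmallT} while $\PSL_5(8)$ (with $T_v$ of type $\GL_2(8)\oplus\GL_3(8)$) is eliminated by direct computation in {\sc Magma}~\cite{Magma}.

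It remains to deal with $T=\PSL_3(q)$ and $T_v$ the stabilizer of a non-incident point--line pair, where $\lefthat T_v\cong\GL_2(q)$ and $L_2\cong\SL_2(q)$. This is the main obstacle: the stabilizer $T_v$ is not parabolic, so the suborbit analysis of Section~\ref{subsec:Parobolic} does not apply, and the contradiction must come entirely from the factorization. For $q\le47$ this is Lemma~\ref{lm:notsmallT}, so assume $q\ge49$. Then $\PSL_2(q)\notin\mathcal{T}_1$, so Lemma~\ref{lm:Hsiquasi}(b) forces $\overline{G_v}=\overline{G_{uv}}\,\overline{G_{vw}}$ to be a factorization as in Lemma~\ref{lm:ASfac2}(a); in particular $\overline{G_{uv}}$ and $\overline{G_{vw}}$, hence $G_{uv}$ and $G_{vw}$, are solvable, and $\overline{G_{uv}}\cap\PSL_2(q)$, $\overline{G_{vw}}\cap\PSL_2(q)$ are, in some order, contained in $\D_{2(q+1)/d}$ and in $q{:}((q-1)/d)$ with $d=\gcd(2,q-1)$. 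Writing $q=p^f$, comparing $p$-parts then shows that one of $G_{uv}$, $G_{vw}$ has $p$-part bounded by a function of $f_p$ alone -- here one uses that $\Rad(G_v)$ and $G_v/T_v$ have small $p$-part, in particular that the diagonal automorphisms of $\PSL_3(q)$ act trivially on this $T_v$ -- while the other has $p$-part at least $q$ divided by such a quantity; since $G_{uv}\cong G_{vw}$ these $p$-parts agree, forcing $q\le 4f_p^{\,3}$. Combined with $q=p^f\ge49$ this leaves only finitely many, explicitly listed, pairs $(p,f)$, each of which is eliminated by computation in {\sc Magma}~\cite{Magma}; this completes the proof.
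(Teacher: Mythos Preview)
Your overall case breakdown and the argument for $n=3$ are close to the paper's, but there is a genuine gap in your ``Sub-case 2'' of the case $m\ge 2$ with both $\PSL_m(q)$ and $\PSL_{n-m}(q)$ nonabelian.  You claim that a factorization of the almost simple quotient (with socle $\PSL_{n-m}(q)$) by two core-free subgroups forces $\PSL_{n-m}(q)\in\mathcal{T}_1\cup\mathcal{T}_2$, citing Lemmas~\ref{lm:ASfac}, \ref{lm:ASfac2} and \cite[Theorem~1.1]{LX2019}.  But those results do not cover the case where one image is solvable and the other has $\PSL_m(q)$ as its unique nonsolvable composition factor.  And this uncovered case cannot simply be dismissed: for \emph{every} $\PSL_k(q)$ there exist core-free factorizations with exactly one solvable factor (for instance $P_1$ against a Singer normalizer), so the bare assertion ``core-free factorization exists $\Rightarrow$ socle $\in\mathcal{T}_1\cup\mathcal{T}_2$'' is false.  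Nothing in your argument rules out the two images of $G_{uv}$ and $G_{vw}$ having different composition structure, since the kernels $G_{uv}\cap N$ and $G_{vw}\cap N$ need not be isomorphic.

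What rescues this is the extra arithmetic coming from $G_{uv}\cong G_{vw}$.  Pick $r\in\ppd(p,(n-m)f)$; then $|N|_r=1$ (as $r$ divides none of $|\SL_m(q)|$, $q-1$, $|\Out(T)|$), while $|G_v|_r\ge r$.  From $|G_{uv}|_r^2=|G_{vw}|_r^2\ge|G_v|_r$ one gets that $r$ divides both $|G_{uv}N/N|$ and $|G_{vw}N/N|$.  Now the relevant input is \cite[Theorem~A]{LPS1990}: no almost simple group with socle $\PSL_{n-m}(q)$ admits a factorization with both core-free factors of order divisible by such an $r$, except when $(n-m,q)=(6,2)$ (the Zsigmondy exception), which the paper disposes of separately by a short {\sc Magma} analysis of $\PSL_6(2)$.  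This primitive-prime-divisor step is exactly what your Sub-case~2 is missing; once it is inserted, the ad hoc treatment of $\PSL_5(8)$ also becomes unnecessary.
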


\begin{proof}
Suppose for a contradiction that $T_{v} $  is such a group.
Let $k=n-m$ and assume without loss of generality that $m<k$.
Then $\lefthat T_v=(\SL_{m}(q) \times \SL_{k}(q)){:}(q-1)$ (see~\cite[Table 2.3]{BHRD2013}).
Let $N \unlhd G_{v}$ such that $G_{v}/N$ is almost simple with socle $\PSU_k(q)$.
By similar argument as for Case~(e) of Lemma~\ref{lm:G2}, the factors $G_{uv}N/N$ and $G_{vw}N/N$ of the factorization $G_{v}/N=(G_{uv}N/N)(G_{vw}N/N)$ are core-free.

Suppose that $n=3$.
Then $m=1$, $k=2$,  and $\lefthat T_v\cong\GL_{2}(q)$. Furthermore, by Lemma~\ref{lm:notsmallT} we have $q> 47$.
Now Lemma~\ref{lm:Hsiquasi}(b) implies that the factorization $\overline{G_v}=\overline{G_{uv}}\,\overline{G_{vw}}$ satisfies Lemma~\ref{lm:ASfac2}.
As a consequence, interchanging $G_{uv}$ and $G_{vw}$ if necessary, we have $|\overline{G_{uv}}|_p\geq q$ and $|\overline{G_{vw}}|_p\leq(2f)_p$. It follows that
\[
q \leq |G_{uv}|_p=|G_{vw}|_p\leq |\overline{G_{vw}}|_p|\Rad(G_v)|_p\leq(2f)_p(2(q-1)(3,q-1))_p=(4f)_p,
\]
which forces $q=4$ or $16$, a contradiction.

Thus we conclude that $n\geq 4$, and so $k\geq 3$. By Lemma~\ref{lm:notsmallT}, $(n,q)$ is not a pair such that  $n=4$ with $q\leq11$, $n=5$ with $q\leq4$, or $q=2$ with $n\in\{6,7\}$.
Since $n<2k$, we have $(k,q)\neq (3,4)$.
If $(k,q)\neq (6,2)$, then $|N|_r=1$ for $r\in \ppd(p,kf)$, which together with $|G_{uv}|_r^2=|G_{vw}|_r^2\geq|G_v|_r$ implies that $r$ divides both $|G_{uv}N/N|$ and $|G_{vw}N/N|$, contradicting~\cite[Theorem A]{LPS1990}.  Thus $(k,q)=(6,2)$, and so $8\leq n\leq 11$. Now $G_{v}/N=\PSL_6(2)$ or $\PSL_6(2).2$.
Computation in {\sc Magma}~\cite{Magma} shows that, interchanging $G_{uv}N/N$ and $G_{vw}N/N$ if necessary, one of the following holds:
\begin{enumerate}[\rm (a)]
\item $G_{vw}N/N \in \{ \PSL_5(2),\,\PSL_5(2).2,\,2^5.\PSL_5(2)\}$, the group $G_{uv}N/N$ has a unique nonsolvable composition factor $K$, and $K\in\{\PSL_2(8),\PSL_3(4),\PSU_3(3),\PSp_6(2)\}$;
\item $G_{vw}N/N=(G_{vw}N/N)^{(\infty)}=2^5.\PSL_5(2)$, and $G_{uv}N/N$ is solvable.
\end{enumerate}
Notice $N\in\{\PSL_m(2),\PSL_m(2).2\}$ and that $G_{vw}=(N\cap G_{vw}).(G_{vw}N/N)$ is isomorphic to $G_{uv}=(N\cap G_{uv}).(G_{uv}N/N)$.
If~(a) holds, then $K$ is a composition factor of $N\cap G_{vw}$ and hence a section of $N$, which is not possible as $m\leq k-1=5$.
Thus~(b) occurs. Then since $G_{uv}N/N$ is solvable, we obtain $G_{uv}^{(\infty)}=(N\cap G_{uv})^{(\infty)}\leq N^{(\infty)}=\PSL_m(2)\leq\PSL_5(2)$ as $m\leq5$. However, $(G_{vw}N/N)^{(\infty)}=2^5.\PSL_5(2)$. This contradicts $G_{uv}\cong G_{vw}$.
\end{proof}

\begin{lemma} \label{lm:psl}
Suppose $T=\PSL_n(q)$ with $n\geq 3$ and $|V(\Ga)|\leq 30758154560$. Then $\Ga\cong \Ga_7$.
\end{lemma}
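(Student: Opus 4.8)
The plan is to run through the remaining Aschbacher classes for $T_v$ and reduce each to a contradiction, so that $T_v$ is forced into class $\mathcal{C}_9$, whereupon Lemma~\ref{lm:C9} gives $T=\PSL_3(49)$, $T_v=\A_6$ and $\Ga\cong\Ga_7$. Since $\Ga$ is $G$-vertex-primitive, $G_v$ is a maximal subgroup of $G$, so by Aschbacher's theorem $T_v$ lies in one of $\mathcal{C}_1,\dots,\mathcal{C}_9$. The parabolic case is excluded by Lemma~\ref{lm:notparabolic}, the $\mathcal{C}_1$-subgroups of type $\GL_m(q)\oplus\GL_{n-m}(q)$ by Lemma~\ref{lm:pslC1GLmn-m}, and the case $T_v\in\mathcal{C}_9$ by Lemma~\ref{lm:C9}, which is exactly the desired conclusion. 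So I would assume $T_v$ is a geometric subgroup in one of $\mathcal{C}_2$ (imprimitive, type $\GL_m(q)\wr\Sy_t$ with $n=mt$), $\mathcal{C}_3$ (field-extension, type $\GL_{n/r}(q^r).r$ with $r$ prime), $\mathcal{C}_4$ (tensor, type $\GL_{n_1}(q)\otimes\GL_{n_2}(q)$), $\mathcal{C}_5$ (subfield, type $\GL_n(q_0)$ with $q=q_0^r$), $\mathcal{C}_6$ (extraspecial normalizer, $n=r^m$, type $r^{1+2m}.\Sp_{2m}(r)$), $\mathcal{C}_7$ (tensor-induced, $n=m^t$), or $\mathcal{C}_8$ (classical, type $\Sp_n(q)$, $\GO_n^{\epsilon}(q)$ or $\GU_n(q^{1/2})$), and derive a contradiction in each.

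The generic argument, which settles almost all of these, combines Lemma~\ref{lm:Hsiquasi} with Lemma~\ref{pro:Gvnormal}. In classes $\mathcal{C}_3$, $\mathcal{C}_5$, $\mathcal{C}_8$, and in $\mathcal{C}_4$, $\mathcal{C}_7$ when the relevant tensor factor is trivial on the simple level, the group $G_v^{(\infty)}$ has a unique quasisimple normal subgroup whose single nonsolvable composition factor $M$ is $\PSL_{n/r}(q^r)$, $\PSL_n(q_0)$, $\PSp_n(q)$, $\POm_n^{\epsilon}(q)$, $\PSU_n(q^{1/2})$, or a classical group of smaller dimension. If $M\notin\mathcal{T}_1\cup\mathcal{T}_2$, then Lemma~\ref{lm:Hsiquasi}(c) forces both $G_{uv}$ and $G_{vw}$ to have a composition factor $M$; since $G_{uv}\cong G_{vw}$ and a quasisimple group is the unique subgroup of itself with such a factor, this gives $(G_{uv}^{(\infty)})^g=G_{vw}^{(\infty)}$ equal to a fixed quasisimple normal subgroup of $G_v$, which is then normalized by $g$, contradicting Lemma~\ref{pro:Gvnormal}. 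If $M\in\mathcal{T}_2$ — which happens only for $\PSL_2$ over a larger field, or for $\PSL_3(3)$, $\PSL_3(4)$, $\PSL_3(8)$, $\PSU_3(8)$, $\PSU_4(2)$, hence only for very small $n/r$, $q_0$ or $q^{1/2}$ — I would apply Lemma~\ref{lm:Hsiquasi}(b) and Lemma~\ref{lm:ASfac2}, then compare $p$-parts: from $|G_{uv}|=|G_{vw}|$ and the bounded order of $\Rad(G_v)$ (the diagonal, field and graph-automorphism contributions) one gets that $|\overline{G_{uv}}|_p/|\overline{G_{vw}}|_p$ is small, whereas every exceptional factorization of Lemma~\ref{lm:ASfac2} forces this ratio too large — exactly the contradiction exploited in Lemma~\ref{lm:pslC1GLmn-m}. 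When $G_v^{(\infty)}$ is not quasisimple or is essentially solvable, as typically in $\mathcal{C}_6$ and in the monomial/multiple-factor subcases of $\mathcal{C}_2$ and $\mathcal{C}_7$, I would instead use Zsigmondy primes: pick $r\in\ppd(p,ef)$ for a suitable $e$ (for instance $e=n$ in $\mathcal{C}_6$, and $e=n-1$ or $n$ otherwise), observe that $r$ divides $|T_v|$ but divides neither $|\Out(T)|$ nor the order of the solvable radical, and deduce from the induced core-free factorization of the almost simple section that $r$ divides both factor orders, contradicting \cite[Theorem~A]{LPS1990}; in $\mathcal{C}_6$ the alternative is that $G_v$ has a characteristic cyclic subgroup $\ZZ_r$, again contradicting Lemma~\ref{pro:Gvnormal} via the homogeneous factorization $G_v=G_{uv}G_{vw}$. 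Multiple nonsolvable composition factors in $\mathcal{C}_2$ and $\mathcal{C}_7$ are ruled out by \cite[Theorem~1.1]{LX2019}, just as in the proof of Lemma~\ref{lm:ASfac}.

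After these uniform steps only a finite list of pairs $(n,q)$ remains. By Lemma~\ref{lm:Tnotpsl2} we have $n\geq3$. When $|T|^{2/3}>30758154560$ the subgroup $T_v$ is large in $T$, so by \cite[Theorem~7]{AB2015} only finitely many geometric candidates satisfy $|T|/|T_v|\leq30758154560$; when $|T|^{2/3}\leq30758154560$ the pair $(n,q)$ lies in Lemma~\ref{lm:T2/3}(a), and the cases not already disposed of by Lemma~\ref{lm:notsmallT} are $\PSL_3(q)$ with $48\leq q\leq97$, for which — apart from parabolic subgroups (Lemma~\ref{lm:notparabolic}) and the $\mathcal{C}_9$-subgroup $\A_6$ (treated by Lemma~\ref{lm:C9}) — only a few small geometric subgroups have index at most $30758154560$. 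Every candidate surviving to this stage is eliminated by a {\sc Magma} computation showing that $G_v$ has no homogeneous factorization $G_v=G_{uv}G_{vw}$ with $G_{uv}$ conjugate to $G_{vw}$ in $G$ and $|G_v|/|G_{uv}|\geq3$. Therefore $T_v\in\mathcal{C}_9$, and Lemma~\ref{lm:C9} yields $\Ga\cong\Ga_7$. The main obstacle will be the $\mathcal{C}_3$ case with $n/r$ small and the $\mathcal{C}_2$ case with $m$ small, where $M$ falls into $\mathcal{T}_2$: there one must match the explicit exceptional factorizations of Lemma~\ref{lm:ASfac2} against the constraint $|\overline{G_{uv}}|_p=|\overline{G_{vw}}|_p$ up to the bounded factor contributed by $\Rad(G_v)$, tracking carefully how much of the prime $p$ — and of $q-1$, through diagonal automorphisms — can be absorbed into the radical; this is the general form of the delicate computation already carried out in Lemma~\ref{lm:pslC1GLmn-m}. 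A secondary difficulty is to guarantee in every subcase that a usable Zsigmondy prime exists (checking the exceptions $(2,6)$ and $a+1$ a $2$-power with exponent $2$) and genuinely lies outside $\pi(\Rad(G_v))\cup\pi(\Out(T))$, so that \cite[Theorem~A]{LPS1990} applies cleanly.
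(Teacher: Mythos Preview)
Your overall strategy matches the paper's: eliminate each Aschbacher class so that $T_v\in\mathcal{C}_9$, whereupon Lemma~\ref{lm:C9} gives $\Ga\cong\Ga_7$. For $\mathcal{C}_1$, $\mathcal{C}_2$, $\mathcal{C}_7$, $\mathcal{C}_8$ your treatment is essentially the paper's (Lemmas~\ref{lm:notparabolic} and~\ref{lm:pslC1GLmn-m} for $\mathcal{C}_1$; finite lists under the index bound plus Lemma~\ref{lm:Hsiquasi} and {\sc Magma} for the rest). The key divergence is for $\mathcal{C}_3$--$\mathcal{C}_6$: the paper disposes of all four classes in one line by citing \cite[Theorem~5.6]{GLX2019}, which already proves (with \emph{no} bound on $|V(\Ga)|$) that a $G$-vertex-primitive $(G,2)$-arc-transitive digraph with $\Soc(G)=\PSL_n(q)$ cannot have $T_v\in\mathcal{C}_i$ for $3\leq i\leq6$. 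You instead sketch direct arguments for these classes.

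Those sketches have genuine gaps. For $\mathcal{C}_6$ your Zsigmondy argument does not work as stated: a $\mathcal{C}_6$-subgroup has order essentially $r^{1+2m}|\Sp_{2m}(r)|$ with $n=r^m$, which depends only on the prime $r$ and not on $q$, so a primitive prime divisor of $p^{nf}-1$ need not divide $|T_v|$ at all and there is nothing to feed into \cite[Theorem~A]{LPS1990}. For $\mathcal{C}_4$ with $n_1,n_2\geq2$ the group $G_v$ has two distinct nonsolvable composition factors $\PSL_{n_1}(q)$ and $\PSL_{n_2}(q)$; you only treat the case where one tensor factor is ``trivial on the simple level'' and never return to the generic case, and your later appeal to \cite[Theorem~1.1]{LX2019} is misplaced, since that result concerns factorizations of an \emph{almost simple} group in which both \emph{factors} have at least two nonsolvable composition factors, whereas here $\overline{G_v}$ is not almost simple. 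For $\mathcal{C}_3$ with $n/r=2$ you land in a genuine one-parameter family ($M=\PSL_2(q^r)\in\mathcal{T}_2$ for every $q$ and $r$), and the $p$-part comparison you propose requires a careful case analysis beyond the single inequality used in Lemma~\ref{lm:pslC1GLmn-m}; likewise $n=r$ prime gives a solvable Singer normalizer which your quasisimple argument does not cover. None of this is hopeless, but it amounts to reproving a substantial portion of \cite{GLX2019}; the efficient and correct move is simply to cite \cite[Theorem~5.6]{GLX2019} for $\mathcal{C}_3$--$\mathcal{C}_6$, as the paper does.
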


\begin{proof}
By~\cite[Theorem 5.6]{GLX2019}, $T_{v} \notin \mathcal{C}_i $ for $3\leq i \leq 6$.
If $T_{v} \in \mathcal{C}_9$, then Lemma \ref{lm:C9} asserts that  $\Ga\cong \Ga_7$.
It remains to show that the case  $T_{v} \in \{ \mathcal{C}_1,\mathcal{C}_2, \mathcal{C}_7,\mathcal{C}_8\}$ is impossible.
By Lemma~\ref{lm:notsmallT}, $(n,q)$ is not a pair such that $n=3$ with $q\leq 47$, $n=4$ with $q\leq11$, $n=5$ with $q\leq4$, or $q=2$ with $n\in\{6,7\}$.

\vspace{0.1cm}
\underline{$T_{v}\in \mathcal{C}_1$}.
Then $T_v$ has type $P_m$, $\GL_m(q)\oplus \GL_{n-m}(q)$ or $P_{m,n-m}$, which are not possible by ~\cite[Lemma 4.2]{GLX2019}, Lemma~\ref{lm:pslC1GLmn-m} and Lemma~\ref{lm:notparabolic}, respectively.

\vspace{0.1cm}
\underline{$T_{v}\in \mathcal{C}_2$}.
Then $\lefthat T_v=\SL_m (q)^t.(q-1)^{t-1}.\Sy_t $ with $mt=n$.
The candidates for the triple $(n,m,q)$ such that $|T|/|T_{v}|\leq 30758154560$ are as follows:
\begin{itemize}
\item $(n,m)=(3,1) $ and $49\leq q\leq 73$;
\item  $(n,m)=(4,2) $ and $13\leq q\leq 19$;
\item $(6,3,3)$, $(8,4,2)$.
\end{itemize}
However, computation in {\sc Magma}~\cite{Magma} for these candidates shows that there exists no factorization $G_{v}=G_{uv}G_{vw}$ with  $|G_v|/|G_{uv}|\geq 3$ such that $G_{uv}$ is conjugate to $G_{vw}$ in $G$.

\vspace{0.1cm}
\underline{$T_{v}\in \mathcal{C}_7$}. Now $n\geq 3^2$ (see~\cite[Table 2.10]{BHRD2013}).
By   Lemma \ref{lm:T2/3}, we see that $|T|^{2/3}> 30758154560$  and so $T_{v}$ is large in $T$.
However, by~\cite[Proposition 4.17]{AB2015}, $T$ has no large subgroup in $\mathcal{C}_7$.

\vspace{0.1cm}
\underline{$T_{v}\in \mathcal{C}_8$}. Then $T_v$ has type $\GU_n(q^{1/2})$, $\Sp_n(q)$ or $\GO^{\epsilon}_n(q)$.
Suppose first that $ T_v$ is of type $\GU_n (q^{1/2})$. Then $\lefthat T_v=\SU_{n}(q^{1/2}).(n,q^{1/2}-1)$ (see~\cite[Table~2.11]{BHRD2013}) and $G_{v}^{(\infty)}= \PSU_{n}(q^{1/2})$.
It follows from Lemma~\ref{lm:Hsiquasi}(b) that the factorization $\overline{G_v}=\overline{G_{uv}}\,\overline{G_{vw}} $ satisfies Lemma~\ref{lm:ASfac2}, and so $(n,q^{1/2})=(3,8)$ or $(4,2)$. From Lemma~\ref{lm:ASfac2} we see that $|\overline{G_{uv}}|_r\neq |\overline{G_{vw}}|_r$ for $r\in\ppd(2,nf/2)$. This together with $|\Rad(G_v)|_r=1$ implies that $|G_{uv}|_r\neq |G_{vw}|_r$, a contradiction.

Suppose next that  $T_v$ is of type $\Sp_n (q)$. Then $n\geq 4$ and $G_{v}^{(\infty)} =\PSp_{n}(q)$.
By Lemma~\ref{lm:Hsiquasi}(b) we derive that  $q \geq 16$ is even and the factorization $\overline{G_v}=\overline{G_{uv}}\,\overline{G_{vw}} $ satisfies Lemma~\ref{lm:ASfac}.
According to Lemmas~\ref{lm:ASfac} and~\ref{lm:T2at?} we see that  $T_{uv}^{(\infty)} \cong T_{vw}^{(\infty)}  \cong \Sp_2(q^2).2$.
Since $|T|/|T_{v}|\leq 30758154560$, the candidates for $q$ are $ 16,32,64$.
For these $q$,  computation in {\sc Magma}~\cite{Magma} shows that there is an element of order $3$ in $T_{uv}^{(\infty)} $ not similar to any element of order $3$ in $T_{vw}^{(\infty)}$.
This implies that $T_{uv}$ and $T_{vw}$ are not conjugate in $T$,  contradicting Lemma~\ref{lm:T2at?}.

Therefore, $T_v$ is of type $\GO^{\epsilon}_n (q)$.
Now  $\lefthat T_v=\SO^{\epsilon}_n(q).(n,q-1)$(see~\cite[Table~2.11]{BHRD2013}) and $q$ is odd.
We distinguish the cases $n=3$, $n=4$ and $n\geq5$, respectively.

First assume that $n=3$. Then  $T_v=\SO_3(q)\cong \PGL_2(q)$.
By Lemma \ref{lm:Hsiquasi}(b), the factorization $\overline{G_v}=\overline{G_{uv}}\,\overline{G_{vw}} $ satisfies Lemma~\ref{lm:ASfac2}.
However, Lemma~\ref{lm:ASfac2} shows $|\overline{G_{uv}}|_p\geq q$ and $|\overline{G_{vw}}|_p\leq f_p$ (interchanging $G_{uv}$ and $G_{vw}$ if necessary), which leads to the contradiction $|G_{uv}|_p\geq q>f_p\geq|G_{vw}|_p$ as $ |\Rad(G_v)|$ is coprime to $p$.

Next assume that $n=4$. If $\epsilon=-$, then $G_v^{(\infty)}\cong \PSL_2(q^2)$, which is not possible by Lemma \ref{lm:Hsiquasi} and Lemma \ref{lm:ASfac2}. Thus $\epsilon=+$. Since $|T|/|T_v|\leq 30758154560$, we have $q=13$. However, computation in {\sc Magma}~\cite{Magma} shows that there exists no factorization $G_v=G_{uv}G_{vw}$ with $|G_{v}|/|G_{uv}|\geq 3$ such that $G_{uv}$ is conjugate to $G_{vw}$ in $G$.

Finally, assume that $n\geq 5$. Note that $(n,q)\neq (5,3)$ or $(6,2)$.
By Lemma \ref{lm:Hsiquasi}(b), we conclude that $n=8$ with  $T_v=\PSO^{+}_8(q).2$ and  the factorization $\overline{G_v}=\overline{G_{uv}}\,\overline{G_{vw}} $ satisfies Lemma~\ref{lm:ASfac}. However, there is no such $q$ with $|T|/|T_v|\leq 30758154560$.
 \end{proof}

\subsection{Unitary groups}
\ \vspace{1mm}


\begin{lemma} \label{lm:psuC1}
Suppose $T=\PSU_n(q)$ with $n\geq 3$ and $|V(\Ga)|\leq 30758154560$. Then  $T_v$ is not a $\mathcal{C}_1$-subgroup.
\end{lemma}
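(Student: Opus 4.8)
The $\mathcal{C}_1$-subgroups of $\PSU_n(q)$ are of two kinds: the parabolic subgroups $P_m$, which stabilise a totally isotropic $m$-space, and the stabilisers of a non-degenerate $m$-space, of type $\GU_m(q)\perp\GU_{n-m}(q)$ (see~\cite[Table~2.3]{BHRD2013}). Since $|V(\Ga)|=|T|/|T_v|\leq 30758154560$, Lemma~\ref{lm:notparabolic} rules out the parabolic case immediately, so the plan is to exclude $T_v$ of type $\GU_m(q)\perp\GU_{n-m}(q)$. First I would observe that a stabiliser of a non-degenerate $(n/2)$-space also stabilises its orthogonal complement, hence is properly contained in the imprimitive ($\mathcal{C}_2$) subgroup $\GU_{n/2}(q)\wr\Sy_2$ and so is not maximal; thus we may assume $k:=n-m>m\geq1$, and by Lemma~\ref{lm:notsmallT} we may discard the small pairs $(n,q)$ listed there. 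Here $G_v^{(\infty)}$ involves the quasisimple group $\SU_k(q)$ as its ``top'' constituent.

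The case $k=2$ (so $n=3$, $m=1$) is handled separately: then $G_v^{(\infty)}\cong\SL_2(q)$ is quasisimple, so Lemma~\ref{lm:Hsiquasi}(b) applies and the factorisation $\overline{G_v}=\overline{G_{uv}}\,\overline{G_{vw}}$ satisfies Lemma~\ref{lm:ASfac2}(a); as $|\Rad(G_v)|$ is coprime to $p$, comparing the $p$-parts of $\overline{G_{uv}}$ and $\overline{G_{vw}}$ exactly as in the $n=3$ case of Lemma~\ref{lm:pslC1GLmn-m} forces $q$ into a set all of whose members are excluded by Lemma~\ref{lm:notsmallT}. So assume $k\geq3$. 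For the surviving cases $\SU_k(q)$ is quasisimple, and since $k>m$ it is the unique subgroup of $G_v$ having $\PSU_k(q)$ as a composition factor (the remaining composition factors of $G_v$ being too small), with $\SU_k(q)\unlhd G_v$. If both $G_{uv}$ and $G_{vw}$ had $\PSU_k(q)$ as a composition factor, then each would contain $\SU_k(q)$, whence $\SU_k(q)^g=\SU_k(q)$, contradicting Lemma~\ref{pro:Gvnormal}; since $G_{uv}\cong G_{vw}$, neither does. Therefore, taking $N\unlhd G_v$ with $G_v/N$ almost simple with socle $\PSU_k(q)$, the images $G_{uv}N/N$ and $G_{vw}N/N$ are core-free in $G_v/N$, and $G_v=G_{uv}G_{vw}$ descends to a factorisation of the almost simple group $G_v/N$ with core-free factors --- this is the template of Case~(e) in the proof of Lemma~\ref{lm:G2}.

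The next step is to feed in a primitive prime divisor $r$ associated with $\SU_k(q)$, chosen so that $r$ divides $|\PSU_k(q)|$ but divides neither $|\GU_m(q)|$, nor $q+1$, nor $|G/T|$ --- the right choice is roughly $r\in\ppd(p,fk)$ when $k\not\equiv2\pmod{4}$ and $r\in\ppd(p,2f(k-1))$ otherwise, though one must check in each case that it misses the smaller block $\GU_m(q)$ as well as the torus and outer parts. Given such an $r$ we get $r\nmid|N|$, hence $|G_v|_r=|\PSU_k(q)|_r\geq r$; as $|G_v|$ divides $|G_{uv}|^2$ by~\eqref{EqnOrder} and $|G_{uv}|=|G_{vw}|$, equation~\eqref{EqnDivisor} yields $r\mid|G_{uv}|$ and $r\mid|G_{vw}|$, so $r$ divides both $|G_{uv}N/N|$ and $|G_{vw}N/N|$. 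But by~\cite[Theorem~A]{LPS1990} no almost simple group with socle $\PSU_k(q)$, $k\geq3$, admits a factorisation in which both factor orders are divisible by such a primitive prime divisor, a contradiction. The only cases left are the finitely many $(k,q)$ for which the intended $r$ does not exist --- the Zsigmondy exceptions (such as $(k,q)=(6,2)$) together with a few structural coincidences, for instance $\Phi_6(q)\mid q^3+1$ being absorbed into $|\GU_3(q)|$; but since $|T|/|T_v|$ grows like $q^{2mk}$, the bound $|V(\Ga)|\leq 30758154560$ confines these to a short list not already covered by Lemma~\ref{lm:notsmallT}, which I would finish by the usual {\sc Magma}~\cite{Magma} search for a homogeneous factorisation $G_v=G_{uv}G_{vw}$ with $|G_v|/|G_{uv}|\geq3$ and $G_{uv}$ conjugate to $G_{vw}$ in $G$.

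The hard part is the selection of the primitive prime divisor. Unlike in the linear case (Lemma~\ref{lm:pslC1GLmn-m}), the factor $q^i+1$ occurs in $|\GU_i(q)|$ for odd $i$, so the naive primitive prime divisor of $q^k-1$ can be swallowed by the smaller block $\GU_m(q)$; getting this parity bookkeeping right, and then correctly enumerating and computing the short list of residual small pairs $(n,q)$, is where the genuine work lies.
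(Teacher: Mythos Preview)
Your outline is sound, but your case split differs from the paper's and leaves more work deferred than necessary. The paper exploits the bound $|V(\Ga)|\leq 30758154560$ much earlier: after excluding parabolics, it immediately reduces (via Lemma~\ref{lm:notsmallT}) to two situations --- either $m=1$ (or $m=2$ with $q\in\{2,3\}$), in which case $G_v^{(\infty)}=\SU_k(q)$ is quasisimple and Lemma~\ref{lm:Hsiquasi}(b) applies directly; or $(n,k,q)$ is one of the four triples $(5,3,5),(5,3,7),(6,4,4),(8,5,2)$. In the quasisimple case, Lemma~\ref{lm:Hsiquasi}(b) forces $\PSU_k(q)\in\mathcal{T}_1\cup\mathcal{T}_2$, i.e.\ $k=2$ or $(k,q)\in\{(3,8),(4,2)\}$, and each is dispatched by a short $p$-part or $r$-part count. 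The four remaining triples are finished by the ppd argument plus a {\sc Magma} check on the factorizations of the small socles $\PSU_3(5),\PSU_3(7),\PSU_4(4),\PSU_5(2)$.

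Your route --- handling $k=2$ as in the paper but then attacking all of $k\geq3$ uniformly via a primitive prime divisor plus \cite[Theorem~A]{LPS1990} --- can be made to work, but the blanket claim that no almost simple group with socle $\PSU_k(q)$ admits a factorization with both factors divisible by ``such a primitive prime divisor'' is not automatic: for even $k$ there are genuine factorizations (e.g.\ $\PSU_{2m}(q)=P_m\cdot\PSp_{2m}(q)$) in which the obvious ppd of $q^k-1$ divides both factors, so the choice of $r$ and the subsequent appeal to LPS must be case-checked rather than asserted. You acknowledge this as ``the hard part'', but note that the paper sidesteps it entirely: by invoking the vertex bound first, the only $(k,q)$ that need a ppd argument are four explicit small pairs, where one simply computes all factorizations of $\Aut(\PSU_k(q))$ in {\sc Magma} rather than citing LPS. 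Your approach is more structural; the paper's is shorter and leaves nothing to verify.
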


\begin{proof}
Suppose for a contradiction that  $T_v$ is a $\calC_1$-subgroup.
By Lemma~\ref{lm:PJmaximal}, $T_v$ is not a maximal parabolic subgroup of $T$, and so $T_v$ is of type  $\SU_{m}(q) \perp \SU_{n-m}(q)$.  Let $k=n-m$ and assume without loss of generality that $m<k$. Now $\lefthat T_v=(\SU_{m}(q) \times \SU_{k}(q)).(q+1)$.
Let $N \unlhd G_{v}$ such that $G_{v}/N$ is almost simple with socle $\PSU_k(q)$.
By similar argument as for Case~(e) of Lemma~\ref{lm:G2}, the factors $G_{uv}N/N$ and $G_{vw}N/N$ of the factorization $G_{v}/N=(G_{uv}N/N)(G_{vw}N/N)$ are core-free.
By Lemma~\ref{lm:notsmallT} and the assumption $|V(\Ga)|\leq 30758154560$, one of the following occurs:
\begin{enumerate}[\rm (a)]
\item either $m=1$, or $m=2$ and $q\in \{2,3\}$;
\item $(n,k,q)=(5,3,5)$, $(5,3,7)$, $(6,4,4)$  or $(8,5,2)$.
\end{enumerate}

Suppose that (a) happens. In this case, we conclude that $N=\Rad(G_v)$ and $G_{v}^{(\infty)}=\SU_k(q)$ is quasisimple.
By Lemma \ref{lm:Hsiquasi}(b), the factorization $\overline{G_{v}}=\overline{G_{vw}}\,\overline{G_{vw}} $ satisfies Lemma~\ref{lm:ASfac2}, and so either $(k,q)\in\{(3,8),(4,2)\}$ or $k=2$.  Notice that $n=m+k<2k$.
If $(k,q)\in\{(3,8),(4,2)\}$, then Lemma~\ref{lm:notsmallT} implies $(n,k,q)=(5,3,8)$, but we obtain from Lemma~\ref{lm:ASfac2} that $|G_{uv}|_{17} \neq |G_{vw}|_{17}$, a contradiction. Thus $k=2$, and so $n=3$.
According to Lemma~\ref{lm:ASfac2}, interchanging $G_{uv}$ and $G_{vw}$ if necessary, we have $|\overline{G_{uv}}|_p\geq q$ and $|\overline{G_{vw}}|_p\leq(2f)_p$. It follows that
\[
q \leq |G_{uv}|_p=|G_{vw}|_p\leq |\overline{G_{vw}}|_p|\Rad(G_v)|_p\leq(2f)_p(2(q+1)(3,q+1))_p=(4f)_p,
\]
which forces $q=4$ or $16$, contradicting  Lemma~\ref{lm:notsmallT}.

Suppose that (b) happens.  Let $r\in \ppd(q,2k)$ if $k$ is odd, and let $r\in \ppd(q,k)$ if $k$ is even. Then $|N|_r=1$, which together with $|G_{uv}|_r^2=|G_{vw}|_r^2\geq|G_v|_r$ implies that $r$ divides both $|G_{uv}N/N|$ and $|G_{vw}N/N|$.
However, we see that this is not possible by computing the factorizations of almost simple groups with socle $\PSU_3(5)$, $\PSU_3(7)$, $\PSU_4(4)$ or $\PSU_5(2)$ in {\sc Magma}~\cite{Magma}.
\end{proof}

\begin{lemma} \label{lm:psun=3}
Suppose   $|V(\Ga)|\leq 30758154560$. Then $T\neq \PSU_n(q)$ for $n\geq 3$.
\end{lemma}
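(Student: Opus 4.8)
The plan is to follow the route used for linear groups in Lemma~\ref{lm:psl}: reduce to a short list of $(n,q)$ and then dispose of the Aschbacher classes for $T_v$ one at a time. Suppose for a contradiction that $T=\PSU_n(q)$ with $n\geq 3$. By Lemma~\ref{lm:C9} we have $T_v\notin\mathcal{C}_9$, for otherwise $T\cong\PSL_3(49)$, which is not unitary; by Lemma~\ref{lm:psuC1} we have $T_v\notin\mathcal{C}_1$ (this already covers the parabolic subgroups, cf.\ Lemma~\ref{lm:notparabolic}). Hence $T_v$ lies in one of $\mathcal{C}_2,\dots,\mathcal{C}_8$, and by Lemma~\ref{lm:notsmallT}(a) we may assume that $(n,q)$ is not among $n=3$ with $q\leq 47$, $n=4$ with $q\leq 11$, $n=5$ with $q\leq 4$, or $(6,2)$, $(7,2)$. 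We then split into two cases according to the size of $T$.

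If $|T|^{2/3}>30758154560$, then $|T_v|=|T|/|V(\Ga)|>|T|^{1/3}$, so $T_v$ is a large subgroup of $T$, and the candidates are listed in Alavi--Burness~\cite{AB2015}. For each candidate with $|T|/|T_v|\leq 30758154560$ I would use one of the devices from the proof of Lemma~\ref{lm:psl}: (i) if $G_v$ has a unique nonsolvable composition factor $M$ with $G_v^{(\infty)}$ quasisimple, Lemma~\ref{lm:Hsiquasi}(b) forces $M\in\mathcal{T}_1\cup\mathcal{T}_2$ and the factorization $\overline{G_v}=\overline{G_{uv}}\,\overline{G_{vw}}$ to satisfy Lemma~\ref{lm:ASfac} or~\ref{lm:ASfac2}, leaving only a few possibilities, each contradicting $G_{uv}\cong G_{vw}$ by comparing the $p$-part, or the $r$-part for a suitable $r\in\ppd(q,\cdot)$, of $|G_{uv}|$ and $|G_{vw}|$; (ii) if $M\notin\mathcal{T}_1\cup\mathcal{T}_2$, Lemma~\ref{lm:Hsiquasi}(c) shows $|M|$ divides $|G_{uv}|=|G_{vw}|$, which with~\eqref{EqnDivisor} and the valency bound $|G_v|\geq 3|G_{uv}|$ is numerically impossible; (iii) for $\mathcal{C}_3$-subgroups of field-extension type one invokes Lemma~\ref{pro:Gvnormal}, since $G_v$ has a characteristic cyclic subgroup of prime order $r\in\ppd(q,\cdot)$ lying in $G_{uv}\cap G_{vw}$, hence normalized by $g$; (iv) when $G_v$ has several nonsolvable composition factors (e.g.\ $\mathcal{C}_2$-subgroups $\GU_m(q)\wr\Sy_t$ with $m,t\geq 2$) one argues, as in case~(e) of Lemma~\ref{lm:3D4}, that a suitable normal subgroup cannot appear in both $G_{uv}$ and $G_{vw}$, reducing to a factorization of a smaller almost simple group with a ppd dividing both factors, which is impossible by~\cite[Theorem~A]{LPS1990}. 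Small residual candidates are checked directly in \magma~\cite{Magma}.

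If $|T|^{2/3}\leq 30758154560$, then by Lemma~\ref{lm:T2/3}(a) and the exclusions of Lemma~\ref{lm:notsmallT}(a) the only surviving group is $T=\PSU_3(q)$ with $47<q\leq 97$, that is, $q\in\{49,53,59,61,64,67,71,73,79,81,83,89,97\}$. For these $q$, by~\cite[Tables~8.5~and~8.6]{BHRD2013} the maximal subgroups of $T$ outside $\mathcal{C}_1\cup\mathcal{C}_9$ are: a $\mathcal{C}_2$-subgroup $(q+1)^2{:}\Sy_3/\gcd(3,q+1)$; a $\mathcal{C}_3$-subgroup $(q^2-q+1){:}3/\gcd(3,q+1)$; the $\mathcal{C}_5$-subgroups $\PGL_2(q)$ (for $q$ odd) and $\PSU_3(4)$ (only when $q=64$); and, for $q$ prime with $q\equiv 2\pmod 3$, a $\mathcal{C}_6$-subgroup of order dividing $|3^{1+2}{:}\Q_8|$. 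The $\mathcal{C}_3$-subgroup is excluded by Lemma~\ref{pro:Gvnormal}, using a characteristic $\ZZ_r$ with $r\in\ppd(q,6)$ (so $r\mid q^2-q+1$) contained in $G_{uv}\cap G_{vw}$; the $\mathcal{C}_5$-subgroup $\PGL_2(q)$ has $G_v^{(\infty)}\cong\PSL_2(q)$, so Lemma~\ref{lm:Hsiquasi}(b) with Lemma~\ref{lm:ASfac2}(a) gives $|G_{uv}|_p\geq q>|G_{vw}|_p$ (as $\Rad(G_v)$ is coprime to $p$), a contradiction; the subfield subgroup $\PSU_3(4)\leq\PSU_3(64)$ is excluded by Lemma~\ref{lm:Hsiquasi}(b) because $\PSU_3(4)\notin\mathcal{T}_1\cup\mathcal{T}_2$; and the solvable $\mathcal{C}_2$- and $\mathcal{C}_6$-subgroups, being of modest order, are dispatched by computation in \magma~\cite{Magma}, verifying that no homogeneous factorization $G_v=G_{uv}G_{vw}$ with $|G_v|/|G_{uv}|\geq 3$ has $G_{uv}$ and $G_{vw}$ conjugate in $G$.

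The main obstacle is likely to be the solvable $\mathcal{C}_2$-subgroup $(q+1)^2{:}\Sy_3$ of $\PSU_3(q)$ for the larger residual values of $q$: its Sylow $r$-subgroup for $r\mid q+1$ need not be cyclic, so Lemma~\ref{pro:Gvnormal} does not apply directly, and one must either tighten the order and divisibility bookkeeping or be sure that \magma\ can enumerate the few homogeneous factorizations of this group and decide their $G$-conjugacy (a potentially slow \textsf{IsConjugate} call in a large matrix group, for which one substitutes element-order or similarity invariants as in Lemma~\ref{lem4.4}). A secondary delicate point is confirming that the list of large maximal subgroups of unitary groups from~\cite{AB2015} is complete and that no geometric candidate—especially in $\mathcal{C}_2$, $\mathcal{C}_4$, $\mathcal{C}_5$ or $\mathcal{C}_7$—escapes the $p$-part and $\ppd$ sieve; this is routine but lengthy case-work of exactly the kind already carried out for linear groups in Lemma~\ref{lm:psl}.
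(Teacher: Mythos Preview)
Your proposal follows essentially the same route as the paper: exclude $\mathcal{C}_1$ and $\mathcal{C}_9$ via Lemmas~\ref{lm:psuC1} and~\ref{lm:C9}, use Lemma~\ref{lm:notsmallT} to discard small $(n,q)$, treat $n=3$ via the explicit tables in~\cite{BHRD2013}, and for $n\geq 4$ invoke Alavi--Burness~\cite{AB2015} together with Lemma~\ref{lm:Hsiquasi} and \magma. The $n=3$ case is handled almost identically; one cosmetic difference is that the paper rules out the $\mathcal{C}_6$-subgroup $3^{1+2}_+{:}\Q_8$ by the pure index bound (no $q\geq 49$ satisfies $|T|/|T_v|\leq 30758154560$) rather than by computation, and the $\mathcal{C}_2$ bound is $49\leq q\leq 73$ rather than $q\leq 97$ once $|T|/|T_v|$ is imposed.

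The one place your write-up is thinner than the paper is the $n\geq 4$ step: you describe four generic devices (i)--(iv) but do not actually extract and dispatch the finite list of large maximal subgroups from~\cite[Theorem~7]{AB2015}. The paper enumerates them explicitly as five cases---$\mathcal{C}_2$ of type $\GU_{n/k}(q)\wr\Sy_k$ with restricted $k$, $\mathcal{C}_2$ of type $\GL_{n/2}(q^2)$, $\mathcal{C}_3$ of type $\GU_{n/k}(q^k)$ with $k=q=3$, and $\mathcal{C}_5$ of types $\GU_n(q_0)$, $\Sp_n(q)$, $\GO_n^\epsilon(q)$---and treats each. For instance, your device~(iii) (a characteristic cyclic $\ZZ_r$ forced into $G_{uv}\cap G_{vw}$) does not apply to the $\mathcal{C}_3$-subgroup $\GU_{n/3}(q^3)$ here; the paper instead uses that $G_v^{(\infty)}$ is quasisimple and applies Lemma~\ref{lm:Hsiquasi}(b). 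Similarly, the $\mathcal{C}_2$-case $\GU_{n/k}(q)\wr\Sy_k$ reduces under the index bound to the handful $(n,k,q)\in\{(4,2,q)\text{ with }13\leq q\leq 19,\ (6,2,3),\ (8,2,2)\}$, which the paper kills by a direct \magma\ search for conjugate factors, not by the $\ppd$ reduction you sketch in~(iv). Two residual $\mathcal{C}_5$-cases ($\GO_4^+(13)$ and $\Sp_4(q)$ with $q\in\{16,32,64\}$) survive Lemma~\ref{lm:Hsiquasi} and need a separate \magma\ conjugacy check on $T_{uv}^{(\infty)}\cong\Sp_2(q^2)$ versus $T_{vw}^{(\infty)}$. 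None of this is hard, but your proposal as written stops at the strategy level for $n\geq 4$; to complete it you need to write out this short case list and verify each line.
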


\begin{proof}
Suppose for a contradiction  that $T=\PSU_n(q)$ with $n\geq 3$. By Lemmas~\ref{lm:notparabolic},~\ref{lm:notsmallT},~\ref{lm:C9} and~\ref{lm:psuC1}, we see that $T_v\notin\{ \mathcal{C}_1,\mathcal{C}_9\}$ and $(n,q)$ is not a pair such that $n=3$ with $q\leq47$, $n=4$ with $q\leq11$, $n=5$ with $q\leq4$, or $q=2$ with $n\in\{6,7\}$.

First suppose that $n=3$. Then $\lefthat T_v$ lies in~\cite[Table 8.5]{BHRD2013}.
By Lemma~\ref{lm:Hsiquasi}(b) and Lemma~\ref{lm:ASfac2}, we can rule out the $\mathcal{C}_5$-subgroups
\[
(3,q+1) \times \SO_3(q)\ \text{ and }\ \SU_{3}(q^{1/r}).\left(\frac{q+1}{q^{1/r}+1},3\right)\ \text{ for odd prime } r.
\]
If $\lefthat T_v=(q^2-q +1).3 \in \mathcal{C}_3$, then $G_v$ has a normal subgroup $M\cong \ZZ_r$ with $r\in \ppd(p,6f)$ such that $M$ is in both $G_{uv}$ and $G_{vw}$, which implies $M^g=M$, contradicting Lemma~\ref{pro:Gvnormal}.
If $\lefthat T_v=3_{+}^{1+2}{:}\Q_8{:}\frac{(q+1,9)}{3}\in \mathcal{C}_6$, then there is no $q\geq 49$ such that $|T|/|T_v| \leq 30758154560$, a contradiction.
Consequently,  $\lefthat T_v=(q+1)^2{:}\Sy_3 \in \mathcal{C}_2$.
Since $|V(\Ga)|\leq 30758154560$, we have $ 49\leq q \leq 73$.
For these $q$, computation in {\sc Magma}~\cite{Magma} shows that  $ \lefthat T_v $ has no subgroups $K$ and $L$ conjugate in $\lefthat T$ but not in $\lefthat T_v$  such that   $|\lefthat T_v|$ is divisible by $|  K  L|$ and $|\Out(T)|$ is divisible by $|\lefthat T_v|/|  K   L|$.
This contradicts Lemma~\ref{lm:T2at?}(b)(c).

Therefore, we conclude  $n\geq 4$. Now $|T|^{2/3}>30758154560$ by Lemma \ref{lm:T2/3}, which implies that $T_{v}$ is large in $T$.  Applying~\cite[Theorem 7]{AB2015}, we need to consider the following cases:
\begin{enumerate}[\rm (a)]
\item   $T_{v}$ is a $\mathcal{C}_2$-subgroup of type $\GU_{n/k}(q)\wr \Sy_k  $,  and one of the following holds:
\begin{enumerate}
\item[\rm (a.1)] $k=2$;
\item[\rm (a.2)] $k=3$, and either $q\in \{2,3,4\}$ or
\[
(q, (n,q+1)) \in \{ (5,3),(7,1),(7,2),(9,1),(9,2),(13,1),(16,1) \};
\]
\item[\rm (a.3)] either $8\leq n=k \leq 11$ with $q=2$, or $(n,q)=(6,3)$;
\end{enumerate}
\item   $T_{v}$ is a $\mathcal{C}_2$-subgroup of type $\GL_{n/2}(q^2)$;
\item  $T_{v}$ is a $\mathcal{C}_3$-subgroup of type  $\GU_{n/k}(q^k)$, where $k=q=3$ and $n$ is odd;
\item  $T_{v}$ is a $\mathcal{C}_5$-subgroup of type $\GU_{n }(q_0) $, where  $q=q_0^3$;
\item  $T_{v}$ is a $\mathcal{C}_5$-subgroup of type $\Sp_{n }(q) $ or $ \GO_n^{\epsilon}(q)$.
\end{enumerate}

We first deal with Case (a). In this case, the triples $(n,k,q)$ with $|T|/|T_{v}|\leq 30758154560$ are $(6,2,3)$, $(8,2,2)$, and those with  $(n,k)=(4,2)$ and $13 \leq q \leq 19$.
Computation in {\sc Magma}~\cite{Magma} shows that   $ \lefthat T_v $ has no subgroups $K$ and $L$ conjugate in $\lefthat T$ but not in $\lefthat T_v$  such that   $|\lefthat T_v|$ is divisible by $|  K  L|$ and $|\Out(T)|$ is divisible by $|\lefthat T_v|/|\lefthat K \lefthat L|$.
This contradicts Lemma~\ref{lm:T2at?}(b)(c).

For other cases, by~\cite[Tables 2.5,~2.6 ~and~2.8]{BHRD2013}, we see that either $G_{v}^{(\infty)}$ is quasisimple, or  $G_{v}^{(\infty)}/\Z(G_{v}^{(\infty)}) \cong \POm_{4}^{+}(q)\cong \PSL_2(q)^2$ (when $n=4$ and $T_{v}$ is a $\mathcal{C}_5$-subgroup of type  $ \GO_4^{+}(q)$).
Applying Lemma~\ref{lm:Hsiquasi}(b) and Lemma~\ref{lm:ASfac2}, we can rule out Cases (b)--(d), as well as the case that $T_v$ is a  $\mathcal{C}_5$-subgroup of type $\Sp_{n}(q) $ with $(n,q)\neq (4,2^f)$ or of type $\GO_{n}^{\epsilon}(q) $ with $(n,\epsilon)\notin \{(4,+),(8,+) \}$. Since $|T|/|T_v|\leq 30758154560$, it remains to consider the following two possibilities:
\begin{itemize}
\item $T=\PSU_4(q)$ with $q=13$, and  $T_{v}$ is a $\mathcal{C}_5$-subgroup of type $\GO_4^{+}(q)$;
\item  $T=\PSU_4(q)$ with $q \in \{16,\,32,\,64\}$, and  $T_{v}$ is a $\mathcal{C}_5$-subgroup of type $\Sp_4(q)$.
\end{itemize}
For the former,  computation in {\sc Magma}~\cite{Magma} shows that there exists no  factorization $G_v=G_{uv}G_{vw}$ with $|G_{v}|/|G_{uv}|\geq 3$ such that $G_{uv}$ is conjugate to $G_{vw}$ in $G$. For the latter, Lemma~\ref{lm:Hsiquasi}(b) implies that $T_{uv}^{(\infty)}$ and $T_{vw}^{(\infty)}$ are non-conjugate subgroups of $T_v=\Sp_4(q)$ that are isomorphic to $\SL_2(q^2)$. However, computation in {\sc Magma}~\cite{Magma} shows that elements of order $3$ in $T_{uv}^{(\infty)}$ and $T_{vw}^{(\infty)}$ respectively are not conjugate in $\GL_4(q^2)$, contradicting the requirement that $T_{uv}$ and $T_{vw}$ are conjugate in $T$.
\end{proof}

 \subsection{Symplectic groups}
 \ \vspace{1mm}


\begin{lemma} \label{lm:pspC1}
Suppose $T=\PSp_n(q)$ with $n\geq 4$ and $|V(\Ga)|\leq 30758154560$.  Then $T_v$ is not a $\mathcal{C}_1$-subgroup.
\end{lemma}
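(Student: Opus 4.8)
The strategy mirrors the treatment of the $\mathcal{C}_1$-subgroups of linear and unitary groups in Lemmas~\ref{lm:pslC1GLmn-m}, \ref{lm:psl} and~\ref{lm:psuC1}. Suppose for a contradiction that $T_v$ is a $\mathcal{C}_1$-subgroup of $T=\PSp_n(q)$ with $n=2m\geq 4$. By Lemma~\ref{lm:notparabolic}, $T_v$ is not a maximal parabolic subgroup, so (consulting \cite[Table~2.5]{BHRD2013}) $T_v$ is of type $\Sp_{2a}(q)\perp \Sp_{2b}(q)$ with $a+b=m$ and, say, $a<b$; or, when $q$ is even, possibly of type $\GO^{\epsilon}_{n}(q)$, but that type belongs to $\mathcal{C}_8$ rather than $\mathcal{C}_1$, so only the reducible non-degenerate subspace type survives. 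Thus $\lefthat T_v=(\Sp_{2a}(q)\times\Sp_{2b}(q))\cap\lefthat T$ up to scalars, and I would set $N\unlhd G_v$ so that $G_v/N$ is almost simple with socle $\PSp_{2b}(q)$.

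First I would establish, exactly as in Case~(e) of Lemma~\ref{lm:G2}, that the factors $G_{uv}N/N$ and $G_{vw}N/N$ of the induced factorization $G_v/N=(G_{uv}N/N)(G_{vw}N/N)$ are both core-free: if both contained a composition factor $\PSp_{2b}(q)$, then since $\Sp_{2b}(q)$ is (essentially) the unique minimal subgroup of $G_v$ with that composition factor we would get $M^g=M$ for the corresponding normal subgroup $M$, contradicting Lemma~\ref{pro:Gvnormal}. Next, I would invoke Lemma~\ref{lm:notsmallT} together with the hypothesis $|V(\Ga)|\leq 30758154560$ to cut down to a short list of triples $(n,a,q)$: for $n=4$ we need $q\leq 37$, for $n=6$ we need $q\leq 5$, and $(n,q)=(8,2)$, and within these only finitely many $(a,b)$ occur. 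For the cases where $b$ is large enough that $\ppd(q,2b)$ or $\ppd(q,b)$ is disjoint from $\pi(N)$ (recalling $N$ is a section of $\Sp_{2a}(q)$ with $a<b$), the usual primitive-prime-divisor argument applies: picking $r$ in the appropriate $\ppd$ set forces $r\mid |G_{uv}N/N|$ and $r\mid|G_{vw}N/N|$ by $|G_{uv}|_r^2\geq |G_v|_r$, contradicting \cite[Theorem~A]{LPS1990}. The genuinely small cases (such as small $b$, or $(b,q)$ one of the Zsigmondy exceptions $(3,2)$, $(1,\cdot)$, etc.) I would dispose of by direct {\sc Magma} computation, checking that $G_v$ has no homogeneous factorization $G_v=G_{uv}G_{vw}$ with $|G_v|/|G_{uv}|\geq 3$ and $G_{uv}$ conjugate to $G_{vw}$ in $G$, as in Lemma~\ref{lm:notsmallT}.

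I expect the main obstacle to be the case $a=b$, i.e.\ $T_v$ of type $\Sp_m(q)\perp\Sp_m(q)$ with $n=2m$: here the two factors of $\lefthat T_v$ are isomorphic, so one cannot single out a unique normal subgroup with socle $\PSp_m(q)$, and $\Sp_m(q)\wr 2$-type behaviour must be handled. In that situation I would instead pass to $G_v/N$ where $N$ is the product of the two symplectic factors, so $G_v/N$ is a subgroup of $\Sy_2\times(\text{field/graph stuff})$, use that $G_{uv}\cong G_{vw}$ to control how the two factors are permuted, and then argue (again via Lemma~\ref{pro:Gvnormal}, since swapping the two $\Sp_m(q)$'s is not available without also moving within $T$) that a homogeneous factorization cannot exist; any residual small-$q$ subcases go to {\sc Magma}. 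A secondary nuisance is that when $q$ is even there are extra isomorphisms ($\Sp_4(2)'\cong\A_6$, $\Sp_{2m}(q)$ vs.\ orthogonal groups) that must be checked not to create spurious factorizations via Lemma~\ref{lm:ASfac}, but these are covered by the finite computation since $n\leq 8$ in all surviving cases.
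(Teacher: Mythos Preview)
Your overall skeleton matches the paper's: rule out parabolics via Lemma~\ref{lm:notparabolic}, reduce to type $\Sp_{2a}(q)\perp\Sp_{2b}(q)$ with $a<b$, push the factorization down to $G_v/N$ with socle $\PSp_{2b}(q)$, establish core-freeness as in Case~(e) of Lemma~\ref{lm:3D4}, and finish with a ppd argument plus \textsc{Magma}. But there are three concrete problems.

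First, your case list is inverted. Lemma~\ref{lm:notsmallT} \emph{excludes} $n=4$ with $q\leq 37$, $n=6$ with $q\leq 5$, and $(n,q)=(8,2)$; it does not hand you those cases to treat. The surviving cases are those \emph{outside} that list but still satisfying $|T|/|T_v|\leq 30758154560$, namely (as in the paper): $n\geq 8$, $m=2$, $q\in\{2,3\}$ (where $\Sp_2(q)$ is solvable, so $G_v^{(\infty)}$ is quasisimple and Lemma~\ref{lm:Hsiquasi}(b) kills it); $n=6$, $k=4$, $7\leq q\leq 19$; $n=8$, $k=6$, $4\leq q\leq 7$; and $(n,k,q)\in\{(10,8,4),(10,6,2),(12,8,2)\}$. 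There is no $n=4$ case at all, since a non-degenerate $2$-space stabilizer in $\Sp_4(q)$ is the imprimitive $\Sp_2(q)\wr\Sy_2$, which lies in $\mathcal{C}_2$.

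Second, your anticipated ``main obstacle'' $a=b$ never occurs here: equal-dimension summands give a $\mathcal{C}_2$-subgroup, not a $\mathcal{C}_1$-subgroup, so that whole paragraph of your plan is addressing a phantom.

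Third, and most substantively, the ppd argument via \cite[Theorem~A]{LPS1990} does \emph{not} by itself dispose of the even-$q$ cases. For $q$ even, $\PSp_{2b}(q)$ admits factorizations in which both core-free factors have order divisible by every $r\in\ppd(p,2bf)$: already $\Sp_4(q)=\Sp_2(q^2).2\cdot\GO_4^-(q)$ has both factors of order divisible by any prime divisor of $q^2+1$. The paper therefore uses \cite{LPS1990} only to eliminate the odd-$q$ entries, and for each remaining even-$q$ triple runs a targeted \textsc{Magma} computation---either exhibiting unequal $r$-parts of the two induced factors for some prime $r$ coprime to $|N|$, or showing that a composition factor forced on one side cannot embed in $N$. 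Your plan to reserve \textsc{Magma} only for Zsigmondy exceptions and ``small $b$'' would leave cases such as $(n,k,q)=(6,4,8)$, $(6,4,16)$ and $(10,8,4)$ unhandled.
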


\begin{proof}
Suppose for a contradiction that  $T_v$ is a $\calC_1$-subgroup.
By Lemma~\ref{lm:notparabolic}, $T_v$ is not a parabolic subgroup of $T$, and hence $T_v$ is of type  $\Sp_{m}(q) \perp \Sp_{n-m}(q)$. In particular, we have $n\geq 6$. Let $k=n-m$ and assume without loss of generality that $m<k$.
Now $\lefthat T_v=\Sp_{m}(q) \times \Sp_{k}(q)$.
Let $N \unlhd G_{v}$ such that $G_{v}/N$ is almost simple with socle $\PSp_k(q)$.
By similar argument as for Case~(e) of Lemma~\ref{lm:G2}, the factors $G_{uv}N/N$ and $G_{vw}N/N$ of the factorization $G_{v}/N=(G_{uv}N/N)(G_{vw}N/N)$ are core-free.
By Lemma~\ref{lm:notsmallT} and the assumption $|V(\Ga)|\leq 30758154560$, one of the following occurs:
\begin{enumerate}[\rm (a)]
\item $n\geq 8$, $m=2$ and $q\in \{2,3\}$;
\item $n=6$,   $k=4$ and $7\leq q \leq 19$;
\item  $n=8$,  $k=6$ and $4\leq q \leq 7$;
\item $(n,k,q)=(10,8,4)$, $(10,6,2)$ or $(12,8,2)$.
\end{enumerate}

Case (a) is ruled out by Lemma~\ref{lm:Hsiquasi}(b).
Take $r\in \ppd(p,kf) $.
Then $|N|_r=1$, which together with $|G_{uv}|_r^2=|G_{vw}|_r^2\geq|G_v|_r$ implies that $r$ divides both  $ |G_{uv}N/N|$ and $ |G_{vw}N/N|$.
By~\cite[Theorem A]{LPS1990}, for those $(k,q)$ in (b)--(d), if $q$ is odd, then there exists no factorization $G_{v}/N=(G_{uv}N/N)(G_{vw}N/N)$ with two  core-free factors, a contradiction.

First suppose that  (b) occurs. Then $(n,k,q)=(6,4,8)$ or $(6,4,16)$.
Computation in {\sc Magma}~\cite{Magma} shows that both $|G_{uv}N/N|$ and $|G_{vw}N/N|$ are divisible by $|\Sp_2(q^2)|$, while for candidates of $T_{uv}$ and $T_{vw}$ with order divisible by $|\Sp_2(q^2)|$ and index at least $3$ in $T_v$, there is no integer $t$ dividing $ |\Out(T)|$ such that $|T_{v}| =t|T_{uv}T_{vw}|$. This contradicts Lemma~\ref{lm:T2at?}(c).

Next suppose that (c) happens. Then $(n,k,q)=(8,6,4)$. Computation in {\sc Magma}~\cite{Magma} shows that $|G_{uv}N/N|_{17}=17$ and $|G_{vw}N/N|_{17}=1$ (interchanging $G_{uv}$ and $G_{vw}$ if necessary), which together with $|N|_{17}=1$ implies $|G_{uv}|_{17}=17$ and $|G_{vw}|_{17}=1$, contradicting $G_{uv}\cong G_{vw}$.

Finally, suppose that (d) appears. If  $(n,k,q)=(10,8,4)$ or $(n,k,q )=(12,8,2)$, then taking $r=13$ or $7$ respectively, {\sc Magma}~\cite{Magma} computation shows that $|G_{uv}N/N|_r=r$ and $|G_{vw}N/N|_r=1$, and so we conclude from $|N|_r=1$ that $|G_{uv}|_r=r$ and  $|G_{vw}|_r=1$, a contradiction. Now $(n,k,q)=(10,6,2)$. Computation  in {\sc Magma}~\cite{Magma} shows  that either $ |G_{uv}N/N|_{7}=7$ and $ |G_{vw}N/N|_{7}=1$, or $G_{uv}N/N$ and $G_{vw}N/N$ are almost simple groups with distinct socles in $ \{ \A_7,\A_8,\PSL_2(8),\PSU_3(3)\}$.
For the former, it follows from $|N|_7=1$ that  $|G_{uv}|_{7}=7$ and  $|G_{vw}|_{7}=1$, a contradiction.
For the latter, since $N$ has no section in $\{ \A_7,\A_8,\PSL_2(8),\PSU_3(3)\} $, we conclude  $G_{uv} \not\cong G_{vw}$, a contradiction.
\end{proof}

\begin{lemma}\label{lm:psp4q}
Suppose  $|V(\Ga)|\leq 30758154560$. Then  $T \neq \PSp_n(q)$ for $n\geq 4$.
 \end{lemma}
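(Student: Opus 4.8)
The plan is to combine the reductions already in place with the Alavi--Burness classification of large maximal subgroups and the geometric suborbit results of Subsection~\ref{subsec:PrimGroups}. Suppose $T=\PSp_n(q)$ with $n\geq4$. By Lemma~\ref{lm:notparabolic} the stabiliser $T_v$ is not parabolic; by Lemma~\ref{lm:C9} and $T\neq\PSL_3(49)$ it is not a $\mathcal{C}_9$-subgroup; by Lemma~\ref{lm:pspC1} it is not a $\mathcal{C}_1$-subgroup; and by Lemma~\ref{lm:notsmallT} together with Lemma~\ref{lm:T2/3}, $T$ is none of the small symplectic groups, so $|T|^{2/3}>30758154560$ and $T_v$ is large in $T$. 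Feeding this into~\cite[Theorem~6]{AB2015} and bounding parameters with $|T|/|T_v|=|V(\Ga)|\leq30758154560$, the remaining possibilities for $T_v$ form a short explicit list: a $\mathcal{C}_2$-subgroup of type $\Sp_{n/k}(q)\wr\Sy_k$ or $\GL_{n/2}(q).2$; a $\mathcal{C}_3$-subgroup of type $\Sp_{n/k}(q^k).k$ or $\GU_{n/2}(q).2$; a $\mathcal{C}_5$ subfield subgroup $\Sp_n(q_0)$; a $\mathcal{C}_6$-subgroup $2^{1+2m}.\GO^{\epsilon}_{2m}(2)$; or, when $q$ is even, a $\mathcal{C}_8$-subgroup $\GO_n^{\epsilon}(q)$ --- in each case with $n\leq8$ and $q$ and the remaining data confined to small sets.

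The candidates are then handled in three groups. \emph{Geometric self-paired cases.} If $T_v$ is of orthogonal type $\GO_n^{\epsilon}(q)$ with $q$ even, then by Lemma~\ref{lm:SpOmq} every $T$-suborbit relative to $v$ is self-paired, contradicting Lemma~\ref{lm:orbital} since $\Ga$ is $T$-arc-transitive by Lemma~\ref{lm:T2at?}(a). For $n=4$ this subsumes the subgroups $\Sp_2(q)\wr\Sy_2$ and $\Sp_2(q^2).2$ when $q$ is even (which equal $\GO_4^{+}(q)$ and $\GO_4^{-}(q)$), and when $q$ is odd these same two subgroups become, under the exceptional isomorphism $\PSp_4(q)\cong\POm_5(q)$, the stabilisers of non-singular $1$-spaces of $+$- and $-$-type, so Lemma~\ref{lm:Omeganonsingular} (with $T=\Omega_5(q)$, $m=2$) again makes all $T$-suborbits self-paired. \emph{Quasisimple cases.} If $T_v^{(\infty)}$ is quasisimple with central quotient $M$ --- covering the subfield subgroups, the subgroups $\GL_{n/2}(q).2$ and $\GU_{n/2}(q).2$, and $\Sp_{n/k}(q^k).k$ with $n/k\geq4$ --- then either $M\notin\mathcal{T}_1\cup\mathcal{T}_2$, so Lemma~\ref{lm:Hsiquasi}(c) and the quasisimplicity argument in the proof of Lemma~\ref{lm:Hsiquasi} give $(G_{uv}^{(\infty)})^g=G_{vw}^{(\infty)}=G_v^{(\infty)}$, contradicting Lemma~\ref{pro:Gvnormal}; or $M\cong\PSL_2(q')$ with $q'\geq41$, so by Lemma~\ref{lm:Hsiquasi}(b) the factorisation $\overline{G_v}=\overline{G_{uv}}\,\overline{G_{vw}}$ satisfies Lemma~\ref{lm:ASfac2}, whence one factor has $p$-part at least $q'$ and the other at most $(2f')_p$ while $\Rad(G_v)$ has tiny $p$-part, forcing $|G_{uv}|_p\neq|G_{vw}|_p$ and contradicting $G_{uv}\cong G_{vw}$; the lone exception is $T=\PSp_4(64)$ with $T_v=\Sp_4(8)\in\mathcal{T}_1$, where I use Lemma~\ref{lm:ASfac}(c) instead to force $T_{uv}\cong T_{vw}\cong\Sp_2(64).2$ and then Lemma~\ref{lm:T2at?}(e), noting that $\Sp_4(8)$ has a unique conjugacy class of such $\mathcal{C}_3$-subgroups. \emph{Residual cases.} The imprimitive $\mathcal{C}_2$-subgroups $\Sp_2(q)\wr\Sy_3<\PSp_6(q)$ and $\Sp_4(q)\wr\Sy_2<\PSp_8(q)$, together with the $\mathcal{C}_6$-type normalisers, survive only for a handful of small $q$ (essentially $\PSp_6(7)$, $\PSp_6(8)$, $\PSp_8(3)$, $\PSp_8(4)$ and one or two $\mathcal{C}_6$-instances); for these I would compute directly in {\sc Magma}, where the relevant $G_v$ (or $\overline{G_v}$, which is almost simple) is of manageable size, enumerating all homogeneous factorisations of index at least $3$ and checking non-conjugacy of the two factors in $T$ by element-similarity in the natural matrix representation, as in Lemma~\ref{lem4.4}.

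The step I expect to be the main obstacle is this last group: in the imprimitive case $\Sp_{n/k}(q)\wr\Sy_k$ with more than one block, the subgroup $T_v^{(\infty)}\cong\SL_{n/k}(q)^k$ is a direct product of several quasisimple groups, so neither Lemma~\ref{lm:Hsiquasi} nor an exceptional isomorphism is available and one cannot simply invoke a factorisation theorem for almost simple groups; likewise the $\mathcal{C}_6$-normalisers carry a large normal $2$-subgroup, so only the $\overline{G_v}$-version of the machinery applies and only partially. What rescues these cases is that the bound $|V(\Ga)|\leq30758154560$ leaves only a few $(n,q)$, each within computational reach; a purely structural treatment --- say via primitive prime divisors, or a Goursat-type analysis of the product factorisation $\SL_{n/k}(q)^k=G_{uv}G_{vw}$ --- would be an alternative route.
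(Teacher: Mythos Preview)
Your overall strategy matches the paper's closely --- same reductions, same appeal to Alavi--Burness, same use of Lemmas~\ref{lm:SpOmq} and~\ref{lm:Omeganonsingular} for the self-paired cases, and \textsc{Magma} for the residual imprimitive and $\mathcal{C}_6$ cases. There is one genuine error, however, and one case you have overlooked.

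\emph{The error.} In your treatment of $T=\PSp_4(64)$ with $T_v=\Sp_4(8)$ you invoke Lemma~\ref{lm:T2at?}(e), arguing that $\Sp_4(8)$ has a unique conjugacy class of $\mathcal{C}_3$-subgroups $\Sp_2(64).2$, so $T_{uv}$ and $T_{vw}$ are conjugate in $T_v$. But in the homogeneous factorisation of $\Sp_4(8)$ given by Lemma~\ref{lm:ASfac}(c), the two factors isomorphic to $\Sp_2(64).2$ lie in \emph{different} Aschbacher classes: one is the $\mathcal{C}_3$-subgroup and the other is the $\mathcal{C}_8$-subgroup $\GO_4^{-}(8)$ (these are interchanged by the graph automorphism of $\Sp_4(8)$ but are \emph{not} conjugate inside $\Sp_4(8)$). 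Hence $T_{uv}$ is not conjugate to $T_{vw}$ in $T_v$, and Lemma~\ref{lm:T2at?}(e) gives nothing. The paper disposes of this case differently: it checks in \textsc{Magma} that $T_{uv}^{(\infty)}\cong\Sp_2(64)$ and $T_{vw}^{(\infty)}\cong\Sp_2(64)$ are not conjugate in the ambient group $T=\Sp_4(64)$, contradicting $T$-arc-transitivity.

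\emph{The missed case.} Your ``lone exception'' is not alone: the $\mathcal{C}_3$-subgroup $T_v=\Sp_4(16).2$ of $T=\PSp_8(4)$ (that is, $\Sp_{n/k}(q^k)$ with $n/k=4$, $k=2$, $q=4$) also has quasisimple $T_v^{(\infty)}$ with central quotient $\PSp_4(16)\in\mathcal{T}_1$, and one checks $|T|/|T_v|\leq 30758154560$. This falls through both branches of your dichotomy (it is in $\mathcal{T}_1$ but is not a $\PSL_2$), and the same flawed $\mathcal{C}_3$-uniqueness argument would not rescue it. The paper again handles it by \textsc{Magma}, showing the two candidate subgroups $\Sp_2(256).[4]$ are non-conjugate in $\PSp_8(4)$.
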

 \begin{proof}
Suppose for a contradiction  that $T=\PSp_n(q)$ with $n\geq 4$. By Lemmas~\ref{lm:notparabolic},~\ref{lm:notsmallT},~\ref{lm:C9} and~\ref{lm:pspC1}, we see that $T_v\notin\{ \mathcal{C}_1,\mathcal{C}_9\}$, and $(n,q)$ is not a pair such that $n=4$ with $q\leq37$, $n=6$ with $q\leq5$, or $(n,q)=(8,2)$.

First suppose that $n=4$. Then $\lefthat T_v$ lies in~\cite[Tables 8.12~and~8.14]{BHRD2013}. Apply Lemma~\ref{lm:Hsiquasi},  we can  rule out the following candidates for $\lefthat T_v$:
\[
\GL_2(q).2,\, \GU_2(q).2,\, \Sp_4(q^{1/r}).(2,r) \text { (with $q$ odd)},\, \Sp_2(q^2){:}2,\, {}^2\B_2(q).
\]
For the remaining candidates of $\lefthat T_v$, as $|T|/|T_v|\leq 30758154560$ and $q\geq 41$, we have either
\begin{itemize}
\item $\lefthat T_v=\Sp_4(q^{1/2}) \in \mathcal{C}_3$ with $q=64$; or
\item $\lefthat T_v =\Sp_2(q)^2{:}2\in \mathcal{C}_2$ or $\mathcal{C}_8$.  
\end{itemize}
For the first candidate, by Lemma~\ref{lm:Hsiquasi} and Lemma~\ref{lm:ASfac}, we derive that $T_{uv}^{(\infty)}\cong T_{vw}^{(\infty)} \cong \Sp_2(8^2)$, but computation in {\sc Magma}~\cite{Magma} shows that $T_{uv}^{(\infty)}$ is not conjugate to $ T_{vw}^{(\infty)} $ in $T$, contradicting the arc-transitivity of $T$.
  Now consider the second candidate. If $q$ is odd, then Lemma~\ref{lm:Omeganonsingular} implies that all $T$-suborbits are self-paired (noticing that $\PSp_4(q)\cong \POm_5(q)$), a contradiction. If $q$ is even, then assume without loss of generality that $T_v$ is a $\mathcal{C}_8$-subgroup $\SO_4^+(q)$ (applying a graph automorphism of order $2$ of $T$ if necessary) and we see from Lemma~\ref{lm:SpOmq} that all $T$-suborbits are self-paired, still a contradiction.

Consequently, we have $n\geq 6$.
 By Lemma \ref{lm:T2/3}, $|T|^{2/3}> 30758154560$  and so $T_{v}$ is large in $T$.
Applying~\cite[Theorem 7]{AB2015},   one of  the following holds:
\begin{enumerate}[\rm (a)]
\item $T_{v}$ is a $\mathcal{C}_8$-subgroup of type $\GO^{\pm}_n(q)$;
\item   $T_{v}$ is a $\mathcal{C}_2$-subgroup of type $\Sp_{n/k}(q)\wr \Sy_k  $, where $k\leq 3$, or $(n,k)=(8,4)$, or $(n,k,q)=(10,5,3)$;
\item   $T_{v}$ is a $\mathcal{C}_2$-subgroup of type $\GL_{n/2}(q)$;
\item  $T_{v}$ is a $\mathcal{C}_3$-subgroup of type $\Sp_{n/2}(q^2),\Sp_{n/3}(q^3)$ or $\GU_{n/2}(q)$;
\item  $T_{v}$ is a $\mathcal{C}_5$-subgroup of type $\Sp_{n }(q_0) $ with $q=q_0^2$;
\item $T_{v}$ is a $\mathcal{C}_6$-subgroup and $(T,T_{v})=(\PSp_8(3),2^6{:}\Omega_6^{-}(2))$.
\end{enumerate}

For Case~(a), since $q$ is even, Lemma~\ref{lm:SpOmq} shows that all $T$-suborbits are self-paired, a contradiction.

Suppose that (b) happens.
The triples $(n,k,q)$ satisfying $|T|/|T_{v}|\leq 30758154560$ are  $  (6,3,7)$, $(6,3,8)$,  $(8,2,3) $, $(8,2,4) $  and  $(8,4,3) $.
Computation in {\sc Magma}~\cite{Magma} shows that  $ \lefthat T_v $ has no subgroups $K$ and $L$ conjugate in $\lefthat T$ but not in $\lefthat T_v$  such that   $|\lefthat T_v|$ is divisible by $|  K  L|$ and $|\Out(T)|$ is divisible by $|\lefthat T_v|/|  K   L|$.
This contradicts Lemma~\ref{lm:T2at?}(b)(c).

Suppose that  (c), (d) or (e) occurs. From~\cite[Tables 2.5,~2.6,~and~2.8]{BHRD2013} we see that $G_{v}^{(\infty)}$ is quasisimple.  By Lemma \ref{lm:Hsiquasi}(b) and  the condition $|V(\Ga)| \leq 30758154560$,  it remains to consider the case that $T=\PSp_8(4)$ and $T_{v}=\PSp_4(16).2 \in \mathcal{C}_3$. In this case,  by Lemma~\ref{lm:Hsiquasi}(a) and Lemma~\ref{lm:ASfac}, we see that $\Ga$ is $(T,2)$-arc-transitive and $T_{uv}\cong T_{vw}\cong \Sp_2(16^2).[4]$. However, {\sc Magma}~\cite{Magma} computation  shows that $T_{uv}$ is not conjugate to $ T_{vw}$ in $T$, a contradiction.

For Case (f), computation in {\sc Magma}~\cite{Magma} shows that   $  T_v $ has no subgroups $K$ and $L$ conjugate in $ T$ but not in $  T_v$  such that   $|  T_v|$ is divisible by $|  K  L|$ and $|\Out(T)|$ is divisible by $| T_v|/|  K   L|$.
This contradicts Lemma~\ref{lm:T2at?}(b)(c).
 \end{proof}

 \subsection{Orthogonal groups}
 \ \vspace{1mm}

\begin{lemma} \label{lm:poC1}
Suppose $T =\POm^{\epsilon}_n(q)$ with $n\geq 7$ and $|V(\Ga)|\leq 30758154560$.
Then $T_v$ is not a $\mathcal{C}_1$-subgroup.
\end{lemma}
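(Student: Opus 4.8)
The plan is to adapt the template of Lemmas~\ref{lm:pslC1GLmn-m},~\ref{lm:psuC1} and~\ref{lm:pspC1} to the $\mathcal{C}_1$-subgroups of orthogonal groups. Suppose for a contradiction that $T_v$ is a $\mathcal{C}_1$-subgroup. By Lemma~\ref{lm:notparabolic}, $T_v$ is not parabolic, so $T_v$ is the stabilizer of a non-degenerate subspace $U$ of the natural module $V=\mathbb{F}_q^n$ (for $q$ even, the case $\dim U=1$ is allowed, with $T_v$ of type $\Sp_{n-2}(q)$). Interchanging $U$ with $U^\perp$ if necessary, put $k=\dim U\leq n/2$, so that by \cite[Tables~2.3--2.4]{BHRD2013} the group $\lefthat T_v$ has the shape $(\Omega_k^{\epsilon_1}(q)\times\Omega_{n-k}^{\epsilon_2}(q)).[c]$; by Lemma~\ref{lm:notsmallT} we may also assume $(n,q)$ is none of $(7,3),(7,5),(8,2),(8,3),(10,2)$. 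Since $|V(\Ga)|=|T|/|T_v|$ has order of magnitude $q^{k(n-k)}$, the hypothesis $|V(\Ga)|\leq 30758154560$ leaves only finitely many admissible $(n,q,k)$, and I would organise the argument according to the nonsolvable composition factors of $G_v$.

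First I would settle the case $k=1$ of a non-singular point. If $n$ is odd then $q$ is odd, $T=\Omega(V)$ with $\dim V$ odd, and by Lemma~\ref{lm:Omeganonsingular} every $T$-suborbit on $V(\Ga)$ is self-paired; as $\Ga$ is $T$-arc-transitive by Lemma~\ref{lm:T2at?}(a), this contradicts Lemma~\ref{lm:orbital}. If $n$ is even then $n\geq 8$, and $G_v^{(\infty)}$ is quasisimple with simple quotient $\Omega_{n-1}(q)$ or $\Sp_{n-2}(q)$, neither of which lies in $\mathcal{T}_1\cup\mathcal{T}_2$; so Lemma~\ref{lm:Hsiquasi}(b) gives a contradiction. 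More generally, whenever $G_v$ has a unique nonsolvable composition factor --- which is automatic for every $k=2$, since $\Omega_2^{\epsilon_1}(q)$ is cyclic --- Lemma~\ref{lm:Hsiquasi}(b) closes the case unless that factor is isomorphic to $\POm_8^+(q)$ (the remaining socles of $\mathcal{T}_1\cup\mathcal{T}_2$ that might occur --- such as $\PSU_4(2)\cong\POm_6^-(2)$ --- doing so only for pairs $(n,q)$ already excluded by Lemma~\ref{lm:notsmallT}).

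This reduces the problem to two situations: (i) the exceptional family in which $\lefthat T_v$ has an $\Omega_8^+(q)$-factor, namely $T=\POm_n^{\epsilon}(q)$ with $n\in\{10,12\}$ and $q$ small (the index bound forcing $q\in\{3,4\}$ when $n=10$ and $q=2$ when $n=12$); and (ii) the configurations in which $G_v$ has two nonsolvable composition factors, which requires the smaller summand $\Omega_k^{\epsilon_1}(q)$ (with $k\geq 3$) to be nonsolvable. The bound $q^{k(n-k)}\leq 30758154560$ together with Lemma~\ref{lm:notsmallT} confines (ii) to a short explicit list of $(n,q)$ with $n\leq 12$ and $q$ small, for example $(7,7)$, $(8,4)$, $(8,5)$, $(9,3)$ and $(12,2)$. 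For each remaining configuration in (i) and (ii), the group $G_v$ is small enough (or sufficiently decomposable) to be constructed explicitly in {\sc Magma}~\cite{Magma}, where one verifies that $G_v$ admits no homogeneous factorization $G_v=G_{uv}G_{vw}$ with $|G_v|/|G_{uv}|\geq 3$ and $G_{uv}$ conjugate to $G_{vw}$ in $G$; when $G$ is too large for {\sf IsConjugate}, the matrix-similarity substitute used in the proof of Lemma~\ref{lem4.4} applies.

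The step I expect to be the crux is the exceptional family (i). Here the induced factorization $\overline{G_v}=\overline{G_{uv}}\,\overline{G_{vw}}$ can genuinely satisfy Lemma~\ref{lm:ASfac}(d), that is, $\POm_8^+(q)=\Omega_7(q)\,\Omega_7(q)$, and since $\ppd(q,6)$ divides $|\Omega_7(q)|$ no counting of prime orders can exclude it. The way through is to observe that the two $\Omega_7(q)$-subgroups arising in this factorization (as the intersections of $\overline{G_{uv}}$ and $\overline{G_{vw}}$ with $\Soc(\overline{G_v})\cong\POm_8^+(q)$) lie in distinct classes fused only by a triality automorphism of $\POm_8^+(q)$, whereas the normalizer in $T$ of the relevant non-degenerate subspace induces no triality on its $\Omega_8^+(q)$-section; hence $T_{uv}$ and $T_{vw}$ cannot be conjugate in $T$, contradicting the $T$-arc-transitivity of $\Ga$. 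With the finitely many $(n,q)$ pinned down, this conjugacy claim can in any case be checked directly in {\sc Magma}.
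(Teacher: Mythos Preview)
Your proposal is correct and follows essentially the same route as the paper: rule out parabolics via Lemma~\ref{lm:notparabolic}, dispatch the $\Sp_{n-2}(q)$ and small-summand cases via Lemma~\ref{lm:Hsiquasi}(b), invoke Lemma~\ref{lm:Omeganonsingular} for $T=\POm_9(q)$, and reduce everything else to a finite \textsc{Magma} check. The differences are organisational rather than substantive. You split according to the number of nonsolvable composition factors of $G_v$, whereas the paper splits according to the dimension $m$ of the small summand (with $m\leq 2$ versus $3\leq m\leq n/2$); your list in~(ii) is flagged ``for example'' and should be made exhaustive (the paper's explicit list also contains $(n,m,q)=(10,3,3)$ and $(9,4,3)$, which you omit, while your $(8,5)$ does not actually survive the index bound). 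The paper's \textsc{Magma} test is also formulated differently: rather than searching for homogeneous factorizations, it checks via Lemma~\ref{lm:T2at?}(b)(c) that $\lefthat T_v$ has no subgroups $K,L$ conjugate in $\lefthat T$ but not in $\lefthat T_v$ with $|KL|$ dividing $|\lefthat T_v|$ and $|\lefthat T_v|/|KL|$ dividing $|\Out(T)|$.

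One caution on your crux step~(i): the triality argument as stated is only heuristic. You observe that the two $\Omega_7(q)$-classes in $\POm_8^+(q)$ are fused only by triality and that $T$ induces no triality on the $\Omega_8^+(q)$-section of $T_v$, but this shows only that $T_{uv}$ and $T_{vw}$ are not conjugate by an element \emph{normalizing $T_v$}; it does not directly preclude conjugacy by an arbitrary element of $T$. You rightly note that the conjugacy claim can be verified in \textsc{Magma} for the finitely many $(n,q)$ in play, and that is indeed what both approaches rely on here.
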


 \begin{proof}
Suppose for a contradiction that  $T_v$ is a $\calC_1$-subgroup.
By Lemma~\ref{lm:notparabolic}, $T_v$ is not a parabolic subgroup of $T$, and hence we see from~\cite[Table~2.2]{BHRD2013} that $T_v$ is of type $\Sp_{n-2}(q)$, or type $\GO_{m}^{\epsilon_1}(q) \perp \GO_{n-m}^{\epsilon_2}(q)$.
If $T_v$ is of type $\Sp_{n-2}(q)$, then $n\geq 8$ and $G_v^{(\infty)}$ is quasisimple with $G_v^{(\infty)}/\Z(G_v^{(\infty)})=\PSp_{n-2}(q)$, contradicting Lemma \ref{lm:Hsiquasi}(b).
Consequently, $T_v$ is of type $\GO_{m}^{\epsilon_1}(q) \perp \GO_{n-m}^{\epsilon_2}(q)$.
Let $k=n-m$ and assume without loss of generality that $m\leq k$.

First suppose that $m\leq 2$. Then $k=n-m\geq 5$ and $G_v^{(\infty)}$ is quasisimple with $G_v^{(\infty)}/\Z(G_v^{(\infty)})=\POm^{\epsilon_2}_{k}(q)$. By Lemma~\ref{lm:Hsiquasi}(b) and Lemma~\ref{lm:notsmallT}, we only need to consider the case $\epsilon_2=+$, $m=1$ and $k=8$. Now $T=\POm_9(q)$ with $q$ odd. Then Lemma~\ref{lm:Omeganonsingular} asserts that all $T$-suborbits are self-paired, a contradiction.

Therefore, we conclude $3\leq m \leq n/2$. By Lemma~\ref{lm:notsmallT} and the assumption   $|V(\Ga)|\leq 30758154560$, one of the following holds:
\begin{itemize}
\item $(\epsilon,\epsilon_1,\epsilon_2)=(\circ,\circ,\pm)$ and $(n,m,q)\in \{  ( 7, 3, 7 ), ( 9, 3, 3 )\}$;
\item $(\epsilon,\epsilon_1,\epsilon_2)=(\circ,\pm,\circ)$ and $(n,m,q)=( 9, 4, 3 )\}$;
\item $(\epsilon,\epsilon_1,\epsilon_2)=(\pm,\circ,\circ)$ and $(n,m,q)=( 10, 3, 3 )$;
\item $(\epsilon,\epsilon_1,\epsilon_2)\in \{ (+,+,+),(+,-,-)  \}$ and $(n,m,q)\in \{  ( 12, 4, 2 ) \}$;
\item $(\epsilon,\epsilon_1,\epsilon_2)\in \{(-,-,+), (-,+,-)\}$ and  $(n,m,q)\in \{  ( 8, 4, 4 ),   ( 12, 4, 2 )  \}$.
\end{itemize}
For the above candidates, computation in {\sc Magma}~\cite{Magma} shows that  $ \lefthat T_v $ has no subgroups $K$ and $L$ conjugate in $\lefthat T$ but not in $\lefthat T_v$  such that   $|\lefthat T_v|$ is divisible by $|  K  L|$ and $|\Out(T)|$ is divisible by $|\lefthat T_v|/|  K   L|$. This contradicts Lemma~\ref{lm:T2at?}(b)(c).
\end{proof}

 \begin{lemma} \label{lm:pso}
Suppose $|V(\Ga)|\leq 30758154560$.
Then $T \neq  \POm^{\epsilon}_n(q)$ for $n\geq 7$.
\end{lemma}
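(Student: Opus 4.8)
The strategy is to follow the template already used for symplectic and unitary groups in Lemmas~\ref{lm:psp4q} and~\ref{lm:psun=3}. Suppose for a contradiction that $T=\POm^{\epsilon}_n(q)$ with $n\geq 7$. By Lemmas~\ref{lm:notparabolic},~\ref{lm:notsmallT},~\ref{lm:C9} and~\ref{lm:poC1}, the stabilizer $T_v$ is not a parabolic subgroup and lies in none of the classes $\mathcal{C}_1$ or $\mathcal{C}_9$, and moreover $(n,q)$ is not one of the pairs excluded in Lemma~\ref{lm:notsmallT}(c); in particular $T$ is not among the groups of Lemma~\ref{lm:T2/3}, so $|T|^{2/3}>30758154560$ and hence $T_v$ is a large subgroup of $T$. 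Appealing to the classification of large maximal subgroups of almost simple orthogonal groups in~\cite{AB2015}, the Aschbacher type of $T_v$ is confined to a short explicit list inside $\mathcal{C}_2$--$\mathcal{C}_7$ (imprimitive subgroups of type $\GO^{\epsilon_1}_{n/k}(q)\wr\Sy_k$ or $\GL_{n/2}(q).2$, field-extension subgroups of type $\GO^{\epsilon_1}_{n/d}(q^d).d$ or $\GU_{n/2}(q).2$, subfield subgroups $\GO^{\epsilon}_n(q_0)$ with $q=q_0^r$, and a few extraspecial-normalizer subgroups). Combining this list with the bound $|T|/|T_v|=|V(\Ga)|\leq 30758154560$ and the smallness exclusions of Lemma~\ref{lm:notsmallT} leaves only finitely many triples $(n,q,\text{type of }T_v)$, the genuine survivors being concentrated in the $\POm_8^{+}$ family.

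Next I would dispose of the generic families via Lemma~\ref{lm:Hsiquasi}. For all of these types except those whose geometric constituent is $4$-dimensional, the subgroup $G_v^{(\infty)}$ is quasisimple by~\cite[Tables~2.5--2.9]{BHRD2013}, so Lemma~\ref{lm:Hsiquasi}(b) forces the homogeneous factorization $\overline{G_v}=\overline{G_{uv}}\,\overline{G_{vw}}$ to satisfy Lemma~\ref{lm:ASfac} or~\ref{lm:ASfac2}. If the unique nonsolvable composition factor $M$ of $G_v$ lies outside $\mathcal{T}_1\cup\mathcal{T}_2$, this is already a contradiction; otherwise $M\in\mathcal{T}_1\cup\mathcal{T}_2$ bounds $q$ (respectively $q^{1/r}$ for subfield subgroups) severely, and together with $|T|/|T_v|\leq 30758154560$ only a handful of pairs $(T,T_v)$ survive -- most notably $T=\POm_8^{+}(q)$ with $T_v$ a subfield subgroup $\POm_8^{+}(q_0).[\,\cdot\,]$ for small $q_0$, where Lemma~\ref{lm:ASfac}(d) pins $T_{uv}$ and $T_{vw}$ to be of type $\Omega_7(q_0)$. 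In the residual $4$-dimensional cases $G_v^{(\infty)}$ has a normal subgroup that is either quasisimple or $\PSL_2(\cdot)$, and after passing to a further quotient the same two factorization lemmas apply, exactly as in the treatment of $\GO_4^{+}(q)$-type subgroups in the proof of Lemma~\ref{lm:psl}.

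For the finitely many surviving pairs $(T,T_v)$ I would close the argument with the three mechanisms used repeatedly above. The first is primitive-prime-divisor arithmetic as in the $\mathcal{C}_1$ analyses (cf.\ Lemma~\ref{lm:poC1}): choosing $N\unlhd G_v$ so that $G_v/N$ is almost simple on the dominant constituent of the decomposition and picking $r\in\ppd(p,kf)$ for the appropriate $k$ with $r\nmid|N|$, the inequality $|G_{uv}|_r^2=|G_{vw}|_r^2\geq|G_v|_r$ from~\eqref{EqnDivisor} forces $r$ to divide both $|G_{uv}N/N|$ and $|G_{vw}N/N|$, contradicting~\cite[Theorem~A]{LPS1990}; and when even $r\nmid|\Rad(G_v)|$, the unique normal $\ZZ_r$ of $G_v$ lies in both $G_{uv}$ and $G_{vw}$, contradicting Lemma~\ref{pro:Gvnormal}. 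The second is the self-pairing obstruction: whenever the relevant action is that of $\Omega_{2m+1}(q)$ with $q$ odd it is covered by Lemma~\ref{lm:Omeganonsingular}, and for $q$ even by Lemma~\ref{lm:SpOmq}, so all $T$-suborbits are self-paired, contradicting that $w$ lies in a non-self-paired $T$-suborbit. The third is {\sc Magma}~\cite{Magma}: for the small residual pairs (such as $T=\POm_8^{+}(4)$ or $\POm_8^{+}(8)$ with $T_v$ a $\mathcal{C}_3$- or $\mathcal{C}_5$-subgroup, and the $\mathcal{C}_6$-case in the $\PSp_8(3)$-adjacent group) one checks directly that there is no homogeneous factorization $G_v=G_{uv}G_{vw}$ with $|G_v|/|G_{uv}|\geq 3$ and $G_{uv}$ conjugate to $G_{vw}$ in $G$, or, when $T$ is too large for {\sf IsConjugate}, one exhibits in each candidate factor $K$ an element of order $2$ or $3$ not similar (verified by {\sf IsSimilar}) to any element of the same order in $L$, forcing $K$ and $L$ to be non-conjugate in $T$ and contradicting Lemma~\ref{lm:T2at?}.

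The step I expect to be the main obstacle is the $\POm_8^{+}$ analysis: triality makes the configuration of $\Omega_7(q)$-type subgroups and their fusion in $T$ delicate, Lemma~\ref{lm:ASfac}(d) supplies only the isomorphism type of the factors and not whether $T_{uv}$ and $T_{vw}$ are $T$-conjugate, and for $q\in\{4,8\}$ the ambient group sits right at the edge of feasible {\sc Magma} computation, so the element-similarity test must be organized carefully. A lesser nuisance is the bookkeeping needed to be sure the Alavi--Burness list for orthogonal groups has been exhausted and that no $\mathcal{C}_4$ or $\mathcal{C}_7$ tensor-type subgroup evades the bound $|T|/|T_v|\leq 30758154560$ -- though such subgroups have far too large an index to do so.
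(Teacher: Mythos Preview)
Your proposal is correct and follows essentially the same route as the paper: exclude $\mathcal{C}_1$ and $\mathcal{C}_9$ via the earlier lemmas, use the large-subgroup bound to invoke~\cite{AB2015}, kill the quasisimple-$G_v^{(\infty)}$ cases with Lemma~\ref{lm:Hsiquasi}, and finish the finitely many survivors by {\sc Magma}. Two small calibrations: the paper's proof does not need your mechanisms~1 and~2 (primitive-prime-divisor arithmetic and the self-pairing Lemmas~\ref{lm:Omeganonsingular}/\ref{lm:SpOmq}) at all for this lemma---after Lemma~\ref{lm:Hsiquasi} and the index bound the only genuine survivor is the single pair $T=\POm_8^{+}(4)$, $T_v=\POm_8^{+}(2)\in\mathcal{C}_5$ (your $q=8$ does not arise, and no $\mathcal{C}_6$ case appears), and this is dispatched by a straightforward {\sc Magma} check that the two $\Sp_6(2)\cong\Omega_7(2)$ factors are not conjugate in $T$; the imprimitive $\mathcal{C}_2$ cases $(n,q,\epsilon,\sigma,k)\in\{(8,4,+,\pm,2),(12,2,+,-,2)\}$ are handled by the $K$--$L$ conjugacy test in $\lefthat T$ against Lemma~\ref{lm:T2at?}, not by factorization arithmetic.
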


 \begin{proof}
Suppose for a contradiction  that $T=\POm^{\epsilon}_n(q)$ with $n\geq 7$.
By Lemmas~\ref{lm:notparabolic},~\ref{lm:notsmallT},~\ref{lm:C9} and~\ref{lm:poC1}, we have $T_v\notin\{ \mathcal{C}_1,\mathcal{C}_9\}$, and $(n,q)$ is not a pair such that $n=7$ with $q\leq5$, $n=8$ with $q\leq3$, or $(n,q)=(10,2)$.
It follows that $|T|^{2/3}> 30758154560$ by Lemma \ref{lm:T2/3},  and so $T_{v}$ is large in $T$.
Applying~\cite[Theorem 7]{AB2015},   we need to consider the following cases:
 \begin{enumerate}[\rm (a)]
\item $T_{v}$ is a $\mathcal{C}_2$-subgroup of type $\GO^{\sigma}_{n/k}(q)\wr \Sy_k  $, and one of the following holds:
\begin{enumerate}
\item[\rm (a.1)]  $k = 2$,
\item[\rm (a.2)]  $(n, k, q, \epsilon , \sigma ) = (12, 3, 2,-, -), (10, 5, 2, -, -)$ or $(8, 4, 2, +, -)$,
\item[\rm (a.3)] $n = k$, $9 \leq n\leq 13$ and $q = 3$;
\end{enumerate}
 \item $T_{v}$ is a $\mathcal{C}_2$-subgroup of type $\GL_{n/2} (q)$;
 \item $T_{v}$ is a $\mathcal{C}_3$-subgroup of type $\GO^{\sigma}_{n/2} (q^2)$ or $\GU_{n/2} (q)$;
 \item  $T_{v}$ is a $\mathcal{C}_4$-subgroup of type $\Sp_{n/2}(q) \otimes \Sp_{2}(q)$ and $(n, \epsilon) = (12, +)$ or $(8, +)$;
 \item $T_{v}$ is a $\mathcal{C}_5$-subgroup of type $\GO^{\sigma}_{n} (q_0)$ with $q=q_0^2$;
   \item $T_{v}$ is a $\mathcal{C}_7$-subgroup   with $T=\POm_8^{+}(q)$, $T_{v}=\Sp_2(q)\wr \Sy_3$ and $q\leq 2^7$  even;
 \item $T_{v}$ lies in~\cite[Table~3]{AB2015} and so $(T,T_{v})$ is one of the following:
\[ \begin{split}
 &  (\POm_{8}^{+}(q),\G_{2}(q)),\, (\POm_{8}^{+}(q),\GL_3(q)\times \GL_1(q)),\, (\POm_{8}^{+}(q),\GU_3(q)\times \GU_1(q)).
\end{split}\]
\end{enumerate}

Suppose that (a) happens. Since $V(\Ga) \leq 30758154560$, it follows that $(n,q,\epsilon,\sigma,k)=(8,4,+,\pm,2)$ or $(12,2 ,+,-,2)$.
Computation in {\sc Magma}~\cite{Magma} shows that  $ \lefthat T_v $ has no subgroups $K$ and $L$ conjugate in $\lefthat T$ but not in $\lefthat T_v$  such that   $|\lefthat T_v|$ is divisible by $|  K  L|$ and $|\Out(T)|$ is divisible by $|\lefthat T_v|/|  K   L|$.
This contradicts Lemma~\ref{lm:T2at?}(b)(c).

Suppose that (b), (c), (e) or (g)  happens. Then
 $G_{v}^{(\infty)}$ is quasisimple except for the case  $T=\POm^{+}_8(q)$ and  $T_{v}$ is  a $\mathcal{C}_3$-subgroup of type $\GO^{+}_{4} (q^2)$.
For this exception,   $T_{v}$ is conjugate to a $\mathcal{C}_2$-subgroup of type $\GO^{-}_4(q) \wr \Sy_2$  by a graph automorphism of order $3$ of $T$ (see~\cite[Table 8.50]{BHRD2013}), which has been ruled out in Case (a).
Now $G_{v}^{(\infty)}$ is quasisimple.
By Lemma~\ref{lm:Hsiquasi}, either
\begin{itemize}
\item $n=8$, and $T_{v}$ is a $\mathcal{C}_3$-subgroup of type $\GO^{+}_{8} (q^2)$; or
\item $n=8$, and $T_{v}$ is a $\mathcal{C}_5$-subgroup of type $\GO^{\sigma}_{8} (q_0)$ with $q=q_0^2$.
\end{itemize}
 Since $|T|/|T_{v}|\leq 30758154560$, the only candidate is $T=\POm_8^{+}(4)$ with $T_{v}=\POm_8^{+}(2) \in \mathcal{C}_5$.
By Lemma~\ref{lm:Hsiquasi}(a) and Lemma~\ref{lm:ASfac}, we conclude that $\Ga$ is $(T,2)$-arc-transitive and $T_{uv} \cong T_{vw} \cong \Sp_6(2)$. However,  computation in {\sc Magma}~\cite{Magma} shows that $T_{uv} $ and $ T_{vw} $  are not conjugate in $T$, a contradiction.

Suppose that (d)  occurs. Then the only candidate for $(n,q)$ with $|T|/|T_{v}|\leq 30758154560$ is $(8,4)$. However, from~\cite[Table 8.50]{BHRD2013} we see that $ T=\POm^{+}_8(4)$ has no  $\mathcal{C}_4$-subgroup $T_v$  of type $\Sp_{4}(q) \otimes \Sp_{2}(q)$ such that $G_v$ is maximal in $G$.

Finally, Case (f) is impossible since there is no $q\geq 4$ such that $|T|/|T_{v}|\leq 30758154560$.
 \end{proof}

\end{document}